\def\ub{{\underline{u}}}
\def\Lb{\underline{L}}
\def\Cb{{\underline{C}}}
\def\Eb{\underline{E}}
\def\Fb{\underline{F}}
\def\gslash{\mbox{$g \mkern -8mu /$ \!}}
\def\nablaslash{\mbox{$\nabla \mkern -13mu /$ \!}}
\def\laplacianslash{\mbox{$\Delta \mkern -13mu /$ \!}}
\newtheorem*{MainTheorem}{Main Theorem}
\newtheorem{theorem}{Theorem}[section]
\newtheorem{lemma}[theorem]{Lemma}
\newtheorem{proposition}[theorem]{Proposition}
\newtheorem{corollary}[theorem]{Corollary}
\newtheorem{definition}[theorem]{Definition}
\newtheorem{remark}[theorem]{Remark}
\newtheorem*{Main Theorem}{Main Theorem}
\numberwithin{equation}{section}
\def\Et{\widetilde{E}}
\def\Ebt{\widetilde{\underline{E}}}
\def\lot{{\text{l.o.t.}}}
\def\Ntop{{N_{\text{top}}}}
\def\Nmu{N_{\mu}}
\def\Ninfty{N_{\infty}}
\def\gt{{\widetilde{g}}}
\def\ds{{\slashed{d}}}
\def\gslash{{\slashed{g}}}
\def\ub{\underline{u}}
\def\Lb{\underline{L}}
\def\Cb{\underline{C}}
\def\Eb{\underline{E}}
\def\Fb{\underline{F}}
\def\tr{{\textrm{tr}}}
\def\dmug{{d \mu_{\slashed{g}}}}
\def\alphab{\underline{\alpha}}
\def\etab{\underline{\eta}}
\def\zetab{\underline{\zeta}}
\def\O{\mathcal{O}}
\def\slashedL{{\slashed{\mathcal{L}}}}
\def\slashedLRi{{\slashed{\mathcal{L}}_{R_i}}}
\def\chib{\underline{\chi}}
\def\chibh{\widehat{\underline{\chi}}}
\def\chit{\widetilde{\chi}}
\def\chibt{\widetilde{\underline{\chi}}}
\def\gslash{\mbox{$g \mkern -8mu /$ \!}}
\def\divslash{\slashed{\textrm{div}}}
\def\nablaslash{\mbox{$\nabla \mkern -13mu /$ \!}}
\def\laplacianslash{\mbox{$\Delta \mkern -13mu /$ \!}}
\def\Zb{{\underline{Z}}}
\def\Et{{\widetilde{E}}}
\def\Ebt{{\underline{\widetilde{E}}}}
\newcommand{\leftexp}[2]{{\vphantom{#2}}^{#1}{#2}}
\begin{document}
\title[Shock QLW with weak pulse]{On the formation of shock for quasilinear wave equations by pulse with weak intensity}
\author[Shuang Miao]{Shuang Miao}
\thanks{Department of Mathematics, University of Michigan,
Ann Arbor MI, U.S.A. shmiao@umich.edu\\
\emph{Current address}: B\^atiment des Math\'ematiques, EPFL, Lausanne, Switzerland, shuang.miao@epfl.ch}

\begin{abstract}
In this paper we continue to study the shock formation for the $3$-dimensional quasilinear wave equation

\begin{align}\label{main eq}
-(1+3G''(0)(\partial_{t}\phi)^{2})\partial^{2}_{t}\phi+\Delta\phi=0,\tag{\textbf{$\star$}}
\end{align}
with $G''(0)$ being a non-zero constant. Since \eqref{main eq} admits global-in-time solution with small initial data, to present shock formation, we consider 
a class of large data. Moreover, no symmetric assumption is imposed on the data. Compared to our previous work \cite{M-Y}, 
here we pose data on the hypersurface $\{(t,x)|t=-r_{0}\}$ instead of $\{(t,x)|t=-2\}$, with $r_{0}$ being arbitrarily large. We prove an a priori energy estimate independent of $r_{0}$. 
Therefore a complete description of the solution behavior as $r_{0}\rightarrow\infty$ is obtained. This allows us to relax the restriction on the profile of initial 
data which still guarantees shock formation. Since \eqref{main eq} can be viewed as a model equation for describing the propagation of electromagnetic waves in nonlinear
dielectric, the result in this paper reveals the possibility to use wave pulse with weak intensity to form electromagnetic shocks in laboratory. A main new feature in the proof is that all estimates in the present paper do \emph{not} depend on the parameter $r_{0}$, which requires different methods to obtain energy estimates. As a byproduct, we prove the existence of semi-global-in-time solutions which lead to shock formation by showing that the limits of the initial energies exist as $r_{0}\rightarrow\infty$. The proof combines the ideas in \cite{Ch1} where the the formation of shocks for 3-dimensional relativistic compressible Euler equations with small initial data is established, and the short pulse method introduced in \cite{Ch2} and generalized in \cite{K-R}, where the formation of black holes in general relativity is proved.
\end{abstract}

\maketitle

\section{Introduction}
In this paper we study the following \emph{quasilinear} wave equation
\begin{align}\label{Main Equation}
 - \left(1+3G^{\prime\prime}(0) (\partial_t\phi)^2\right)\partial^2_t \phi +\Delta\phi=0,\tag{\textbf{$\star$}}
\end{align}
where $G''(0)$ is a \emph{nonzero} constant and $\phi \in C^\infty(\mathbb{R}_t\times\mathbb{R}_{x}^3;\mathbb{R})$ is a smooth solution. 
The aim of the paper is to present an energy-estimate-based proof for the stable shock formation of smooth solutions to \eqref{Main Equation} generated by data prescribed at $\Sigma_{-r_{0}}:=\{(t,x)|t=-r_{0}\}$ with $r_{0}$ being arbitrarily large. A classical result by Klainerman \cite{K1} says that the equation \eqref{Main Equation} admits global-in-time smooth solution with small data, so in order to prove shock formation, we consider a class of ``large" data which will be specified below. As we shall see, the equation \eqref{Main Equation} can be regarded as a model equation for the Maxwell equations in nonlinear electromagnetic theory, in which the shocks can be observed experimentally. The shock formation in nonlinear electromagnetic theory will be the subject of our forthcoming work. 

\subsection{Lagrangian formulation of the main equation and its relation to nonlinear electromagnetic waves}
We briefly discuss the derivation of the main equation $(\star)$. The linear wave equation in Minkowski spacetime $(\mathbb{R}^{3+1}, m_{\mu\nu})$ can be derived by a variational principle: we take the Lagrangian density $L(\phi)$ to be $\frac{1}{2}\big(-(\partial_t \phi)^2 + |\nabla_x \phi|^2\big)$ and take the action functional $\mathcal{L}(\phi)$ to be $\int_{\mathbb{R}^{3+1}} L(\phi) d \mu_m$ where $d \mu_m$ denotes the volume form of the standard Minkowski metric $m_{\mu\nu}$. The corresponding Euler-Lagrange equation is exactly the linear wave equation  $-\partial^2_t\phi +\Delta \phi = 0$. We observe that the quadratic nature of the Lagrangian density result in the \textit{linearity} of the equation. This simple observation allows one to derive plenty of \textit{nonlinear} wave equations by changing the quadratic nature of the Lagrangian density. In particular, we will change the quadratic term in $\partial_t\phi$ to a quartic term, this will lead to a \textit{quasi-linear} wave equation.

In fact, we consider a perturbation of the Lagrangian density of linear waves:
\begin{equation}\label{Lagrangian density}
 L(\phi)= -\frac{1}{2}G\big((\partial_{t}\phi)^{2}\big)+\frac{1}{2}|\nabla\phi|^{2},
\end{equation}
where $G=G(\rho)$ is a smooth function defined on $\mathbb{R}$ and $\rho = |\partial_t  \phi|^2$. The corresponding Euler-Lagrange equation is
\begin{equation*}
 -\partial_t \big(G^{\prime}(\rho) \partial_t\phi\big)+\Delta\phi=0.
\end{equation*}

 The function $G(\rho)$ is a perturbation of $G_0(\rho)=\rho$ and therefore we can think of the above equation as a perturbation of the linear wave equation. For instance, we can work with a real analytic function $G(\rho)$ with $G(0)=0$ and $G^{\prime}(0)=1$. In particular, we can perturb $G(\rho) = \rho$ in the simplest possible way by adding a quadratic function so that $G(\rho) = \rho +\frac{1}{2}G''(0)\rho^2$. In this situation, we obtain precisely the main equation $(\star)$. It is in this sense that $(\star)$ can be regarded as the simplest quasi-linear wave equation derived from action principle.\vspace{2mm}

The main equation $(\star)$ is also closely tight to electromagnetic waves in a nonlinear dielectric. The Maxwell equations in a homogeneous insulator is derived from a Lagrangian $L$ which is a function of the electric field $E$ and the magnetic field $B$. The corresponding displacements $D$ and $H$ are defined through $L$ by $D=-\frac{\partial L}{\partial E}$ and $H=\frac{\partial L}{\partial B}$ respectively. In the case of an isotropic dielectric, $L$ is of the form
\begin{align}\label{Lagrangian Maxwell}
L=-\frac{1}{2}G(|E|^{2})+\frac{1}{2}|B|^2,
\end{align}
hence $H = B$. The fields $E$ and $B$ are derived from the scalar potential $\psi$ and the vector potential $A$ according to $E=-\nabla\psi-\partial_{t}A$ and $B=\nabla\times A$ respectively. This is equivalent to the first pair of Maxwell equations:
\begin{align}\label{Maxwell 1}
\nabla\times E+\partial_{t}B=0,\quad \nabla\cdot B=0.
\end{align}
The potentials are determined only up to a gauge transformation $\psi\mapsto \psi-\partial_{t}f$ and $A\mapsto A+df$, where $f$ is an arbitrary smooth function. The second pair of Maxwell equations
\begin{align}\label{Maxwell 2}
\nabla\cdot D=0,\quad \nabla\times H-\partial_{t}D=0.
\end{align}
are the Euler-Lagrange equations, the first resulting from the variation of $\psi$ and the second resulting from the variation of $A$. Fixing the gauge by setting $\psi = 0$, we obtain a simplified model if we neglect the vector character of $A$ replacing it by a scalar function $\phi$. Then the above equations for the fields in terms of the potentials simplify to $E=-\partial_{t}\phi$ and $B=\nabla\phi$.
The Lagrangian \eqref{Lagrangian Maxwell} becomes
\begin{align*}
L=-\frac{1}{2}G((\partial_{t}\phi)^{2})+\frac{1}{2}|\nabla\phi|^{2}
\end{align*}
which is exactly \eqref{Lagrangian density}. Therefore, the main equation $(\star)$ provides a good approximation for shock formation in a natural physical model: the shock formations for nonlinear electromagnetic waves. We will discuss the physical motivation to start with a weak intensity pulse in detail after we introduce the initial data in Section 1.3.\vspace{2mm}

\subsection{A geometric perspective for shock formation}
To present a geometric picture of shock formation, we start with the inviscid Burgers Equation

\begin{align}\label{Burgers}
\partial_{t}u+u\partial_{x}u=0.
\end{align}
We assume that $u \in C^\infty(\mathbb{R}_t\times \mathbb{R}_x; \mathbb{R})$ is a smooth solution. Given smooth initial data $u(0,x)$(non-zero everywhere for simplicity), along each \emph{characteristic curve} $u(0,x)t+x$, the solution $u(t,x)$ to \eqref{Burgers} remains constant. We now consider two specific characteristics passing through $x_1$ and $x_2$ ($x_1<x_2$). If we choose datum in such a way that $u(0,x_1) > u(0,x_2) >0$, both the characteristics travel towards the right (see the picture below). Moreover, the characteristic on the left (noted as $C_1$) travels with speed $c_1 = u(0,x_1)$ and the characteristic on the right (noted as $C_2$) travels with speed $c_2 = u(0,x_2)$. Since $C_1$ travels faster than $C_2$, $C_1$ will eventually catch up with $C_2$. The collision of two characteristics causes the breakdown on the smoothness of the solution. In summary, we have a geometric perspective on shock formation: a ``faster" characteristic catches up a ``slower" one so that it causes a collapse of characteristics. 

\includegraphics[width = 4.7 in]{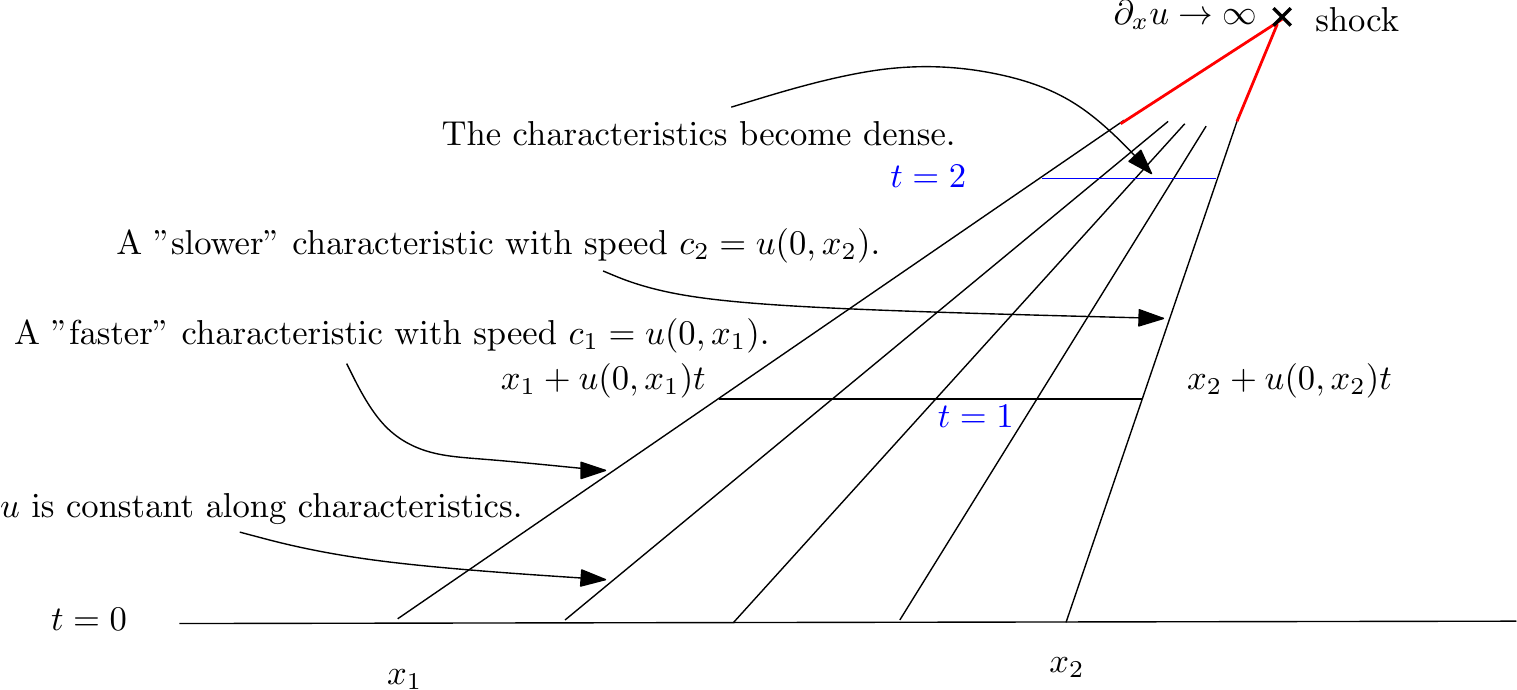}

In reality, instead of showing that characteristics collapse, we show that $|\partial_x u|$  blows up. Instead of being na\"{i}vely a derivative, $|\partial_x u|$ have an important \emph{geometric interpretation}. Recall that the level sets of $u$ are exactly the characteristic curves and the $(t,x)$-plane is foliated by these curves (see the above picture). Therefore, $|\partial_x u|$ is the \emph{density of the foliation by the characteristics}. As a consequence,  we can regard the shock formation as the following geometric picture: \emph{the foliation of characteristic curves becomes infinitely dense}.

Let us recall a standard way to prove the blow-up of $|\partial_x u|$. The remarkable feature of this standard proof is that in three dimensions similar phenomenon happens for the main equation $(\star)$. Let $\underline{L} = \partial_t + u \partial_x$ be the tangent vectorfield of the characteristic curves (for $(\star)$, the corresponding vectorfield are tangent vectorfield of null geodesics on the characteristic hypersurfaces). Therefore, by taking $\partial_x$ derivatives, we obtain
\begin{equation*}
\underline{L}  \partial_x u + (\partial_x u)^2 = 0.
\end{equation*}
This is a Riccati equation for $\partial_x u$ and $\partial_x u$ blows up in finite time if it is negative initially. However, we would like to understand the blow-up in another way (which is tied to the shock formation for $(\star)$). We define the \emph{inverse density function} $\mu = - (\partial_x u)^{-1}$, therefore, along each characteristic curve, $\mu$ satisfies the following equation:
\begin{equation*}
\underline{L} \mu(t,x) = -1,
\end{equation*}
i.e. $\underline{L} \mu$ is constant along each characteristic curve so that it is determined by its initial value. Therefore,if $\mu$ is positive initially it will eventually become $0$ after finite time. $\mu\rightarrow0$ means that the foliation becomes infinitely dense (For $(\star)$, we will also define an inverse density function $\mu$ for the foliation of characteristic hypersurfaces and show that $\underline{L} \mu(t,x)$ is almost a constant along each generating geodesic of the characteristic hypersurfaces).\vspace{2mm}

We return to the main equation $(\star)$, which can be written as the following \emph{geometric form}:

\begin{align}\label{Main equation geometric}
-\frac{1}{c^{2}}\partial^{2}_{t}\phi+\Delta\phi=0
\end{align}
with $c=(1+3G''(0)(\partial_{t}\phi)^{2})^{-1/2}$. If $c$ were a constant, \eqref{Main equation geometric} would describe the propagation of light in Minkowski spacetime. In the present situation, we still regard $c$ as the speed of wave propagation which depends on time and position $(t,x)$ through the unknown $\phi$. We prescribe the initial data $\phi(-r_{0},\theta), (\partial_{t}\phi)(-r_{0},\theta)$ on the hypersurface $\Sigma_{-r_{0}}$. Let $S_{r}$ be the sphere centered at the origin of $\Sigma_{-r_{0}}$ with radius $r$ and $B_{r}$ be the ball with $S_{r}$ as its boundary. We choose the data such that

\begin{enumerate}
\item The data is trivial inside $B_{r_{0}}$. By the Huygens' Principle, the solution $\phi$ is identically zero in the interior of the backward solid light cone with the base $B_{r_{0}}$. If we denote the wave propagation speed along the incoming characteristic hypersurface emanated from the leaf $S_{r_{0}}$ by $c_{1}$, then in view of the formula $c=(1+3G''(0)(\partial_{t}\phi)^{2})^{-1/2}$, $c_{1}=1$.
\item $|\partial_{t}\phi|$ is approximately of size $\delta^{1/2}r_{0}^{-1}$ in the annulus region between $S_{r_{0}}$ and $S_{r_{0}+\delta}$. If we denote the wave propagation speed along the incoming characteristic hypersurface emanated from the leaf $S_{r_{0}+\delta}$ by $c_{2}$, then a Taylor expansion implies that $c_{2}\sim 1-\frac{3}{2}G''(0)\delta r^{-2}_{0}+O(\delta^{2}r_{0}^{-4})$.
\end{enumerate}
A key point of the paper is to identify a set of initial data such that profile $c_{2}(t)\sim 1-\frac{3}{2}G''(0)\delta t^{-2}+O(\delta^{2}t^{-4})$ is preserved in the evolution. Another way to understand this is through energy estimates: we can find a specific set of data so that we can obtain a priori energy estimates. Once we show that the energy (and its higher order analogue) is almost conserved, we can use Sobolev inequality to show that the pointwise profile of $\partial_t \phi$ is almost conserved along the generators of the incoming characteristic hypersurfaces. According to the formula of $c$, this also implies the preservation of the profile for wave speed. We are now in a situation that resembles the Burgers' picture. The initial distance between the inner most characteristic hypersurface, which is emanated from $S_{r_{0}}$, and the outer most characteristic hypersurface, which is emanated from $S_{r_{0}+\delta}$, is $\delta$. We also expect the ``faster"(outer) characteristic hypersurface catching up the ``slower"(inner) one. This catching up process can be described as $``distance"=\int_{-r_{0}}^{``time"}(c_{2}-c_{1})dt$. Since $c_{2}-c_{1}\sim \delta t^{-2}$ and we regard shock formation as the collision of characteristic hypersurfaces, we expect the shock formation around $t=-1$. Finally, we point out that, as in the case of Burgers' equation, instead of showing that characteristic hypersurfaces intersect, we show that the inverse density $\mu$ of the foliation by the characteristic hypersurfaces becomes $0$, i.e. the foliation turns to being infinitely dense. Similarly, this can be done by showing that profile of $\underline{L} \mu(t,x)$ is almost a constant along each generating geodesic of the characteristic hypersurfaces.

\subsection{Main results}
We now state the main result of the paper. Let $t$ be the time function in Minkowski spacetime.  We use $\Sigma_t$ to denote its level set, which is identified as $\mathbb{R}^{3}$ for each $t$. 
We also use $\Sigma_{-r_0}^{\delta}$ to denote the following annulus:
\begin{equation}\label{delta annulus}
 \Sigma_{-r_{0}}^{\delta}:=\left\{x\in\Sigma_{-r_{0}} \big| r_{0}\leq r(x)\leq r_{0}+\delta\right\}.
\end{equation}
With the wave speed $c:=\left(1+3G''(0)(\partial_{t}\phi)^{2}\right)^{-1/2}$, let $\Lb:=\partial_{t}-c\partial_{r}$ and $L:=\partial_{t}+c\partial_{r}$ on $\Sigma_{-r_{0}}^{\delta}$.  We introduce a pair of functions $\big(\phi_1(s,\theta),\phi_2(s,\theta)\big) \in C^\infty\big((0,1]\times \mathbb{S}^2\big)$ and we will call it the \emph{seed data}.

The seed data $\big(\phi_1(s,\theta),\phi_2(s,\theta)\big)$ can be freely prescribed and once it is given once forever. In particular, the choice of the seed data is independent of the small parameter $\delta$. As we will see below, even though we call $(\phi_{1},\phi_{2})$ the seed data, from the proof of the following Lemma 1.1, our initial data depends on $\phi_{2}$ implicitly.

\begin{lemma}\label{lemma constraint}
 Given seed data $(\phi_1,\phi_2)$, there exists a $\delta'>0$ depending only on the seed data, for all $\delta<\delta'$, we can construct another function $\phi_0\in  C^\infty\big((0,1]\times \mathbb{S}^2\big)$ satisfying the following two properties:

(1) For all $k\in \mathbb{Z}_{\geq 0}$, the $C^k$-norm of $\phi_0$ are bounded by a function of the $C^k$-norms of $\phi_1$ and $\phi_2$;

(2) If we pose initial data for $(\star)$ on $\Sigma_{-r_{0}}$ in the following way:

For all $x \in \Sigma_{-r_{0}}$ with $r(x) \leq r_{0}$, we require $\big(\phi(-r_{0},x),  \partial_t \phi(-r_{0},x)\big) = \big(0,0\big)$;\ For $r_{0} \leq r(x) \leq r_{0}+\delta$, we require that
\begin{equation}\label{data profile}
 \phi(-r_{0},x)=\frac{\delta^{3/2}}{r_{0}}\phi_{0}(\frac{r-r_{0}}{\delta},\theta),\ \ (\partial_{t}\phi)(-r_{0},x)=\frac{\delta^{1/2}}{r_{0}}\phi_{1}(\frac{r-r_{0}}{\delta},\theta).
\end{equation}
Then we have
\begin{equation}\label{no outgoing radiation}
\|\Lb\phi\|_{L^\infty\left(\Sigma^{\delta}_{-r_{0}}\right)}\lesssim \frac{\delta^{3/2}}{r_{0}^{2}},\ \ \|\Lb^{2}\phi\|_{L^\infty\left(\Sigma^{\delta}_{-r_{0}}\right)}\lesssim \frac{\delta^{3/2}}{r_{0}^{3}}.
\end{equation}
\end{lemma}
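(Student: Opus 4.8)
The plan is to reduce the whole statement to an ordinary differential equation in the radial variable on $\Sigma_{-r_0}$, using the equation $(\star)$ to trade the only second time-derivative that occurs for spatial derivatives, and then to choose the profile $\phi_0$ so that the leading contributions to $\Lb\phi$ and $\Lb^{2}\phi$ cancel order by order. Write $s=\delta^{-1}(r-r_{0})$ on $\Sigma^{\delta}_{-r_0}$, so $\partial_{r}=\delta^{-1}\partial_{s}$ and, on $\Sigma_{-r_0}$, the wave speed $c=(1+3G''(0)\delta r_{0}^{-2}\phi_{1}(s,\theta)^{2})^{-1/2}$ depends only on $\phi_{1}$, with $c-1=O(\delta r_{0}^{-2})$. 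Since $\phi$ and $\partial_{t}\phi$ are prescribed by \eqref{data profile},
\[
\Lb\phi\big|_{\Sigma_{-r_0}}=\partial_{t}\phi-c\,\partial_{r}\phi=\frac{\delta^{1/2}}{r_{0}}\big(\phi_{1}-c\,\partial_{s}\phi_{0}\big),
\]
so the term of size $\delta^{1/2}r_{0}^{-1}$ disappears once $\partial_{s}\phi_{0}\approx\phi_{1}/c$, leaving a remainder of size at most $\delta^{3/2}r_{0}^{-2}$. Differentiating once more and substituting $\partial_{t}^{2}\phi=c^{2}\Delta\phi=c^{2}(\partial_{r}^{2}\phi+\tfrac{2}{r}\partial_{r}\phi+r^{-2}\laplacianslash\phi)$ from $(\star)$ gives, on $\Sigma_{-r_0}$,
\[
\Lb^{2}\phi=\frac{2c}{\delta^{1/2}r_{0}}\big(c\,\partial_{s}^{2}\phi_{0}-\partial_{s}\phi_{1}\big)+\frac{2c^{2}}{r}\partial_{r}\phi+\frac{c^{2}}{r^{2}}\laplacianslash\phi-\big(\partial_{t}c-c\,\partial_{r}c\big)\partial_{r}\phi ,
\]
where every term is now a function of $\phi_{0},\phi_{1}$ and their $s$- and angular derivatives: $\partial_{r}c=O(r_{0}^{-2})$ directly, and $\partial_{t}c=O(r_{0}^{-2})$ after using $(\star)$ once more for $\partial_{t}^{2}\phi$. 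The naive sizes of the four groups are $\delta^{-1/2}r_{0}^{-1}$, $\delta^{1/2}r_{0}^{-2}$, $\delta^{3/2}r_{0}^{-3}$ and $\delta^{1/2}r_{0}^{-3}$, so everything strictly larger than $\delta^{3/2}r_{0}^{-3}$ must be killed by the choice of $\phi_{0}$.

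I would then define $\phi_{0}$ as the solution of a linear first-order ODE in $s$ (with $\theta$ a parameter) of the schematic form
\[
c\,\partial_{s}\phi_{0}=\phi_{1}-\frac{\delta}{r_{0}}\,\phi_{0}+\frac{\delta}{r_{0}^{2}}\,\Psi(s,\theta),
\]
with $\Psi$ an explicit cubic polynomial in $\phi_{1}$ and $\partial_s\phi_1$, designed so that after differentiation the $\tfrac{\delta}{r_{0}^{2}}\Psi$-term produces exactly the contribution needed to cancel the $\delta^{1/2}r_{0}^{-3}$-group of $\Lb^{2}\phi$ (that group is, modulo lower order, a perfect $s$-derivative $\partial_{s}(\phi_{1}^{3})$, which is what forces $\Psi$ to be cubic), and with the initial data for this ODE prescribed in terms of the second seed function $\phi_{2}$; this is the only place $\phi_{2}$ enters, which is the sense in which the initial data depends on $\phi_{2}$ implicitly. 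With this choice one reads off $\Lb\phi=\tfrac{\delta^{3/2}}{r_{0}^{2}}\phi_{0}+O(\delta^{3/2}r_{0}^{-3})$ — the first bound — while in $\Lb^{2}\phi$ the $\tfrac{2c^{2}}{r}\partial_{r}\phi$ term is cancelled by the $-\tfrac{\delta}{r_{0}}\partial_{s}\phi_{0}$ contribution of the ODE up to $O(\delta^{3/2}r_{0}^{-3})$ (the mismatch coming only from $c-1=O(\delta r_{0}^{-2})$ and $r-r_{0}=O(\delta)$), the $\partial_{t}c,\partial_{r}c$ terms are cancelled by the $\Psi$-correction up to $O(\delta^{3/2}r_{0}^{-4})$, and the genuinely small term $r^{-2}\laplacianslash\phi$ has size $\delta^{3/2}r_{0}^{-3}$ and is simply retained — giving the second bound.

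The $C^{k}$-estimate in part (1) then follows from the standard Gr\"onwall and smooth-dependence estimates for linear ODEs: all the coefficients ($c$, $\partial_{s}c$, $\Psi$ and their $\theta$-derivatives) are bounded in $C^{k}$ by polynomial expressions in the $C^{k}$-norm of $\phi_{1}$, and the initial data by that of $\phi_{2}$, uniformly for $\delta<\delta'$ and all large $r_{0}$, because the only $\delta$- and $r_{0}$-dependent factors that appear, namely $\delta r_{0}^{-2}$, $\delta r_{0}^{-1}$ and $r^{-1}$, are uniformly bounded on $\Sigma^{\delta}_{-r_0}$. One also has to check that the correction terms do not destroy the infinite-order vanishing of $\phi_{0}$ at $s=0$ (inherited from that of $\phi_{1}$, since the ODE is linear and its inhomogeneity vanishes there to infinite order), which is exactly what guarantees that the data glued by \eqref{data profile} is smooth and trivial inside $B_{r_{0}}$.

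The main difficulty is squeezing the bound for $\Lb^{2}\phi$ down to $\delta^{3/2}r_{0}^{-3}$: its natural size is $\delta^{-1/2}r_{0}^{-1}$, so one must gain a factor $\delta^{2}r_{0}^{-2}$, and this cannot be achieved by a single ansatz — it forces one to adjust the equation for $\phi_{0}$ successively (first the $\tfrac{\delta}{r_{0}}$-correction, which repairs $\Lb^{2}\phi$ but mildly worsens $\Lb\phi$; then the $\tfrac{\delta}{r_{0}^{2}}$-correction, which repairs the $\partial_{t}c,\partial_{r}c$ terms without reintroducing anything above $\delta^{3/2}r_{0}^{-3}$), with the remainder at each stage tracked explicitly and, crucially for the rest of the paper, shown to be uniform in $r_{0}$. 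A secondary technical point is that the term $r^{-2}\laplacianslash\phi$ formally turns the relation for $\phi_{0}$ into a PDE rather than an ODE, so one must verify that this term is never required to be cancelled and hence that working with a genuine radial ODE is legitimate at the orders that matter.
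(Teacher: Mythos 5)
Your argument is essentially correct but runs in the opposite direction from the paper's. The paper defines $\phi_0$ by a \emph{second-order} ODE in $s$ (equation \eqref{ODE phi0 1}, with zero Cauchy data at $s=0$ and with $\phi_2$ entering as the inhomogeneity $\frac{\delta^{2}}{r_0^{4}}\phi_2$), chosen precisely so that $|\Lb^{2}\phi|\lesssim\delta^{3/2}r_0^{-3}$ holds by construction; the bound $|\Lb\phi|\lesssim\delta^{3/2}r_0^{-2}$ is then obtained for free, by rewriting $(\star)$ as an equation for $\partial_r\Lb\phi$, all of whose terms are $O(\delta^{1/2}r_0^{-2})$, and integrating in $r$ over the interval of length $\delta$ starting from $r_0$, where $\Lb\phi=0$. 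You instead impose a \emph{first-order} ODE $c\,\partial_s\phi_0=\phi_1-\frac{\delta}{r_0}\phi_0+\frac{\delta}{r_0^{2}}\Psi$ so that $\Lb\phi$ is small directly, and verify the $\Lb^{2}\phi$ bound by differentiating the ODE. This is workable (both schemes produce the same leading profile $\partial_s\phi_0\approx c^{-1}\phi_1$), and the key cancellations are the ones you name: the $\frac{\delta}{r_0}$-damping kills $\frac{2c^{2}}{r}\partial_r\phi$ up to $O(\delta^{3/2}r_0^{-3})$, and the residual obstruction at order $\delta^{1/2}r_0^{-3}$ is, to leading order, a perfect $s$-derivative of a cubic in $\phi_1$, absorbed by $\Psi$. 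What the paper's arrangement buys is economy: the harder estimate ($\Lb^{2}\phi$) is the defining equation and the easier one follows from the PDE, so no two-stage correction has to be designed or tracked.

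Two points in your plan need repair. First, the bookkeeping of which $\delta^{1/2}r_0^{-3}$ term forces the cubic correction is off: once your ODE is in force, the group $-(\partial_t c-c\,\partial_r c)\partial_r\phi$ is already $O(\delta^{3/2}r_0^{-4})$, because $\partial_t^{2}\phi-c\,\partial_r\partial_t\phi$ inherits the near-cancellation $c\,\partial_s^{2}\phi_0-\partial_s\phi_1=O(\delta/r_0)$; the term $\Psi$ actually has to cancel is $-(\partial_s c)\,\partial_s\phi_0$, generated when the $s$-derivative falls on $c$ in your ODE, and this is indeed proportional to $\frac{\delta}{r_0^{2}}\partial_s(\phi_1^{3})$ to leading order, so the mechanism survives but the identification of the target must be corrected. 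Second, letting $\phi_2$ enter through the initial datum of the ODE at $s=0$ is either incompatible with the infinite-order vanishing of $\phi_0$ at $s=0$ that you yourself invoke for smooth gluing with the trivial data inside $B_{r_0}$, or vacuous if $\phi_2$ is flat at $s=0$; the paper takes zero Cauchy data and lets $\phi_2$ enter as the negligible inhomogeneity $\frac{\delta^{2}}{r_0^{4}}\phi_2$, which is the harmless way to realize the stated dependence on $\phi_2$ and is the placement you should adopt.
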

We remark that the condition \eqref{no outgoing radiation} has a clear physical meaning: since $\Lb\phi$ controls the outgoing radiation, initially the waves are actually set to be incoming and the outgoing radiation is very little, because the $L^{\infty}$-norm of the outgoing radiation field $\Lb\phi$ is controlled by $\delta$ and its $L^{2}$-norm on $\Sigma_{-r_{0}}$ decays to $0$ as $r_{0}\rightarrow\infty$.

\begin{proof}
As in \cite{M-Y}, a direct computation gives
\begin{align*}
\begin{split}
 \Lb^2 \phi &= \big(2-3G''(0)c^3\partial_t\phi\,\partial_r \phi\big)\big(c^2 \partial_r^2 \phi\big) + \big(1-3G''(0)c^3\partial_t\phi\,\partial_r \phi\big)\big(\frac{2c^2}{r}\partial_r\phi + \frac{c^2}{r^{2}} \laplacianslash_{\mathbb{S}^{2}} \phi \big)\\
 &\ \ \ +c\partial_r c \, \partial_r \phi-2c \partial_r(\partial_t \phi),
\end{split}
\end{align*}
which is rewritten, in terms of $\phi_{0}$ and $\phi_{1}$, as
\begin{align}\label{Lb2 phi seed}
\begin{split}
 \Lb^2 \phi|_{t=-r_{0}} &= \left(2-3G''(0)c^3\phi_1\,\partial_s \phi_0 \,\frac{\delta}{r_{0}^{2}}\right)\left(\frac{\delta^{-\frac{1}{2}}}{r_{0}}c^2 \partial_s^2 \phi_0 \right)\\
 &\ \ \ + \left(1-3G''(0)c^3\phi_1\,\partial_s \phi_0 \,\frac{\delta}{r_{0}^{2}}\right)\left(\frac{2c^2}{r_{0}^{2}}\partial_s\phi_0 \, \delta^{\frac{1}{2}} + \frac{c^2}{r_{0}^{3}} \laplacianslash_{\mathbb{S}^{2}} \phi_0 \, \delta^\frac{3}{2} \right)\\
 &\ \ \ -3c^{4}G''(0)\phi_{1}\partial_{s}\phi_{1} \partial_{s}\phi_{0}\frac{\delta^\frac{1}{2}}{r_{0}^{3}}-2c \partial_s\phi_1 \frac{\delta^{-\frac{1}{2}}}{r_{0}}.
\end{split}
\end{align}
Claim: We can choose $\phi_0$, which may depend on the choice of $\phi_1$ but is independent of $\delta$, in such a way that $|\Lb^2 \phi| \lesssim \dfrac{\delta^\frac{3}{2}}{r_{0}^{3}}$. 

To see this, we first observe that since $\phi_1$ is given and $\delta$ is small, we have $|c| \lesssim 1$. We make the following ansatz for $\phi_0$:
\begin{equation}\label{ansatz on phi_0}
 |\partial_s \phi_0| +  |\partial^2_\theta \phi_0| \leq C,
\end{equation}
where the constant $C$ may only depend on $\phi_1$ but not on $\delta$ or $r_{0}$. By the ansatz \eqref{ansatz on phi_0} and by looking at the powers in $\delta$ and $r_{0}$, one can ignore all the terms controlled by $\dfrac{\delta^{\frac{3}{2}}}{r_{0}^{3}}$. Therefore, to show $|\Lb^2 \phi| \lesssim \dfrac{\delta^\frac{3}{2}}{r_{0}^{3}}$, it suffices to consider
\begin{equation*}
\begin{split}
& \left(2-3G''(0)c^3\phi_1\,\partial_s \phi_0 \,\frac{\delta}{r_{0}^{2}}\right)\left(\frac{\delta^{-\frac{1}{2}}}{r_{0}}c^2 \partial_s^2 \phi_0 \right) +  \frac{2c^2}{r_{0}^{2}}\partial_s\phi_0 \, \delta^{\frac{1}{2}}    -3c^{4}G''(0)\phi_{1}\partial_{s}\phi_{1} \, \partial_s \phi_0 \cdot \frac{\delta^\frac{1}{2}}{r_{0}^{3}}-2c \partial_s\phi_1 \frac{\delta^{-\frac{1}{2}}}{r_{0}} \\
=& O\left(\frac{\delta^\frac{3}{2}}{r_{0}^{3}}\right),
\end{split}
\end{equation*}
or equivalently,

\begin{align*}
\begin{split}
 \left(2-3G''(0)c^3\phi_1\,\partial_s \phi_0 \,\frac{\delta}{r_{0}^{2}}\right)\left(c^2 \partial_s^2 \phi_0 \right) +  \frac{2c^2}{r_{0}}\partial_s\phi_0 \, \delta -3c^{4}G''(0)\phi_{1}\partial_{s}\phi_{1} \, \partial_s \phi_0 \cdot \frac{\delta}{r_{0}^{2}}-2c \partial_s\phi_1=O\left(\frac{\delta^2}{r_{0}^{2}}\right).
\end{split}
\end{align*}
Since $\left(2-3G''(0)c^3\phi_1\,\partial_s \phi_0 \,\frac{\delta}{r_{0}^{2}}\right)^{-1}=\dfrac{1}{2}+\dfrac{3}{4}G''(0)\phi_{1}\partial_{s}\phi_{0}\dfrac{\delta}{r_{0}^{2}}+O\left(\dfrac{\delta^{2}}{r_{0}^{4}}\right)$, by multiplying both sides of the above identity by $\left(2-3G''(0)c^3\phi_1\,\partial_s \phi_0 \,\dfrac{\delta}{r_{0}^{2}}\right)^{-1}$ and using the fact $c\sim 1$, it suffices to consider 

\begin{align*}
\partial_{s}^{2}\phi_{0}+\left(\frac{\delta}{r_{0}}+\frac{3}{2c}G''(0)\phi_{1}\frac{\delta}{r_{0}}(1+c^{2})\right)\partial_{s}\phi_{0}=\frac{1}{c}\partial_{s}\phi_{1}+O\left(\frac{\delta^{2}}{r_{0}^{4}}\right)
\end{align*}
To solve for $\phi_{0}$, for $s\in[0,1)$, we consider the following family (parametrized by a compact set of parameters $\theta\in\mathbb{S}^{2}$ and the parameters $\delta$ and $\dfrac{1}{r_{0}}$) of linear ordinary differential equations

\begin{align}\label{ODE phi0 1}
\begin{split}
&\partial_{s}^{2}\phi_{0}+\left(\frac{\delta}{r_{0}}+\frac{3}{2c}G''(0)\phi_{1}\frac{\delta}{r_{0}}(1+c^{2})\right)\partial_{s}\phi_{0}-c^{-1}\partial_{s}\phi_{1}=\frac{\delta^{2}}{r_{0}^{4}}\phi_{2},\\
&\phi_{0}(r_{0},\theta)=\partial_{s}\phi_{0}(r_{0},\theta)=0.
\end{split}
\end{align}
Since the $C^k$-norms of the solution depends smoothly on the coefficients and the parameters $\theta, \delta$ and $\frac{1}{r_{0}}$, all $C^k$-norms of $\phi_0$ are of order $O(1)$ and indeed are determined by the solution of 
\begin{equation}\label{ODE phi0 2}
\begin{split}
&\ \ \  \partial_s^2 \phi_0  -c^{-1}\partial_{s}\phi_1  = 0, \\
&\   \phi_0(r_{0},\theta)=0, \ \partial_s\phi_0(r_{0},\theta) = 0.
\end{split}
\end{equation}
In particular, this shows that the ansatz \eqref{ansatz on phi_0} holds if we choose $C$ appropriately large in \eqref{ansatz on phi_0} and $\delta$ sufficiently small. Note that although the ODE \eqref{ODE phi0 1} depends on $\delta$ and $r_{0}^{-1}$ explicitly, if a priorily we choose $\delta$ and $r_{0}^{-1}$ to be less than $\frac{1}{2}$, the $C^{k}$-norms of $\phi_{0}$ in \eqref{ODE phi0 1} depends on $\phi_{1}$ through \eqref{ODE phi0 2} and some other absolute constants, therefore indepedent of the final choice of $\delta$ and $r_{0}^{-1}$. So the above construction shows that 
$$|\Lb^2 \phi| \lesssim \frac{\delta^\frac{3}{2}}{r_{0}^{3}}.$$
\smallskip
We claim that, by the above choice of initial data, on $\Sigma_{-r_{0}}^\delta$, we automatically have
$$|\Lb \phi| \lesssim \frac{\delta^\frac{3}{2}}{r_{0}^{2}}.$$
\smallskip
Indeed, by replacing $\partial_t = \Lb +c\partial_r$ in the main equation, we obtain
\begin{equation*}
 \partial_r \Lb\phi = \frac{1}{2c}\Big(-\Lb^2\phi-Lc \, \partial_r\phi+\frac{2c^2}{r}\partial_r\phi+\frac{c^2}{r^{2}}\laplacianslash_{\mathbb{S}^{2}} \phi\Big).
\end{equation*}
By the construction of the data, it is obvious that all the terms on the right hand side are of size $O\left(\frac{\delta^\frac{1}{2}}{r_{0}^{2}}\right)$. By integrating from $r_{0}$ to $r$ with $r\in [r_{0},r_{0}+\delta)$ and $\Lb \phi(r_{0},\theta)=0$, we have 
$$|\Lb\phi(r,\theta)|\leq \delta \cdot O\left(\frac{\delta^\frac{1}{2}}{r_{0}^{2}}\right)\lesssim \frac{\delta^\frac{3}{2}}{r_{0}^{2}}.$$
\end{proof}

\begin{definition}
 The Cauchy initial data of $(\star)$ constructed in the lemma (satisfying the two properties) are called no-outgoing-radiation short pulse data.
\end{definition}

The main theorem of the paper is as follows:
\begin{MainTheorem}
For a given constant $G''(0)\neq 0$, we consider
\begin{equation*}
 - \big(1+3G^{\prime\prime}(0) (\partial_t\phi)^2\big)\partial^2_t \phi +\Delta\phi=0.
\end{equation*}
Let $(\phi_1,\phi_2)$ be a pair of seed data and the initial data for the equation is taken to be the no-outgoing-radiation initial data.

If the following condition on $\phi_{1}$ holds for at least one $(s,\theta) \in (0,1] \times \mathbb{S}^2$:
\begin{equation}\label{shock condition}
G''(0)\cdot\partial_s \phi_1(s,\theta) \cdot \phi_1(s,\theta) \leq -\frac{1}{3},
\end{equation}
then there exists a constant $\delta_0$ which depends only on the seed data $(\phi_1,\phi_2)$, so that for all $\delta< \delta_0$, shocks form for the corresponding solution $\phi$ before $t=-1$, i.e. $\phi$ will no longer be smooth.
\end{MainTheorem}

\begin{remark}
The choice of $\phi_1$ in the proof of Lemma \ref{lemma constraint} is arbitrary. In particular, this is \textbf{consistent} with the condition \eqref{shock condition} since $\phi_1$ can be freely prescribed.
\end{remark}

\begin{remark}

\begin{enumerate}
\item We do \emph{not} assume spherical symmetry on the initial data. Therefore, the theorem is in nature a higher dimensional result. 

\item The proof can be applied to a large family of equations derived through action principle. We will discuss this point when we consider the Lagrangian formulation of \eqref{Main Equation}.

\item The condition \eqref{shock condition} is \emph{only} needed to create shocks. It is not necessary at all for the a priori energy estimates.
\end{enumerate}
\end{remark}

\begin{remark}
 The smoothness of $\phi$ breaks down in the following sense:
\begin{itemize}
 \item[1)]\ The solution and its first derivative, i.e. $\phi$ and $\partial \phi$, are always bounded. Moreover, $|\partial_t\phi| \lesssim \delta^{\frac{1}{2}}|t|^{-1}$, therefore \eqref{Main Equation} is always of wave type.

 \item[2)]\ The second derivative of the solution blows up. In fact, when one approaches the shocks, $|\partial_t \partial_r \phi|$ blows up.
\end{itemize}
\end{remark}
\subsection{A brief discussion on physical motivation}
To actually create electromagnetic shocks in laboratory, one would have to focus sufficiently strong electromagnetic wave pulses into a suitable nonlinear medium. As a preliminary step for the model equation \eqref{Main Equation}, in \cite{M-Y} we identified a class of large initial data, which can be thought as a strongly focused wave pulse, on the initial hypersurface $\{(t,x)|t=-2\}$, and proved shock formation before the time slice $\{(t,x)|t=-1\}$. This means that pulse has to be strong enough such that shock can form within a time period approximately $1$. However, in reality due to the limitation of lasers, one can only focus weak wave pulse in experiments, namely, the initial electric field has to be small. For our model equation, $\partial_{t}\phi$ has to be small initially. Mathematically, this can be achieved by choosing $\delta$ sufficiently small. However, physically, if $\delta$, which measures the width the initial pulse, is too small, then wave pulse would have high frequency and the dispersion effect would dominate. In this case, the system \eqref{Maxwell 1}-\eqref{Maxwell 2} would be no longer accurate to model the physical situation. (See \cite{LL}.) Therefore the question is whether we can make the initial pulse ($\partial_{t}\phi|_{t=-r_{0}}$) small without increasing its frequency, which is measured by $\delta^{-1}$. Thanks to the profile \eqref{data profile} of the short pulse data, by choosing $r_{0}$ sufficiently large, this problem is addressed.
\subsection{Historical works}
The study of singularity formation for quasilinear wave equations dates back to the work \cite{John1}, in which he obtained upper bounds for the lifespan of the rotationally symmetric solutions to the equation $-\partial^{2}_{t}\phi+\Delta\phi=\partial_{t}\phi\partial^{2}_{t}\phi$ (see also the survey article \cite{John2} and references therein). Later in \cite{Alin1} and \cite{Alin2}, Alinhac removed the symmetric assumption and showed the solution blows up in $2d$. Moreover, he gave a precise description of the solution near the blow-up point. \vspace{2mm}

A major breakthrough in understanding the shock formations in higher dimensional space is made by Christodoulou in his monograph \cite{Ch1}. He considers the relativistic $3d$ Euler equations for a perfect irrotational fluid with an arbitrary equation of state. Given the initial data being a small perturbation from the constant state, he obtained a complete picture of shock formation. A similar result for classical Euler equations is obtained in \cite{Ch-M}. The approaches are based on differential geometric methods originally introduced by Christodoulou and Klainerman in their monumental proof \cite{Ch-K} of the nonlinear stability of the Minkowski spacetime in general relativity. More recently, based on similar ideas, Holzegel, Klainerman, Speck and Wong have obtained remarkable results in understanding the stable mechanism for shock formations for certain types of quasilinear wave equations with small data in three dimensions, see their overview paper \cite{HKSW} and Speck's detailed proof \cite{Sp}. In \cite{Ch1} and \cite{Ch-M} the authors obtained sharp lower and upper bounds for the lifespan of smooth solutions associated to the given data \emph{without} any symmetry conditions. Prior to \cite{Ch1} \cite{Ch-M}, most of works on shock waves in fluid are limited to the simplified case of with some symmetric assumptions, i.e. essentially the one space dimension case. As an example, we note the work \cite{Alin3} by Alihnac in which the singularity formation for the compressible Euler equations on $\mathbb{R}^2$ with rotational symmetry is studied. Let us also note a recent work \cite{Ch-P} in which a complete description of shock formation for genuinely nonlinear one-dimensional hyperbolic system is given. \cite{Ch-P} generalizes an influential result \cite{John3} which states that no genuinely nonlinear strictly hyperbolic quasilinear first order system in one dimensional space has a global smooth solution for small enough initial data.\vspace{2mm}

All the aforementioned  works have the common feature that the initial data are small. However, as we have explained at the beginning of the paper, we need to use a special family of large data--the so called \emph{short pulse data}--to create shock for \eqref{Main Equation}. The short pulse data was first introduced by Christodoulou in a milestone work \cite{Ch2} in understanding the formation of black holes in general relativity. By identifying an open set of initial data without any symmetry assumptions (the short pulse ansatz), he showed that a trapped surface can form, even in vacuum spacetime, from completely dispersed initial configurations and by means of the focusing effect of gravitational waves. Although the data are no longer close to Minkowski data, in other words, the data are no longer small, he is still able to prove a long time existence result for these data. This establishes the first result on the long time dynamics in general relativity and paves the way for many new developments on dynamical problems related to black holes. Shortly after Christodoulou's work, Klainerman and Rodnianski extends and significantly simplifies Christodoulou's work, see \cite{K-R}. From a pure PDE perspective, the data appeared in the above works are carefully chosen large profiles which can be preserved by the Einstein equations along the evolution. 
Later in \cite{M-Y} the short pulse data was applied to prove shock formation for \eqref{Main Equation}. There based on an energy estimate, we showed that the hierarchy with respect to the small parameter $\delta$ of the short pulse data is propagated until the shock formation. Since in \cite{M-Y} the energy estimate is proved for a finite time interval $t\in[-2,-1]$, one only needs to track the behavior of solution with respect to $\delta$. However, in the present work, since the time behavior is crucial of interest, one needs to propagate both the $\delta$ and $t$ hierarchy in the energy estimate. 
Let us conclude this subsection by noting that recently, Speck, Holzegel, Luk and Wong studied shock formation for a class of $2d$ quasilinear wave equation with initial data being a small perturbation of a plane symmetric solution (see \cite{SHLW}, and also its recent generalization \cite{LS} to compressible Euler equation with non-zero vorticity.). Since the size of plane symmetric solution could be large, the derivative of the solution along one certain direction is allowed to be large. On the other hand, due to the smallness of the perturbation, the derivatives along other directions are small, which allows the authors to close the energy estimate. Since plane waves do not disperse, there are no dispersive estimates in \cite{SHLW}.

\subsection{New features of the proof}
Since the equation \eqref{Main Equation} is invariant under the space and time translations, we consider its linearized equation which is a linear wave equation with respect to a Lorentzian metric defined by the solution. Like in \cite{M-Y}, the proof of the energy estimates is based on the study of geometry for the incoming null hypersurfaces with respect to this Lorentizain metric. The shock formation is equivalent to the collapse of the foliation by these null hypersurfaces. In the energy estimates, we need to use descent scheme to eliminate the singularity raised by shock formation. (See more details in the introduction of \cite{M-Y}.) Here we will discuss some new features related to the dispersive estimates but not appearing in \cite{M-Y}.

\begin{enumerate}

\item A modified multiplier vectorfield. In \cite{M-Y} since we are interested in the behavior of solution 
in the time interval $t\in[-2,-1]$, no dispersive estimate is needed. Therefore to prove the energy estimate, 
it suffice to prove that the $L^{2}$-norm $\|(\partial\phi)(t,\cdot)\|_{L^{2}(\mathbb{R}^{3})}$ is bounded by initial data. 
To prove such an energy estimate we only need to use the standard multiplier $\partial_{t}$. (In reality, we use the analog of null vectorfields $\Lb:=\partial_{t}-\partial_{r}, L:=\partial_{t}+\partial_{r}$ in Minkowski spacetime.)  While in the present work, since we solve the equation from $t=-r_{0}$, where $r_{0}$ can be arbitrarily large, to $t=-1$, pointwise decay estimate is needed. To this end, we prove that the weighted $L^{2}$-norm $\|t(\partial\phi)(t,\cdot)\|_{L^{2}(\mathbb{R}^{3})}$ is bounded by initial data. For wave equations in Minkowski spacetime, the standard way to obtain such estimates is to use the conformal Killing vectorfield $K_{0}:=\ub^{2}L+u^{2}\Lb$ as the multiplier in proving energy estimate. Here $u:=\dfrac{1}{2}(t-r), \ub:=\dfrac{1}{2}(t+r)$. In this paper we also choose such a vectorfield as the multiplier and of course the functions $u, \ub$ and the vectorfields $L, \Lb$ are associated to the Lorentzian metric defined by the solution. In reality, since we are interested in the solution defined in a region corresponding to a small range of $\ub$, instead of $\ub^{2}L+u^{2}\Lb$, we use an analogy of $u^{2}\Lb$ as the multiplier. (See Remark \ref{remark: K1} for more discussion)\vspace{2mm}

\item The scattering data. In the proof we will show that all the constants appearing in the estimates do not depend on $r_{0}$. In other words, as $r_{0}\rightarrow\infty$, 
the energy estimates still hold if the initial energies are finite as $r_{0}\rightarrow\infty$. We will prove that the limits of energies as $r_{0}\rightarrow\infty$ indeed exist. 
(See Proposition \ref{prop: scattering}) The \emph{scattering data} is in this sense. The energy estimates imply the semi-global existence of the smooth solution from $t=-\infty$ 
all the way up to shock formation. On the other hand, as we have stated, since $r_{0}$ can be made as large as we wish, the initial intensity of the pulse, $
\left|\partial_{t}\phi\cdot\partial_{r}\partial_{t}\phi\right|(-r_{0}, \theta)$, can be made as small as possible without shrinking the size of $\delta$, which fits the reality.

 \item Resolve of a logarithmic divergence in estimating geometric quantities. To obtain the $L^{2}$-estimates for higher order derivatives, one needs to commute 
 certain vectorfields with the linearized equation. Since the linearized wave equation is with respect to the Lorentzian metric defined by solution, 
 the commutator vectorfields are usually no longer Killing or conformal Killing. So we need to estimate the nonlinear contributions from the deformation tensors of commutators. 
 In particular, the highest order derivatives of these deformation tensors are the most difficult ones to estimate. Not only that we need to modify the propagation equations 
 satisfied by them to avoid loss of regularity (This modification also appears in \cite{Ch1}, \cite{Ch-M}, \cite{HKSW}, \cite{Sp}, \cite{M-Y} and \cite{SHLW}.), 
 but also the top order derivatives of the deformation tensors result in a logarithmic divergence:
 
 \begin{align}\label{log divergence}
E(t)\lesssim E(-r_{0})+\int_{-r_{0}}^{t}(-t')^{-2}\int_{-r_{0}}^{t'}(-t'')^{-1}E(t'')dt''dt'
 \end{align}
 Here $E$ is the $L^{2}$-norm of highest order derivatives for $\phi$.  Since $-t\leq -t'\leq -t''\leq r_{0}$, the estimate for $E(t)$ would grow in $r_{0}$. In fact, let us denote by $X(t)$ the $L^{2}$-norm of the top order derivative of the defomration tensor. Since the contribution of $X(-r_{0})$ is lower order, we omit it in this rough outline. Systematically, we have
 
 \begin{align}\label{log div 1}
  X'(t)\leq (-t)^{-1}E(t)\quad \Rightarrow\quad X(t)\leq X(-r_{0})+\int_{-r_{0}}^{t}(-t')^{-1}E(t')dt'.
 \end{align}
On the other hand, $E(t)$ depends on $X(t)$ (roughly) through the following inequality:

\begin{align}\label{log div 2}
 E(t)\leq E(-r_{0})+\int_{-r_{0}}^{t}(-t')^{-2}X(t')dt'.
\end{align}
Combining \eqref{log div 1} and \eqref{log div 2}, we obtain \eqref{log divergence}. To avoid this logarithmic divergence, we use a modified energy $\widetilde{X}(t)$ for the top order derivative of deformation tensor, such that $\widetilde{X}(t)\sim(-t)^{-1}X(t)$ and $\widetilde{X}'(t)\sim(-t)^{-1}X'(t)$. This modification can be achieved by modifying a commutator vectorfield. Therefore the second inequality in \eqref{log div 1} becomes

\begin{align}\label{log div 3}
 \widetilde{X}(t)\leq \widetilde{X}(-r_{0})+\int_{-r_{0}}^{t}(-t')^{-2}E(t')dt',
\end{align}
and the inequality in \eqref{log div 2} becomes

\begin{align}\label{log div 4}
  E(t)\leq E(-r_{0})+\int_{-r_{0}}^{t}(-t')^{-1}\widetilde{X}(t')dt'.
\end{align}
Combing \eqref{log div 3} and \eqref{log div 4}, we obtain

\begin{align}\label{log resolved}
 E(t)\lesssim E(-r_{0})+\int_{-r_{0}}^{t}(-t')^{-1}\int_{-r_{0}}^{t'}(-t'')^{-2}E(t'')dt''dt',
\end{align}
and the estimate for $E(t)$ will no longer depend on $r_{0}$. The rigorous and detailed derivation of this argument is given in Section 7.
\end{enumerate}

\section{The optical geometry}
The construction of optical geometry is similar to that in \cite{M-Y}. To let the present paper be self-contained, here we repeat some necessary discussions in \cite{M-Y}.
\subsection{The optical metric and linearized equation}
We observe that main equation \eqref{Main Equation} is invariant under space translations, rotations and the time translation.
This can also be seen from the invariance of the Lagrangian $L(\phi)$ under these symmetries. We use $A$ to denote any possible choice from $\{\partial_t, \partial_i, \Omega_{ij}= x^i \partial_j-x^j \partial_i\}$ where $i,j=1,2,3$ and $i<j$. These vectorfields correspond to the infinitesimal generators of the symmetries of \eqref{Main Equation}.

To linearize \eqref{Main Equation}, we apply the symmetry generated by $A$ to a solution $\phi$ of \eqref{Main Equation} to obtain a family of solutions $\{\phi_{\tau}:\tau\in \mathbb{R}\big| \phi_0 = \phi\}$. Therefore, $ -\frac{1}{c(\phi_\tau)^2} \partial^2_t \phi_\tau +\Delta\phi_\tau=0$ for $\tau \in \mathbb{R}$. We then differentiate in $\tau$ and evaluate at $\tau=0$.  We define the so called \emph{variations} $\psi$ as
\begin{equation}\label{definition for variation}
 \psi:= A\phi = \frac{d\phi_{\tau}}{d\tau}|_{\tau=0}
\end{equation}
By regarding $\phi$ as a fixed function, this procedure produces a linear equation for $\psi$, which is \emph{the linearized equation of \eqref{Main Equation} for the solution $\phi$ with respect to the symmetry $A$}.

In the tangent space at each point in $\mathbb{R}^{3+1}$ where the solution $\phi$ is defined, we introduce the following Lorentzian metric $g_{\mu\nu}$
\begin{equation}\label{optical metric}
g = - c^2 dt \otimes d t +d x^1 \otimes d x^1 + d x^2 \otimes d x^2 + d x^3 \otimes d x^3,
\end{equation}
with $(t,x^{1},x^{2},x^{3})$ being the standard rectangular coordinates in Minkowski spacetime. Since $c$ depends on the solution $\phi$, so does $g_{\mu\nu}$. We also introduce a conformal metric $\gt_{\mu\nu}$ with the conformal factor $\Omega=\dfrac{1}{c}$
\begin{align}\label{conformal metric}
 \gt_{\mu\nu}=\Omega \cdot g_{\mu\nu} = \frac{1}{c} g_{\mu\nu}.
\end{align}
We refer $g_{\mu\nu}$ and $\gt_{\mu\nu}$ as the \textit{optical metric} and the \textit{conformal optical metric} respectively.

A direct computation shows
\begin{lemma}
The linearized equation of \eqref{Main Equation} for a solution $\phi$ with respect to $A$ can be written as
\begin{equation}\label{linearization conformal}
 \Box_{\tilde{g}}\psi=0,
\end{equation}
where $\Box_{\gt}$ is the wave operator with respect to $\gt$ and $\psi = A \phi$.
\end{lemma}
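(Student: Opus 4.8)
The plan is to compute both sides of \eqref{linearization conformal} explicitly in the rectangular coordinates $(t,x^1,x^2,x^3)$ and check that they coincide. First I would carry out the linearization by hand. Writing $(\star)$ as $-c^{-2}\partial_t^2\phi+\Delta\phi=0$ with $c^{-2}=1+3G''(0)(\partial_t\phi)^2$, I apply this to the family $\phi_\tau$ (each a solution, since $A$ generates a symmetry), differentiate in $\tau$, and evaluate at $\tau=0$; using $\frac{d}{d\tau}\big(c(\phi_\tau)^{-2}\big)\big|_{\tau=0}=6G''(0)\,\partial_t\phi\,\partial_t\psi$ this gives that $\psi=A\phi$ satisfies
\begin{equation*}
-c^{-2}\partial_t^2\psi-6G''(0)\,\partial_t\phi\,\partial_t^2\phi\,\partial_t\psi+\Delta\psi=0.
\end{equation*}
The key observation is that $\partial_t(c^{-2})=6G''(0)\,\partial_t\phi\,\partial_t^2\phi$, so the first two terms assemble into a perfect $t$-derivative and the linearized equation is \emph{exactly} the divergence-form equation
\begin{equation*}
-\partial_t\big(c^{-2}\partial_t\psi\big)+\Delta\psi=0.
\end{equation*}

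Second, I would evaluate $\Box_{\gt}\psi=\frac{1}{\sqrt{-\det\gt}}\,\partial_\mu\big(\sqrt{-\det\gt}\,\gt^{\mu\nu}\partial_\nu\psi\big)$. From $\gt_{\mu\nu}=c^{-1}g_{\mu\nu}$ one has $\gt_{00}=-c$, $\gt_{ij}=c^{-1}\delta_{ij}$, hence $\gt^{00}=-c^{-1}$, $\gt^{ij}=c\,\delta^{ij}$, and $\det\gt=-c^{-2}$, so $\sqrt{-\det\gt}=c^{-1}$. The conformal weights then arrange so that $\sqrt{-\det\gt}\,\gt^{00}=-c^{-2}$ and $\sqrt{-\det\gt}\,\gt^{ij}=\delta^{ij}$, whence
\begin{equation*}
\Box_{\gt}\psi=c\,\Big(-\partial_t\big(c^{-2}\partial_t\psi\big)+\Delta\psi\Big).
\end{equation*}
Since $c>0$, comparing with the divergence form of the linearized equation shows $\Box_{\gt}\psi=0$, which is the claim.

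The work is essentially bookkeeping of signs and powers of $c$; the one conceptual point worth isolating is \emph{why} the conformal factor is $\Omega=1/c$. The metric wave operator $\Box_g$ itself does not reproduce the linearized equation, because the $c$-dependent coefficients in its divergence form produce unwanted first-order terms proportional to $\partial c$. Passing to $\gt=\Omega g$ and demanding that $\sqrt{-\det\gt}\,\gt^{00}$ equal precisely $-c^{-2}$ --- the coefficient of $\partial_t^2$ in $(\star)$ --- forces $\Omega^{(n-2)/2}=c^{-1}$ in $n$ spacetime dimensions, i.e.\ $\Omega=c^{-1}$ exactly when $n=4$; this is the only place the ambient dimension enters. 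One could instead invoke the general conformal transformation law for $\Box$, but in four dimensions the direct computation above is shorter and makes the cancellation transparent, and I anticipate no obstacle beyond this.
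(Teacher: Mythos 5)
Your computation is correct and is exactly the "direct computation" the paper invokes without writing out: you linearize $(\star)$ into the divergence form $-\partial_t\big(c^{-2}\partial_t\psi\big)+\Delta\psi=0$ and verify that $\Box_{\gt}$ in rectangular coordinates, with $\Omega=c^{-1}$, reproduces it up to the positive factor $c$. The paper's only other remark is the alternative Lagrangian-field-theory route, which you do not need; your argument stands as is.
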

This lemma can also be proved using a more natural way which is standard in Lagrangian field theory, e.g. see \cite{Ch3}, by showing the linearized Lagrangian density of \eqref{Lagrangian density} is exactly the Lagrangian density for the linear wave equation associated to $\widetilde{g}$.
\subsection{Lorentzian geometry of the maximal development}
\subsubsection{The maximal development}
We define a function $\ub$ on $\Sigma_{-r_{0}}$ as follows:
\begin{align}\label{ub initial}
\ub:=r-r_{0}.
\end{align}
The level sets of $\ub$ in $\Sigma_{-r_{0}}$ are denoted by $S_{-r_{0},\ub}$ and they are round spheres of radii $\ub+r_{0}$.  The annular region $\Sigma_{-r_{0}}^\delta$ defined in \eqref{delta annulus} is foliated by $S_{-r_{0},\ub}$ as
\begin{align}\label{foliation initial}
\Sigma_{-r_{0}}^{\delta}:=\bigcup_{\ub\in[0,\delta]}S_{-r_{0},\ub}.
\end{align}

Given an initial data set $(\phi,\partial_t \phi)\big |_{t=-r_{0}}$ defined on $B_{-r_{0}}^{r_{0}+\delta}:=\bigcup_{\ub\in[-r_{0},\delta]}S_{-r_{0},\ub}$ to the main equation \eqref{Main Equation} (as we stated in the Main Theorem), we recall the notion of \emph{the maximal development} or maximal solution with respect to the given data. 

\smallskip

By virtue of the local existence theorem (to \eqref{Main Equation} with smooth data), one claims the existence of a \emph{development} of the given initial data set, namely, the existence of 
\begin{itemize}
\item a domain $\mathcal{D}$ in Minkowski spacetime, whose past boundary is $B_{-r_{0}}^{r_{0}+\delta}$;
\item a smooth solution $\phi$ to \eqref{Main Equation} defined on $\mathcal{D}$ with the given data on $B_{-r_{0}}^{r_{0}+\delta}$ with following property: For any point $p\in\mathcal{D}$, if an inextendible curve $\gamma: [0,\tau) \rightarrow \mathcal{D}$ satisfies the property that 

(1) $\gamma(0)=p$,

(2) For any $\tau' \in [0,\tau)$, the tangent vector $\gamma'(\tau')$ is past-pointed and causal (i.e., $g(\gamma'(\tau'),\gamma'(\tau'))\leq 0$) with respect to the optical metric $g_{\alpha\beta}$ at the point $\gamma(\tau')$,

\noindent then the curve $\gamma$ 
must terminate at a point of  $B_{-r_{0}}^{r_{0}+\delta}$.
\end{itemize}

By the standard terminology of Lorentzian geometry, the above simply says that $B_{-r_{0}}^{r_{0}+\delta}$ is a Cauchy hypersurface of $\mathcal{D}$.

\smallskip

The local uniqueness theorem asserts that if $(\mathcal{D}_{1},\phi_{1})$ and $(\mathcal{D}_{2},\phi_{2})$ are two developments of the same initial data sets, then $\phi_{1}=\phi_{2}$ in $\mathcal{D}_{1}\bigcap\mathcal{D}_{2}$. Therefore the union of all developments of a given initial data set is itself a development. This is the so called \emph{maximal development} and its corresponding domain is denoted by $W^*$. The corresponding solution is called the \emph{maximal solution}. Sometimes we also identify the development as its corresponding domain when there is no confusion.
\subsubsection{Geometric set-up}
Given an initial data set, we consider a specific family of incoming null hypersurfaces (with respect to the optical metric $g$) in the maximal development $W^*$. Recall that $\ub$ is defined on $\Sigma_{-r_{0}}$ as $r-r_{0}$. For any $\ub \in [0,\delta]$, we use $\Cb_{\ub}$ to denote the incoming null hypersurface  emanated from the sphere  $S_{-r_{0},\ub}$. By definition, we have $\Cb_{\ub} \subset W^*$ and $\Cb_{\ub}\bigcap\Sigma_{-r_{0}}=S_{-r_{0},\ub}$. 

Let $W_{\delta}$ be the subset of the maximal development of the given initial data foliated by $\Cb_{\ub}$ with $\ub \in [0,\delta]$, i.e.,
\begin{equation}\label{W delta}
W_{\delta}: =\bigcup_{\ub\in[0,\delta]}\Cb_{\ub}.
\end{equation}
Roughly speaking, our main estimates will be carried out only on $W_{\delta}$. The reason is as follows: since we assume that the data set is completely trivial for $\ub\leq 0$ on $\Sigma_{-r_{0}}$, the uniqueness of smooth solutions for quasilinear wave equations implies that the spacetime in the interior of $\Cb_{0}$ 
is indeed determined by the trivial solution. In particular, $\Cb_{0}$ is a flat cone in Minkowski spacetime (with respect to the Minkowski metric).

We extend the function $\ub$ to $W_{\delta}$ by requiring that the hypersurfaces $\Cb_{\ub}$ are precisely the level sets of the function $\ub$.  Since $\Cb_{\ub}$ is null with respect to $g_{\alpha\beta}$, the function $\ub$ is then a solution to the equation
\begin{align}\label{eikonal eq}
(g^{-1})^{\alpha\beta}\partial_{\alpha}\ub\partial_{\beta}\ub=0,
\end{align}
where $(g^{-1})^{\alpha\beta}$ is the inverse of the metric $g_{\alpha\beta}$. 
We call such a function $\ub$ an \emph{optical function}. 

With respect to the affine parameter, the future-directed tangent vectorfield of a null geodesic on $\Cb_{\ub}$ is given by
\begin{align}\label{Lb affine}
\widehat{\underline{L}}:=-(g^{-1})^{\alpha\beta}\partial_{\alpha}\ub\,\partial_{\beta}.
\end{align}
However, for an apparent reason, which will be seen later, instead of using $\widehat{\underline{L}}$, we will work with a renormalized (by the time function $t$) vectorfield $\Lb$ defined through
\begin{align}\label{Lb}
\Lb=\mu\widehat{L},\quad \Lb t=1,
\end{align}
i.e., $\Lb$ is the tangent vectorfield of null geodesics parametrized by $t$. 

The function $\mu$ can be computed as
\begin{align*}
\frac{1}{\mu}=-(g^{-1})^{\alpha\beta}\partial_{\alpha}\ub\partial_{\beta}t.
\end{align*}
We will see later on that the $\mu$ also has a very important geometric meaning: ${\mu}^{-1}$ is the density of the foliation $\bigcup_{\ub\in[0,\delta]}\Cb_{\ub}$.

\medskip

Given $\ub \leq \delta$, to consider the density of null-hypersurface-foliation on $\Sigma_t \bigcap W_\delta$, we define
\begin{align}\label{definition of mu m}
\mu^{\ub}_{m}(t)=\min\left(\inf_{(\ub',\theta)\in[0,\ub]\times \mathbb{S}^{2}}\mu(t,\ub',\theta), 1\right).
\end{align}
For $\ub=\delta$, we  define 
$$s_{*} = \sup \big\{t\big | t\geq -r_{0} \ \text{and} \ \mu_m^\delta(t)>0 \big\}.$$ 
From the PDE perspective, for the given initial data to \eqref{Main Equation} (as constructed in Lemma \ref{lemma constraint}), we also define 

\begin{align*}
t_{*} =  \sup \big\{&\tau \big | \tau \geq -r_{0} \ \text{such that the smooth solution exists for all} \ (t,\ub)\in[-r_{0},\tau)\times[0,\delta] \ \text{and} \ \theta\in\mathbb{S}^{2}\big\}.
\end{align*}

Finally, we define
\begin{align}\label{geodesic upper bound}
s^{*}=\min\{s_{*},-1\},\ \ t^{*}=\min\{t_{*},s^{*}\}.
\end{align}
We remark that we will exhibit data in such a way that the solution breaks down before $t=-1$. This is the reason we take $-1$ in the definition of $s^*$.

\smallskip

In the sequel, we will work in a further confined spacetime domain $W^{*}_{\delta}\subset W_\delta \subset W^*$ to prove a priori energy estimates. By definition, it consists of all the points in $W_{\delta}$ with time coordinate $t\leq t^{*}$, i.e.,
$$W^*_\delta =W_\delta \bigcap\Big( \bigcup_{-r_{0}\leq t \leq t^*} \!\!\!\!\Sigma_t\Big).$$

\smallskip

For the purpose of future use, we introduce more notations to describe various geometric objects.

\smallskip

For each $(t,\ub)\in[-r_{0},t^{*})\times[0,\delta]$, we use $S_{t,\ub}$ to denote the closed two dimensional surface
\begin{align}\label{S t ub}
S_{t,\ub}:=\Sigma_{t}\bigcap\Cb_{\ub}.
\end{align}
In particular, we have
\begin{align}\label{W star}
W^{*}_{\delta}=\bigcup_{(t,\ub)\in[-r_{0},t^{*})\times[0,\delta]}S_{t,\ub}.
\end{align}
For each $(t,\ub) \in [-r_{0},t^*)\times [0,\delta]$, we define
\begin{align}\label{various notations}
\begin{split}
\Sigma_{t}^{\ub}:=&\big\{(t,\ub',\theta) \in \Sigma_t \mid 0\leq\ub' \leq \ub \big\},\\ \Cb_{\ub}^{t}:=&\big\{(\tau,\ub,\theta) \in \Cb_{\ub} \mid  -r_{0}\leq\tau \leq t\big\},\\
 W^{t}_{\ub}:=&\bigcup_{(t',\ub')\in[-r_{0},t)\times [0,\ub]}S_{t',\ub'}.
 \end{split}
\end{align}

In what follows when working in $W^{t}_{\ub}$, we usually omit the superscript $\ub$ to write $\mu^{\ub}_{m}(t)$ as $\mu_{m}(t)$, whenever there is no confusion. 

\smallskip

We define the vectorfield $T$ in $W^{*}_{\delta}$ by the following three conditions:
\begin{itemize}
\item[1)] $T$ is tangential to $\Sigma_{t}$;
\item[2)] $T$ is orthogonal (with respect to $g$) to $S_{t,\ub}$ for each $\ub\in[0,\delta]$;
\item[3)] $T\ub=1$.
\end{itemize}
The letter $T$ stands for ``transversal" since the vectorfield is transversal to the foliation of null hypersurfaces $\Cb_{\ub}$.

In particular, the point $1)$ implies
\begin{align}\label{T t}
Tt=0.
\end{align}
According to \eqref{eikonal eq}-\eqref{Lb}, we have
\begin{align}\label{Lb ub}
\Lb\ub=0,\quad \Lb t=1.
\end{align}
In view of \eqref{Lb}, \eqref{Lb ub}, \eqref{T t} and the fact $T\ub=1$, we see that the commutator 
\begin{align}\label{commutator Lb T}
\Lambda:=[\Lb,T]
\end{align}
is tangential to $S_{t,\ub}$.

In view of \eqref{eikonal eq}-\eqref{Lb} and the fact $T\ub=1$ we have
\begin{align}\label{Lb T}
g(\Lb, T)=-\mu, \quad g(\Lb,\Lb)=0.
\end{align}

Since $T$ is spacelike with respect to $g$ (indeed, $\Sigma_t$ is spacelike and $T$ is tangential to $\Sigma_t$), we denote
\begin{align}\label{T magnitude}
g(T,T)=\kappa^{2},\quad \kappa>0.
\end{align}

\begin{lemma}\label{lemma mu kappa}
As in \cite{M-Y}, we have the following relations for $\Lb$, $T$, $\mu$ and $\kappa$:
\begin{align}\label{mu kappa}
\mu=c\kappa,\ \ \Lb=\partial_{0}-c\kappa^{-1}T,
\end{align}
where $\partial_0$ is the standard time vectorfield in Minkowski spacetime.
\end{lemma}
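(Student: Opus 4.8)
The plan is to exploit the special block structure of the optical metric $g=-c^{2}\,dt\otimes dt+\sum_{i}dx^{i}\otimes dx^{i}$, which makes $\partial_{0}$ $g$-orthogonal to everything tangent to $\Sigma_{t}$. First I would record three orthogonality facts. (i) Since $g(\partial_{0},\partial_{i})=g_{0i}=0$ for $i=1,2,3$, the vectorfield $\partial_{0}$ is $g$-orthogonal to $\Sigma_{t}$; in particular $g(\partial_{0},T)=0$ because $T$ is tangential to $\Sigma_{t}$, and $\partial_{0}$ is $g$-orthogonal to $S_{t,\ub}\subset\Sigma_{t}$. (ii) $T$ is $g$-orthogonal to $S_{t,\ub}$ by its defining condition. (iii) $\Lb$ is $g$-orthogonal to $S_{t,\ub}$: indeed $\Lb=\mu\widehat{\underline{L}}$ with $\widehat{\underline{L}}=-(g^{-1})^{\alpha\beta}\partial_{\alpha}\ub\,\partial_{\beta}$ as in \eqref{Lb affine}, and for any $X$ tangent to the level set $\Cb_{\ub}\supset S_{t,\ub}$ one computes $g(\widehat{\underline{L}},X)=-X\ub=0$.

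Next I would observe that, in the tangent space at any point, the $g$-orthogonal complement $\Pi$ of $T S_{t,\ub}$ is two-dimensional (the restriction of $g$ to $T S_{t,\ub}$ is positive definite, hence nondegenerate), and by (i)--(ii) it is spanned by the linearly independent pair $\{\partial_{0},T\}$, since $g(\partial_{0},\partial_{0})=-c^{2}<0$ while $g(T,T)=\kappa^{2}>0$ by \eqref{T magnitude}. By (iii) we have $\Lb\in\Pi$, so we may write $\Lb=a\,\partial_{0}+b\,T$ for scalar functions $a,b$. Evaluating on $t$ and using $\Lb t=1$ and $\partial_{0}t=1$ together with $Tt=0$ from \eqref{T t} forces $a=1$, hence $\Lb=\partial_{0}+b\,T$.

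It then remains to pin down $b$, which I would do in a way that simultaneously yields $\mu=c\kappa$. From $g(\Lb,\Lb)=0$ (see \eqref{Lb T}) and $g(\partial_{0},T)=0$ we get $-c^{2}+b^{2}\kappa^{2}=0$, so $b=\pm c/\kappa$; from $g(\Lb,T)=-\mu$ (again \eqref{Lb T}) we get $b\,\kappa^{2}=-\mu$, so $b=-\mu/\kappa^{2}$. Comparing the two expressions forces $\mu=\mp c\kappa$, and positivity of $\mu,c,\kappa$ selects $b=-c/\kappa$ together with $\mu=c\kappa$. This gives $\Lb=\partial_{0}-c\kappa^{-1}T$, as claimed.

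I do not anticipate a genuine obstacle: the entire argument is linear algebra in the fibers using only the definitions and the already-established relations \eqref{Lb ub}, \eqref{T t}, \eqref{Lb T}, \eqref{T magnitude}. The one point requiring a little care is the choice of sign for $b$, which is settled by the convention that $\mu>0$ on the region under consideration, or equivalently by checking consistency with the flat initial configuration on $\Sigma_{-r_{0}}$, where $c=\kappa=\mu=1$, $T=\partial_{r}$, and $\Lb=\partial_{t}-\partial_{r}$.
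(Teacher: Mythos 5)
Your proposal is correct, and it is essentially the direct computation that the paper omits and defers to \cite{M-Y}: decompose $\Lb$ in the $g$-orthogonal complement of $T_pS_{t,\ub}$ spanned by $\{\partial_0,T\}$, fix the $\partial_0$-coefficient by $\Lb t=1$, $Tt=0$, and then use $g(\Lb,\Lb)=0$, $g(\Lb,T)=-\mu$, $g(T,T)=\kappa^2$ with positivity of $\mu,c,\kappa$ to get $\mu=c\kappa$ and $\Lb=\partial_0-c\kappa^{-1}T$, in agreement with the identity $\partial_0=\Lb+c\widehat{T}$ used later in the paper. The only minor slip is the parenthetical claim that $c=\kappa=\mu=1$ exactly on $\Sigma_{-r_0}$ (they only equal $c\approx 1$ there), but this remark is not needed since the sign is already fixed by positivity.
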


\begin{proof}
The proof is by direct computations, which can be found in \cite{M-Y}. We omit it here.
\end{proof}

\begin{remark}
On the initial Cauchy surface $\Sigma_{-r_{0}}$, since $\ub=r-r_{0}$, we have $T=\partial_r$ and $\kappa=1$. Therefore, by using the standard rectangular coordinates, we obtain that 
$$\Lb = \partial_t -c\partial_r.$$
This is coherent with the notations and computations in  Lemma \ref{lemma constraint}.
\end{remark}
\subsubsection{The optical coordinates}

We construct a new coordinate system on $W_{\delta}^*$. If shocks form, the new coordinate system is completely different from the standard rectangular coordinates. Indeed, we will show that they define two differentiable structures on $W_\delta^*$ when shocks form.

\smallskip

Given $\ub\in[0,\delta]$, the generators of $\Cb_{\ub}$ define a diffeomorphism between $S_{-r_{0},\ub}$ and $S_{t,\ub}$ for each $t\in[-r_{0},t^{*})$. Since $S_{-r_{0},\ub}$ is diffeomorphic to the standard sphere $\mathbb{S}^{2}\subset\mathbb{R}^{3}$ in a natural way. We obtain a natural diffeomorphism between $S_{t,\ub}$ and $\mathbb{S}^{2}$. If local coordinates $(\theta^{1},\theta^{2})$ are chosen on $\mathbb{S}^{2}$,  then the diffeomorphism induces local coordinates on $S_{t,\ub}$ for every $(t,\ub)\in[-r_{0},t^{*})\times[0,\delta]$. The local coordinates $(\theta^{1},\theta^{2})$, together with the functions $(t,\ub)$ define a complete system of local coordinates $(t,\ub,\theta^{1},\theta^{2})$ for $W^{*}_{\delta}$. This new coordinates are defined as \emph{the optical coordinates}. 

\smallskip
We now express for $\Lb$, $T$ and the optical metric $g$ in the optical coordinates. 

\smallskip
First of all, the integral curves of $\Lb$ are the lines with constant $\ub$ and $\theta$. Since $\Lb t=1$, therefore in optical coordinates we have
\begin{align}\label{Lb op}
\Lb=\frac{\partial}{\partial t}.
\end{align}
Similarly, since $T\ub=1$ and $T$ is tangential to $\Sigma_{t}$, we have
\begin{align}\label{T op}
T=\frac{\partial}{\partial\ub}-\Xi
\end{align}
with $\Xi$ a vectorfield tangential to $S_{t,\ub}$. Locally, we can express $\Xi$ as
\begin{align}\label{Xi}
\Xi=\sum_{A=1,2}\Xi^{A}\frac{\partial}{\partial\theta^{A}},
\end{align}
The metric $g$ then can be written in the optical coordinates $(t,\ub,\theta^{1},\theta^{2})$ as
\begin{align}\label{metric op}
g=-2\mu dtd\ub+\kappa^{2}d\ub^{2}+\slashed{g}_{AB}\left(d\theta^{A}+\Xi^{A}
d\ub\right)\left(d\theta^{B}+\Xi^{B}d\ub\right)
\end{align}
with
\begin{align}\label{gslash}
\slashed{g}_{AB}=g\left(\frac{\partial}{\partial\theta^{A}},\frac{\partial}{\partial\theta^{B}}\right), \ 1\leq A,B\leq 2.
\end{align}

\medskip

To study the differentiable structure defined by the optical coordinates, we study
the Jacobian $\triangle$ of the transformation from the optical coordinates $(t,\ub,\theta^{1},\theta^{2})$ to the rectangular coordinates $(x^0,x^{1},x^{2},x^{3})$. 

First of all, since $x^0=t$, we have 
\begin{align*}
\frac{\partial x^{0}}{\partial t}=&1,\quad \frac{\partial x^{0}}{\partial\ub}=\frac{\partial x^{0}}{\partial\theta^{A}}=0.
\end{align*}
Secondly, by \eqref{T op}, we can express $T =  T^i \partial_{i}$ in the rectangular coordinates $(x^1,x^2,x^3)$ as
\begin{align*}
T^{i}=\frac{\partial x^{i}}{\partial\ub}-\sum_{A=1,2}\Xi^{A}\frac{\partial x^{i}}{\partial\theta^{A}}
\end{align*}
In view of the fact that $T$ is orthogonal to $\frac{\partial}{\partial\theta^{A}}$ with respect to the Euclidean metric (which is the induced metric of $g$ on $\Sigma_{t}$!), we have

\begin{align*}
\triangle=\det\left(
\begin{array}{ccc}
T^{1}&T^{2}&T^{3}\\
\dfrac{\partial x^{1}}{\partial\theta^{1}}&\dfrac{\partial x^{2}}{\partial\theta^{1}}&\dfrac{\partial x^{3}}{\partial\theta^{1}}\\
\dfrac{\partial x^{1}}{\partial\theta^{2}}&\dfrac{\partial x^{2}}{\partial\theta^{2}}&\dfrac{\partial x^{3}}{\partial\theta^{2}}
\end{array}
\right)=\|T\|\left\|\dfrac{\partial}{\partial\theta^{1}}\wedge\dfrac{\partial}{\partial\theta^{2}}\right\|=c^{-1}\mu\sqrt{\det\slashed{g}},
\end{align*}
where $\|\cdot\|$ measures the magnitude of a vectorfield with respect to the Euclidean metric in $\mathbb{R}^{3}$ (defined by the rectangular coordinates $(x^1,x^2,x^3)$). 

\smallskip

We end the discussion by an important remark. 

\begin{remark}[{\bf Geometric meaning of $\mu$}]\label{geometric meaning of mu}
In the sequel, we will show that the wave speed function $c$ will be always approximately equal to $1$ in $W_\delta^*$. Since $\mu = c\kappa$, we may think of $\mu$ being $\kappa$ in a efficient way.

On the other hand, by the definition of $T$, in particular $T\ub=1$,  we know that $\kappa^{-1}$ is indeed the density of the foliation by the $\Cb_{\ub}$'s. This is because $g(T,T)=\kappa^2$. Since the optical metric coincides with the Euclidean metric on each constant time slice $\Sigma_t$, by $\mu \sim \kappa$, we arrive at the following conclusion:
\begin{itemize}
\item  $\mu^{-1}$ measures the foliation of the incoming null hypersurfaces $\Cb_{\ub}$'s.
\end{itemize}
Therefore, by regarding shock formation as the collapsing (i.e. the density blows up) of the characteristics ($\simeq$ the incoming null hypersurfaces), we may say that
\begin{itemize}
\item Shock formation is equivalent to $\mu \rightarrow 0$.
\end{itemize}

By virtue of the formula $\triangle=c^{-1}\mu\sqrt{\det\slashed{g}}$, it is clear (the volume element $\sqrt{\det\slashed{g}}$ will be controlled in the sequel) that if shock forms then the coordinate transformation between the optical coordinates and the rectangular coordinates will fail to be a diffeomorphism. Therefore, we can also say that
\begin{itemize}
\item Shock formation is equivalent to the fact that the optical coordinates on the maximal development defines a different differentiable structure (compared to the usual differentiable structure induced from the Minkowski spacetime).
\end{itemize}
\end{remark}
\subsection{Connection, curvature and structure equations}
 The $2^{\text{nd}}$ fundamental forms of $S_{t,\ub}\hookrightarrow \Cb_{\ub}$ and $S_{t,\ub}\hookrightarrow \Sigma_{t}$ are defined as
\begin{align}\label{2nd fundamental forms}
\chib_{AB}:=g(\nabla_{X_{A}}\Lb,X_{B}),\quad\theta_{AB}:=g(\nabla_{X_{A}}\hat{T},X_{B})
\end{align}
respectively. Here $\nabla$ is the Levi-Civita connection of $g$ and $\widehat{T}:=c\mu^{-1} T$. For $\chib$, sometimes we will treat the trace/traceless parts with respect to $\slashed{g}$ separately, which are defined as $\tr\chib:=(\slashed{g}^{-1})^{AB}\chib_{AB}$ and $\widehat{\chib}_{AB}:=\chib_{AB}-\frac{1}{2}\slashed{g}_{AB}\tr\chib$. At each point $p\in W^{*}$, the vectorfields $\{\Lb, T\}$ form a basis of subspace of $T_{p}W^{*}$, which is orthogonal to $T_{p}S_{t,\ub}$. Since by virtue of \eqref{optical metric} the vectorfield $\dfrac{\partial}{\partial x^{0}}$ is orthogonal to $T_{p}S_{t,\ub}$, $\dfrac{\partial}{\partial x^{0}}$ is a linear combination of $\Lb$ and $T$. Actually $\dfrac{\partial}{\partial x^{0}}=\Lb+c\widehat{T}$ and $\chib=-c\theta$.

The torsion one forms $\etab_A$ and $\zetab_A$ are defined by $\zetab_A =g(\nabla_{X_A}\Lb,T)$ and $\etab_A=-g(\nabla_{X_A}T,\Lb)$. They are related to the inverse density $\mu$ by $\etab_A=\zetab_A+X_A(\mu)$ and $\zetab_A=-c^{-1}\mu \, X_A(c)$. 

An outgoing null vectorfield
\begin{align}\label{outgoing null}
L:=c^{-2}\mu\Lb+2T
\end{align}
is introduced so that $g(\Lb,L)=-2\mu$. The corresponding  $2^{\text{nd}}$ fundamental form is $\chi_{AB} = g(\nabla_{X_A} L, X_B)$. Similarly, we define $\tr\chi = \tr_{\gslash}\chi = \gslash^{AB}\chi_{AB}$ and $\widehat{\chi}_{AB} = \chi_{AB}-\dfrac{1}{2}\tr\chi \,\gslash_{AB}$. 

The covariant derivative $\nabla$ is expressed in the frame $(\Lb,T,X_{1},X_{2})$:
\begin{align*}
 \nabla_{\Lb}\Lb&=\mu^{-1}(\Lb\mu)\Lb,\ \ \nabla_{T}\Lb=\etab^{A}X_{A}-{c}^{-1}\Lb(c^{-1}\mu)\Lb,\\
 \nabla_{X_{A}}\Lb&=-\mu^{-1}\zetab_{A}\Lb+\underline{\chi}_{A}{}^{B} X_{B},\ \  \nabla_{\Lb}T=-\zetab^{A}X_{A}-{c}^{-1} \Lb(c^{-1}\mu)\Lb,\\
 \nabla_{T}T &=c^{-3}\mu\big(Tc + \Lb (c^{-1}\mu)\big)\Lb+\Big(c^{-1}\big(Tc+\Lb(c^{-1}\mu)\big)+T\big(\log(c^{-1}\mu)\big)\Big)T-c^{-1}\mu\slashed{g}^{AB}X_{B}(c^{-1}\mu)X_{A},\\
\nabla_{X_{A}}T&=\mu^{-1}\etab_{A}T+c^{-1}\mu\theta_{AB}\slashed{g}^{BC}X_{C},\ \ \nabla_{\Lb}X_{A}=\nabla_{X_{A}}\Lb, \ \ \nabla_{X_{A}}X_{B}=\nablaslash_{X_{A}}X_{B}+\mu^{-1}\underline{\chi}_{AB}T.
\end{align*}
In terms of null frames $(L,\Lb, X_1,X_2)$, we have
\begin{align*}
\nabla_{L}{\Lb}&=-{\Lb}({c}^{-2}\mu){\Lb}+2\underline{\eta}^{A}X_{A}, \ \ \nabla_{{\Lb}}L=-2\underline{\zeta}^{A}X_{A},\\
\nabla_{L}L&=(\mu^{-1}L\mu + {\Lb}({c}^{-2}\mu))L-2\mu X^{A}({c}^{-2}\mu)X_{A}, \ \ \nabla_{X_{A}}L=\mu^{-1}\underline{\eta}_{A}L+\chi_{A}{}^{B}X_{B},\\
\nabla_{X_{A}}X_{B}&=\nablaslash_{X_{A}}X_{B}+\frac{1}{2}\mu^{-1}\underline{\chi}_{AB}L+\frac{1}{2}\mu^{-1}\chi_{AB}{\Lb}.
\end{align*}
Here $\slashed{\nabla}$ is the induced covariant derivative on $S_{t,\ub}$.

In the Cartesian coordinates, the only non-vanishing curvature components are $R_{0i0j}$'s:
\begin{equation*}
R_{0i0j} =\frac{1}{2}\frac{d(c^{2})}{d\rho}\nabla_{i}\nabla_j\rho +\frac{1}{2}\frac{d^{2}(c^{2})}{d\rho^{2}}\nabla_{i}\rho\nabla_{j}\rho
-\frac{1}{4}c^{-2}|\frac{d(c^{2})}{d\rho}|^2\nabla_{i}\rho \nabla_{j}\rho.
\end{equation*}
In the optical coordinates, the only nonzero curvature components are $\alphab_{AB}=R(X_{A},\Lb,X_{B},\Lb)$:
\begin{equation*}
\alphab_{AB}= \frac{1}{2}\frac{d(c^{2})}{d\rho}\slashed{\nabla}^{2}_{X_A,X_B}\rho -\frac{1}{2}\mu^{-1}\frac{d(c^{2})}{d\rho}T(\rho) \chib_{AB}+\frac{1}{2}\big(\frac{d^{2}(c^{2})}{d\rho^{2}}
-\frac{1}{2}c^{-2}\big|\frac{d(c^{2})}{d\rho}\big|^2\big)X_{A}(\rho) X_{B}(\rho).
\end{equation*}
We define $\alphab'_{AB}= \frac{1}{2}\frac{d(c^{2})}{d\rho}\slashed{\nabla}^{2}_{X_A,X_B}\rho +\frac{1}{2}[\frac{d^{2}(c^{2})}{d\rho^{2}}
-\frac{1}{2}c^{-2}|\frac{d(c^{2})}{d\rho}|^2]X_{A}(\rho) X_{B}(\rho)$ such that 
\begin{align}\label{alpha expansion in mu and alpha prime}
\alphab_{AB} =  -\dfrac{1}{2}\mu^{-1}\dfrac{d(c^{2})}{d\rho}T(\rho) \chib_{AB}+ \alphab'_{AB}.
\end{align}

\begin{remark}
As a convention, we say that the first term on the right hand side of \eqref{alpha expansion in mu and alpha prime}  is \emph{singular} in $\mu$ (since $\mu$ may go to zero). The second term $\alphab'_{AB}$ is \emph{regular} in $\mu$.

Indeed, in the course of the proof, we will see that $\alphab'_{AB}$ are bounded and $\alphab_{AB}$ behaves exactly as ${\mu}^{-1}$ in amplitude. Therefore, in addition to two equivalent descriptions of the shock formation in Remark \ref{geometric meaning of mu}, we have another geometric interpretation: 
\begin{itemize}
\item Shock formation is equivalent to the fact that curvature tensor of the optical metric $g$ becomes unbounded.
\end{itemize}
Compared to the one dimensional picture of shock formations in conservation laws, e.g., for inviscid Burgers equation, this new description of shock formation is purely geometric in the following sense: it does not even depend on the choice of characteristic foliation (because the curvature tensor is tensorial!).
\end{remark}

In the frame $(T,\Lb,\frac{\partial}{\partial\theta^{A}})$, we have the following \emph{structure equations} in optical coordinates:
\begin{equation}\label{Structure Equation Lb chibAB}
\Lb(\underline{\chi}_{AB})=\mu^{-1}(\Lb\mu)\underline{\chi}_{AB} +\underline{\chi}_{A}{}^{C}\underline{\chi}_{BC}-\alphab_{AB},
\end{equation}
\begin{equation}\label{Structure Equation div chib}
\divslash\underline{\chi}-\slashed{d}\textrm{tr}\underline{\chi} = -\mu^{-1}(\zetab\cdot\underline{\chi}-\zetab\textrm{tr}\underline{\chi}),
\end{equation}
\begin{equation}\label{Structure Equation T chib}
\slashed{\mathcal{L}}_{T}\underline{\chi}_{AB}=(\nablaslash\widehat{\otimes}\etab)_{AB}+\mu^{-1}(\zetab \hat{\otimes}\etab)_{AB}-{c}^{-1}\Lb(c^{-1}\mu)\underline{\chi}_{AB}+c^{-1}\mu(\theta \widehat{\otimes}\chib)_{AB},
\end{equation}
which will be used in proving the main estimates. Here $(\zetab\cdot\underline{\chi})_{B}=\slashed{g}^{AC}\zetab_{A}\underline{\chi}_{BC}$, $(\nablaslash\widehat{\otimes}\etab)_{AB} = \dfrac{1}{2}(\nablaslash_{A}\etab_{B}+\nablaslash_{B}\etab_{A})$, $(\zetab \widehat{\otimes}\etab)_{AB}=\frac{1}{2}(\zetab_{A}\etab_{B}+\zetab_{B}\etab_{A})$ and $(\theta \widehat{\otimes}\chib)_{AB}=\frac{1}{2}(\theta_{AC}\underline{\chi}_{B}^{C}+\theta_{BC}\underline{\chi}_{A}^{C})$.
By taking the trace of \eqref{Structure Equation Lb chibAB}, we have
\begin{equation}\label{Structure Equation Lb trchib}
\Lb\textrm{tr}\underline{\chi}=\mu^{-1}(\Lb\mu)\textrm{tr}\underline{\chi}-|\underline{\chi}|^{2}_{\slashed{g}}-\textrm{tr}\alphab.
\end{equation}

The inverse density function $\mu$ satisfies the following transport equation:
\begin{equation}\label{Structure Equation Lb mu}
\Lb\mu=m+\mu e,
\end{equation}
with $m=-\frac{1}{2}\frac{d(c^{2})}{d\rho}T\rho$ and $e=\frac{1}{2c^{2}}\frac{d(c^{2})}{d\rho}\Lb\rho$. With these notations, we have $\alphab_{AB} = \mu^{-1} m \chib_{AB} + \alphab'_{AB}$.

Regarding the regularity in $\mu$, we use \eqref{Structure Equation Lb mu} to replace $\Lb\mu$ in \eqref{Structure Equation Lb chibAB}. This yields
\begin{equation}\label{Structure Equation Lb chibAB nonsingular}
\Lb (\chib_{AB}) = e \chib_{AB} + \chib_{A}{}^{C}\chib_{BC}-\alphab_{AB}'.
\end{equation}
Compared to the original \eqref{Structure Equation Lb chibAB}, the new equation is \textit{regular} $\mu$ in the sense that it has no $\mu^{-1}$ terms. 
\subsection{Rotation Vectorfields}\label{section definition for Ri}
Although $g\big|_{\Sigma_t}$ is flat, the foliation $S_{t,\ub}$ is different from the standard spherical foliation. In the Cartesian coordinates on $\Sigma_t$, let $\Omega_{1} = x^2 \partial_3 -x^3 \partial_2$, $\Omega_{2} = x^3 \partial_1 -x^1 \partial_3$ and $\Omega_{3} = x^1 \partial_2 -x^2 \partial_1$ be the standard rotations. Let $\Pi$ be the orthogonal projection to $S_{t,\ub}$. The rotation vectorfields $R_i \in \Gamma(TS_{t,\ub})$ ($i=1,2,3$) are defined by
\begin{equation}\label{definition for rotation vectorfields}
R_i = \Pi \,\Omega_{i}.
\end{equation}
Let indices $i, j, k \in \{1,2,3\}$. We use the $T^k$, $\Lb^k$ and $X_A^{k}$ to denote the components for $T$, $\Lb$ and $X_A$ in the Cartesian frame $\{\partial_{i}\}$ on $\Sigma_t$ (notice that $\Lb$ has also a $0$th component $\Lb^{0} = 1$). We introduce some functions to measure the difference between the foliations $S_{t,\ub}$ and the standard spherical foliations.

The functions $\lambda_i$'s measure the derivation from $R_i$ to $\Omega_i$:
\begin{equation}\label{definition for lambda_i}
 \lambda_i \widehat{T} = \Omega_i - R_i.
\end{equation}
The functions $y'^k$'s measure the derivation from $\widehat{T}$ to the standard radial vectorfield $\frac{x^i}{r}\partial_i$:
\begin{equation}\label{definition for y'}
 y'^k= \widehat{T}^k-\frac{x^k}{r}.
\end{equation}
We also define (we will show that $|y^k-y'^k|$ is bounded by a negligible small number)
\begin{equation}\label{definition for y}
 y^k= \widehat{T}^k-\frac{x^k}{\ub-t}.
\end{equation}
The functions $z^k$'s measure the derivation of $\Lb$ from $\partial_t - \partial_r$ in Minkowski spacetime:
\begin{equation*}
 z^k= \Lb^k+\frac{x^k}{\ub-t} = -\frac{(c-1)x^k}{\ub-t}-cy^k.
\end{equation*}
Finally, the rotation vectorfields can be expressed as
\begin{equation}\label{explicit expression of R i}
R_i = \Omega_i- \lambda_i \displaystyle\sum_{j=1}^3 \widehat{T}^j \partial_j,\quad \lambda_i= \displaystyle\sum_{j,k,l=1}^3 \varepsilon_{ilk}x^l y^k,
\end{equation}
where $\varepsilon_{ijk}$ is the totally skew-symmetric symbol.
\section{Initial data, bootstrap assumptions and the main estimates}
\subsection{Preliminary estimates on initial data}\label{section short pulse data}
In the Main Theorem, we take the so called short pulse datum for $(\star)$ on $\Sigma_{-r_0}^{\delta}$. Recall that $\phi(-r_0,x)=\frac{\delta^{3/2}}{r_{0}}\phi_{0}(\frac{r-r_{0}}{\delta},\theta)$ and $\partial_{t}\phi(-r_0,x)=\frac{\delta^{1/2}}{r_{0}}\phi_{1}(\frac{r-r_{0}}{\delta},\theta)$, where $\phi_{0}, \phi_{1}\in C^\infty_0\big((0,1]\times \mathbb{S}^2\big)$. The condition $(3)$ in the statement of the Main Theorem reads as
\begin{align*}
\|\Lb\phi\|_{L^\infty\left(\Sigma_{-r_{0}}\right)}\lesssim \frac{\delta^{3/2}}{r_{0}^{2}},\ \ \|\Lb^{2}\phi\|_{L^\infty\left(\Sigma_{-r_{0}}\right)}\lesssim \frac{\delta^{3/2}}{r_{0}^{2}}.
\end{align*}
We now derive estimates for $\phi$ and its derivatives on $\Sigma_{-r_0}$. These estimates also suggest the estimates, e.g. the bootstrap assumptions in next subsection, that one can expect later on.

For $\phi$ and $\psi = A \phi$ where $A \in \{\partial_\alpha\}$, by the form of the data, we clearly have
\begin{equation}\label{initial L infinity estimates for psi}
 \|\phi\|_{L^{\infty}(\Sigma_{-r_{0}})}\lesssim\frac{\delta^{3/2}}{r_{0}}, \  \|\psi\|_{L^{\infty}(\Sigma_{-r_{0}})}\lesssim\frac{\delta^{1/2}}{r_{0}}.
\end{equation}
We will use $Z$ or $Z_j$ to denote any vector from $\{T, R_i, Q\}$ where $Q= t\Lb$. On $\Sigma_{-r_0}$, $Z$ is simply $\partial_r$, $\Omega_{i}$ or $-r_0(\partial_{t}-\partial_{r})$, therefore, we have
\begin{equation}\label{initial L infinity estimates for commutations on psi}
\|Z^m \psi\|_{L^{\infty}(\Sigma_{-r_{0}})} \lesssim \frac{\delta^{1/2-l}}{r_{0}},
\end{equation}
with $l$ is the number of $T$'s and $Z \in \{T,\Omega_i,Q\}$. We remark that throughout the whole argument $Q$ appears at most twice in the string of $Z$'s.

We also consider the incoming energy for $Z^m \psi$ on $\Sigma_{-r_0}$. According to \eqref{initial L infinity estimates for commutations on psi}, we have
\begin{align*}
\|\Lb ( Z^m \psi )\|_{L^{2}(\Sigma_{-r_{0}})} + \|\slashed{d}(Z^m \psi)\|_{L^{2}(\Sigma_{-r_{0}})} \lesssim \frac{\delta^{1-l}}{r_{0}}, \  \|T(Z^m \psi)\|_{L^{2}(\Sigma_{-r_{0}})} \lesssim {\delta^{-l}}
\end{align*}
where $\ds$ denotes for the exterior differential on $S_{t,\ub}$. In terms of $L$, for $m\in \mathbb{Z}_{\geq 0}$, we obtain
\begin{equation}\label{initial energy estimates for commutations on psi}
\|\Lb ( Z^m \psi )\|_{L^{2}(\Sigma_{-r_{0}})} + \|\slashed{d}(Z^m \psi)\|_{L^{2}(\Sigma_{-r_{0}})}  \lesssim \frac{\delta^{1-l}}{r_{0}}, \ \ \ \|L (Z^m \psi)\|_{L^{2}(\Sigma_{-r_{0}})} \lesssim \delta^{-l}
\end{equation}
where $l$ is the number of $T$'s in $Z$'s.

We also consider the estimates on some connection coefficients on $\Sigma_{-r_0}$. For $\mu$, since we have $g(T,T) = c^{-2}\mu^{2}$ and $T=\partial_r$ on $\Sigma_{-r_0}$, we then have $\mu = c$ on $\Sigma_{r_0}$. Since $c =\big(1+ 2 G''(0)(\partial_t \phi)^2\big)^{-\frac{1}{2}}$, according to \eqref{initial L infinity estimates for psi}, for sufficiently small $\delta$, we obtain
\begin{equation}\label{initial estimates on mu}
\|\mu - 1\|_{L^\infty(\Sigma_{-r_0})} \lesssim \frac{\delta}{r_{0}^{2}}.
\end{equation}
For $\chib_{AB}$, since $\chib_{AB} = -c\theta_{AB} =-\dfrac{c}{r_0}\slashed{g}_{AB}$, we have $\chib_{AB} +\dfrac{1}{r_0}\slashed{g}_{AB}  = (1-c)\dfrac{1}{r_0}\slashed{g}_{AB}$.
Hence,
\begin{equation}\label{initial L infinity estimates for tr chib prime}
\|\chib_{AB} +\dfrac{1}{r_0}\slashed{g}_{AB}\|_{L^{\infty}(\Sigma_{-r_0})}  \lesssim \frac{\delta}{r_{0}^{3}}.
\end{equation}
It measures the difference between the $2^{\text{nd}}$ fundamental form with respect to $g_{\alpha\beta}$ and $m_{\alpha\beta}$.

\subsection{Bootstrap assumptions and the main estimates}\label{section Bootstrap Assumptions}
We expect the estimates \eqref{initial L infinity estimates for psi}, \eqref{initial L infinity estimates for commutations on psi} and \eqref{initial energy estimates for commutations on psi} hold not only for $t=-r_0$ but also for later time slice in $W^*_\delta$. For this purpose, we will run a bootstrap argument to derive the \emph{a priori} estimates for the $Z^m \psi$'s.

\subsubsection{Conventions}
We first introduce three large positive integers $N_{\text{top}}$, $N_{\mu}$ and $\Ninfty$. They will be determined later on.  We require that $\Nmu = \lfloor \frac{3}{4} \Ntop\rfloor$ and $\Ninfty = \lfloor \frac{1}{2} \Ntop\rfloor + 1$. $\Ntop$ will eventually be the total number of derivatives applied to the linearized equation $\Box_{\gt} \psi = 0$.

As in \cite{M-Y}, to count the number of derivatives, we define the \emph{order} of an object. The solution $\phi$ is considered as an order $-1$ object. The variations $\psi = A \phi$ are of order $0$. The metric $g$ depends only on $\psi$, so it is of order $0$. The inverse density function $\mu$ is of order $0$. The connection coefficients are 1st order derivatives on $g$, hence, of order $1$. In particular, $\chib_{AB}$ is of order $1$. Let $\alpha =(i_1,\cdots,i_{k-1})$ be a multi-index with $i_j$'s from $\{1,2,3\}$. We use $Z^{\alpha} \psi$ as a schematic expression of $Z_{i_1}Z_{i_2}\cdots Z_{i_{k-1}} \psi$. The order of $Z^\alpha\psi$ is $|\alpha|$, where $|\alpha| = k-1$. Similarly, for any tensor of order $|\alpha|$, after taking $m$ derivatives, its order becomes $|\alpha|+m$. The highest order objects in this paper will be of order $\Ntop +1$.

Let $l \in \mathbb{Z}_{\geq 0}$ and $k \in \mathbb{Z}$. We use $\O^l_{k,j}$ or $\O^{\leq l}_{k,j}$ to denote any term of order $l$ or at most $l$ with estimates
\begin{equation*}
 \|\O^l_{k,j}\|_{L^\infty(\Sigma_t)}\lesssim \frac{\delta^{\frac{1}{2}k}}{(-t)^{j}}, \ \|\O^{\leq l}_{k,j}\|_{L^\infty(\Sigma_t)}\lesssim \frac{\delta^{\frac{1}{2}k}}{(-t)^{j}}.
\end{equation*}
 Similarly, we use $\Psi^l_{k,j}$ or $\Psi^{\leq l}_{k,j}$ to denote any term of order $l$ or at most $l$ with estimates
\begin{equation*}
 \|\Psi^l_{k,j}\|_{L^\infty(\Sigma_t)}\lesssim \frac{\delta^{\frac{1}{2}k}}{(-t)^{j}}, \ \|\Psi^{\leq l}_{k,j}\|_{L^\infty(\Sigma_t)}\lesssim \frac{\delta^{\frac{1}{2}k}}{(-t)^{j}},
\end{equation*}
and moreover, it can be explicitly expressed a function of the variations $\psi$. For example,
$\partial_t \phi \cdot \partial_i\phi \in \Psi^0_{1,1}$; A term of the form $\prod_{i=1}^{n} Z^{\alpha_i} \psi$
so that $\max|\alpha_i|\leq m$ is $\Psi^{\leq m}_{n-2l,1}$, where $l$ is the number of $T$ appearing in the derivatives. Note that $\chib$ and $\mu$ can not be expressed explicitly in terms of $\psi$. The $\O^l_{k,j}$ terms (or similarly the $\Psi^l_{k,j}$ terms) obey the following algebraic rules:
\begin{align*}
 \O^{\leq l}_{k,j} + \O^{\leq l'}_{k',j'} &= \O^{\leq \max(l,l')}_{\min(k,k'),\min(j,j')},\ \   \O^{\leq l}_{k,j}  \O^{\leq l'}_{k',j'} = \O^{\leq \max(l,l')}_{k+k',j+j'}.
\end{align*}

\subsubsection{Bootstrap assumptions on $L^\infty$ norms}

Motivated by \eqref{initial L infinity estimates for commutations on psi}, we make the following bootstrap assumptions (B.1) on $W^*_\delta$: For all $t$ and $2\leq |\alpha|\leq \Ninfty$, \footnote{For a multi-index $\alpha$, the symbol $\alpha-1$ means another multi-index $\beta$ with degree $|\beta| = |\alpha|-1$.}
\begin{align*}\tag{B.1}
\|\psi\|_{L^{\infty}(\Sigma_{t})}+(-t)\|\Lb \psi\|_{L^\infty(\Sigma_t)}+(-t)\|\ds\psi\|_{L^\infty(\Sigma_t)} + \delta \|T \psi\|_{L^\infty(\Sigma_t)} + \delta^l\|Z^\alpha \psi\|_{L^\infty(\Sigma_t)}&\lesssim \delta^{1/2}M(-t)^{-1}.
\end{align*}
where $l$ is the number of $T$'s appearing in $Z^\alpha$ and $M$ is a large positive constant depending on $\phi$. We will show that if $\delta$ is sufficiently small which may depend on $M$, then we can choose $M$ in such a way that it depends only on the initial datum.
\subsubsection{Energy norms}
For $(t,\ub) \in [-r_0,t^*)\times [0,\delta]$, let $\dmug$ be the volume form of $\slashed{g}$. For a function $f(t,\ub,\theta)$, we define
\begin{equation*}
 \int_{\Sigma_{t}^{\ub}} f = \int_{0}^{\ub} \Big(\int_{S_{t,\ub'}}f(t,\ub',\theta)\dmug\Big) d \ub', \ \ \int_{\Cb_{\ub}^t} f= \int_{-r_0}^{t} \Big(\int_{S_{\tau,\ub}}f(\tau,\ub,\theta)\dmug\Big) d \tau.
\end{equation*}

For a function $\Psi(t,\ub,\theta)$, we define the energy flux through the hypersurfaces $\Sigma_{t}^{\ub}$ and $\Cb_{\ub}^{t}$ as
\begin{equation}\label{definition for energy and flux}
 \begin{split}
 {E}(\Psi)(t,\ub)&=\int_{\Sigma_{t}^{\ub}}(L\Psi)^2 + \mu\left((\Lb\psi)^{2}+(1+\mu)|\slashed{d} \Psi|^2\right), \ \  {F}(\Psi)(t,\ub)=\int_{\Cb_{\ub}^{t}} (\Lb\Psi)^{2}+\mu |\slashed{d} \Psi|^2,\\
{\Eb}(\Psi)(t,\ub)&= (-t)^{2}\int_{\Sigma_{t}^{\ub}} \mu \left(\left(\Lb \Psi + \frac{1}{2}\widetilde{\text{tr}}\chib\Psi\right)^{2}+ |\slashed{d}\Psi|^2\right),\ \ {\Fb}(\Psi)(t,\ub)=\int_{\Cb_{\ub}^{t}}(-t')^{2} \left(\Lb \Psi+\frac{1}{2}\widetilde{\text{tr}}\chib\Psi\right)^2.
 \end{split}
\end{equation}
Here $\widetilde{\text{tr}}\chib$ is the trace of $\chib$ with respect to the induced conformal metric $\widetilde{\slashed{g}}$.

For each integer $0\leq k\leq \Ntop$, we define
\begin{equation}\label{definition for kth energy}
 \begin{split}
 {E}_{k+1}(t,\ub)=\sum_{\psi}\sum_{|\alpha| = k-1} \delta^{2l}E(Z^{\alpha+1} \psi)(t,\ub),\quad F_{k+1}(t,\ub)=  \sum_{\psi}\sum_{|\alpha| = k-1} \delta^{2l}F(Z^{\alpha+1} \psi)(t,\ub),\\
{\Eb}_{k+1}(t,\ub)=\sum_{\psi}\sum_{|\alpha| = k-1} \delta^{2l}\Eb(Z^{\alpha+1} \psi)(t,\ub),\quad \Fb_{k+1}(t,\ub)= \sum_{\psi}\sum_{|\alpha| = k-1}\delta^{2l}\Fb(Z^{\alpha+1} \psi)(t,\ub),
 \end{split}
\end{equation}
and

\begin{align}\label{sum of kth energy}
\begin{split}
E_{\leq k+1}(t,\ub):=\sum_{j\leq k}E_{j+1}(t,\ub),\quad \Eb_{\leq k+1}(t,\ub):=\sum_{j\leq k}\Eb_{j+1}(t,\ub)\\
F_{\leq k+1}(t,\ub):=\sum_{j\leq k}F_{j+1}(t,\ub),\quad \Fb_{\leq k+1}(t,\ub):=\sum_{j\leq k}\Fb_{j+1}(t,\ub)
\end{split}
\end{align}
where $l$ is the number of $T$'s appearing in $Z^\alpha$.  The symbol $\displaystyle\sum_{\psi}$ means to sum over all the first order variations $A\phi$ of $\psi$. For the sake of simplicity, we shall omit this sum symbol in the sequel.

For each integer $0\leq k \leq \Ntop$, we assign a nonnegative integer $b_k$ to $k$ in such a way that
\begin{equation}
b_0 = b_1 =\cdots = b_{\Nmu} =0, \ \ b_{\Nmu +1} <  b_{{\Nmu}+2}< \cdots <  b_{\Ntop}.
\end{equation}
We call $b_k$'s \emph{the blow-up indices}. The sequence $(b_k)_{0\leq k \leq \Ntop}$ will be determined later on.

For each integer $0\leq k\leq \Ntop$, we also define the modified energy ${\Et}_k(t,\ub)$ and ${\Ebt}_k(t,\ub)$  as
\begin{equation}\label{definition for modified energy}
\begin{split}
 &{\Et}_{k+1}(t,\ub)= \sup_{\tau \in[-r_{0},t]}\{\mu^{\ub}_{m}(\tau)^{2b_{k+1}}E_{k+1}(\tau,\ub)\},
 \quad {\Ebt}_{k+1}(t,\ub)=\sup_{\tau \in[-r_{0},t]}\{\mu^{\ub}_{m}(\tau)^{2b_{k+1}}\Eb_{k+1}(\tau,\ub)\}\\
&{\widetilde{F}}_{k+1}(t,\ub)= \sup_{\tau \in[-r_{0},t]}\{\mu^{\ub}_{m}(\tau)^{2b_{k+1}}F_{k+1}(\tau,\ub)\},\quad {\widetilde{\Fb}}_{k+1}(t,\ub)=\sup_{\tau \in[-r_{0},t]}\{\mu^{\ub}_{m}(\tau)^{2b_{k+1}}\Fb_{k+1}(\tau,\ub)\}\\
&\Et_{\leq k+1}(t,\ub):=\sum_{j\leq k}\Et_{j+1}(t,\ub),\quad \Ebt_{\leq k+1}(t,\ub):=\sum_{j\leq k}\Ebt_{j+1}(t,\ub),\\
&\widetilde{F}_{\leq k+1}(t,\ub):=\sum_{j\leq k}\widetilde{F}_{j+1}(t,\ub),\quad \widetilde{\Fb}_{\leq k+1}(t,\ub):=\sum_{j\leq k}\widetilde{\Fb}_{j+1}(t,\ub),
 \end{split}
 \end{equation}
where $\mu^{\ub}_{m}(t)$ is defined as
\begin{align}\label{definition  of mu m}
\mu^{\ub}_{m}(t)=\min_{\tau\in[-r_0,t]}\left\{\inf_{(\ub',\theta)\in[0,\ub]\times \mathbb{S}^{2}}\mu(\tau,\ub',\theta), 1\right\}
\end{align}
For simplicity, we omit the parameter $\ub$ to write the weight as $\mu_{m}(t)$. When we work on the estimates on $W^{t}_{\ub}$, the weight $\mu_{m}(t)$ means $\mu^{\ub}_{m}(t)$.

We now state the main estimates of the paper.
\begin{theorem}\label{theorem main estimates}
There exists a constant $\delta_0$ depending only on the seed data $\phi_0$ and $\phi_1$, so that for all $\delta< \delta_0$, there exist constants $M_0$, $\Ntop$ and $(b_k)_{0 \leq k \leq \Ntop}$ with the following properties
\begin{itemize}
\item $M_0$, $\Ntop$ and $(b_k)_{0 \leq k \leq \Ntop}$ depend only on the initial datum.
\item The inequalities (B.1) holds for all $t < t^*$ with $M=M_0$.
\item Either $t^{*}=-1$ and we have a smooth solution in the time slab $[-r_{0},-1]$; or $t^{*}<-1$ and then $\psi_{\alpha}$'s as well as the rectangular coordinates $x^{i}$'s extend smoothly as functions of the coordinates $(t,\ub,\theta)$ to $t=t^{*}$ and there is at least one point on $\Sigma_{t^{*}}^{\delta}$ where $\mu$ vanishes, thus we have shock formation.
\item If, moreover, the initial data satisfies the largeness condition \eqref{shock condition}, then in fact $t^{*}<-1$.
\end{itemize}
\end{theorem}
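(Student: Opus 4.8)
\emph{Overall strategy.} The plan is to prove Theorem~\ref{theorem main estimates} by a bootstrap/continuity argument on $[-r_0,t^*)$: assume (B.1) holds on $W^t_\delta$ with a large constant $M$, derive from it all the geometric and energy estimates, recover (B.1) with a strictly better constant $M_0$ depending only on the data, and close by an open--closed argument. Every constant is tracked to be independent of $r_0$ --- this is the reason for the modified multiplier and the modified energies.

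\emph{Step 1: pointwise geometric estimates.} Feeding (B.1) into the transport equations \eqref{Structure Equation Lb mu}, \eqref{Structure Equation Lb chibAB nonsingular}, \eqref{Structure Equation Lb trchib} for $\mu$ and $\chib$, and into the analogous transport equations for $\lambda_i, y^k, z^k$, integrated along the integral curves of $\Lb$ --- which in optical coordinates are the $t$-lines by \eqref{Lb op} --- starting from $\Sigma_{-r_0}$ where \eqref{initial estimates on mu}--\eqref{initial L infinity estimates for tr chib prime} hold, I would get $c = 1 + O\!\left(\delta(-t)^{-2}\right)$, $\mu$ bounded above (with $|\mu - 1|$ small away from the shock), $\chib + \tfrac{1}{\ub - t}\slashed{g}$ small, and $\Lb\mu = m + O\!\left(\delta(-t)^{-2}\right)$ with $m = -\tfrac12\frac{d(c^2)}{d\rho}T\rho$. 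The structural point is that every such integrand decays like $(-t')^{-2}$ or faster, so $\int_{-r_0}^t(-t')^{-2}\,dt'$ is bounded \emph{uniformly in $r_0$}; in the $\O^l_{k,j}$/$\Psi^l_{k,j}$ bookkeeping of Section~3 the $\delta$-and-$t$ hierarchy then propagates with no $r_0$ growth.

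\emph{Step 2: energy estimates.} This is the core. For each variation $\psi$ and each string $Z^\alpha$ with $|\alpha|\le\Ntop$, commuting $Z^\alpha$ with $\Box_{\gt}\psi = 0$ gives $\Box_{\gt}(Z^\alpha\psi) = (\text{error})$; using the deformed conformal multiplier (an analogue of $u^2\Lb$ built from the optical metric) one derives energy identities for $E_{k+1},F_{k+1}$ and the conformally weighted $\Eb_{k+1},\Fb_{k+1}$. The error terms split into: (i) commutator/deformation-tensor contributions, whose top-order pieces require the \emph{modified} propagation equations of Section~7 to avoid loss of regularity; (ii) terms \emph{singular} in $\mu^{-1}$ (from $\alphab_{AB}$ via \eqref{alpha expansion in mu and alpha prime}), handled by a \emph{descent scheme} --- estimating the modified energies $\Et_{k+1}$ with weight $\mu_m^{2b_{k+1}}$ from top order downward, with $\Ntop$, $\Nmu$ and the blow-up indices $(b_k)$ chosen so that each $\mu^{-1}$-loss at order $k$ is absorbed into the gap $b_k > b_{k-1}$; and (iii) the logarithmic divergence \eqref{log divergence}, resolved as in the introduction by passing to a modified energy $\widetilde X$ for the top-order deformation-tensor derivative with $\widetilde X \sim (-t)^{-1}X$, so that the coupled system collapses to \eqref{log resolved}, whose iterated integral $\int_{-r_0}^t(-t')^{-1}\int_{-r_0}^{t'}(-t'')^{-2}(\cdots)$ converges uniformly in $r_0$. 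Combining with the (conformally weighted) initial energy bounds coming from \eqref{initial energy estimates for commutations on psi} --- finite for each $r_0$, and bounded uniformly in $r_0$ once the $(-t)^2$ weight compensates the $r_0^{-2}$ in the data --- and summing over $\psi$ and $\alpha$ closes the hierarchy for $\Et_{\le\Ntop+1}$ and $\Ebt_{\le\Ntop+1}$ with $r_0$-independent constants.

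\emph{Step 3: closing the bootstrap, the dichotomy, and shock formation.} A Klainerman--Sobolev inequality on the surfaces $S_{t,\ub}$ (using the $\slashed{g}$- and $\chib$-geometry from Step~1) converts the $L^2$ control of $Z^\alpha\psi$ for $|\alpha|\le\Ninfty+2$ into the $L^\infty$ bounds of (B.1); fixing $\Ntop$, the $b_k$, and then $\delta$ small in terms of $M$, the resulting constant $M_0$ depends only on the data and beats $M$, so (B.1) propagates to all of $[-r_0,t^*)$. For the dichotomy: if the breakdown time $t_*$ were reached with $\mu$ bounded below, the a priori bounds would extend $\psi$, the rectangular $x^i$, and all lower-order quantities smoothly in $(t,\ub,\theta)$ up to $t_*$, contradicting maximality via local existence; hence either $t^* = -1$ with a smooth solution on $[-r_0,-1]$, or $t^* = s_* < -1$ with $\mu$ vanishing somewhere on $\Sigma_{t^*}^\delta$. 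Finally, under \eqref{shock condition}, I would track $\mu$ along the $\Lb$-curve through the point where \eqref{shock condition} is (almost) saturated using $\Lb\mu = m + \mu e$: Step~1 shows $m$ equals, to leading order, a negative constant multiple of $\delta^{1/2}(-t)^{-2}\,G''(0)\,\partial_s\phi_1\,\phi_1$ while $\mu e$ is negligible, so integrating from $-r_0$ gives $\mu(t)\le 1 + C\!\int_{-r_0}^t m\,dt' < 0$ at some $t > -1$ once \eqref{shock condition} holds and $\delta$ is small (again $\int(-t')^{-2}dt'$ is $r_0$-uniform), forcing $t^* < -1$.

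\emph{The main obstacle.} The hard part is running Step~2 \emph{uniformly in $r_0$}: the descent bookkeeping (choosing $\Ntop,\Nmu,(b_k)$ so every $\mu^{-1}$-loss is compensated) must be carried out simultaneously with the modified-energy resolution of the logarithmic divergence, and it is precisely the interplay of the $(-t)$-weights in the conformal multiplier, the $\mu_m^{b_k}$-weights, and the modified deformation-tensor energy $\widetilde X$ that makes the iterated Gr\"onwall inequality \eqref{log resolved} converge without a power of $r_0$.
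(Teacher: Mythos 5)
Your proposal follows the paper's own proof essentially step for step: a bootstrap on (B.1), pointwise optical estimates from the transport equations for $\mu$, $\chib$, $\lambda_i$, $y^k$, energy estimates with the two multipliers (the modified $K_1$ playing the role of $u^2\Lb$), $\mu_m^{2b_k}$-weighted modified energies with a descent scheme to absorb the $\mu^{-1}$ losses, the modified top-order optical quantities to resolve the logarithmic divergence with $r_0$-independent constants, Sobolev recovery of (B.1), the Jacobian/local-existence argument for the dichotomy, and the asymptotic expansion of $\mu$ for shock formation; at the level of strategy there is nothing different from the paper.

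One concrete slip in your final step, which as written would fail: you claim $m$ is, to leading order, a negative multiple of $\delta^{1/2}(-t)^{-2}G''(0)\,\partial_s\phi_1\,\phi_1$. With the data profile \eqref{data profile} one has $\psi_0=\partial_t\phi\sim\delta^{1/2}r_0^{-1}\phi_1$ but $T\psi_0\sim\delta^{-1/2}r_0^{-1}\partial_s\phi_1$, so $m=-\tfrac12\frac{d(c^2)}{d\rho}T\rho\approx 3G''(0)\,\phi_1\partial_s\phi_1\,r_0^{-2}$ on $\Sigma_{-r_0}$: the powers of $\delta$ cancel and there is no $\delta^{1/2}$. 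This $\delta$-neutrality is exactly what makes the mechanism work. By \eqref{expansion of Lb mu} the quantity $t^2\Lb\mu$ is conserved to leading order along $\Lb$, and integrating as in Proposition \ref{proposition on expansion for mu} gives $\mu(t)\approx 1+3G''(0)\phi_1\partial_s\phi_1\bigl(\tfrac{1}{|t|}-\tfrac{1}{r_0}\bigr)+O(\delta t^{-2})$; since $\tfrac{1}{|t|}-\tfrac{1}{r_0}<1$ on $[-r_0,-1]$, forcing $\mu$ to vanish before $t=-1$ requires precisely the sharp threshold $G''(0)\phi_1\partial_s\phi_1\le-\tfrac13$ of \eqref{shock condition}, uniformly in $r_0$ and $\delta$. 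With your extra $\delta^{1/2}$ the total decrease of $\mu$ would only be $O(\delta^{1/2})$, so "taking $\delta$ small" would make shock formation harder rather than easier and the conclusion $t^*<-1$ would not follow; the last step must therefore be carried out with the exact, $\delta$-independent coefficient rather than a generic negative constant.
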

\subsection{Preliminary estimates based on (B.1)}
In this subsection, we list the pointwise estimates for components of optical metric, connection coefficients as well as deformation tensors associated to various vectorfields based on (B.1). The proofs are similar to those in \cite{M-Y}.
\subsubsection{Estimates on metric and connection}
\begin{lemma} For sufficiently small\footnote{This sentence always means that, there exists $\varepsilon = \varepsilon(M)$ so that for all $\delta \leq \varepsilon$, we have ...} $\delta$, we have
\begin{equation}\label{bound on c}
\begin{split}
\|c-1\|_{L^{\infty}(\Sigma_{t})}&\lesssim \delta M^{2}(-t)^{-2},
\frac{1}{2} \leq c \leq 2,\\
(-t)\|\Lb c\|_{L^\infty(\Sigma_t)} + (-t)\|\ds c\|_{L^\infty(\Sigma_t)} &\lesssim \delta M^2(-t)^{-2},\ \|T c\|_{L^\infty(\Sigma_t)} \lesssim M^2(-t)^{-2}.
\end{split}
\end{equation}
\end{lemma}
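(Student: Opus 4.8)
The plan is to exploit that the wave speed $c$ is an \emph{algebraic} function of $\rho := (\partial_t\phi)^2$ — explicitly $c = \bigl(1+3G''(0)\rho\bigr)^{-1/2}$ — and that $\partial_t\phi$ is itself one of the variations, namely $\psi = A\phi$ with $A = \partial_t$. Hence every bound on $c$ and on its derivatives reduces, via the chain rule, to the corresponding bound on the variation $\psi = \partial_t\phi$ supplied by the bootstrap assumptions (B.1). The one structural feature to keep in mind is that $c$ depends \emph{quadratically} on $\psi$; this is precisely why the large constant $M$ from (B.1) enters the estimates for $c$ squared, and why one gains a power of $\delta$ relative to the $\psi$-level bounds.

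First I would record the pointwise size of $\rho$. From (B.1), $\|\psi\|_{L^\infty(\Sigma_t)}\lesssim \delta^{1/2}M(-t)^{-1}$ with $\psi = \partial_t\phi$, so $\|\rho\|_{L^\infty(\Sigma_t)}\lesssim \delta M^2(-t)^{-2}$. Since we work in $W^*_\delta$, where $t\le t^*\le s^*\le -1$, we have $-t\ge 1$, and therefore $\|\rho\|_{L^\infty(\Sigma_t)}\lesssim \delta M^2$. Choosing $\delta$ small (depending on $M$, as permitted by the convention recalled in the footnote), we get $|3G''(0)\rho|\ll 1$ throughout $W^*_\delta$, so $\rho\mapsto c(\rho)$ and all of its $\rho$-derivatives are smooth and bounded on the relevant range of $\rho$, with $c(0)=1$. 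The mean value theorem then yields $\|c-1\|_{L^\infty(\Sigma_t)} = \|c(\rho)-c(0)\|_{L^\infty(\Sigma_t)}\lesssim \|\rho\|_{L^\infty(\Sigma_t)}\lesssim \delta M^2(-t)^{-2}$, and since $\delta M^2$ is small this also gives $\frac{1}{2}\le c\le 2$.

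For the derivative bounds, let $V$ denote any of $\Lb$, a frame vectorfield $X_A$, or $T$. The chain rule gives $Vc = c'(\rho)\,V\rho = 2\,c'(\rho)\,\psi\,V\psi$ with $\psi=\partial_t\phi$, and similarly $\ds c = 2\,c'(\rho)\,\psi\,\ds\psi$ for the $S_{t,\ub}$-differential. Using $|c'(\rho)|\lesssim 1$ from the previous step together with $\|\psi\|_{L^\infty(\Sigma_t)}\lesssim \delta^{1/2}M(-t)^{-1}$, we obtain $\|Vc\|_{L^\infty(\Sigma_t)}\lesssim \delta^{1/2}M(-t)^{-1}\,\|V\psi\|_{L^\infty(\Sigma_t)}$ and likewise for $\ds c$. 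Now insert the $\psi$-bounds from (B.1): from $\|\Lb\psi\|_{L^\infty(\Sigma_t)}\lesssim \delta^{1/2}M(-t)^{-2}$ and $\|\ds\psi\|_{L^\infty(\Sigma_t)}\lesssim \delta^{1/2}M(-t)^{-2}$ we get $(-t)\|\Lb c\|_{L^\infty(\Sigma_t)}+(-t)\|\ds c\|_{L^\infty(\Sigma_t)}\lesssim \delta M^2(-t)^{-2}$, while from $\|T\psi\|_{L^\infty(\Sigma_t)}\lesssim \delta^{-1/2}M(-t)^{-1}$ we get $\|Tc\|_{L^\infty(\Sigma_t)}\lesssim M^2(-t)^{-2}$. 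This is exactly \eqref{bound on c}.

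The argument involves no genuine analytic obstacle; it is essentially bookkeeping. The points requiring a little care are: (i) verifying that $\partial_t\phi$ is literally one of the variations so that (B.1) applies to it directly, rather than having to convert a rectangular $\partial_t$ into the frame $\Lb,\widehat{T}$ and re-estimate; (ii) tracking the powers of $M$, $\delta$ and $(-t)$ through the chain rule — the quadratic dependence on $\psi$ is what produces $M^2$ and the extra $\delta^{1/2}$, and one repeatedly uses $-t\ge 1$ to absorb surplus factors of $(-t)^{-1}$; and (iii) noting that the admissible smallness of $\delta$ is allowed to depend on $M$, which is exactly what keeps $\rho$ small enough to bound $c$, $c'$ (and the higher $\rho$-derivatives needed later) uniformly and to keep $c$ away from $0$ and $\infty$.
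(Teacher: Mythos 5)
Your proposal is correct and follows the standard route: since $c=(1+3G''(0)(\partial_t\phi)^2)^{-1/2}$ is an algebraic function of the variation $\psi=\partial_t\phi$, the chain rule together with the (B.1) bounds on $\psi$, $\Lb\psi$, $\ds\psi$, $T\psi$ gives exactly \eqref{bound on c}, with the quadratic dependence producing $M^2$ and the extra factor of $\delta$. The paper omits the proof and defers to \cite{M-Y}, but your bookkeeping is precisely the argument intended there, so no gap.
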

\begin{lemma}
 For sufficiently small $\delta$, we have
\begin{equation}\label{bound on m}
 \|m\|_{L^\infty(\Sigma_t)} +(-t)\|\ds m\|_{L^\infty(\Sigma_t)} \lesssim M^2(-t)^{-2},\quad  \|T m\|_{L^\infty(\Sigma_t)} \lesssim \delta^{-1} M^2(-t)^{-2},
\end{equation}
\begin{equation}\label{bound on e}
 \|e\|_{L^\infty(\Sigma_t)}  +  (-t)\|\ds e\|_{L^\infty(\Sigma_t)}  \lesssim \delta M^2(-t)^{-2},\quad  \|T e\|_{L^\infty(\Sigma_t)}  \lesssim  M^2(-t)^{-2},
\end{equation}
\begin{equation}\label{bound on mu}
 \|\mu-1\|_{L^\infty(\Sigma_t)} +(-t)\|\Lb \mu\|_{L^\infty(\Sigma_t)}\lesssim  M^2(-t)^{-1}.
\end{equation}
\end{lemma}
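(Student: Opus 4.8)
The plan is to reduce all three estimates to the bootstrap assumptions (B.1) together with the already-established pointwise bounds \eqref{bound on c} on $c$, using the explicit algebraic form of $m$ and $e$ and the transport equation \eqref{Structure Equation Lb mu} for $\mu$. Since $c^{2}=(1+3G''(0)\rho)^{-1}$ with $\rho=(\partial_{t}\phi)^{2}=\psi^{2}$, where $\psi=\partial_{t}\phi$ is the variation associated with $A=\partial_{t}$, one has $\frac{d(c^{2})}{d\rho}=-3G''(0)c^{4}$, hence
\begin{equation*}
m=3G''(0)\,c^{4}\,\psi\,T\psi,\qquad e=-3G''(0)\,c^{2}\,\psi\,\Lb\psi .
\end{equation*}
Thus $m$ and $e$ are each the product of a bounded function of $c$ (controlled by \eqref{bound on c}, in particular $\tfrac12\le c\le 2$), the factor $\psi$ with $\|\psi\|_{L^{\infty}(\Sigma_{t})}\lesssim\delta^{1/2}M(-t)^{-1}$ by (B.1), and exactly one first-order derivative of $\psi$.

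The $L^{\infty}$- and $\ds$-estimates for $m$ and $e$ then follow by the Leibniz rule. Each $\ds$ falls either on $c$, contributing $\|\ds c\|_{L^{\infty}}\lesssim\delta M^{2}(-t)^{-3}$ from \eqref{bound on c}, or on $\psi$, $T\psi$, or $\Lb\psi$. For the latter I use that $|\ds f|\lesssim(-t)^{-1}\sum_{i}|R_{i}f|$ for any function $f$, since by the preliminary estimates of this section the rotation vectorfields $R_{i}$ span $TS_{t,\ub}$ with $|R_{i}|_{\slashed{g}}\lesssim -t$; that $R_{i}t=Tt=0$; and that $\Lb\psi=t^{-1}Q\psi$ with $Q=t\Lb$. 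Hence $\ds\psi$, $\ds(T\psi)$ and $\ds(\Lb\psi)$ are, up to a factor $(-t)^{-1}$ (resp.\ $(-t)^{-2}$ for the last), controlled by the norms $\|Z^{\alpha}\psi\|_{L^{\infty}}$ appearing in (B.1) with the appropriate number $l$ of $T$'s. Tracking the powers of $\delta$ and $(-t)$ gives $\|m\|_{L^{\infty}(\Sigma_{t})}+(-t)\|\ds m\|_{L^{\infty}(\Sigma_{t})}\lesssim M^{2}(-t)^{-2}$ and $\|e\|_{L^{\infty}(\Sigma_{t})}+(-t)\|\ds e\|_{L^{\infty}(\Sigma_{t})}\lesssim\delta M^{2}(-t)^{-2}$ (the $e$-bound even comes with an extra power of $(-t)^{-1}$). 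The $T$-derivative estimates are obtained the same way, now also using $\|Tc\|_{L^{\infty}}\lesssim M^{2}(-t)^{-2}$ and the weights $\|T\psi\|_{L^{\infty}}\lesssim\delta^{-1/2}M(-t)^{-1}$, $\|T^{2}\psi\|_{L^{\infty}}\lesssim\delta^{-3/2}M(-t)^{-1}$ from (B.1): the worst term in $Tm$ is $(T\psi)^{2}$ (of the same size as $\psi\,T^{2}\psi$), producing exactly $\|Tm\|_{L^{\infty}(\Sigma_{t})}\lesssim\delta^{-1}M^{2}(-t)^{-2}$, while the worst terms in $Te$ are $T\psi\,\Lb\psi$ and $\psi\,T(\Lb\psi)$, each of size $M^{2}(-t)^{-3}$, whence $\|Te\|_{L^{\infty}(\Sigma_{t})}\lesssim M^{2}(-t)^{-2}$.

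For $\mu$ I integrate the transport equation $\Lb\mu=m+\mu e$ along the integral curves of $\Lb$, which in optical coordinates are the lines $\{\ub=\mathrm{const},\ \theta=\mathrm{const}\}$ on which $\Lb=\partial/\partial t$ by \eqref{Lb op}. Writing $\Lb\mu=m+e+e(\mu-1)$ and using the initial bound $\|\mu-1\|_{L^{\infty}(\Sigma_{-r_{0}})}\lesssim\delta r_{0}^{-2}$ from \eqref{initial estimates on mu}, one gets for each $(\ub,\theta)$
\begin{equation*}
|\mu(t,\ub,\theta)-1|\le|\mu(-r_{0},\ub,\theta)-1|+\int_{-r_{0}}^{t}\big(|m|+|e|+|e|\,|\mu-1|\big)(t',\ub,\theta)\,dt' ,
\end{equation*}
and inserting $|m|+|e|\lesssim M^{2}(-t')^{-2}$, $|e|\lesssim\delta M^{2}(-t')^{-2}$ and applying Gr\"onwall's inequality — the integrating factor is $\exp\big(\int_{-r_{0}}^{t}\delta M^{2}(-t')^{-2}dt'\big)\le e^{\delta M^{2}}\lesssim 1$ for $\delta$ small — gives $\|\mu-1\|_{L^{\infty}(\Sigma_{t})}\lesssim\delta r_{0}^{-2}+M^{2}(-t)^{-1}\lesssim M^{2}(-t)^{-1}$; the bound on $\Lb\mu$ then follows directly from $\Lb\mu=m+\mu e$ and the bounds already obtained. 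There is no genuinely hard step here: the work is bookkeeping of the $\delta$- and $(-t)$-weights — in particular the sharpness of the $\delta^{-1}$ loss in $\|Tm\|$, which arises from two $T$-derivatives landing on $\psi$ — and the verification that nothing depends on $r_{0}$, which is the feature this paper must maintain throughout. The latter holds because every time integral occurring is of the type $\int_{-r_{0}}^{t}(-t')^{-2}dt'=(-t)^{-1}-r_{0}^{-1}\le(-t)^{-1}$, uniformly bounded as $r_{0}\to\infty$, and the initial contribution $\delta r_{0}^{-2}$ decays; lower-order error terms (carrying an extra factor $\delta$ or an extra $(-t)^{-1}$) are absorbed by taking $\delta$ small, possibly depending on $M$, which is permissible at this stage.
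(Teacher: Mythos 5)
Your proposal is correct and follows essentially the route the paper intends (the paper defers the proof to \cite{M-Y}): write $m=-\tfrac12\tfrac{d(c^2)}{d\rho}T\rho$ and $e=\tfrac{1}{2c^2}\tfrac{d(c^2)}{d\rho}\Lb\rho$ explicitly as $c$-coefficients times $\psi$ times one first derivative of $\psi$, estimate via (B.1), \eqref{bound on c} and the $R_i\sim(-t)\ds$ equivalence, and then integrate $\Lb\mu=m+\mu e$ from \eqref{initial estimates on mu} with Gr\"onwall. The only remark is one of ordering: the conversion $|\ds f|\lesssim(-t)^{-1}\sum_i|R_if|$ is established in the paper after the $\mu$-bound (it uses $r\sim|t|$, hence $\mu$), so one should note that your $L^\infty$ bounds on $m,e$ and the resulting $\mu$, $\Lb\mu$ estimates use only first-order terms directly contained in (B.1), after which the equivalence and then your $\ds$- and $T$-estimates follow without circularity.
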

\begin{lemma}
For sufficiently small $\delta$, we have
\begin{equation}\label{bound on T mu and slashed d mu}
 \|T\mu\|_{L^\infty(\Sigma_t)} \lesssim \delta^{-1} M^2(-t)^{-1}, \quad (-t)\|\ds \mu\|_{L^\infty(\Sigma_t)} \lesssim M^2(-t)^{-1}.
\end{equation}
\end{lemma}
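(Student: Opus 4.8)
\emph{Proof proposal.}

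The plan is to get both bounds from transport equations along the integral curves of $\Lb$ --- which in the optical coordinates are the lines $\{\ub=\mathrm{const},\,\theta=\mathrm{const}\}$ parametrized by $t$, so that $\Lb=\partial/\partial t$ --- the master relation being the propagation equation \eqref{Structure Equation Lb mu}, $\Lb\mu=m+\mu e$. Differentiating this equation by $T$, and componentwise by $\ds$, produces transport equations for $T\mu$ and $\ds\mu$ whose right-hand sides are built from $m,e$ and their $T$- and $\ds$-derivatives --- all already controlled by \eqref{bound on m}, \eqref{bound on e}, \eqref{bound on mu} --- together with (in the $T$-case) a commutator term. One integrates from $\Sigma_{-r_{0}}$, where $\mu=c$ and $T=\partial_{r}$, so that $T\mu|_{\Sigma_{-r_{0}}}=Tc$ and $\ds\mu|_{\Sigma_{-r_{0}}}=\ds c$ are governed by \eqref{bound on c} and are in fact far smaller than the asserted bounds, of size $O(M^{2}r_{0}^{-2})$ and $O(\delta M^{2}r_{0}^{-3})$. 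The whole issue, absent in \cite{M-Y}, is to keep the $t$-weights so that no power or logarithm of $r_{0}$ survives the $t$-integration.

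For the second estimate: since $[\Lb,\partial/\partial\theta^{A}]=0$, differentiating \eqref{Structure Equation Lb mu} gives the transport equation $\slashedL_{\Lb}(\ds\mu)=\ds m+e\,\ds\mu+\mu\,\ds e$ for the $S_{t,\ub}$-one-form $\ds\mu$. Using $\Lb\slashed{g}^{AB}=-2\chib^{AB}$ and the bound $\chib_{AB}=-(-t)^{-1}\slashed{g}_{AB}+(\lot)$ --- established, as in \cite{M-Y}, from the initial value \eqref{initial L infinity estimates for tr chib prime} and the propagation equation \eqref{Structure Equation Lb trchib} --- one checks that the weighted quantity $p:=(-t)|\ds\mu|_{\slashed{g}}$ obeys $\Lb p\lesssim M^{2}(-t)^{-2}+\delta M^{2}(-t)^{-2}\,p$. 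The decisive point is that the term $-|\ds\mu|_{\slashed{g}}$ coming from $\Lb(-t)=-1$ cancels exactly the leading contribution $(-t)^{-1}|\ds\mu|_{\slashed{g}}$ of $-2\chib^{AB}\ds_{A}\mu\,\ds_{B}\mu$ --- because $\tr\chib$ equals its Minkowski value $-2(-t)^{-1}$ up to negligible corrections --- so that the surviving source is only $O(M^{2}(-t)^{-2})$, by \eqref{bound on m}, \eqref{bound on e}, \eqref{bound on mu}. Gr\"onwall then yields $p(t)\lesssim p(-r_{0})+M^{2}(-t)^{-1}\lesssim M^{2}(-t)^{-1}$ with an $r_{0}$-independent constant, which is the claimed $(-t)\|\ds\mu\|_{L^{\infty}(\Sigma_{t})}\lesssim M^{2}(-t)^{-1}$.

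For the first estimate: the commutator $\Lambda:=[\Lb,T]$ is tangent to $S_{t,\ub}$ (cf. \eqref{commutator Lb T}) and, by the connection formulas of Section 2, equals $-(\zetab+\etab)^{\sharp}$; combined with $\etab_{A}=\zetab_{A}+X_{A}\mu$ this gives the explicit formula $\Lambda\mu=-2\zetab^{A}X_{A}\mu-|\ds\mu|_{\slashed{g}}^{2}$. Hence commuting \eqref{Structure Equation Lb mu} with $T$ gives $\Lb(T\mu)=Tm+e\,T\mu+\mu\,Te+\Lambda\mu$, in which --- using the $\ds\mu$-bound just proved and $|\zetab|_{\slashed{g}}=c^{-1}\mu|\ds c|_{\slashed{g}}\lesssim\delta M^{2}(-t)^{-3}$ --- the term $\Lambda\mu$ is now a genuine source of size $\lesssim M^{4}(-t)^{-4}$. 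Integrating along the $\Lb$-curves: the homogeneous coefficient $e$ is tiny and integrable (Gr\"onwall factor $\lesssim e^{\delta M^{2}}\lesssim 1$), the term $\mu Te$ contributes $\lesssim M^{2}(-t)^{-1}$ by \eqref{bound on e} and \eqref{bound on mu}, the term $\Lambda\mu$ contributes $\lesssim M^{4}(-t)^{-3}$ (which is $\lesssim\delta^{-1}M^{2}(-t)^{-1}$ once $\delta M^{2}\lesssim 1$), and the dominant term $Tm$, obeying $|Tm|\lesssim\delta^{-1}M^{2}(-t)^{-2}$ by \eqref{bound on m} --- the loss of $\delta^{-1}$ coming precisely from the two $T$'s landing on the variations --- contributes $\lesssim\delta^{-1}M^{2}(-t)^{-1}$. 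Together with the negligible initial value $Tc|_{\Sigma_{-r_{0}}}$, this gives $\|T\mu\|_{L^{\infty}(\Sigma_{t})}\lesssim\delta^{-1}M^{2}(-t)^{-1}$.

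The main obstacle is the borderline $\chib$-term in the $\ds\mu$ estimate. In the evolution of $|\ds\mu|_{\slashed{g}}^{2}$ the quantity $-\tr\chib\,|\ds\mu|_{\slashed{g}}^{2}\approx 2(-t)^{-1}|\ds\mu|_{\slashed{g}}^{2}$ has the growth-inducing sign and the scale-invariant coefficient $(-t)^{-1}$; a crude Gr\"onwall bound would carry a factor $r_{0}/(-t)$ raised to a fixed power and, once the $O(M^{2}(-t)^{-3})$ source is fed in, a genuine $\log r_{0}$. The resolution is the exact weight $p=(-t)|\ds\mu|_{\slashed{g}}$ tuned to the Minkowski value of $\tr\chib$: it converts the dangerous term into a perfect cancellation and reduces the homogeneous coefficient to the $r_{0}$-uniformly integrable $\delta M^{2}(-t)^{-2}$, after which the extreme smallness of $\ds c|_{\Sigma_{-r_{0}}}$ makes its bounded amplification irrelevant. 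Justifying this cancellation with the sharp $(-t)$-weights --- that is, estimating $\chib+(-t)^{-1}\slashed{g}$ and all the lower-order corrections --- is exactly the part that reuses the bookkeeping of \cite{M-Y}, now carried out with $t$-weights alongside the $\delta$-weights.
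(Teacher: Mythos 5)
Your proposal is correct, and it is essentially the argument the paper has in mind: the paper omits the proof (deferring to \cite{M-Y}), but its higher-order analogues (Proposition \ref{prop mu L infty estimates} and the refined bound \eqref{precise bound on d mu}) proceed exactly as you do, by commuting with the transport equation $\Lb\mu=m+\mu e$ and integrating along the $\Lb$-lines with the bounds \eqref{bound on m}, \eqref{bound on e}, \eqref{bound on mu}. The one genuine variation is in the angular estimate: the paper's habit is to commute with $R_i\sim(-t)\ds$, which absorbs the $t$-weight automatically but needs the $[\Lb,R_i]$ deformation-tensor bounds (not yet available at this point of Section 3), whereas you commute with the coordinate $\ds$ (zero commutator) and recover the weight through the quantity $p=(-t)|\ds\mu|_{\slashed g}$, paying for it with the $\tr\chib$-cancellation; the two devices are equivalent, and yours is self-contained at this stage. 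Two small points to tighten: the cancellation should be justified from \eqref{precise bound on chib'}, whose proof uses the regularized propagation equation \eqref{Structure Equation Lb chibAB nonsingular} rather than the $\mu^{-1}$-singular \eqref{Structure Equation Lb trchib} you cite, and since \eqref{precise bound on chib'} rests only on (B.1) there is no circularity in invoking it before the lemma in the paper's ordering; and your identity $\Lambda\mu=-2\zetab\cdot\ds\mu-|\ds\mu|^2_{\slashed g}$, together with $T\mu|_{\Sigma_{-r_0}}=Tc$ (valid because $\kappa\equiv1$ on $\Sigma_{-r_0}$ and $T$ is tangent to it), checks out and gives exactly the $\delta^{-1}M^2(-t)^{-1}$ bound driven by the $Tm$ source.
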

As a corollary of these lemmas, we can show
\begin{corollary}
For sufficiently small $\delta$, we have
\begin{equation}\label{bound on zetab and etab}
\|\zetab\|_{L^\infty(\Sigma_t)} \lesssim \delta M^2(-t)^{-2}, \ \ \|\etab\|_{L^\infty(\Sigma_t)} \lesssim M^2(-t)^{-2}.
\end{equation}
\end{corollary}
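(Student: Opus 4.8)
The plan is to obtain \eqref{bound on zetab and etab} as an immediate consequence of the algebraic identities for the torsion one-forms recorded in Section~2.3, namely
\[
\zetab_A = -c^{-1}\mu\, X_A(c), \qquad \etab_A = \zetab_A + X_A(\mu),
\]
together with the pointwise bounds already proved in \eqref{bound on c}, \eqref{bound on mu} and \eqref{bound on T mu and slashed d mu}. The argument is entirely a substitution; no new transport or elliptic estimate is needed (this is why it is stated as a corollary), and — in keeping with the theme of the paper — no constant that appears will depend on $r_0$.

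First I would treat $\zetab$. Choosing $\{X_A\}$ to be a frame on $S_{t,\ub}$ orthonormal with respect to $\slashed g$, one has $|X_A(c)| \le |\ds c|_{\slashed g}$ pointwise, so that $|\zetab| \lesssim c^{-1}\mu\,|\ds c|$. Taking $L^\infty(\Sigma_t)$ norms and using $\tfrac12\le c\le 2$ from \eqref{bound on c}, the bound on $\mu$ from \eqref{bound on mu}, and $(-t)\|\ds c\|_{L^\infty(\Sigma_t)}\lesssim \delta M^2(-t)^{-2}$ (again \eqref{bound on c}), we get
\[
\|\zetab\|_{L^\infty(\Sigma_t)} \lesssim \|\ds c\|_{L^\infty(\Sigma_t)} \lesssim \delta M^2(-t)^{-3} \le \delta M^2(-t)^{-2},
\]
where the last step uses $-t\ge 1$ on $W_\delta^*$. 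Next, for $\etab$ I would use the second identity: $|\etab|\le |\zetab| + |\ds\mu|_{\slashed g}$ pointwise, and \eqref{bound on T mu and slashed d mu} gives $\|\ds\mu\|_{L^\infty(\Sigma_t)}\lesssim M^2(-t)^{-2}$, so
\[
\|\etab\|_{L^\infty(\Sigma_t)} \lesssim \|\zetab\|_{L^\infty(\Sigma_t)} + \|\ds\mu\|_{L^\infty(\Sigma_t)} \lesssim M^2(-t)^{-2}.
\]
The absence of a $\delta$-gain for $\etab$ is genuine and structural: it simply records that $\ds\mu$ carries smallness only in $(-t)$, not in $\delta$, whereas $\ds c$ carries both.

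There is no real obstacle here; the only points I would be careful about in a full write-up are bookkeeping ones: fixing the normalization of $\{X_A\}$ so that the pointwise magnitudes $|\zetab|_{\slashed g} = (\slashed g^{AB}\zetab_A\zetab_B)^{1/2}$, $|\etab|_{\slashed g}$ are genuinely controlled by $\|\ds c\|$ and $\|\ds\mu\|$, and checking that the scalar factors $c^{-1}$ and $\mu$ are uniformly bounded in the region $W_\delta^*$ (which is provided by \eqref{bound on c} and \eqref{bound on mu}), so that they can be absorbed into the implied constants. As noted in the text, the computation follows the corresponding one in \cite{M-Y} verbatim once these bounds are in hand.
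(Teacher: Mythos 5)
Your proposal is correct and is essentially the argument the paper intends: the corollary is meant to follow directly from the identities $\zetab_A=-c^{-1}\mu\,X_A(c)$, $\etab_A=\zetab_A+X_A(\mu)$ together with \eqref{bound on c}, \eqref{bound on mu} and \eqref{bound on T mu and slashed d mu}, exactly as in \cite{M-Y}, with no new transport or elliptic input. Your intermediate bound $\|\zetab\|_{L^\infty(\Sigma_t)}\lesssim\delta M^2(-t)^{-3}$ is in fact the sharper form the paper itself uses later (e.g.\ $|\mu^{-1}\zetab|\lesssim\delta(-t)^{-3}$), and weakening it by $(-t)\geq 1$ to match the stated $(-t)^{-2}$ is fine.
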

We now estimate $\chib_{AB}$. For this purpose, we introduce
\begin{equation}\label{definition for chib prime}
\chib'_{AB} = \chib_{AB} + \frac{\slashed{g}_{AB}}{\ub-t}.
\end{equation}
which measures the deviation of $\chib_{AB}$ from the null $2^{\text{nd}}$ fundamental form in Minkowski space. We have
\begin{lemma}
For sufficiently small $\delta$, we have
\begin{equation}\label{precise bound on chib'}
 \| \chib'_{AB}\|_{L^\infty(\Sigma_t)} \lesssim  \delta M^2(-t)^{-3}.
\end{equation}
\end{lemma}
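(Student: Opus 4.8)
The plan is to derive a transport equation for $\chib'_{AB}$ along the integral curves of $\Lb$ — which in the optical coordinates are the lines of constant $(\ub,\theta)$, with $\Lb = \partial/\partial t$ — that is \emph{regular} in $\mu$, and then to integrate it from the initial slice $\Sigma_{-r_{0}}$, on which $\chib'_{AB}$ is already of size $\delta r_{0}^{-3}$. Throughout I would estimate $\chib'$ through its components in the optical coordinates (equivalently, in a controlled frame on $S_{t,\ub}$), the passage to the tensor norm being harmless since $\gslash$ and $\gslash^{-1}$ are controlled.

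First I would compute $\Lb\chib'_{AB}$. Combining the regular structure equation \eqref{Structure Equation Lb chibAB nonsingular}, the first--variation identity $\Lb\gslash_{AB} = 2\chib_{AB}$ (valid because $[\Lb,X_{A}] = 0$ in optical coordinates), and $\Lb(\ub-t) = \Lb\ub - \Lb t = -1$ (so $\Lb\big((\ub-t)^{-1}\big) = (\ub-t)^{-2}$), and then substituting $\chib_{AB} = \chib'_{AB} - \gslash_{AB}(\ub-t)^{-1}$ throughout, one finds that all the ``Minkowskian'' pieces — the terms proportional to $\chib'_{AB}(\ub-t)^{-1}$ and to $\gslash_{AB}(\ub-t)^{-2}$ — cancel, leaving
\begin{equation*}
\Lb\chib'_{AB} = e\,\chib'_{AB} + \chib'_{A}{}^{C}\chib'_{BC} - \alphab'_{AB} - \frac{e}{\ub-t}\,\gslash_{AB}.
\end{equation*}
Starting from \eqref{Structure Equation Lb chibAB nonsingular} rather than \eqref{Structure Equation Lb chibAB} is essential: the right--hand side carries no $\mu^{-1}$, so no shock--type singularity enters the estimate for $\chib'$.

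Next I would integrate this equation along $\Lb$ from $t = -r_{0}$ to $t$, controlling the right--hand side pointwise. Since $e = \tfrac{1}{2c^{2}}\tfrac{d(c^{2})}{d\rho}\Lb\rho = \Lb\log c$, the bound \eqref{bound on c} gives $\|e\|_{L^{\infty}(\Sigma_{t})} \lesssim \delta M^{2}(-t)^{-3}$; since $0 \le \ub \le \delta$ and $t \le -1$ we have $\ub - t \sim (-t)$, so $\big\|\tfrac{e}{\ub-t}\gslash_{AB}\big\|_{L^{\infty}(\Sigma_{t})} \lesssim \delta M^{2}(-t)^{-4}$. For $\alphab'_{AB}$: since $\rho = (\partial_{t}\phi)^{2}$ and $\partial_{t}\phi$ is one of the variations, its explicit formula makes $\alphab'_{AB}$ schematically $\psi\,\slashed{\nabla}^{2}\psi + (\ds\psi)^{2}$ up to $O(1)$ functions of $c$, so (B.1) together with the comparison between $\slashed{\nabla}$ on $S_{t,\ub}$ and the rotation fields $R_{i}$ gives $\|\alphab'_{AB}\|_{L^{\infty}(\Sigma_{t})} \lesssim \delta M^{2}(-t)^{-4}$. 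On $\Sigma_{-r_{0}}$ one has $\ub - t = r$, $\mu = c$, and $\chib_{AB} = -c\,\theta_{AB} = -\tfrac{c}{r}\gslash_{AB}$, hence the exact identity $\chib'_{AB}\big|_{\Sigma_{-r_{0}}} = \tfrac{1-c}{r}\gslash_{AB}$, so by \eqref{initial estimates on mu} and $r \sim r_{0}$ we get $\|\chib'_{AB}\|_{L^{\infty}(\Sigma_{-r_{0}})} \lesssim \delta r_{0}^{-3} \le \delta(-t)^{-3}$. Now integrate: the linear coefficient $e$ is time--integrable, $\int_{-r_{0}}^{t}\|e\|_{L^{\infty}(\Sigma_{t'})}\,dt' \lesssim \delta M^{2}(-t)^{-2} \lesssim 1$, so Gronwall contributes only a bounded factor; the quadratic term $\chib'_{A}{}^{C}\chib'_{BC}$ is absorbed by a standard continuity argument, since under the bootstrap $\|\chib'\| \le 2C\delta M^{2}(-t)^{-3}$ its contribution is $\lesssim \int_{-r_{0}}^{t}\delta^{2}M^{4}(-t')^{-6}\,dt' \lesssim \delta^{2}M^{4}(-t)^{-5}$; and the remaining inhomogeneous terms contribute $\int_{-r_{0}}^{t}\big(\|\alphab'\| + \big\|\tfrac{e}{\ub-t}\gslash\big\|\big)\,dt' \lesssim \int_{-r_{0}}^{t}\delta M^{2}(-t')^{-4}\,dt' \lesssim \delta M^{2}(-t)^{-3}$. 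Summing, $\|\chib'_{AB}\|_{L^{\infty}(\Sigma_{t})} \lesssim \delta M^{2}(-t)^{-3}$, which closes the continuity argument (for $\delta$ small relative to $M$).

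The main point — and the only place where $r_{0}$--independence could be lost — is the pointwise control of the inhomogeneous terms: one needs $\alphab'_{AB}$ and $e(\ub-t)^{-1}\gslash_{AB}$ to decay like $(-t)^{-4}$, not merely like $(-t)^{-3}$, so that their time integral from $-r_{0}$ is $O\big((-t)^{-3}\big)$ uniformly in $r_{0}$ (and has a limit as $r_{0} \to \infty$). This extra power of $(-t)^{-1}$ rests on $\Lb$ and $\ds$ being ``good'' derivatives in (B.1) — they do not cost a factor $\delta^{-1}$, unlike $T$ — and, for $e$, on the observation that $e = \Lb\log c$ is itself an $\Lb$--derivative. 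One also needs the initial datum $\chib'_{AB}|_{\Sigma_{-r_{0}}}$ to be of size $\delta r_{0}^{-3}$ (not $\delta r_{0}^{-2}$), which is exactly why the exact cancellation in $\chib'_{AB}|_{\Sigma_{-r_{0}}} = (1-c)r^{-1}\gslash_{AB}$, combined with $\|c-1\|_{L^{\infty}(\Sigma_{-r_{0}})} \lesssim \delta r_{0}^{-2}$, is used rather than bounding the two terms of $\chib'_{AB}$ separately.
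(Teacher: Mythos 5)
Your proposal follows essentially the same route as the paper: the regular transport equation \eqref{transport equation for chib prime} derived from \eqref{Structure Equation Lb chibAB nonsingular}, the pointwise inputs $\|e\|\lesssim \delta M^{2}(-t)^{-3}$, $\|\alphab'\|\lesssim\delta M^{2}(-t)^{-4}$ from (B.1), the exact cancellation $\chib'_{AB}|_{\Sigma_{-r_{0}}}=(1-c)r^{-1}\gslash_{AB}$ giving the $\delta r_{0}^{-3}$ datum as in \eqref{initial L infinity estimates for tr chib prime}, and a Gronwall-plus-continuity closure. The one step you gloss over is the passage from the tensorial transport equation to a scalar inequality for the norm: it is not ``harmless because $\gslash,\gslash^{-1}$ are controlled'' --- the coordinate components of $\gslash$ grow like $t^{2}$, and correspondingly $\Lb|\chib'|^{2}_{\gslash}$ picks up, via $\slashedL_{\Lb}\gslash^{-1}$, the extra term $2|\chib'|^{2}/(\ub-t)$, whose Gronwall factor $(\ub-t')/(\ub-t)\lesssim r_{0}/|t|$ is exactly the kind of $r_{0}$-dependent loss you are trying to rule out (so the ``only place $r_{0}$-independence could be lost'' is not just the source terms). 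The paper handles this by propagating the weighted quantity $(t-\ub)^{2}|\chib'|$, for which that term is absorbed; equivalently, if you keep the amplification factor in Duhamel your numerology still closes, since the sources decay like $(-t)^{-4}$ and the datum is $\delta r_{0}^{-3}$ (giving $\delta r_{0}^{-2}(-t)^{-1}\le\delta(-t)^{-3}$). With that bookkeeping made explicit, your argument is the paper's proof.
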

\begin{proof}
According to \eqref{Structure Equation Lb chibAB nonsingular}, we have
\begin{equation}\label{transport equation for chib prime}
\Lb (\chib_{AB}') = e \chib'_{AB} + \chib'_{A}{}^{C}\chib'_{BC}- \frac{e}{\ub - t}\slashed{g}_{AB} - \alphab'_{AB}.
\end{equation}
Hence, $\Lb|\chib'|^2_{\slashed{g}} =2e|\chib'|^2 -2\chib'_{A}{}^{B} \chib'_{B}{}^{C} \chib'_{C}{}^{A}+\frac{2|\chib'|^{2}}{\ub-t}-\frac{2e}{\ub-t}\tr\chib' - 2\chib'^{AB}\alphab_{AB}$.
Therefore, we obtain
\begin{align}\label{a 1}
\Lb((t-\ub)^{2}|\chib'|)\lesssim (t-\ub)^{2}\left((|e|+|\chib'|)|\chib'|+\frac{|e|}{\ub-t}+|\alphab'|\right)
\end{align}
Let $\mathcal{P}(t)$ be the property that $\|\chib'\|_{L^{\infty}(\Sigma_{t'})}\leq C_{0}\delta M^{2}(-t')^{-3}$ for all $t'\in[-r_{0},t]$. By choosing $C_{0}$ suitably large, according to the assumptions on initial data, we have $\|\chib'\|_{L^{\infty}(\Sigma_{-r_{0}})}<C_{0}\delta r^{-3}_{0}$. It follows by continuity that $\mathcal{P}(t)$ is true for $t$ sufficiently close to $-r_{0}$. Let $t_{0}$ be the upper
bound of $t\in [-r_{0}, t_0]$ for which $\mathcal{P}(t)$ holds. By continuity, $\mathcal{P}(t_0)$ is true. Therefore, for $t\leq t_0$, we have $|\chib'(t)|+|e(t)|\leq (C_{0}+C_{1})\delta M^{2}(-t)^{-3}$ for a universal constant $C_{1}$. According to the explicit formula of $\alphab'$ and (B.1), for sufficiently small $\delta$, there is a universal constant $C_3$ so that $\|\alphab'\|_{L^\infty(\Sigma_t)} \leq C_3\delta M^{2}(-t)^{-4}$. In view of \eqref{a 1}, there is a universal constant $C_{4}$ so that
\begin{align}\label{chib transport}
\Lb((t-\ub)^{2}|\chib'|)\leq C_{4}(t-\ub)^{2}\left((C_{0}+C_{1})\delta M^{2}(-t)^{-3}|\chib'|+(C_{1}+C_{3})\delta M^{2}(-t)^{-4}\right).
\end{align}
If we define $x(t)=(t-\ub)^{2}|\chib'|$ along the integral curve of $\Lb$, then we can rewrite \eqref{chib transport} as $\frac{dx}{dt}\leq fx+g$, where $f(t)=C_{4}(C_{0}+C_{1})\delta M^{2}(-t)^{-3}$ and $g(t)=C_{4}(C_{1}+C_{3})\delta M^{2}(-t)^{-2}$. By integrating from $-r_{0}$ to $t$, we obtain
\begin{align*}
x(t)\leq e^{\int_{-r_{0}}^{t}f(t')dt'}\big(x(-r_{0})+\int_{-r_{0}}^{t}e^{-\int_{-r_{0}}^{t'}f(t'')dt''}g(t')dt'\big).
\end{align*}
Taking into account the facts that $\int_{-r_{0}}^{t}f(t')dt'\leq C_{4}(C_{0}+C_{1})\delta M^{2}(-t)^{-1}$ and $\int_{-r_{0}}^{t}g(t')dt'\leq C_{4}(C_{1}+C_{3})\delta M^{2}(-t)^{-1}$, since $(t-\ub)^{2}\sim t^{2}$ on the support of $\chib'$, for some universal constant $C_{5}$, we have
\begin{align}\label{chib infty 1}
\|\chib'\|_{L^{\infty}(\Sigma_{t})}\leq &C_{5}e^{C_{4}(C_{0}+C_{1})\delta M^{2}}(-t)^{-2}\left(r_{0}^{2}\|\chib'\|_{L^{\infty}(\Sigma_{-r_{0}})}+C_{4}(C_{1}+C_{3})\delta M^{2}(-t)^{-1}\right)\\\notag
\leq &C_{5}e^{C_{4}(C_{0}+C_{1})\delta M^{2}}(-t)^{-3}\left(r_{0}^{3}\|\chib'\|_{L^{\infty}(\Sigma_{-r_{0}})}+C_{4}(C_{1}+C_{3})\delta M^{2}\right)
\end{align}
We then fix $C_{0}$ in such a way that $C_{0}>2C_{5}C_{4}(C_{1}+C_{3})$ and $C_{0}>\frac{r^{3}_{0}\|\chib'\|_{L^{\infty}(\Sigma_{t})}}{4C_{5}}$. Provided $\delta$ satisfying $\delta<\frac{\log 2}{C_{4}(C_{0}+C_{1})M^{2}}$, the estimate \eqref{chib infty 1} implies $\|\chib'\|_{L^{\infty}(\Sigma_{t})}<C_{0}\delta M^{2}(-t)^{-3}$ for all $t\in[-r_{0},t_{0}]$. By continuity, $\mathcal{P}(t)$ holds for some $t>t_{0}$. Hence the lemma follows.
\end{proof}

\begin{remark}[\textbf{Estimates related to the conformal optical metric $\gt$}]
 As in \cite{M-Y}, we shall use $\ \widetilde{ } \ $ to indicate the quantities defined with respect to $\gt$.

We expect the quantities (with $\ \widetilde{} \ $) defined with respect to $\gt$ have the similar estimates as the counterparts (without $\ \widetilde{} \ $) defined with respect to $g$. This is clear: the difference can be explicitly computed in terms of $c$ and hence controlled by the estimates on $c$. For example, the difference between $\chibt'_{AB}$ and $\chib'_{AB}$ is $\chibt_{AB}- \chib_{AB}= -\frac{1}{2c^2} \Lb(c)\slashed{g}_{AB}$, based on \eqref{bound on c} and \eqref{precise bound on chib'}, we have
\begin{equation}\label{precise bound on chibt}
\left\| \chibt_{AB} + \frac{\slashed{g}_{AB}}{\ub-t}\right\|_{L^\infty(\Sigma_t)} \lesssim  \delta M^2(-t)^{-3}.
\end{equation}
\end{remark}
\subsubsection{Estimates on deformation tensors}
Now we consider the deformation tensors of the following five commutation vectorfields: $Z_1 = T, Z_2 = R_1, Z_3 = R_2, Z_4 =R_3, Z_5 = Q$. The notation $Z^\alpha$ for a multi-index
$\alpha =(i_1,\cdots,i_m)$ means $Z_{i_1} Z_{i_2} \cdots Z_{i_m}$ with $i_j \in \{1,2,3,4,5\}$. For a vectorfield $Z$, the deformation tensor $^{(Z)}{}\pi$ or $^{(Z)}{}\widetilde{\pi}$ with respect to $g$ and $\gt$ is defined by
${}^{(Z)}{}{\pi}_{\alpha\beta} = \nabla_\alpha Z_\beta + \nabla_\beta Z_\alpha$ or ${}^{(Z)}{}\widetilde{\pi}_{\alpha\beta} = \frac{1}{c}{}^{(Z)}{}{\pi}_{\alpha\beta} + Z(\frac{1}{c})g_{\alpha\beta}$. As in \cite{M-Y}, a direct calculation gives the expressions for deformation tensors of $T$
\begin{equation}\label{deformation tensor of T}
 \begin{split}
  ^{(T)}{}\widetilde{\pi}_{\Lb\Lb}&=0,\ \ ^{(T)}{}\widetilde{\pi}_{LL}= 4 c^{-1}\mu T(c^{-2}\mu),\ \ ^{(T)}{}\widetilde{\pi}_{L\Lb}= -2c^{-1}(T\mu -c^{-1} \mu T(c)),\\
 ^{(T)}{}\widetilde{\pi}_{L A}&=-c^{-3}\mu(\zetab_A+\etab_A),\ \ ^{(T)}{}\widetilde{\pi}_{\Lb A}=-c^{-1}(\zetab_A+\etab_A),\\
^{(T)}{}\widehat{\widetilde{\slashed\pi}}_{A B}&=-2c^{-3}\mu\widehat{\chib}_{AB},\ \ \text{tr}{}^{(T)}\widetilde{\slashed\pi}=-2c^{-3}\mu \text{tr}_{\tilde{\slashed{g}}}\widetilde{\chib}.
 \end{split}
\end{equation}
and those of $Q$:
\begin{equation}\label{deformation tensor of Q}
\begin{split}
\leftexp{(Q)}{\tilde{\pi}}_{\Lb\Lb}&=0,\ \leftexp{(Q)}{\tilde{\pi}}_{LL}=4t\Lb(c^{-2}\mu)(c^{-1}\mu)-4c^{-3}\mu^{2},\  \leftexp{(Q)}{\tilde{\pi}}_{L\Lb}=-2t\Lb(c^{-1}\mu)
-2c^{-1}\mu\\
\leftexp{(Q)}{\tilde{\pi}}_{\Lb A}&=0,\  \leftexp{(Q)}{\tilde{\pi}}_{LA}=2tc^{-1}(\zetab_{A}+\etab_{A}), \  \leftexp{(Q)}{\widehat{\tilde{\slashed{\pi}}}}_{AB}
=2tc^{-1}\widehat{\chib}_{AB},\  \text{tr}\leftexp{(Q)}{\tilde{\slashed{\pi}}}=2c^{-1}t\text{tr}_{\tilde{\slashed{g}}}\widetilde{\chib}.
\end{split}
\end{equation}
as well as their estimates:
\begin{align}\label{L infinity etimates on the deformation tensor of T}
 \|\mu^{-1}{}^{(T)}{}\widetilde{\pi}_{LL}\|_{L^\infty(\Sigma_t)}&\lesssim \delta^{-1}M^2(-t)^{-1},\quad \|{}^{(T)}{}\widetilde{\pi}_{L\Lb}\|_{L^\infty(\Sigma_t)}\lesssim \delta^{-1}M^2(-t)^{-1},\\\notag
  \|\mu^{-1}{} ^{(T)}{}\widetilde{\pi}_{L A}\|_{L^\infty(\Sigma_t)}&\lesssim M^2(-t)^{-2}, \quad
 \|{}^{(T)}{}\widetilde{\pi}_{\Lb A}\|_{L^\infty(\Sigma_t)}\lesssim M^2(-t)^{-2}, \\\notag \|\mu^{-1} \leftexp{(T)}{\widehat{\widetilde{\slashed\pi}}}_{A B}\|_{L^\infty(\Sigma_t)}&\lesssim \delta M^2(-t)^{-3},\quad \|\mu^{-1}\text{tr}{}^{(T)}\widetilde{\slashed\pi}\|_{L^\infty(\Sigma_t)}\lesssim (-t)^{-1}.
\end{align}
\begin{equation}\label{Linfty of deformation tensor of Q}
\begin{split}
\|\mu^{-1}\leftexp{(Q)}{\tilde{\pi}}_{LL}\|_{L^{\infty}(\Sigma_{t})}&\lesssim 1+M^{2}(-t)^{-1},\ \|\leftexp{(Q)}{\tilde{\pi}}_{L\Lb}\|_{L^{\infty}(\Sigma_{t})}\lesssim 1+M^{2}(-t)^{-1},\ \|\leftexp{(Q)}{\tilde{\pi}}_{LA}\|_{L^{\infty}(\Sigma_{t})}\lesssim M^{2}(-t)^{-1},\\
\|\leftexp{(Q)}{\widehat{\tilde{\slashed{\pi}}}}_{AB}\|_{L^{\infty}(\Sigma_{t})}&\lesssim\delta M^{2}(-t)^{-2},\ \|\text{tr}\leftexp{(Q)}{\tilde{\slashed{\pi}}}\|_{L^{\infty}(\Sigma_{t})}\lesssim 1.
\end{split}
\end{equation}
Actually the estimate for $\tr\leftexp{(Q)}{\widetilde{\slashed{\pi}}}$ can be improved more precisely. Let us rewrite the following component of deformation tensor of $Q$:
\begin{align*}
\text{tr}\leftexp{(Q)}{\tilde{\slashed{\pi}}}=2c^{-1}t
\text{tr}_{\tilde{\slashed{g}}}
\widetilde{\chib}=2c^{-1}t
\text{tr}_{\tilde{\slashed{g}}}
\widetilde{\chib}'+4\frac{c^{-1}\ub}{\ub-t}-4(c^{-1}-1)-4
\end{align*}
This tells us:
\begin{align}\label{Improved estimate for trQ}
\|\text{tr}\leftexp{(Q)}{\tilde{\slashed{\pi}}}+4\|_{L^{\infty}(\Sigma_{t}^{\ub})}\lesssim\delta(-t)^{-1}.
\end{align}

To derive the pointwise estimates for the deformation tensors of $R_{i}$ requires more work. First, in view of \eqref{explicit expression of R i}, we have the following expressions:
\begin{equation}\label{deformation tensor of Rotational R_i in T,Lb,X_A}
\begin{split}
  ^{(R_i)}{}{\pi}_{\Lb\Lb}&=0,\  ^{(R_i)}{}{\pi}_{TT}= 2c^{-1}\mu \cdot R_i(c^{-1}\mu), \  ^{(R_i)}{}{\pi}_{\Lb T}= -R_i (\mu), \ \ ^{(R_i)}{}{\pi}_{AB}= -2\lambda_i \theta_{AB},\\
^{(R_i)}{}{\pi}_{T A}
&=-c^{-1}\mu (\theta_{AB}-\frac{\slashed{g}_{AB}}{\ub-t})R_i{}^B + c^{-1} \mu \varepsilon_{ikj} y^k X_A{}^{j} + \lambda_i X_A (c^{-1}\mu),\\
^{(R_i)}{}{\pi}_{\Lb A}&=-\chib_{AB}R_i{}^B + \Lb^k \varepsilon_{ikj}X_A{}^{j} + c\mu^{-1}\lambda_i \zetab_A=-(\chib_{AB}+\frac{\slashed{g}_{AB}}{\ub-t})R_i{}^B +  \varepsilon_{ikj}z^k X_A{}^{j} + c\mu^{-1}\lambda_i \zetab_A.
\end{split}
\end{equation}
The Latin indices $i,j,k$ are defined with respect to the Cartesian coordinates on $\Sigma_t$. To bound deformation of $R_{i}$, it suffices to control the $\lambda_i$'s, $y^i$'s and $z^i$'s.

First of all, we have
\begin{equation}\label{estimates on T hat i and Lb i}
|\widehat{T}^i|+|\Lb^i|\lesssim 1.
\end{equation}
The proof is straightforward: $g\big|_{\Sigma_t}$ is flat and $\widehat{T}$ is the unit normal of $S_{t,\ub}$ in $\Sigma_t$, so $|\widehat{T}^i|\leq 1$. In the Cartesian coordinates $(t,x^1,x^2,x^3)$, $\Lb = \partial_t - c\widehat{T}^i \partial_i$, so $|\Lb^i|\lesssim 1$.

Let $r=(\sum_{i=1}^3 x^i)^{\frac{1}{2}}$. Since $Tr =  c^{-1}\mu \sum_{i=1}^3 \frac{x^i \widehat{T}^i}{r}$,
\eqref{estimates on T hat i and Lb i} implies that $|T r| \lesssim 1+ M^2(-t)^{-1}$. We then integrate from $0$ to $\ub$,
since $r=-t$ when $\ub = 0$ and $|\ub| \leq \delta$, we obtain $|r+t|\lesssim \delta M^2$. In application, for sufficiently small $\delta$,
we often use $r \sim |t|$. The estimate can also be written as
\begin{equation}\label{estimates on r precise}
\left|\frac{1}{r} - \frac{1}{\ub+|t|}\right|\lesssim \delta M^{2}(-t)^{-2}.
\end{equation}
To control $\lambda_i$, we consider its $\Lb$ derivative. By definition $\lambda_i = g(\Omega_i, \widehat{T})$,
we can write its derivative along $\Lb$ as $\Lb \lambda_i = \sum_{k=1}^3 (\Omega_i){}^k \Lb \widehat{T}^k=\sum_{k=1}^3 (\Omega_i){}^k X^A(c) X_A{}^k$.
As $|t| \sim r$, we have  $|\Omega_i| \lesssim |t|$, this implies
\begin{equation}\label{estimates on Lb lambda_i}
 \|\Lb \lambda_i\|_{L^\infty(\Sigma_t)} \lesssim \delta M^2(-t)^{-2}.
\end{equation}
Since $\lambda_i = 0$ on $\Sigma_{-r_0}$, we have
\begin{equation}\label{estimates on lambda_i}
 \|\lambda_i\|_{L^\infty(\Sigma_t)} \lesssim \delta M^2(-t)^{-1}.
\end{equation}
To control $y^i$'s and $z^i$'s, let $\overline{y}=(y^1,y^2,y^3)$ and $\overline{x}=(x^1,x^2,x^3)$, we then have
\begin{equation}\label{y estimates 1}
 |\overline{y}-(\frac{1}{r}-\frac{1}{\ub-t})\overline{x}|^2=|(g(\widehat{T},\partial_r)-1)\partial_r|^2+\frac{1}{r^{2}}\sum_{i=1}^{3}\lambda_{i}^{2}
\end{equation}
On the other hand, we have $1-|g(\widehat{T},\partial_{r})|^{2}=\frac{1}{r^{2}}\sum_{i=1}^{3}\lambda_{i}^{2}\lesssim \delta M^{2}(-t)^{-4}$.
While on $S_{t,0}$,
Since $g(\partial_{r},\widehat{T})=1$ on $S_{t,0}$, for sufficiently small $\delta$, the angle between $\partial_{r}$ and $\widehat{T}$ is less than $\frac{\pi}{2}$, which implies $1+g(\widehat{T},\partial_{r})\geq 1$. Therefore,
\begin{align}\label{angle estimate}
|1-g(\widehat{T},\partial_{r})|\lesssim \delta M^{2}(-t)^{-4}.
\end{align}
Together with
\eqref{estimates on lambda_i} and \eqref{y estimates 1}, this implies
\begin{equation}\label{estimates on y^i}
 |y^i|\lesssim \delta M^2(-t)^{-2},\quad |y'^{i}|\lesssim \delta M^{2}(-t)^{-2}.
\end{equation}
We then control $z^i$ from its definition
\begin{equation}\label{estimates on z^i}
 |z^i|\lesssim \delta M^2(-t)^{-2}.
\end{equation}
The derivatives of $\lambda_{i}$ on $\Sigma_{t}$ are given by $X_{A}(\lambda_{i})=\left(\theta_{AB}-\frac{\slashed{g}_{AB}}{\ub-t}\right)R^{B}_{i}-\varepsilon_{ikj}y^{k}X^{j}_{A}$ and $T(\lambda_{i})=-R_{i}(c^{-1}\mu)$. Hence,
\begin{align*}
\|\ds\lambda_{i}\|_{L^{\infty}(\Sigma_{t})}\lesssim \delta M^{2}(-t)^{-2},\quad \|T\lambda_{i}\|_{L^{\infty}(\Sigma_{t})}\lesssim M^{2}(-t)^{-1}
\end{align*}
Finally, we obtain the following estimates for the deformation tensor of $R_i$:
\begin{equation*}
 \begin{split}
  ^{(R_i)}{}{\pi}_{\Lb \Lb}=0, \ \ \|{} ^{(R_i)}&{}{\widetilde{\pi}}_{\Lb T}\|_{L^\infty(\Sigma_t)}\lesssim M^2(-t)^{-1}, \ \  \|\mu^{-1}{}^{(R_i)}{}{{\pi}}_{TT}\|_{L^\infty(\Sigma_t)} \lesssim M^2(-t)^{-1},\\
\|{}^{(R_i)}{}{{\pi}}_{\Lb A}\|_{L^\infty(\Sigma_t)} &\lesssim \delta M^2(-t)^{-2},\ \ \|{}^{(R_i)}{}{\pi}_{T A}\|_{L^\infty(\Sigma_t)} \lesssim \delta M^4(-t)^{-2},\\
\|{}^{(R_i)}{}\widehat{{\slashed\pi}}_{AB}\|_{L^\infty(\Sigma_t)} &\lesssim \delta^2 M^4(-t)^{-4},\ \ \|\text{tr}{}^{(R_i)}{}{{\slashed\pi}}\|_{L^\infty(\Sigma_t)} \lesssim \delta M^4(-t)^{-2}.
  \end{split}
\end{equation*}
We use the relation $L = c^{-2}\mu \Lb + 2T$ to rewrite the above estimates in null frame as follows:
\begin{equation}\label{estimates on the deformation tensor of Rotational R_i in g metric}
 \begin{split}
 \|{} ^{(R_i)}{}{\pi}_{\Lb L}\|_{L^\infty(\Sigma_t)}&\lesssim M^2(-t)^{-1}, \ \  \|\mu^{-1}{}^{(R_i)}{}{\pi}_{LL}\|_{L^\infty(\Sigma_t)} \lesssim M^2(-t)^{-1},\\
\|{}^{(R_i)}{}{\pi}_{\Lb A}\|_{L^\infty(\Sigma_t)} &\lesssim  \delta M^2(-t)^{-2},\ \ \|{}^{(R_i)}{}{\pi}_{L A}\|_{L^\infty(\Sigma_t)} \lesssim \delta M^4(-t)^{-2},\\
\|{}^{(R_i)}{}\widehat{\slashed\pi}_{AB}\|_{L^\infty(\Sigma_t)} &\lesssim \delta^2 M^4(-t)^{-4},\ \ \|\text{tr}{}^{(R_i)}{}{\slashed\pi}\|_{L^\infty(\Sigma_t)} \lesssim \delta M^4(-t)^{-2}.
  \end{split}
\end{equation}
The deformation tensors of $R_i$'s with respect to $\widetilde{g}$ are estimated by
\begin{equation}\label{estimates on the deformation tensor of Rotational R_i in g tilde}
 \begin{split}
  ^{(R_i)}{}\widetilde{\pi}_{\Lb \Lb}=0, \ \ \|{} ^{(R_i)}&{}{\widetilde{\pi}}_{\Lb L}\|_{L^\infty(\Sigma_t)}\lesssim M^2(-t)^{-1}, \ \  \|\mu^{-1}{}^{(R_i)}{}{\widetilde{\pi}}_{LL}\|_{L^\infty(\Sigma_t)} \lesssim M^2(-t)^{-1},\\
\|{}^{(R_i)}{}{\widetilde{\pi}}_{\Lb A}\|_{L^\infty(\Sigma_t)} &\lesssim \delta M^2(-t)^{-2},\ \ \|{}^{(R_i)}{}\widetilde{\pi}_{L A}\|_{L^\infty(\Sigma_t)} \lesssim \delta M^4(-t)^{-2},\\
\|{}^{(R_i)}{}\widehat{\widetilde{\slashed\pi}}_{AB}\|_{L^\infty(\Sigma_t)} &\lesssim \delta^2 M^4(-t)^{-4},\ \ \|\text{tr}{}^{(R_i)}{}{\widetilde{\slashed\pi}}\|_{L^\infty(\Sigma_t)} \lesssim \delta M^4(-t)^{-2}.
  \end{split}
\end{equation}
\subsubsection{Applications of the estimates on $\lambda_i$ and $y^i$} As in \cite{M-Y}, based on calculations in [Ch-Shocks] and [Ch-Miao], we are able to show that the $R_i$ derivatives are equivalent to the $(-t)\slashed{d}$ and $(-t)\nablaslash$ derivative.
For a 1-form $\xi$ on $S_{t,\ub}$, we have $\sum_{i=1}^3 \xi(R_i)^2 = r^2\Big(|\xi|^2-\big(\xi(y')\big)^2\Big)$.
This is indeed can be derived from the formula $\sum_{i=1}^{3}(R_{i})^{a}(R_{i})^{b}=r^{2}(\delta_{cd}-y^{\prime c}y^{\prime d})\Pi^{a}_{c}\Pi^{b}_{d}$, where $a,b,c,d\in\{1, 2, 3\}$.
In view of \eqref{estimates on r precise}, \eqref{estimates on y^i} and the definition of $y^{\prime i}$, for sufficiently small $\delta$,
we have $\sum_{i=1}^3 \xi(R_i)^2 \sim  r^2 |\xi|^2$. Since $r$ is bounded below and above by $(-t)$,
we obtain $\sum_{i=1}^3 \xi(R_i)^2 \sim   t^{2}|\xi|^2$. Similarly, for a $k$-covariant tensor $\xi$ on $S_{t,\ub}$,
we have $\sum_{i_1,i_2,\cdots, i_k=1}^3 \xi(R_{i_1},R_{i_2},\cdots,R_{i_k})^2 \sim   t^{2}|\xi|^2$.
In particular, we can take $\xi = \slashed{d}\psi$, therefore, $\sum_{i=1}^3 (R_i\psi)^2 \sim   t^{2}|\slashed{d}\psi|^2$.
Henceforth, we omit the summation and write schematically as $|R_i\psi| \sim  (-t) |\slashed{d}\psi|$.

We can also compare the $R_i$-derivatives with the $\nablaslash$-derivatives for tensors. For $S_{t,\ub}$-tangential 1-form $\xi$ and vectorfield $X$,  let $\slashedLRi \xi$ be the orthogonal projection of the Lie derivative $\mathcal{L}_{R_i} \xi$ onto the surface $S_{t,\ub}$. Since $(\slashedLRi \xi)(X) = (\nablaslash_{R_i}\xi)(X) + \xi(\nablaslash_X R_i)$, we obtain
\begin{equation*}
\sum_{i=1}^3|\slashedLRi \xi|^2 =\sum_{i=1}^3|\xi(R_{i})|^2 + 2 \sum_{i=1}^3 \xi^k (\nablaslash_{R_i} \xi)_a (\nablaslash R_i)^a{}_k + \sum_{i=1}^3 \xi^k \xi^l (\nablaslash R_i)_a {}_k (\nablaslash R_i)^a{}_l.
\end{equation*}
We also have $\sum_{i=1}^3|\nablaslash_{R_i}\xi|^2 = r^2(\delta^{cd}-y'^c y'^d)(\nablaslash \xi)_{ca}(\nablaslash \xi)_d{}^a$. In view of the estimates on $y^{\prime i}$, for sufficiently small $\delta$, we obtain
\begin{equation*}
\sum_{i=1}^3|\nablaslash_{R_i}\xi|^2  \gtrsim r^{2}|\nablaslash \xi|^2.
\end{equation*}
Let $\varepsilon_{ijk}$ be the volume form on $\Sigma_t$ and $v_i$ be a $S_{t,\ub}$ 1-form with
rectangular components $(v_{i})_{a}=\Pi_{a}^{b}\varepsilon_{ibk}\xi_{k}$. By virtue of the formula $
(\slashed{\nabla}R_{i})^{k}_{l}=\Pi^{m}_{k}\Pi^{n}_{l}
\varepsilon_{inm}-\lambda_{i}\theta_{kl}$, we have
\begin{equation*}
\sum_{i=1}^3 \xi^k \xi^l (\nablaslash R_i)_a {}_k (\nablaslash R_i)^a{}_l =|\xi|^2+ 2c \sum_{i=1}^3 \lambda_i \xi\cdot\chib\cdot v_i + c^2 |\chib \cdot \xi|^2 \sum_{i=1}^3 \lambda_i^2.
\end{equation*}
In view of the estimates on $\lambda_i$, for sufficiently small $\delta$, we have $\sum_{i=1}^3 \xi^k \xi^l (\nablaslash R_i)_a {}_k (\nablaslash R_i)^a{}_l  \sim |\xi|^2$.
Similarly, we have $
\big|\sum_{i=1}^3 \xi^k (\nablaslash_{R_i} \xi)_a (\nablaslash {R_i})^b{}_k \big| \lesssim
r|\xi||\nablaslash \xi |$.
Finally, we conclude that
\begin{equation*}
|\xi|^2 + r^{2}|\nablaslash \xi|^2 \lesssim \sum_{i=1}^3 |\slashedLRi\xi|^2 \lesssim |\xi|^2 + r^{2}|\nablaslash \xi|^2 .
\end{equation*}
 Henceforth, we omit the summation and write schematically as  $|\slashedLRi\xi| \sim   |\xi|+(-t)|\nablaslash\xi|$. Similarly, for a tracefree symmetric 2-tensors $\theta_{AB}$ tangential to $S_{t,\ub}$, we have
 $|\theta|+|\nablaslash \theta| \lesssim |\slashedLRi\theta| \lesssim  |\theta|+(-t)|\nablaslash\theta|$. This will be applied to $\theta = \widehat{\chib}_{AB}$ later on.

\subsubsection{Sobolev inequalities and elliptic estimates}
To obtain the Sobolev inequalities on $S_{t,\ub}$, we introduce
$$I(t,\ub) = \displaystyle \sup_{U \in S_{t,\ub}, \atop \partial U \text{ is } C^1} \frac{\min\big(|U|, |S_{t,\ub}-U|\big)}{|\partial U|^2}$$
the isoperimetric constant on $S_{t,\ub}$, where $|U|$, $|S_{t,\ub}-U|$ and $|\partial U|$ are the measures of the corresponding sets with respect to $\slashed{g}$ on $S_{t,\ub}$.
Therefore, in view of the fact that $R_i \sim r\nablaslash$, for sufficiently small $\delta$, we have the following Sobolev inequalities:
\begin{align}\label{isoperimetric}
\begin{split}
\|f\|_{W^{1,4}(S_{t,\ub})} &\lesssim |I(t,\ub)|^{\frac{1}{4}}|S_{t,\ub}|^{-\frac{3}{4}}\big(\|f\|_{L^2(S_{t,\ub})}+ \|R_i f\|_{L^2(S_{t,\ub})} + \|R_i R_j f\|_{L^2(S_{t,\ub})}\big),\\
\|f\|_{L^{\infty}(S_{t,\ub})} &\lesssim |I(t,\ub)|^{\frac{3}{4}}|S_{t,\ub}|^{-\frac{1}{2}}\big(\|f\|_{L^2(S_{t,\ub})}+ \|R_i f\|_{L^2(S_{t,\ub})} + \|R_i R_j f\|_{L^2(S_{t,\ub})}\big).
\end{split}
\end{align}
where $\|f\|_{W^{1,4}(S_{t,\ub})}$ is defined as $
\|f\|_{W^{1,4}(S_{t,\ub})}=|S_{t,\ub}|^{-1/2}\|f\|_{L^{4}(S_{t,\ub})}+\|\slashed{d}f\|_{L^{4}(S_{t,\ub})}$.
It remains to control the isoperimetric constant $I(t,\ub)$.

We use $T$ to generate a diffeomorphism of $S_{t,\ub}$ to $S_{t,0}$ which maps $U$, $S_{t,\ub}-U$ and $\partial U$ to corresponding sets $U_{\ub}$, $S_{t,0}-U_{\ub}$ and $\partial U_{\ub}$ on $S_{t,0}$. Let $U_{\ub'}, S_{t,\ub'}-U_{\ub'}, \partial U_{\ub'}$ be the inverse images of these on each $S_{t,\ub'}$ for $\ub'\in[0,\ub]$. Since $\slashed{\mathcal{L}}_T \gslash_{AB} = 2c^{-1}\mu\theta = -2c^{-2}\mu\chib_{AB}$, for $\ub' \in [0,\ub]$, we obtain
\begin{align*}
\frac{d}{d\ub'}\big(|U_{\ub'}|\big) = -\int_{U_{\ub'}}c^{-2}\mu \tr\chib d\mu_{\gslash}, \ \ \frac{d}{d\ub'}\big(|\partial U_{\ub'}|\big) = -\int_{U_{\ub'}}c^{-2}\mu \chib(\nu,\nu) ds,
\end{align*}
where $\nu$ is the unit normal of $\partial U_{\ub'}$ in $S_{t,0}$ and $ds$ the element of arc length of $\partial U_{\ub'}$.  In view of the estimates on $\chib$ and $\mu$ derived before,  for sufficiently small $\delta$, we have
\begin{align*}
\frac{d}{d \ub'}\big(|U_{\ub'}|\big) \lesssim |U_{\ub'}|, \ \ \frac{d}{d\ub'}\big(|\partial U_{\ub'}|\big) \lesssim |\partial U_{\ub'}|.
\end{align*}
Therefore, by integrating from $0$ to $\ub$, we have
\begin{align*}
|U| \sim |U_{\ub}|, \ \ |\partial U| \sim |\partial U_{\ub}|.
\end{align*}
Hence, $I(t,\ub) \sim I(t,0) \sim 1$. Finally, since $|S_{t,\ub}|\sim t^{2}$, we conclude that
\begin{equation}\label{Sobolev on S_t ub}
\begin{split}
(-t)^{3/2}\|f\|_{W^{1,4}(S_{t,\ub})} + (-t)\|f\|_{L^{\infty}(S_{t,\ub})} &\lesssim  \|f\|_{L^2(S_{t,\ub})}+ \|R_i f\|_{L^2(S_{t,\ub})} + \|R_i R_j f\|_{L^2(S_{t,\ub})}.
\end{split}
\end{equation}
We remark that, similarly, we have
\begin{equation}\label{Sobolev on S_t ub L4}
\|f\|_{L^{4}(S_{t,\ub})} \lesssim  \|f\|_{L^2(S_{t,\ub})}+ \|R_i f\|_{L^2(S_{t,\ub})}.
\end{equation}

As in \cite{M-Y}, we also have the following elliptic estimates for traceless two-tensors.
\begin{lemma}
If $\delta$ is sufficiently small, for any traceless 2-covariant symmetric tensor $\theta_{AB}$ on $S_{t,\ub}$, we have
\begin{equation}\label{elliptic estimates}
\int_{S_{t,\ub}} \mu^2 \big(|\nablaslash \theta|^2 + |\theta|^2\big)d\mu_{\slashed{g}}\lesssim \int_{S_{t,\ub}} \mu^2 |\divslash \theta|^2 + |\ds\mu|^2 |\theta|^2 \slashed{d}\mu_{\slashed{g}}
\end{equation}
\end{lemma}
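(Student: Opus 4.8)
The plan is to prove a $\mu^{2}$-weighted analogue of the classical second-order elliptic (Bochner--Weitzenb\"ock) estimate for traceless symmetric $2$-tensors on a $2$-surface, applied to $S_{t,\ub}$. First I would record the unweighted pointwise identity: on any $2$-surface, expressing $\int_{S_{t,\ub}}|\nablaslash\theta|^{2}$ through $\int_{S_{t,\ub}}\langle\theta,\laplacianslash\theta\rangle$ and commuting the covariant derivatives in the rough Laplacian brings in the Gauss curvature $K$, yielding for traceless symmetric $\theta$ an identity of the schematic form $\int_{S_{t,\ub}}|\nablaslash\theta|^{2}\dmug = 2\int_{S_{t,\ub}}|\divslash\theta|^{2}\dmug - 2\int_{S_{t,\ub}}K|\theta|^{2}\dmug$ (the precise constants are immaterial). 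Here $S_{t,\ub}$ is diffeomorphic to $\mathbb{S}^{2}$ with a metric close to round, which is exactly the input already used above to bound the isoperimetric constant.

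Next I would repeat the two integrations by parts with a factor $\mu^{2}$ inserted. Besides $\int_{S_{t,\ub}}\mu^{2}|\divslash\theta|^{2}\dmug$ and $\int_{S_{t,\ub}}\mu^{2}K|\theta|^{2}\dmug$, each step produces commutator terms in which a derivative lands on $\mu^{2}$, schematically $\mu\,\ds\mu\cdot\theta\cdot\nablaslash\theta$ and $\mu\,\ds\mu\cdot\theta\cdot\divslash\theta$; these I would absorb by Cauchy--Schwarz, $|\mu\,\ds\mu\cdot\theta\cdot\nablaslash\theta|\le\varepsilon\,\mu^{2}|\nablaslash\theta|^{2}+\varepsilon^{-1}|\ds\mu|^{2}|\theta|^{2}$ and likewise with $\divslash\theta$, reabsorbing the $\mu^{2}$-weighted first-derivative pieces into the left side and leaving exactly $\int_{S_{t,\ub}}|\ds\mu|^{2}|\theta|^{2}\dmug$ on the right. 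This gives
\begin{equation*}
\int_{S_{t,\ub}}\mu^{2}\big(|\nablaslash\theta|^{2}+K|\theta|^{2}\big)\dmug\lesssim\int_{S_{t,\ub}}\mu^{2}|\divslash\theta|^{2}\dmug+\int_{S_{t,\ub}}|\ds\mu|^{2}|\theta|^{2}\dmug.
\end{equation*}

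It then remains to bound $K$ from below. Since $g|_{\Sigma_{t}}$ is flat, the Gauss equation identifies $K$ with the determinant of the second fundamental form $\theta_{AB}=g(\nabla_{X_{A}}\widehat{T},X_{B})$ of $S_{t,\ub}\hookrightarrow\Sigma_{t}$; and $\theta=-c^{-1}\chib=-c^{-1}\chib'+\tfrac{c^{-1}}{\ub-t}\gslash$, which by \eqref{precise bound on chib'}, \eqref{bound on c} and $|\ub|\le\delta$ is, for $\delta$ small, a small perturbation of $\tfrac{1}{\ub-t}\gslash$. Hence $K=(\ub-t)^{-2}(1+O(\delta M^{2}(-t)^{-1}))\gtrsim(-t)^{-2}>0$, so the curvature term has the favorable sign and, using the near-roundness of $S_{t,\ub}$ (which also furnishes a uniform coercivity constant for $\divslash$, whose kernel on traceless symmetric $2$-tensors over a sphere is trivial), one recovers \eqref{elliptic estimates} after keeping track of the $(-t)$-weights, which is harmless since $t\le-1$.

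The hard part will be the weight $\mu$: because $\mu$ can vanish---this is precisely the onset of the shock---one may not divide by $\mu$ or absorb any $\mu^{-1}$, so every derivative that falls on the weight must be retained as a genuine $\ds\mu$-factor and paired (by Cauchy--Schwarz) with a $\mu$-weighted derivative of $\theta$ before it can be absorbed. Organizing this bookkeeping so that no uncontrolled $\ds\mu$-term survives, while simultaneously verifying that the Gauss curvature of $S_{t,\ub}$ stays positive of size $(-t)^{-2}$ so that its contribution lands on the correct side of the inequality, is the technical core of the argument.
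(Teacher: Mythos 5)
Your overall strategy is the same as the one the paper relies on (it cites \cite{M-Y}, which follows Christodoulou): the identity $\int_{S_{t,\ub}}\big(|\nablaslash\theta|^2+2K|\theta|^2\big)\dmug=2\int_{S_{t,\ub}}|\divslash\theta|^2\dmug$ for traceless symmetric $2$-tensors, used with the weight $\mu$ — applying it to $\mu\theta$ is exactly your ``insert $\mu^2$ into the integrations by parts and absorb the cross terms $\mu\,\ds\mu\cdot\theta\cdot\nablaslash\theta$ by Cauchy--Schwarz'' — together with positivity of the Gauss curvature $K$ of $S_{t,\ub}$, obtained from $\chib=-c\theta$, \eqref{precise bound on chib'} and \eqref{bound on c} precisely as you describe.

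The one step that does not close as written is the zeroth-order term. The curvature term only gives you $\int_{S_{t,\ub}}K\mu^2|\theta|^2\dmug$ with $K\sim(\ub-t)^{-2}\sim t^{-2}$, and the coercivity constant of $\divslash$ on traceless symmetric $2$-tensors over a sphere of radius $\sim|t|$ likewise scales like $t^{-2}$; since in this paper $|t|$ ranges up to the arbitrarily large $r_{0}$, the inequality $t\le-1$ works \emph{against} you ($(-t)^{-2}\le1$), so ``harmless since $t\le-1$'' is backwards. What your argument actually yields, uniformly in $t$, is $\int_{S_{t,\ub}}\mu^2\big(|\nablaslash\theta|^2+(-t)^{-2}|\theta|^2\big)\dmug\lesssim\int_{S_{t,\ub}}\mu^2|\divslash\theta|^2+|\ds\mu|^2|\theta|^2\,\dmug$; a $t$-independent constant for the unweighted $\mu^2|\theta|^2$ cannot come from the curvature or from the spectral gap of $\divslash$ alone (test $\mu\equiv1$ on a nearly round sphere of radius $|t|$ with the lowest divergence mode). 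This is immaterial for how the lemma is used in the paper — only the gradient part enters, e.g.\ in \eqref{elliptic app}, where the positive curvature term is simply dropped — but as a proof of the literal statement with a uniform constant your final step needs either the restriction to $|t|\sim1$ (the setting of \cite{M-Y}) or the $(-t)^{-2}$ weight on the zeroth-order term.
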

\section{The behavior of the inverse density function}
As in \cite{M-Y}, the behavior of the inverse density function $\mu$ also plays an dominant r\^{o}le in this paper. The method of obtaining estimates on $\mu$ is to relate $\mu$ to its initial value on $\Sigma_{t=-r_0}$. Besides the behavior with respect to $\delta$, in this paper we will also take into account the behavior with respect to $t$. Since the metric $g$ depends only on $\psi_0 = \partial_t \phi$, $\mu$ is also determined by $\psi_0$. This leads naturally to the study of the wave equation $\Box_{\widetilde{g}} \psi_0 =0$. We rewrite it in the null frame as
\begin{equation}\label{box psi_0 in null frame}
\Lb(L\psi_0) + \frac{1}{2}\tr\chibt\cdot L\psi_0 +\left(-\mu\laplacianslash \psi_0 +\frac{1}{2}\tr\widetilde{\chi}\cdot \Lb\psi_0 + 2 \zetab \cdot \slashed{d}\psi_0 + \mu \slashed{d}\log(c)\cdot \slashed{d}\psi_0\right)=0.
\end{equation}
\subsection{The asymptotic expansion for $\mu$}
\begin{lemma}
For sufficiently small $\delta$, we have
\begin{equation}\label{expansion of L psi_0}
\big|(-t) L\psi_0(t,\ub, \theta)-r_0 L \psi_0(-r_0,\ub, \theta)\big| \lesssim \delta^{\frac{1}{2}} M^3(-t)^{-1}.
\end{equation}
\end{lemma}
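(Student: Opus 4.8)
The plan is to read \eqref{box psi_0 in null frame} as a transport equation for $L\psi_0$ along the integral curves of $\Lb$. Since $\Lb=\partial/\partial t$ in the optical coordinates by \eqref{Lb op}, along each line of constant $(\ub,\theta)$ it takes the form
\begin{equation*}
\Lb(L\psi_0)=-\tfrac12\,\tr\chibt\cdot L\psi_0+\mathcal E,\qquad
\mathcal E:=\mu\laplacianslash\psi_0-\tfrac12\tr\widetilde\chi\cdot\Lb\psi_0-2\zetab\cdot\ds\psi_0-\mu\,\ds\log c\cdot\ds\psi_0 .
\end{equation*}
The key point is that, by \eqref{precise bound on chibt} and the bound \eqref{bound on c} on $c$, one has $\tr\chibt=-\tfrac{2}{\ub-t}+h$ with $\|h\|_{L^\infty(\Sigma_t)}\lesssim\delta M^2(-t)^{-3}$, so the leading part of the homogeneous equation is $\Lb f=\tfrac1{\ub-t}f$, whose solutions are multiples of $(\ub-t)^{-1}$. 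Hence I would track the quantity $f(t):=(\ub-t)\,L\psi_0(t,\ub,\theta)$, for which the displayed equation gives
\begin{equation*}
\frac{df}{dt}=-L\psi_0+(\ub-t)\,\Lb(L\psi_0)=(\ub-t)\Big(-\tfrac12\,h\,L\psi_0+\mathcal E\Big).
\end{equation*}

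Next I would collect the pointwise bounds. From (B.1) and $L=c^{-2}\mu\Lb+2T$ (see \eqref{outgoing null}) one gets $\|L\psi_0\|_{L^\infty(\Sigma_t)}\lesssim\delta^{-1/2}M(-t)^{-1}$, so $\|(\ub-t)\,h\,L\psi_0\|_{L^\infty(\Sigma_t)}\lesssim\delta^{1/2}M^3(-t)^{-3}$. For $\mathcal E$: the comparison $R_i\sim(-t)\nablaslash$ and (B.1) at order two give $\|\laplacianslash\psi_0\|_{L^\infty(\Sigma_t)}\lesssim\delta^{1/2}M(-t)^{-3}$, hence (using $\mu\lesssim M^2$ from \eqref{bound on mu}) $\|(\ub-t)\mu\laplacianslash\psi_0\|_{L^\infty(\Sigma_t)}\lesssim\delta^{1/2}M^3(-t)^{-2}$; since $\|\Lb\psi_0\|_{L^\infty(\Sigma_t)}\lesssim\delta^{1/2}M(-t)^{-2}$ by (B.1) and $\|\tr\widetilde\chi\|_{L^\infty(\Sigma_t)}\lesssim(-t)^{-1}$, the term $(\ub-t)\,\tfrac12\tr\widetilde\chi\,\Lb\psi_0$ is $\lesssim\delta^{1/2}M(-t)^{-2}$; and the remaining two terms carry an extra factor $\delta$ by \eqref{bound on zetab and etab} and \eqref{bound on c} and are negligible. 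Altogether $\big|\tfrac{df}{dt}\big|\lesssim\delta^{1/2}M^3(-t)^{-2}$.

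Integrating this bound along $\Lb$ from $-r_0$ to $t$, and using $\int_{-r_0}^{t}(-t')^{-2}\,dt'\le(-t)^{-1}$ — which is where independence of $r_0$ enters — gives
\begin{equation*}
\big|(\ub-t)L\psi_0(t,\ub,\theta)-(\ub+r_0)L\psi_0(-r_0,\ub,\theta)\big|\lesssim\delta^{1/2}M^3(-t)^{-1}.
\end{equation*}
Finally I would remove the $\ub$'s: since $0\le\ub\le\delta$ one has $\ub\,\|L\psi_0\|_{L^\infty(\Sigma_t)}\lesssim\delta^{1/2}M(-t)^{-1}$ at every time, and at $t=-r_0$ this correction is $\lesssim\delta^{1/2}M r_0^{-1}\le\delta^{1/2}M(-t)^{-1}$ because $-t\le r_0$; writing $(-t)L\psi_0(t)=f(t)-\ub L\psi_0(t)$ and $r_0L\psi_0(-r_0)=f(-r_0)-\ub L\psi_0(-r_0)$ and combining with the previous display yields \eqref{expansion of L psi_0}.

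The only genuine difficulty is the bookkeeping of weights: $L\psi_0$ is itself of the relatively large size $\delta^{-1/2}(-t)^{-1}$, so one must verify that every term on the right of the equation for $f$ gains either an extra power of $\delta$ (from $h$, $\zetab$, $\ds c$) or enough additional $(-t)$-decay (from the genuinely small derivatives $\Lb\psi_0$ and $\laplacianslash\psi_0\sim\delta^{1/2}(-t)^{-3}$) that, after being multiplied by the weight $(\ub-t)\sim(-t)$ and integrated from $-r_0$, it produces the claimed $\delta^{1/2}M^3(-t)^{-1}$ bound uniformly in $r_0$. The precise identity $\tr\chibt=-2/(\ub-t)+h$ coming from \eqref{precise bound on chibt} is exactly what makes the weight $(\ub-t)$ the correct integrating factor and avoids a logarithmic loss in $t$, hence in $r_0$.
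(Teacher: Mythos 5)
Your proposal is correct and follows essentially the same route as the paper: read \eqref{box psi_0 in null frame} as a transport equation for $L\psi_0$ along $\Lb$, use \eqref{precise bound on chibt} to replace $\tfrac12\tr\chibt$ by $-\tfrac{1}{\ub-t}$ up to an admissible error, bound $\Lb\big((\ub-t)L\psi_0\big)$ by $\delta^{1/2}M^3(-t)^{-2}$ via (B.1) and the earlier pointwise estimates, and integrate from $-r_0$ to $t$, using $|\ub|\le\delta$ to pass from the weight $(\ub-t)$ to $(-t)$. Your write-up just makes explicit the term-by-term $L^\infty$ bookkeeping and the final removal of the $\ub$-corrections that the paper leaves implicit.
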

\begin{proof}
We regard \eqref{box psi_0 in null frame} as a transport equation for $L\psi_0$. According to (B.1) and the estimates from previous sections,
the $L^\infty$ norm of the terms in the big parenthesis in \eqref{box psi_0 in null frame} is bounded by $\delta^{\frac{1}{2}}M^{3}$. Hence,
\begin{equation*}
|\Lb(L\psi_0)(t,\ub, \theta) + \frac{1}{2}\tr\chibt(t,\ub, \theta)\cdot L\psi_0(t,\ub,\theta) | \lesssim  \delta^\frac{1}{2}M^{3}(-t)^{-3}.
\end{equation*}
By virtue of \eqref{precise bound on chibt}, this implies $|\Lb\big(L\psi_0\big)(t,\ub, \theta) -\frac{1}{\ub-t}  L\psi_0(t,\ub,\theta) | \lesssim \delta^\frac{1}{2}M^{3}(-t)^{-3}$. Therefore, we obtain
\begin{equation*}
|\Lb\big((\ub-t)L\psi_0\big)(t,\ub, \theta)| \lesssim \delta^\frac{1}{2}M^{3}(-t)^{-2}.
\end{equation*}
Since $|\ub| \leq \delta$, we integrate from $-r_0$ to $t$ and this yields the desired estimates.
\end{proof}
\begin{remark}
The estimates \ref{expansion of L psi_0} also hold for $R_{i}L\psi_{0}$ or $R_{i}R_{j}\psi_{0}$, e.g., see \eqref{expansion of L R_i psi_0}. To derive these estimates, we commute $R_{i}$'s with \eqref{box psi_0 in null frame} and follow the same way as in the above proof.
\end{remark}
Since $L = c^{-2}\mu \Lb + 2 T$, as a corollary, we have
\begin{corollary}
For sufficiently small $\delta$, we have
\begin{equation}\label{expansion of T psi_0}
|(-t) T\psi_0(t,\ub, \theta)-r_0 T \psi_0(-r_0,\ub, \theta)| \lesssim \delta^{\frac{1}{2}} M^3(-t)^{-1},
\end{equation}
\begin{equation}\label{expansion of psi_0}
|(-t) \psi_0(t,\ub, \theta)-r_0 \psi_0(-r_0,\ub, \theta)| \lesssim \delta^{\frac{3}{2}} M^3(-t)^{-1}.
\end{equation}
\end{corollary}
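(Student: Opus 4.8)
The plan is to derive the two estimates \eqref{expansion of T psi_0} and \eqref{expansion of psi_0} as direct consequences of the already-established expansion \eqref{expansion of L psi_0} for $L\psi_0$, together with the preliminary pointwise bounds of Section 3 and the structure of the null frame. The starting point is the frame decomposition $L = c^{-2}\mu\Lb + 2T$, equivalently $T = \frac12\big(L - c^{-2}\mu\Lb\big)$, so that
\begin{align*}
(-t)T\psi_0(t,\ub,\theta) = \tfrac12(-t)L\psi_0(t,\ub,\theta) - \tfrac12(-t)c^{-2}\mu\,\Lb\psi_0(t,\ub,\theta).
\end{align*}
For the first term on the right I invoke \eqref{expansion of L psi_0}. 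For the second term, the bootstrap assumption (B.1) gives $\|\Lb\psi_0\|_{L^\infty(\Sigma_t)}\lesssim \delta^{1/2}M(-t)^{-2}$, while \eqref{bound on c} and \eqref{bound on mu} give $c^{-2}\mu = 1 + O(M^2(-t)^{-1})$, hence $(-t)c^{-2}\mu\,\Lb\psi_0 = O(\delta^{1/2}M^3(-t)^{-1})$. Since on $\Sigma_{-r_0}$ we have $c=\mu=1$ and $\Lb = \partial_t - \partial_r$, the same frame identity at $t=-r_0$ reads $r_0 T\psi_0(-r_0,\ub,\theta) = \tfrac12 r_0 L\psi_0(-r_0,\ub,\theta) - \tfrac12 r_0\Lb\psi_0(-r_0,\ub,\theta)$; the last term is $O(\delta^{1/2}r_0^{-1})\cdot r_0 \cdot$ — more precisely, by the initial estimate $\|\Lb\psi_0\|_{L^\infty(\Sigma_{-r_0})}\lesssim\delta^{1/2}r_0^{-2}$ (which follows from \eqref{initial L infinity estimates for commutations on psi} applied with $\Lb$, or directly from the $r_0^{-1}$-decay of the data and the argument in Lemma \ref{lemma constraint}), so $r_0\Lb\psi_0(-r_0,\ub,\theta) = O(\delta^{1/2}r_0^{-1})$, which is absorbed into the right-hand side of \eqref{expansion of T psi_0}. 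Subtracting and collecting the error terms yields \eqref{expansion of T psi_0}.

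For \eqref{expansion of psi_0} the idea is to integrate $T\psi_0$ in $\ub$. Since $T = \partial/\partial\ub - \Xi$ in optical coordinates with $\Xi$ tangential to $S_{t,\ub}$, one has $\partial_{\ub}\psi_0 = T\psi_0 + \Xi\psi_0$, and $\Xi\psi_0 = O((-t)\|\ds\psi_0\|_{L^\infty}) = O(\delta^{1/2}M(-t)^{-1})$ by (B.1) together with the bound $|\Xi|\lesssim(-t)\cdot$(something bounded), which is available from the $L^\infty$ estimates on the optical metric components. Because $\psi_0$ (and hence $\partial_{\ub}\psi_0$) vanishes for $\ub\le 0$ and $|\ub|\le\delta$, integrating from $0$ to $\ub$ gives
\begin{align*}
(-t)\psi_0(t,\ub,\theta) = \int_0^{\ub}(-t)\,\partial_{\ub'}\psi_0(t,\ub',\theta)\,d\ub' = \int_0^{\ub}\big[(-t)T\psi_0(t,\ub',\theta) + O(\delta^{1/2}M(-t)^{-1})\big]\,d\ub'.
\end{align*}
Applying \eqref{expansion of T psi_0} inside the integral, the leading term $\int_0^{\ub} r_0 T\psi_0(-r_0,\ub',\theta)\,d\ub'$ reconstructs $r_0\psi_0(-r_0,\ub,\theta)$ (again using $T = \partial_{\ub}$ on $\Sigma_{-r_0}$ and the vanishing at $\ub=0$), while the error terms are each $O(\delta^{1/2}M^3(-t)^{-1})$ pointwise and, after the $\ub$-integration over an interval of length $\le\delta$, become $O(\delta^{3/2}M^3(-t)^{-1})$. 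This is exactly \eqref{expansion of psi_0}.

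The main obstacle, such as it is, is bookkeeping rather than conceptual: one must be careful that the weight $(-t)$ is factored consistently through both the transport estimate \eqref{expansion of L psi_0} and the frame change, and that every error term genuinely carries the stated power of $\delta$ — in particular the gain of one extra power of $\delta^{1/2}$ in \eqref{expansion of psi_0} comes solely from the $\ub$-integration over the pulse region of width $\delta$, so one must verify that no term is integrated in $\ub$ without picking up that factor, and that the contributions from $\Xi\psi_0$ and from the $c^{-2}\mu - 1$ discrepancy are correctly tracked. There is no regularity loss and no use of the bootstrap beyond the already-recorded $L^\infty$ bounds, so the argument is self-contained given the preceding lemmas.
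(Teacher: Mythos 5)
Your derivation of \eqref{expansion of T psi_0} is correct and is essentially the paper's own (one-line) argument: write $T=\tfrac12\big(L-c^{-2}\mu\Lb\big)$, apply \eqref{expansion of L psi_0} to the $L$-part, and absorb $(-t)c^{-2}\mu\,\Lb\psi_0(t,\cdot)$ and $r_0\Lb\psi_0(-r_0,\cdot)$ into the error using (B.1), \eqref{bound on c}, \eqref{bound on mu} and the initial bound $\|\Lb\psi_0\|_{L^\infty(\Sigma_{-r_0})}\lesssim\delta^{1/2}r_0^{-2}$ (the $O(\delta r_0^{-2})$ deviation of $c^{-2}\mu$ from $1$ on $\Sigma_{-r_0}$ is harmless). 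No issue there.

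For \eqref{expansion of psi_0} your route --- integrate in $\ub$ from $\Cb_0$, where $\psi_0$ vanishes, and use \eqref{expansion of T psi_0} so that the width $\delta$ of the pulse supplies the extra $\delta$ --- is the right one, but your treatment of the $\Xi\psi_0$ term has a genuine gap. You invoke only $|\Xi|\lesssim(-t)\cdot(\text{bounded})$, which gives $\Xi\psi_0=O(\delta^{1/2}M(-t)^{-1})$; but then the error in your integrand, which carries the weight $(-t)$, is $O(\delta^{1/2}M)$, not the $O(\delta^{1/2}M(-t)^{-1})$ you wrote (a factor of $(-t)$ has been dropped), and after the $\ub$-integration it is only $O(\delta^{3/2}M)$, which is \emph{not} $\lesssim\delta^{3/2}M^3(-t)^{-1}$ since $(-t)$ may be as large as $r_0$. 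In other words, with the $\Xi$ bound you assert, the crucial time decay in \eqref{expansion of psi_0} is lost; and no estimate on $\Xi$ is stated in the paper, so it must be proved. The missing ingredient is a uniform bound of the type $|\Xi|_{\slashed{g}}\lesssim M^2(-t)^{-1}$. It follows from the commutator \eqref{commutator Lb T}: in optical coordinates $\Lb=\partial_t$ and $T=\partial_{\ub}-\Xi$, so $[\Lb,T]=-\partial_t\Xi^A\,\partial_{\theta^A}$, while the connection table gives $[\Lb,T]=\nabla_{\Lb}T-\nabla_T\Lb=-(\zetab^A+\etab^A)X_A$; hence $\partial_t\Xi^A=\zetab^A+\etab^A$. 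Since $\Xi\equiv0$ on $\Sigma_{-r_0}$ (there $T=\partial_r=\partial_{\ub}$), integrating in $t$ and using \eqref{bound on zetab and etab} (together with the fact that $\slashed{\mathcal{L}}_{\Lb}\slashed{g}=2\chib$ is controlled, indeed has favorable sign up to $\chib'$) yields $|\Xi|_{\slashed{g}}\lesssim M^2(-t)^{-1}$. With this, $(-t)|\Xi\psi_0|\lesssim(-t)\cdot M^2(-t)^{-1}\cdot\delta^{1/2}M(-t)^{-2}=\delta^{1/2}M^3(-t)^{-2}$, and your $\ub$-integration then produces the stated $\delta^{3/2}M^3(-t)^{-1}$. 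The remaining ingredients of your argument (vanishing on $\Cb_0$, and the reconstruction of $r_0\psi_0(-r_0,\ub,\theta)$ from $r_0T\psi_0(-r_0,\cdot)$ using $T=\partial_{\ub}$ on $\Sigma_{-r_0}$) are fine.
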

We turn to the behavior of $\Lb \mu$.
\begin{lemma}\label{lemma on the expansion of Lb mu}
For sufficiently small $\delta$, we have
\begin{equation}\label{expansion of Lb mu}
|t^2\Lb \mu(t,\ub, \theta)-r_0^2\Lb \mu(-r_0,\ub, \theta)|\lesssim \delta M^4(-t)^{-1}.
\end{equation}
\end{lemma}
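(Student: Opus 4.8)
The plan is to derive a transport equation for $\Lb\mu$ along the integral curves of $\Lb$, show that (after multiplication by an appropriate power of $t$) the right-hand side is integrable in $t$ with a bound independent of $r_0$, and then integrate from $-r_0$ to $t$. The starting point is the structure equation \eqref{Structure Equation Lb mu}, $\Lb\mu = m+\mu e$, where $m=-\tfrac12\tfrac{d(c^2)}{d\rho}T\rho$ and $e=\tfrac{1}{2c^2}\tfrac{d(c^2)}{d\rho}\Lb\rho$. Applying $\Lb$ once more gives
\begin{equation*}
\Lb(\Lb\mu) = \Lb m + (\Lb\mu)e + \mu(\Lb e).
\end{equation*}
The key observation, exactly as in \cite{M-Y}, is that $m$ is essentially governed by $T\psi_0$ (since $\rho=\psi_0^2$, so $T\rho = 2\psi_0\,T\psi_0$ and $\Lb\rho=2\psi_0\,\Lb\psi_0$), and the leading behavior of $\Lb m$ is controlled by $\Lb(T\psi_0)$, which by the commutation $[\Lb,T]=\Lambda$ (tangential to $S_{t,\ub}$, see \eqref{commutator Lb T}) reduces to $T(\Lb\psi_0)$ plus lower-order terms involving $\Lambda\psi_0$. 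Since $\Lb\psi_0$ and its derivatives are already well controlled via the wave equation \eqref{box psi_0 in null frame} and the expansion \eqref{expansion of L psi_0}, one gets that $\Lb m$ behaves like $(-t)^{-3}$ times a bounded quantity (in $\delta^{1/2}M^{\sharp}$), and $e$, $\Lb e$ are even smaller (they carry an extra factor $\delta$, cf. \eqref{bound on e}).

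Concretely, I would first establish a pointwise bound of the form $|\Lb(\Lb\mu)(t,\ub,\theta)| \lesssim \delta M^4(-t)^{-3}$, using \eqref{bound on c}, \eqref{bound on m}, \eqref{bound on e}, \eqref{bound on mu}, the commutator structure, and the already-proved asymptotics \eqref{expansion of T psi_0}, \eqref{expansion of psi_0} for $\psi_0$ and $T\psi_0$. Then I would write $t^2\Lb\mu$ and compute its $\Lb$-derivative: since in optical coordinates $\Lb = \partial/\partial t$ (see \eqref{Lb op}) and $\Lb t = 1$, we have
\begin{equation*}
\Lb\bigl(t^2\,\Lb\mu\bigr) = 2t\,\Lb\mu + t^2\,\Lb(\Lb\mu).
\end{equation*}
The second term is $O(\delta M^4 (-t)^{-1})$ by the pointwise bound just described. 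For the first term, $2t\,\Lb\mu = 2t(m+\mu e)$; here $\mu e = O(\delta M^2(-t)^{-2})$ contributes $O(\delta M^2(-t)^{-1})$, which is acceptable, but $2tm$ is of size $O(t\cdot M^2(-t)^{-2}) = O(M^2(-t)^{-1})$ — this is integrable in $t$ (giving a $\log$), but a naive integration of $(-t)^{-1}$ from $-r_0$ produces a $\log r_0$, which would be fatal. So the subtlety is that one must NOT bound $2tm$ crudely; instead one uses that $m$ itself has a good asymptotic expansion. Indeed $m = -\psi_0\,T\psi_0\cdot\tfrac{d(c^2)}{d\rho}$, and by \eqref{expansion of psi_0} and \eqref{expansion of T psi_0} the product $t^2\,\psi_0\,T\psi_0$ converges (up to $O(\delta^{1/2}M^3(-t)^{-1})$ errors) to its value at $-r_0$; combined with $\tfrac{d(c^2)}{d\rho}$ being approximately constant ($c\sim1$), this shows $t^2 m(t,\ub,\theta) - r_0^2 m(-r_0,\ub,\theta) = O(\delta^{1/2}M^3(-t)^{-1})$ type control — in other words $2tm = \Lb(\text{something bounded}) + O((-t)^{-2}\cdot\text{bounded})$, so its "antiderivative" along $\Lb$ is bounded uniformly in $r_0$.

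Hence the cleaner route — and the one I would actually carry out — is: set $f(t) := t^2\Lb\mu - (t^2 m + \text{correction})$ chosen so that $\Lb f = O(\delta M^4(-t)^{-2})$ is genuinely integrable with an $r_0$-independent bound, integrate from $-r_0$ to $t$, and then reassemble using the already-established convergence of $t^2 m$. Equivalently, one can bypass the decomposition and argue directly: from $\Lb\mu = m + \mu e$ and the asymptotic expansions for $\psi_0, T\psi_0$ one reads off $t^2\Lb\mu(t,\ub,\theta) = r_0^2\Lb\mu(-r_0,\ub,\theta) + [\text{convergent correction from } t^2 m] + [\text{integral of } t^2\Lb(\mu e)\text{-type terms}]$, and the correction terms are all $O(\delta M^4(-t)^{-1})$ (note the loss from $M^3$ to $M^4$ comes from multiplying the $\psi_0$-expansions together and through the extra $T$-derivative picking up a $\delta^{-1/2}$ from \eqref{initial L infinity estimates for commutations on psi}-type weights, balanced by the $\delta$'s in $e$ and in $m$'s quadratic structure). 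The main obstacle, as flagged, is precisely ensuring that the potentially logarithmically-divergent contribution $\int_{-r_0}^t 2t' m\,dt'$ is handled via the expansion of $m$ rather than a pointwise absolute-value bound — this is the one place where the argument genuinely differs from \cite{M-Y} and where the $r_0$-independence is won or lost.
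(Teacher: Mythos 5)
Your final ``direct'' route is essentially the paper's proof: the paper simply evaluates $\Lb\mu = m + \mu e$ pointwise at $t$ and at $-r_0$, bounds the $\mu e$ difference using \eqref{bound on e}, and controls $t^2 m(t,\ub,\theta)-r_0^2 m(-r_0,\ub,\theta)$ through the explicit formula for $m$ in terms of $\psi_0, T\psi_0$ together with \eqref{expansion of T psi_0} and \eqref{expansion of psi_0} --- no transport equation for $\Lb\mu$ and no integration along $\Lb$ are needed. Your preliminary detour is harmless since you discard it, but be aware that its intermediate claim $|\Lb(\Lb\mu)|\lesssim \delta M^4(-t)^{-3}$ is false (the term $\Lb\psi_0\cdot T\psi_0$ in $\Lb m$ is only $O(M^2(-t)^{-3})$, with no gain in $\delta$), which is exactly why that route, taken literally, produces the $\log r_0$ divergence you correctly flag.
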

\begin{proof}
According to \eqref{Structure Equation Lb mu}, we write $t^2\Lb \mu(t,\ub, \theta)-r_0^2\Lb \mu(-r_0,\ub, \theta)$ as
\begin{align*}
\left(t^2 m(t,\ub, \theta)-r_0^2 m(-r_0,\ub, \theta)\right) +\left[ t^2\ (\mu \cdot e)(t,\ub, \theta)-r_0^2(\mu \cdot e)(-r_0,\ub, \theta)\right].
\end{align*}
In view of \eqref{bound on e}, we bound the terms in the bracket by $\delta M^4(-t)^{-1}$ up to a universal constant. Therefore,
\begin{align*}
t^2\Lb \mu(t,\ub, \theta)-r_0^2\Lb \mu(-r_0,\ub, \theta) &=\left(t^2 m(t,\ub, \theta)-r_0^2 m(-r_0,\ub, \theta)\right) +O\left(\frac{\delta M^4}{-t}\right).
\end{align*}
Since
\begin{equation*}
t^2 m(t,\ub, \theta)-r_0^2 m(-r_0,\ub, \theta)=\tau^2 \frac{3G''(0)(\psi_0(\tau,\ub, \theta) \cdot T \psi_0(\tau,\ub, \theta))}{(1+3G''(0)\psi_0^2(\tau,\ub, \theta))^{2}}\bigg|_{\tau=-r_{0}}^{\tau=t}.
\end{equation*}
It is clear that the estimates follow immediately after \eqref{expansion of T psi_0} and \eqref{expansion of psi_0}.
\end{proof}
We now are able to prove an accurate estimate on $\mu$.
\begin{proposition}\label{proposition on expansion for mu}
For sufficiently small $\delta$, we have
\begin{equation}\label{expansion of mu}
\left|\mu(t,\ub, \theta)-1 + r_0^2(\frac{1}{t}+\frac{1}{r_0})\Lb\mu(-r_0,\ub, \theta) \right|\lesssim \delta M^4(-t)^{-2}.
\end{equation}
In particular, we have $\mu \leq C_0$ where $C_0$ is a universal constant depending only on the initial data.
\end{proposition}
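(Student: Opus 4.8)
The plan is to establish \eqref{expansion of mu} by integrating the transport equation \eqref{Structure Equation Lb mu} for $\mu$ along the integral curves of $\Lb$, using the sharp asymptotics for $\Lb\mu$ already proved in Lemma \ref{lemma on the expansion of Lb mu}. In the optical coordinates the integral curves of $\Lb$ are exactly the lines of constant $(\ub,\theta)$, and along them $\Lb=\partial/\partial t$ by \eqref{Lb op}. Hence for each fixed $(\ub,\theta)$,
\begin{equation*}
\mu(t,\ub,\theta)=\mu(-r_0,\ub,\theta)+\int_{-r_0}^{t}\Lb\mu(t',\ub,\theta)\,dt'.
\end{equation*}
The first term on the right is close to $1$: since $\mu=c$ on $\Sigma_{-r_0}$, \eqref{initial estimates on mu} gives $|\mu(-r_0,\ub,\theta)-1|\lesssim\delta r_0^{-2}\le\delta(-t)^{-2}$, using $-t\le r_0$ on $W_\delta^*$. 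For the integrand I would rewrite \eqref{expansion of Lb mu} in the form
\begin{equation*}
\Lb\mu(t',\ub,\theta)=\frac{r_0^2}{t'^{\,2}}\,\Lb\mu(-r_0,\ub,\theta)+O\!\left(\delta M^4(-t')^{-3}\right).
\end{equation*}

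Inserting this into the integral, the main term contributes $r_0^2\,\Lb\mu(-r_0,\ub,\theta)\int_{-r_0}^{t}t'^{-2}\,dt'=-r_0^2\bigl(\tfrac1t+\tfrac1{r_0}\bigr)\Lb\mu(-r_0,\ub,\theta)$, which is precisely the second term displayed in \eqref{expansion of mu}, while the remainder is bounded by $\delta M^4\int_{-r_0}^{t}(-t')^{-3}\,dt'=\tfrac12\delta M^4\bigl((-t)^{-2}-r_0^{-2}\bigr)\lesssim\delta M^4(-t)^{-2}$. Adding the three contributions yields \eqref{expansion of mu}. The one delicate point --- and the reason this otherwise elementary computation is placed at the core of the paper --- is that \emph{both} time integrals $\int_{-r_0}^{t}t'^{-2}\,dt'$ and $\int_{-r_0}^{t}(-t')^{-3}\,dt'$ remain bounded as $r_0\to\infty$, so that no constant in the estimate degenerates; this is exactly the mechanism that will make all later estimates independent of $r_0$.

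For the last assertion $\mu\le C_0$, it suffices to bound the quantity $r_0^2\bigl(\tfrac1t+\tfrac1{r_0}\bigr)\Lb\mu(-r_0,\ub,\theta)$ appearing in \eqref{expansion of mu}. Throughout $[-r_0,t^*)\subseteq[-r_0,-1]$ one has $\tfrac1t+\tfrac1{r_0}\in[-1,0]$, hence $\bigl|\tfrac1t+\tfrac1{r_0}\bigr|\le 1$; and $r_0^2\Lb\mu(-r_0,\ub,\theta)$ is computed directly from the initial data, since $T=\partial_r$ and $\mu=c$ on $\Sigma_{-r_0}$: from \eqref{Structure Equation Lb mu}, the formulas $m=-\tfrac12\tfrac{d(c^2)}{d\rho}T\rho$ and $e=\tfrac1{2c^2}\tfrac{d(c^2)}{d\rho}\Lb\rho$, and the data profile \eqref{data profile} (for which $\rho=(\partial_t\phi)^2=\tfrac{\delta}{r_0^2}\phi_1^2$ and $T\rho=\tfrac2{r_0^2}\phi_1\,\partial_s\phi_1$ on $\Sigma_{-r_0}$, the $\mu e$ contribution being lower order in $\delta$ by the construction in Lemma \ref{lemma constraint}), one finds $r_0^2\Lb\mu(-r_0,\ub,\theta)=3G''(0)\,\phi_1\,\partial_s\phi_1+O(\delta)$, which is bounded by a constant depending only on the seed data. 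Since in addition $\delta M^4(-t)^{-2}\le\delta M^4\le 1$ for $\delta$ small, \eqref{expansion of mu} gives $\mu\le C_0$ with $C_0$ depending only on the initial data. I expect no genuine obstacle beyond the bookkeeping above; the substantive content is the convergence of the two time integrals, already highlighted.
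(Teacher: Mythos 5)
Your proposal is correct and follows essentially the same route as the paper: integrate $\Lb\mu$ along the integral curves of $\Lb$ (on which $\Lb=\partial_t$ in optical coordinates), substitute the expansion $\Lb\mu(t')=r_0^2\Lb\mu(-r_0)/t'^{\,2}+O(\delta M^4(-t')^{-3})$ from Lemma \ref{lemma on the expansion of Lb mu}, and absorb $\mu(-r_0,\ub,\theta)-1$ via \eqref{initial estimates on mu}. Your added computation of $r_0^2\Lb\mu(-r_0,\ub,\theta)$ from the data profile to justify $\mu\le C_0$ is consistent with what the paper uses later (cf.\ the formula in Section 12.2) and is a fine way to make the paper's unproved final assertion explicit.
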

\begin{proof}
According to the previous lemma, we integrate $\Lb \mu$:
\begin{align*}
\mu(t,\ub,\theta)-\mu(-r_0,\ub,\theta)&=\int_{-r_0}^t \Lb \mu(\tau,\ub, \theta)d\tau = \int_{-r_0}^t \frac{\tau^2\Lb \mu(\tau,\ub, \theta)}{\tau^2}d\tau\\
&\stackrel{\eqref{expansion of Lb mu}}{=}\int_{-r_0}^t \frac{r_0^2\Lb \mu(-r_0,\ub, \theta)}{\tau^2} + \frac{O (\delta M^4)}{-\tau^{3}}d\tau\\
&=-(\frac{1}{t}+\frac{1}{r_0})r_0^2\Lb\mu(-r_0,\ub, \theta) +O\left(\frac{\delta M^4}{t^{2}}\right).
\end{align*}
Therefore, we can use \eqref{initial estimates on mu} to conclude.
\end{proof}
We are ready to derive two key properties of the inverse density function $\mu$. The first asserts that the shock wave region is
trapping for $\mu$.
\begin{proposition}\label{Proposition C3}
For sufficiently small $\delta$ and for all $(t,\ub, \theta) \in W_{shock}$, we have
\begin{equation}\label{C3}
\Lb \mu (t,\ub,\theta) \leq -\frac{1}{4|t|^{2}}.
\end{equation}
\end{proposition}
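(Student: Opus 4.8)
The plan is to reduce \eqref{C3} to the initial hypersurface $\Sigma_{-r_{0}}$ by combining the two asymptotic expansions already in hand. Proposition \ref{proposition on expansion for mu} gives
\[
\mu(t,\ub,\theta)=1+A(t)\,\Lb\mu(-r_{0},\ub,\theta)+O\big(\delta M^{4}(-t)^{-2}\big),\qquad A(t):=-r_{0}^{2}\Big(\tfrac{1}{t}+\tfrac{1}{r_{0}}\Big)=\tfrac{r_{0}(r_{0}+t)}{-t},
\]
while Lemma \ref{lemma on the expansion of Lb mu} gives
\[
t^{2}\Lb\mu(t,\ub,\theta)=r_{0}^{2}\Lb\mu(-r_{0},\ub,\theta)+O\big(\delta M^{4}(-t)^{-1}\big).
\]
On $W^{*}_{\delta}$ one has $-t\geq 1$ (by the definitions of $s^{*}$ and $t^{*}$), and since $r_{0}+t\leq r_{0}$ there also $\tfrac{r_{0}^{2}}{A(t)}=\tfrac{r_{0}(-t)}{r_{0}+t}\geq -t\geq 1$. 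These two elementary facts are what will ultimately produce the numerical constant $\tfrac{1}{4}$, and the point that $-t\geq 1$ uniformly (rather than some $r_{0}$-dependent lower bound) is precisely what makes the argument $r_{0}$-independent.

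Next I would use the characterization of $W_{shock}$: it is the subregion of $W^{*}_{\delta}$ on which $\mu$ lies below a fixed small threshold $c_{1}<\tfrac{1}{2}$. On such points the first expansion forces $A(t)\,\Lb\mu(-r_{0},\ub,\theta)\leq c_{1}-1+O(\delta M^{4})$, where the $(-t)^{-2}$-weighted error has been absorbed using $-t\geq 1$. Since $M$ depends only on the initial data (Theorem \ref{theorem main estimates}), the smallness of $\delta$ needed below does not chase $M$, so we may take $\delta$ small enough that $O(\delta M^{4})\leq \tfrac{1}{2}(1-c_{1})$; then $A(t)\,\Lb\mu(-r_{0},\ub,\theta)\leq-\tfrac{1}{2}(1-c_{1})<0$, in particular $\Lb\mu(-r_{0},\ub,\theta)<0$ and $-\Lb\mu(-r_{0},\ub,\theta)\geq\tfrac{1-c_{1}}{2A(t)}$. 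Feeding this into the second expansion,
\[
-t^{2}\Lb\mu(t,\ub,\theta)=r_{0}^{2}\big(-\Lb\mu(-r_{0},\ub,\theta)\big)-O\big(\delta M^{4}\big)\geq\frac{1-c_{1}}{2}\cdot\frac{r_{0}^{2}}{A(t)}-O\big(\delta M^{4}\big)\geq\frac{1-c_{1}}{2}-O\big(\delta M^{4}\big).
\]
With $c_{1}$ fixed small and $\delta$ small once more the right-hand side is $\geq\tfrac{1}{4}$, which is exactly $\Lb\mu(t,\ub,\theta)\leq-\tfrac{1}{4|t|^{2}}$.

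As a side remark that clarifies the geometry but is not needed for this statement: on $\Sigma_{-r_{0}}$ the quantity $\Lb\mu(-r_{0},\ub,\theta)$ is explicit. From \eqref{Structure Equation Lb mu}, $T=\partial_{r}$, and the data profile \eqref{data profile} with $\psi_{0}=\tfrac{\delta^{1/2}}{r_{0}}\phi_{1}(\tfrac{\ub}{\delta},\theta)$, one finds $r_{0}^{2}\Lb\mu(-r_{0},\ub,\theta)=3G''(0)\,\phi_{1}\partial_{s}\phi_{1}+O(\delta^{a})$ for some $a>0$, the $\mu e$ contribution being lower order by the no-outgoing-radiation bounds \eqref{no outgoing radiation}; hence the shock-triggering inequality \eqref{shock condition} is exactly what makes $\Lb\mu(-r_{0},\cdot)$ strictly negative and drives $\mu$ into $W_{shock}$ in the first place.

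The main obstacle here is not conceptual: once Proposition \ref{proposition on expansion for mu} and Lemma \ref{lemma on the expansion of Lb mu} are available, the proof is the short chain of inequalities above. The only point requiring care — and it is the whole thesis of the $r_{0}$-free scheme — is bookkeeping the constants: one must be sure that every error term carries a genuine positive power of $\delta$ with an $r_{0}$-independent constant, which here reduces to the gains $(-t)^{-1}\leq 1$ and $(-t)^{-2}\leq 1$ valid throughout $W^{*}_{\delta}$ together with the $r_{0}$-independence of $M$.
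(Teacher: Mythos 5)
Your proposal is correct and follows essentially the same route as the paper: it uses the expansion of $\mu$ (Proposition \ref{proposition on expansion for mu}) to force the weighted initial quantity $r_{0}^{2}\big(\tfrac{1}{t}+\tfrac{1}{r_{0}}\big)\Lb\mu(-r_{0},\ub,\theta)$ to be quantitatively bounded away from zero on $W_{shock}$, and then the expansion of $\Lb\mu$ (Lemma \ref{lemma on the expansion of Lb mu}) together with the elementary bound $\tfrac{r_{0}(-t)}{r_{0}+t}\geq -t\geq 1$ to transfer this to $t^{2}\Lb\mu(t,\ub,\theta)\leq-\tfrac{1}{4}$. The only cosmetic difference is that you work with a generic threshold $c_{1}$ and divide by $A(t)$ rather than keeping the paper's explicit bound $r_{0}^{2}\Lb\mu(-r_{0},\ub,\theta)\leq\tfrac{1}{2}\tfrac{r_{0}t}{r_{0}+t}$, which changes nothing of substance.
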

\begin{proof}
For $(t,\ub, \theta) \in W_{shock}$, we have $\mu(t,\ub, \theta) < \frac{1}{10}$. In view of \eqref{expansion of mu},
we claim that $r_0^2\Lb\mu (-r_0,\ub, \theta)<0$. Otherwise, since $\frac{1}{t}+\frac{1}{r_0}<0$, we would have $\mu(t,\ub, \theta) \geq 1 + O(\delta M^2)>\frac{1}{10}$,
provided $\delta$ is sufficiently small. This contradicts the fact that $\mu(t,\ub, \theta) < \frac{1}{10}$.

We can also use this argument to show that $(\frac{1}{t}+\frac{1}{r_0})r_0^2\Lb\mu(-r_0,\ub, \theta) \geq \frac{1}{2}$. Otherwise, for sufficiently small $\delta$, we would have $\mu(t,\ub, \theta) \geq \frac{1}{2} + O(\delta M^2)>\frac{1}{10}$.

Therefore, we obtain $r_0^2\Lb\mu(-r_0,\ub, \theta) \leq \frac{1}{2}\frac{r_0 t}{r_0+t}$. In view of \eqref{expansion of Lb mu}, we have
\begin{equation*}
t^2 \Lb\mu(t,\ub,\theta)\leq  \frac{1}{2}\frac{r_0 t}{r_0+t}+O\left(\frac{\delta M^2}{-t}\right).
\end{equation*}
By taking a sufficiently small $\delta$ and noticing that $\frac{r_{0}t}{r_{0}+t}$ is bounded from above by a negative number, this yields the desired estimates.
\end{proof}

\begin{remark}
As in \cite{M-Y}, compared to the estimates \eqref{bound on mu} $|\Lb \mu| \lesssim M^2(-t)^{-2}$, \eqref{expansion of Lb mu} $|\Lb \mu| \leq C_0(-t)^{-2} + \delta M^2(-t)^{-3}$, where $C_0$ depends only on the initial data, gives us a more precise estimate. The improvement comes from integrating the wave equation $\Box_{\widetilde{g}}\psi_0 =0$
or equivalently \eqref{box psi_0 in null frame}.
\end{remark}
\subsection{The asymptotic expansion for derivatives of $\mu$}
We start with an estimate on derivatives of $\tr\chibt$.
\begin{lemma}
For sufficiently small $\delta \leq \varepsilon$, we have
\begin{equation}\label{bound on L trchibt}
 \|L \tr_{\widetilde{g}}\widetilde{\chib} \|_{L^{\infty}} \lesssim  M^2(-t)^{-1},
\end{equation}
\begin{equation}\label{bound on d trchibt}
 \|\ds \tr \widetilde{\chib} \|_{L^{\infty}} \lesssim \delta  M^2(-t)^{-4}.
\end{equation}
\end{lemma}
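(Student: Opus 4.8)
The plan is to obtain both bounds from the $\mu$-regular transport equations governing $\tr\chibt$ along $\Lb$ and $T$, integrated from the initial slice $\Sigma_{-r_0}$ with constants uniform in $r_0$; this uniformity is the real point, and it reduces to the elementary fact that $\int_{-r_0}^{t}(-t')^{-k}\,dt'$ is bounded independently of $r_0$ as soon as $k\ge 2$. Throughout I set $\tr\chibt':=\tr\chibt+\tfrac{2}{\ub-t}$, which satisfies $\|\tr\chibt'\|_{L^\infty(\Sigma_t)}\lesssim\delta M^2(-t)^{-3}$ by \eqref{precise bound on chibt} and \eqref{bound on c}, I use freely that $c,\mu$ are bounded (\eqref{bound on c}, \eqref{bound on mu}), and I use that $\ub,t$ are constant on each $S_{t,\ub}$ so that $\ds$ annihilates every function of $\ub-t$; in particular $\ds\tr\chibt=\ds\tr\chibt'$.

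For \eqref{bound on d trchibt} I would trace the conformal analogue of the $\mu$-regular structure equation \eqref{Structure Equation Lb chibAB nonsingular} (turning $\tr\slashed{\mathcal{L}}_{\Lb}\chibt$ into $\Lb\tr\chibt$ via $\slashed{\mathcal{L}}_{\Lb}\gslash=2\chib$, the $\Lb$-derivative of the conformal factor $1/c$ being explicit in $c$), obtaining a scalar transport equation $\Lb\tr\chibt'=a\,\tr\chibt'+f$ with $\|a\|_{L^\infty(\Sigma_t)}\lesssim(-t)^{-1}$ and $f$ built from $e,\chibt',\alphab'$; the key structural point is that the $\ds$-nontrivial part of $f$ is $O(\delta M^2(-t)^{-4})$ — the potentially dangerous $\tfrac{1}{\ub-t}\times(\text{lower order})$ contributions being absorbed either by the conformal normalization or by the sharp bound $\|e\|_{L^\infty(\Sigma_t)}\lesssim\delta M^2(-t)^{-3}$ (immediate from $e=\tfrac{1}{2c^2}\tfrac{d(c^2)}{d\rho}\Lb\rho$, $\Lb\rho=2\psi_0\Lb\psi_0$ and (B.1)). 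I then commute with the rotation fields $R_i$: $[\Lb,R_i]$ is tangent to $S_{t,\ub}$ and controlled by \eqref{estimates on the deformation tensor of Rotational R_i in g metric}, $R_i a$ and $R_i f$ cost one extra angular derivative on $c,e,\chibt',\alphab'$ (all within the $\Ninfty$-range of (B.1) and the preliminary estimates), and the $\tfrac{1}{(\ub-t)^2}$-parts drop out; integrating the resulting equation $\Lb(R_i\tr\chibt')=a\,R_i\tr\chibt'+O(\delta M^2(-t)^{-3})$ from $\Sigma_{-r_0}$, where \eqref{initial L infinity estimates for tr chib prime} and the initial bounds on $c$ give $\|R_i\tr\chibt'\|_{L^\infty(\Sigma_{-r_0})}\lesssim\delta r_0^{-3}$, Gronwall's inequality yields $\|R_i\tr\chibt'\|_{L^\infty(\Sigma_t)}\lesssim\delta M^2(-t)^{-3}$. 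Since $|R_i\xi|\sim(-t)|\ds\xi|$ on $S_{t,\ub}$ and $\ds\tr\chibt=\ds\tr\chibt'$, this gives \eqref{bound on d trchibt}. (One may equally start from the conformal Codazzi equation \eqref{Structure Equation div chib}, regularizing $\mu^{-1}\zetab$ as in the next step.)

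For \eqref{bound on L trchibt} I decompose $L=c^{-2}\mu\Lb+2T$. The term $c^{-2}\mu\Lb\tr\chibt$ is read off from the transport equation above together with $\Lb\tfrac{2}{\ub-t}=\tfrac{2}{(\ub-t)^2}$, giving $|c^{-2}\mu\Lb\tr\chibt|\lesssim(-t)^{-2}\lesssim M^2(-t)^{-1}$. In $2T\tr\chibt$, the explicit piece is $2T\!\big(\tfrac{-2}{\ub-t}\big)=\tfrac{4}{(\ub-t)^2}=O((-t)^{-2})$, and $T\tr\chibt'$ I would estimate by tracing the structure equation \eqref{Structure Equation T chib}: passing from $\tr\slashed{\mathcal{L}}_T\chibt$ to $T\tr\chibt$ (using $\slashed{\mathcal{L}}_T\gslash=-2c^{-2}\mu\chib$) produces a $\chib\!\cdot\!\chibt$ term of size $O((-t)^{-2})$; the apparently $\mu^{-1}$-singular term $\mu^{-1}(\zetab\widehat{\otimes}\etab)$ is in fact regular since $\zetab=-c^{-1}\mu\,\ds c$, so $\mu^{-1}\zetab\cdot\etab=-c^{-1}\ds c\cdot\etab=O(\delta M^2(-t)^{-5})$; and the remaining terms $\divslash\etab$, $c^{-1}\Lb(c^{-1}\mu)\tr\chibt$, $c^{-1}\mu\,(\theta\widehat{\otimes}\chibt)$ are bounded using \eqref{bound on c}--\eqref{bound on zetab and etab} and the control of $\divslash\etab=\divslash(-c^{-1}\mu\,\ds c)+\laplacianslash\mu$, which in turn needs the bound on $\laplacianslash c$ from (B.1) and an auxiliary bound on $\laplacianslash\mu$ obtained by commuting \eqref{Structure Equation Lb mu} twice with $\ds$ and integrating from $\Sigma_{-r_0}$. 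Altogether $|T\tr\chibt'|\lesssim M^2(-t)^{-2}$, hence $\|L\tr\chibt\|_{L^\infty}\lesssim M^2(-t)^{-1}$.

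The main obstacle is the transversal estimate in the last step: unlike $\Lb$ and $\ds$, the field $T$ carries a $\delta^{-1}$ loss on the variations (compare the weight $\delta$ on $T\psi$ in (B.1) and $\|T\mu\|_{L^\infty}\lesssim\delta^{-1}M^2(-t)^{-1}$ in \eqref{bound on T mu and slashed d mu}), so one must genuinely exploit the structure of \eqref{Structure Equation T chib} — the cancellation of the $\mu^{-1}$-singularity via $\zetab=-c^{-1}\mu\,\ds c$ and the conformal trace identity — to land on a nonnegative power of $\delta$, and one must separately produce the bound on the angular Hessian $\laplacianslash\mu$, which is not among the preliminary estimates. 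Running parallel to this is the insistence on $r_0$-uniformity: all these transport equations are integrated over $[-r_0,t]$, so one must systematically use that $\int_{-r_0}^{t}(-t')^{-k}\,dt'$ is bounded uniformly in $r_0$ for $k\ge 2$, which is exactly why the constants in \eqref{bound on L trchibt}--\eqref{bound on d trchibt} are $r_0$-independent.
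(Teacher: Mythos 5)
Your treatment of \eqref{bound on d trchibt} is essentially the paper's own: commute a rotation with the $\mu$-regular transport equation \eqref{transport equation for chib prime} (the paper does this at the level of the full tensor $\chib'_{AB}$ rather than its trace, which is cosmetic), bound $[\Lb,R_i]$ by the deformation-tensor estimates, Gronwall from $\Sigma_{-r_0}$, and convert $R_i$ into $(-t)\ds$. For \eqref{bound on L trchibt}, however, you take a genuinely different route. The paper never decomposes $L$: it commutes $L$ directly with \eqref{transport equation for chib prime}, so that $L\chib'_{AB}$ itself satisfies an $\Lb$-transport equation whose commutator contributions are $\Lb(c^{-2}\mu)\Lb\chib'$ and $(\zetab+\etab)\cdot\ds\chib'$ --- the latter controlled by the just-proved angular estimate --- and Gronwall yields the stronger tensorial bound $\|L\chib'_{AB}\|_{L^\infty}\lesssim M^2(-t)^{-3}$, from which \eqref{bound on L trchibt} follows; in particular no second derivatives of $\mu$ are ever needed. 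You instead write $L=c^{-2}\mu\Lb+2T$ and bound $T\tr\chibt$ from the trace of \eqref{Structure Equation T chib}; this is also viable (the $\mu^{-1}$-singular term is indeed regular since $\zetab=-c^{-1}\mu\,\ds c$, and no $\delta^{-1}$ loss enters the traced equation), but it forces you to control $\divslash\etab$ and hence $\laplacianslash\mu$, which is not among the Section 3--4 preliminaries; your plan of commuting $\Lb\mu=m+\mu e$ with two rotations does produce it --- it is in effect the lowest-order case of Proposition \ref{prop mu L infty estimates}, proved later in the paper, and uses only (B.1) up to order three plus the deformation-tensor bounds, so there is no circularity. The trade-off: your route costs the auxiliary $\laplacianslash\mu$ estimate and more bookkeeping in the conformal trace identities, but delivers a bound on $T\tr\chib$ as a by-product; the paper's route is shorter and stays entirely within first derivatives of $\mu$, at the price of estimating $L$-derivatives of $e$ and $\alphab'$ (which still involve transversal derivatives of the variations, covered by (B.1)). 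Both arguments are uniform in $r_0$ for the reason you state, namely $\int_{-r_0}^{t}(-t')^{-k}\,dt'\lesssim 1$ for $k\ge 2$.
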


\begin{proof}
We derive a transport equation for $R_i \chib'_{AB}$ by commuting $R_i$ with \eqref{transport equation for chib prime}:
\begin{equation*}
\Lb (R_i \chib'_{AB}) = [\Lb ,R_i] \chib'_{AB} + e R_i \chib'_{AB} + 2 \chib'_{A}{}^{C}R_i \chib'_{BC}+(R_i e) \cdot \chib'_{AB} - \frac{e}{\ub-t}R_i \slashed{g}_{AB} - R_i\big(\frac{e}{\ub-t}\big) \slashed{g}_{AB} -R_i\alphab'_{AB}.
\end{equation*}
Since $[\Lb,R_i]^A = {}^{(R_i)}\pi_{\Lb}{}^A$, the commutator term $[\Lb ,R_i] \chib'_{AB}$ can be bounded by the estimates
on the deformation tensors. We then multiply both sides by $R_{i} \chib'_{AB}$ and repeat the procedure that we used to
 derive \eqref{precise bound on chib'}. Since it is routine, we omit the details and only give the final result
\begin{equation}\label{estimate for L chib}
 \|R_i\left(\chib_{AB}+\frac{\slashed{g}_{AB}}{\ub-t}\right)\|_{L^{\infty}} \lesssim \delta M^2(-t)^{-3}.
\end{equation}
In particular, this yields $\|R_i \tr\chib \|_{L^{\infty}} \lesssim   \delta M^2(-t)^{-3}$ which is equivalent to \eqref{bound on d trchibt}.

We derive a transport equation for $L\chib'_{AB}$ by commuting $L$ with \eqref{transport equation for chib prime}:
\begin{align*}
\Lb (L \chib'_{AB}) &= e L\chib'_{AB} + 2 \chib'_{A}{}^{C}L \chib'_{BC}+ L e \chib'_{AB} - \frac{e}{\ub-t}L \slashed{g}_{AB}\\
 &\ \ + L\big(\frac{e}{\ub-t}\big) \slashed{g}_{AB} -L \alphab'_{AB}+\Lb(c^{-2}\mu)\Lb\chib'_{AB}+\slashed{g}^{CD}(\zetab_{C}+\etab_{D})X_{C}(\chib'_{AB}).
\end{align*}
We then use Gronwall to derive
\begin{equation}\label{expansion for L chib}
 \|L\left(\chib_{AB}+\frac{\slashed{g}_{AB}}{\ub-t}\right)\|_{L^{\infty}} \lesssim M^2(-t)^{-3}.
\end{equation}
In particular, this yields $\|L \tr\chib \|_{L^{\infty}} \lesssim  M^2(-t)^{-1}$. This is equivalent to \eqref{bound on L trchibt}.
\end{proof}
We now derive estimates for $R_i\psi_{0}$.
\begin{lemma}
For sufficiently small $\delta$, we have
\begin{equation}\label{expansion of L R_i psi_0}
\big|(-t) L R_i \psi_0(t,\ub, \theta)-r_0 LR_i \psi_0(-r_0,\ub, \theta)\big| \lesssim \delta^{\frac{1}{2}} M^3(-t)^{-1}.
\end{equation}
\end{lemma}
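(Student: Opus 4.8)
The plan is to repeat the argument that proved \eqref{expansion of L psi_0}, now carrying one extra rotational derivative. The first step is to commute $R_i$ with the null-frame form \eqref{box psi_0 in null frame} of $\Box_{\widetilde{g}}\psi_0=0$. Since $[R_i,\Lb]$ is $S_{t,\ub}$-tangential with components $\leftexp{(R_i)}{\pi}_{\Lb}{}^A$, and $[R_i,L]$ is likewise controlled by the components of $\leftexp{(R_i)}{\pi}$, this produces a transport equation along $\Lb$ of the schematic form
\begin{equation*}
\Lb\big(LR_i\psi_0\big)+\tfrac12\tr\chibt\cdot LR_i\psi_0=\mathcal{E}_i,
\end{equation*}
where $\mathcal{E}_i$ gathers three types of terms: (i) the commutator contributions $[R_i,\Lb]L\psi_0$ and $\Lb\big([R_i,L]\psi_0\big)$; (ii) $\tfrac12\big(R_i\tr\chibt\big)L\psi_0$; and (iii) $R_i$ applied to the ``regular'' parenthesis $-\mu\laplacianslash\psi_0+\tfrac12\tr\widetilde{\chi}\,\Lb\psi_0+2\zetab\cdot\ds\psi_0+\mu\,\ds\log c\cdot\ds\psi_0$ appearing in \eqref{box psi_0 in null frame}.

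Second, I would estimate $\|\mathcal{E}_i\|_{L^\infty(\Sigma_t)}$. For (i) I would use the deformation-tensor bounds \eqref{estimates on the deformation tensor of Rotational R_i in g metric}, \eqref{estimates on the deformation tensor of Rotational R_i in g tilde} together with (B.1) and the equivalence $|R_i\,\cdot\,|\sim(-t)|\ds\,\cdot\,|$; these terms turn out to be much better than needed. For (ii) I would invoke \eqref{bound on d trchibt}, giving $\|R_i\tr\chibt\|_{L^\infty}\lesssim\delta M^2(-t)^{-3}$, combined with the (B.1) bound on $L\psi_0$. For (iii), note $R_i\psi_0$ has order $1$ and $R_i\laplacianslash\psi_0$ order $3\le\Ninfty$, so after commuting $R_i$ through $\laplacianslash$ (producing curvature-of-$\slashed{g}$ and $\leftexp{(R_i)}{\pi}$ terms, all controlled by Section~3), every resulting term is bounded by (B.1) and the connection estimates exactly as in the proof of \eqref{expansion of L psi_0}; in particular $\|\ds\mu\|_{L^\infty}\lesssim M^2(-t)^{-1}$ from \eqref{bound on T mu and slashed d mu} is used for $(R_i\mu)\laplacianslash\psi_0$. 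The outcome is $\|\mathcal{E}_i\|_{L^\infty(\Sigma_t)}\lesssim\delta^{\frac12}M^3(-t)^{-3}$.

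Third, I would feed in the precise asymptotics \eqref{precise bound on chibt}: since $\tfrac12\tr\chibt=-\tfrac{1}{\ub-t}+O\big(\delta M^2(-t)^{-3}\big)$ and $|LR_i\psi_0|\lesssim\delta^{-\frac12}M(-t)^{-1}$ by (B.1), the zeroth-order term rewrites as $-\tfrac{1}{\ub-t}LR_i\psi_0$ plus an error of size $\delta^{\frac12}M^3(-t)^{-4}$ that is absorbed into $\mathcal{E}_i$. Hence, using $|\ub-t|\sim(-t)$,
\begin{equation*}
\Big|\Lb\big((\ub-t)LR_i\psi_0\big)\Big|\lesssim\delta^{\frac12}M^3(-t)^{-2}.
\end{equation*}
Integrating along the integral curves of $\Lb$ (on which $\ub,\theta$ are constant and $t$ is the parameter) from $-r_0$ to $t$, and using $|\ub|\le\delta\le1$ to replace $(\ub-t)$ by $(-t)$ and $(\ub+r_0)$ by $r_0$ at a cost of $O(\delta)\cdot|LR_i\psi_0|\lesssim\delta^{\frac12}M(-t)^{-1}$, yields the stated estimate.

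The step I expect to be the main obstacle is (iii): one must verify that commuting $R_i$ past the singular-looking operator $\mu\laplacianslash$ costs no factor of $(-t)$ or $\delta^{-1}$, i.e.\ that $[R_i,\laplacianslash]\psi_0$ and $(R_i\mu)\laplacianslash\psi_0$ are genuinely of size $\delta^{\frac12}M^3(-t)^{-3}$ rather than worse. This relies on $R_i\sim(-t)\ds$ supplying a compensating $(-t)^{-1}$, on the smallness of $\ds\mu$ and of the Gauss curvature of $\slashed{g}$ relative to the round sphere of radius $|t|$, and on $\Ninfty$ being large enough to accommodate the third-order spherical derivatives; once these are in hand, the rest is the same transport-plus-integration scheme already carried out for $L\psi_0$.
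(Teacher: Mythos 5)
Your proposal is correct and follows essentially the same route as the paper: commute $R_i$ with \eqref{box psi_0 in null frame}, bound the resulting source in $L^\infty$ by $\delta^{1/2}M^3(-t)^{-3}$ using (B.1), the deformation-tensor estimates and \eqref{bound on d trchibt}, and then integrate the transport equation after renormalizing by $(\ub-t)$ via \eqref{precise bound on chibt}. The only cosmetic difference is that the paper first estimates $R_iL\psi_0$ and converts to $LR_i\psi_0$ at the very end by a pointwise bound on the commutator applied to $\psi_0$, whereas you carry $\Lb\big([R_i,L]\psi_0\big)$ inside the error term, which also works but additionally requires $\Lb$-derivatives of deformation-tensor components (all available from Section 3 and (B.1)); note also that \eqref{bound on T mu and slashed d mu} in fact gives $\|\ds\mu\|_{L^\infty(\Sigma_t)}\lesssim M^2(-t)^{-2}$, stronger than the bound you quoted, so that step goes through a fortiori.
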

\begin{proof}
We commute $R_i$ with \eqref{box psi_0 in null frame} and we obtain that $\Lb(R_i L  \psi_0) + \frac{1}{2}\tr\chibt\cdot R_i L \psi_0 = N$ with
\begin{align*}
N=  R_i\left(\mu\laplacianslash \psi_0 -\frac{1}{2}\tr\widetilde{\chi}\cdot \Lb \psi_0 - 2 \zetab \cdot \slashed{d} \psi_0 - \mu \slashed{d}\log(c)\cdot \slashed{d}\psi_0\right)  - \frac{1}{2}R_i \tr\chibt \cdot  L \psi_0+ [\Lb,R_i] L  \psi_0.
\end{align*}
 According to  (B.1) and the previous lemma, $N$ is bounded by $\delta^{\frac{1}{2}} M^3$. Hence, $\big|\Lb(R_i L  \psi_0) + \frac{1}{2}\tr\chibt\cdot R_i L  \psi_0\big| \lesssim  \delta^{\frac{1}{2}}M^3(-t)^{-3}$.
 We then integrate to derive
\begin{equation*}
\big||t| R_i L \psi_0(t,\ub, \theta)-r_0 R_i L \psi_0(-r_0,\ub, \theta)\big| \lesssim \delta^{\frac{1}{2}} M^3(-t)^{-1}.
\end{equation*}
The commutator $[R_i, \Lb] \psi_0$ is bounded by $\delta M^3(-t)^{-2}$ thanks to the estimates on deformation tensors. This completes the proof.
\end{proof}
Using this lemma, we can obtain a more accurate estimate for $\ds\mu$. 
\begin{lemma}
For sufficiently small $\delta$, we have
\begin{equation}\label{precise bound on d mu}
\|\ds\mu\|_{L^\infty(\Sigma_t)} \lesssim \frac{\left(1 + \delta M^4\right)}{t^{2}}.
\end{equation}
\end{lemma}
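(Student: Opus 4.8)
The plan is to follow closely the proof of Proposition~\ref{proposition on expansion for mu}, but with a rotation vectorfield $R_i$ commuted through the transport equation \eqref{Structure Equation Lb mu} for $\mu$, and then to convert the resulting bound on $R_i\mu$ into one for $\ds\mu$ using $\sum_i(R_i\mu)^2\sim t^2|\ds\mu|^2$. Commuting $R_i$ with $\Lb\mu = m+\mu e$ and moving the commutator to the right gives
\begin{equation*}
\Lb(R_i\mu)=R_i m+e\,R_i\mu+\mu\,R_i e+[\Lb,R_i]\mu .
\end{equation*}
First I would dispose of all but the first term on the right. Since $[\Lb,R_i]$ annihilates $t$ and $\ub$ it is tangential to $S_{t,\ub}$ with components $[\Lb,R_i]^A={}^{(R_i)}\pi_\Lb{}^A$, so $[\Lb,R_i]\mu$ only involves $\ds\mu$; combining \eqref{estimates on the deformation tensor of Rotational R_i in g metric}, \eqref{bound on e} and the crude bound $\|\ds\mu\|_{L^\infty(\Sigma_t)}\lesssim M^2(-t)^{-2}$ from \eqref{bound on T mu and slashed d mu}, one finds that after multiplication by $t^2$ the three lower-order terms contribute at most $O(\delta M^4)$, i.e.\ they are $\delta$-small.

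The substance of the argument is the term $R_i m$. Recall $m=-\tfrac12\tfrac{d(c^2)}{d\rho}T\rho$ with $\rho=\psi_0^2$, so $R_i m$ is a finite sum of products of a smooth function of $\psi_0$ with one of $\psi_0$, $R_i\psi_0$, $T\psi_0$, $R_iT\psi_0$. Using the asymptotic expansions \eqref{expansion of psi_0} and \eqref{expansion of T psi_0}, together with the expansion \eqref{expansion of L R_i psi_0} and the relation $L=c^{-2}\mu\Lb+2T$ (which, as remarked after \eqref{expansion of L psi_0}, produce the analogous expansions for $R_i\psi_0$ and for $R_iT\psi_0$), and tracking the powers of $\delta$ and $(-t)$ exactly as in the proof of Lemma~\ref{lemma on the expansion of Lb mu}, I would establish
\begin{equation*}
\big|t^2R_i m(t,\ub,\theta)-r_0^2R_i m(-r_0,\ub,\theta)\big|\lesssim \delta M^4(-t)^{-1}.
\end{equation*}
The decisive point is that the surviving leading term $r_0^2R_i m(-r_0,\ub,\theta)$ lives on $\Sigma_{-r_0}$, where $T=\partial_r$, $R_i=\Omega_i$ and the data has the explicit profile \eqref{data profile}; a direct computation then gives $\|r_0^2R_i m(-r_0,\cdot)\|_{L^\infty}\lesssim C_0$ with $C_0$ depending only on low-order norms of the seed data and, crucially, \emph{independent of $M$}. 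Together with the previous paragraph this yields $t^2\Lb(R_i\mu)(t,\ub,\theta)=r_0^2R_i m(-r_0,\ub,\theta)+O\big(\delta M^4(-t)^{-1}\big)$.

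It remains to integrate. Writing $\Lb(R_i\mu)=t^{-2}\cdot t^2\Lb(R_i\mu)$ and integrating along the integral curve of $\Lb=\partial/\partial t$ from $-r_0$ to $t$, exactly as in Proposition~\ref{proposition on expansion for mu}, gives
\begin{equation*}
R_i\mu(t,\ub,\theta)=R_i\mu(-r_0,\ub,\theta)-\Big(\tfrac1t+\tfrac1{r_0}\Big)r_0^2R_i m(-r_0,\ub,\theta)+O\big(\delta M^4(-t)^{-2}\big).
\end{equation*}
On $\Sigma_{-r_0}$ one has $\mu=c$, so $R_i\mu(-r_0,\cdot)=R_i c(-r_0,\cdot)$ is a smooth function of $\psi_0$ and $R_i\psi_0$, and the form of the data forces $\|R_i\mu(-r_0,\cdot)\|_{L^\infty}\lesssim\delta r_0^{-2}$. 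Since $|\tfrac1t+\tfrac1{r_0}|\le(-t)^{-1}$ for $-r_0\le t\le-1$, the three terms above are bounded by $\delta r_0^{-2}$, $C_0(-t)^{-1}$ and $\delta M^4(-t)^{-2}$ respectively, hence $\|R_i\mu\|_{L^\infty(\Sigma_t)}\lesssim(1+\delta M^4)(-t)^{-1}$; the stated bound \eqref{precise bound on d mu} then follows from $\sum_i(R_i\mu)^2\sim t^2|\ds\mu|^2$.

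The step I expect to be the main obstacle is the middle one: one must check that in $R_i m$ every term carrying a power of $M$ also carries a compensating power of $\delta$ and an extra factor of $(-t)^{-1}$, so that the only $M$-free, $O(1)$ part of $t^2R_i m$ is its initial value. This is precisely where the sharp expansion \eqref{expansion of L R_i psi_0} for the worst piece $R_iT\psi_0$ is needed; feeding in only the bootstrap bounds (B.1) would merely reproduce the weaker estimate $\|\ds\mu\|_{L^\infty(\Sigma_t)}\lesssim M^2(-t)^{-2}$ of \eqref{bound on T mu and slashed d mu}.
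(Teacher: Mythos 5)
Your proposal follows essentially the same route as the paper: commute $R_i$ with the transport equation $\Lb\mu=m+\mu e$, absorb the terms $eR_i\mu+\mu R_ie+[\Lb,R_i]\mu$ as $\delta$-small errors using (B.1) and the deformation-tensor bounds, use the expansion \eqref{expansion of L R_i psi_0} (via $T=\tfrac12(L-c^{-2}\mu\Lb)$) to get the analogue of \eqref{expansion of Lb mu} for $R_i\mu$, integrate along $\Lb$ as in Proposition \ref{proposition on expansion for mu}, and convert $R_i\mu$ to $\ds\mu$ via $\sum_i(R_i\mu)^2\sim t^2|\ds\mu|^2$. The only cosmetic difference is that you keep $r_0^2R_im(-r_0)$ rather than $r_0^2\Lb(R_i\mu)(-r_0)$ as the leading initial coefficient, which differ only by $\delta$-small terms, so the argument is correct.
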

\begin{proof}
We commute $R_i$ with $\Lb \mu= m+ e\mu$ to derive
\begin{equation*}
\Lb(R_i \mu) = R_i m + \left(e R_i \mu+\mu R_i e +[\Lb,R_i]\mu\right).
\end{equation*}
According to (B.1) and the estimates on $L R_i \psi_{0}$ (needed to bound $R_i m$) from the previous lemma,
it is straightforward to bound the terms in the parenthesis by $\delta M^2$. Similar to \eqref{expansion of Lb mu}, we obtain
\begin{equation}\label{expansion of Lb R_i mu}
|(-t)^2\Lb\left(R_i\mu \right)(t,\ub,\theta)-r_0^2\Lb\left(R_i\mu \right)(-r_0,\ub,\theta)| \lesssim \delta M^4(-t)^{-1}.
\end{equation}
Since $\|[\Lb,R_{i}]\|_{L^{\infty}}\lesssim\delta M^{2}(-t)^{-2}$, we bound $R_i \mu$ as
\begin{align*}
\quad R_i\mu(t,\ub,\theta)-R_i\mu(-r_0,\ub,\theta)&\stackrel{\eqref{expansion of Lb R_i mu}}{=}\int_{-r_0}^t \frac{r_0^2\Lb\big( R_i\mu(\-r_0,\ub, \theta)\big)}{\tau^2} + \frac{O ( \delta M^4)}{-\tau^{3}}d\tau\\
&=-(\frac{1}{t}+\frac{1}{r_0})r_0^2\Lb\big(R_i\mu(-r_0,\ub, \theta)\big) +\frac{O(\delta M^4)}{t^{2}}.
\end{align*}
By using the relation between $R_{i}$ and $\ds$, this inequality yields \eqref{precise bound on d mu} for sufficiently small $\delta$.
\end{proof}
We can also obtain a better estimate for $L\mu$.
\begin{lemma}
For sufficiently small $\delta$, we have
\begin{equation}\label{precise bound on L mu}
\|L\mu\|_{L^\infty(\Sigma_t)} \lesssim \frac{\delta^{-1} + M^4}{-t}.
\end{equation}
\end{lemma}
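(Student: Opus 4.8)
The plan is to reduce $L\mu$ to quantities that are already controlled, using $L=c^{-2}\mu\Lb+2T$ (see \eqref{outgoing null}), which gives
\begin{equation*}
L\mu=c^{-2}\mu\,\Lb\mu+2\,T\mu.
\end{equation*}
The first summand is harmless: by \eqref{bound on c} we have $\tfrac12\le c\le 2$, by Proposition \ref{proposition on expansion for mu} we have $\mu\le C_0$, and by \eqref{bound on mu} we have $\|\Lb\mu\|_{L^\infty(\Sigma_t)}\lesssim M^2(-t)^{-2}$; since $-t\ge 1$ on $W^*_\delta$ this yields $\|c^{-2}\mu\,\Lb\mu\|_{L^\infty(\Sigma_t)}\lesssim M^2(-t)^{-1}\lesssim M^4(-t)^{-1}$.

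The content of the lemma is therefore the bound on $2\,T\mu$. Rather than simply quoting \eqref{bound on T mu and slashed d mu} (which only gives $\delta^{-1}M^2(-t)^{-1}$), I would propagate $T\mu$ along $\Lb$ so as to isolate the genuinely $M$-free $\delta^{-1}$ contribution. Commuting $T$ with the structure equation \eqref{Structure Equation Lb mu}, $\Lb\mu=m+\mu e$, and using that $\Lambda=[\Lb,T]$ is tangent to $S_{t,\ub}$ (by \eqref{commutator Lb T}) and already estimated, one obtains a transport equation of the form
\begin{equation*}
\Lb(T\mu)=e\,T\mu+Tm+(Te)\,\mu+\Lambda^A X_A\mu.
\end{equation*}
The terms $(Te)\mu$ and $\Lambda^A X_A\mu$ gain a power of $\delta$ relative to $Tm$ and are negligible. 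For $Tm$ one writes $m=-\tfrac12\,\tfrac{d(c^2)}{d\rho}\,T\rho=-\tfrac12\,T(c^2)$, so that $Tm=-\tfrac12\,T^2(c^2)$ is a sum of terms of the schematic types $(T\psi_0)^2$, $\psi_0\,T^2\psi_0$ and $(\psi_0 T\psi_0)^2$ with coefficients bounded in terms of $c$. Using the sharp asymptotic expansions \eqref{expansion of psi_0} and \eqref{expansion of T psi_0} (and, for the second-order factor $T^2\psi_0$, either (B.1) or the analogue of these expansions obtained by commuting $T$ with \eqref{box psi_0 in null frame}), which display the leading parts of $\psi_0$ and $T\psi_0$ as $(-t)^{-1}$ times $M$-independent seed-data quantities with $\delta$-gaining remainders, together with (B.1), one obtains $\|Tm\|_{L^\infty(\Sigma_t)}\lesssim(\delta^{-1}+M^4)(-t)^{-2}$. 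Integrating the transport equation from $\Sigma_{-r_0}$, where $T\mu=\partial_r c=O(r_0^{-2})$ by Section \ref{section short pulse data}, with the Gronwall factor $\exp\!\big(\int e\big)$ bounded via \eqref{bound on e} and $\int_{-r_0}^{t}(-\tau)^{-2}\,d\tau\lesssim(-t)^{-1}$, gives $\|T\mu\|_{L^\infty(\Sigma_t)}\lesssim(\delta^{-1}+M^4)(-t)^{-1}$; combined with the estimate for the first summand this proves the claim.

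The only delicate point is the estimate for $Tm$: one must separate the $M$-independent leading behaviour of $\psi_0$, $T\psi_0$ (and $T^2\psi_0$) from their $\delta$-gaining remainders — exactly the mechanism already used in Lemma \ref{lemma on the expansion of Lb mu} and Proposition \ref{proposition on expansion for mu} — so that the unavoidable factor $\delta^{-1}$ is not accompanied by a positive power of $M$, while all genuinely nonlinear errors are absorbed into the $M^4$ term. Everything else is a routine application of the pointwise bounds for $c$, $e$, $\mu$, $\Lambda$ and $\psi_0$ collected in the preceding subsections.
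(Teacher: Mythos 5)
Your reduction $L\mu=c^{-2}\mu\,\Lb\mu+2T\mu$ and the propagation of $T\mu$ along $\Lb$ via $\Lb(T\mu)=Tm+e\,T\mu+\mu\,Te+\Lambda\mu$ is a legitimate variant of the paper's argument (the paper instead commutes $L$ directly with $\Lb\mu=m+e\mu$), and your treatment of the bracket terms, the Gronwall factor and the initial value $T\mu|_{\Sigma_{-r_0}}=\partial_r c=O(r_0^{-2})$ is fine. The proof, however, hinges entirely on the asserted pointwise bound $\|Tm\|_{L^\infty(\Sigma_t)}\lesssim(\delta^{-1}+M^4)(-t)^{-2}$, and this is where there is a genuine gap. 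The dangerous term in $Tm$ is $\psi_0\,T^2\psi_0$ (schematically). Your first suggested justification, ``use (B.1) for $T^2\psi_0$'', does not suffice: (B.1) only gives $\|T^2\psi_0\|_{L^\infty}\lesssim\delta^{-3/2}M(-t)^{-1}$, so even with the $M$-free leading behaviour of $\psi_0$ from \eqref{expansion of psi_0} you obtain $\|Tm\|\lesssim\delta^{-1}M(-t)^{-2}$ and hence $\|L\mu\|\lesssim\delta^{-1}M(-t)^{-1}$. Since $\delta$ is ultimately taken small depending on $M$, $\delta^{-1}M$ is \emph{not} bounded by $\delta^{-1}+M^4$, so this route proves a strictly weaker statement than the lemma (essentially no better than what already follows from \eqref{bound on T mu and slashed d mu}). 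Your second suggested justification, ``the analogue of the expansions obtained by commuting $T$ with \eqref{box psi_0 in null frame}'', is precisely the missing nontrivial ingredient: unlike commuting $R_i$, commuting $T$ produces source terms with $\delta^{-1}$-sized or not-yet-estimated coefficients ($T\mu$, $T\widetilde{\mathrm{tr}}\chib$ — which via \eqref{Structure Equation T chib} brings in $\nablaslash\etab$, i.e.\ second derivatives of $\mu$ not controlled at this point of Section 4 — as well as $T\zetab$, $\laplacianslash T\psi_0$, etc.), and one must actually verify that the leading $(-t)^{-1}$ behaviour of $TL\psi_0$ is carried by its $M$-free initial value $\sim\delta^{-3/2}r_0^{-1}$ with a $\delta$-gaining remainder. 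You assert this expansion rather than prove it, and with the stated fallback failing, the key estimate on $Tm$ is unsupported.

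For comparison, the paper avoids demanding any pointwise $(\delta^{-1}+M^4)$ bound on $Tm(t)$: after commuting $L$ with $\Lb\mu=m+e\mu$ and bounding the bracket by $M^4(-t)^{-3}$ via (B.1), it only establishes the almost-conservation statement \eqref{expansion of Lb L mu}, $|t^2\Lb(L\mu)(t)-r_0^2\Lb(L\mu)(-r_0)|\lesssim M^4(-t)^{-1}$, ``exactly as in Lemma \ref{lemma on the expansion of Lb mu}'', and then integrates along $\Lb$; the entire $\delta^{-1}$ enters only through the explicitly computable initial value $r_0^2\Lb(L\mu)(-r_0)$ (driven by $Tm|_{t=-r_0}\sim\delta^{-1}r_0^{-2}$, which is read off from the data), so it never needs to be separated from $M$ at later times. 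If you want to keep your $T\mu$-based organization, you must either prove the $TL\psi_0$ (equivalently $T^2\psi_0$) expansion you allude to, or restructure as the paper does so that only the initial-time value of the $\delta^{-1}$-sized quantity is needed.
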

\begin{proof}
By commuting $L$ with $\Lb \mu= m+ e\mu$, we obtain
\begin{equation*}
\Lb(L\mu) =Lm + \big[-2(\zetab^A+\etab^A)X_A(\mu)+\Lb(c^{-2}\mu)\Lb\mu + eL\mu + \mu L e\big].
\end{equation*}
According to (B.1), we can bound the terms in the bracket by $M^4(-t)^{-3}$. Hence,
\begin{align*}
&\quad |t|^2\Lb\big(L\mu \big)(t,\ub,\theta)-|r_0|^2\Lb\big(L\mu \big)(-r_0,\ub,\theta) =|t|^2 L m(t,\ub,\theta)-|r_0|^2L m(-r_0,\ub,\theta) + \frac{O(M^4)}{-t}.
\end{align*}
By the explicit formula of $m$, we can proceed exactly as in Lemma \ref{lemma on the expansion of Lb mu} and we obtain
\begin{equation}\label{expansion of Lb L mu}
||t|^2\Lb\big(L\mu \big)(t,\ub,\theta)-|r_0|^2\Lb\big(L\mu \big)(-r_0,\ub,\theta)| \lesssim M^4(-t)^{-1}.
\end{equation}
We then integrate along $\Lb$ and we have
\begin{align*}
L\mu(t,\ub,\theta)-L\mu(-r_0,\ub,\theta)&= \int_{-r_0}^t \frac{\tau^2\Lb\big( L\mu(\tau,\ub, \theta)\big)}{\tau^2}d\tau\stackrel{\eqref{expansion of Lb L mu}}{=}\int_{-r_0}^t \frac{r_0^2\Lb\big( L\mu(\-r_0,\ub, \theta)\big)}{\tau^2} + \frac{O ( M^4)}{-\tau^{3}}d\tau\\
&=-(\frac{1}{t}+\frac{1}{r_0})r_0^2\Lb\big(L\mu(-r_0,\ub, \theta)\big) +\frac{O(M^4)}{t^{2}}.
\end{align*}
For sufficiently small $\delta$, this implies \eqref{precise bound on L mu}.
\end{proof}
We now relate $L^2 \psi_0(t,\ub,\theta)$ to its initial value.
\begin{lemma}
For sufficiently small $\delta$, we have
\begin{equation}\label{expansion of L L psi_0}
\left||t| L^2\psi_0(t,\ub, \theta)-r_0 L^2 \psi_0(-r_0,\ub, \theta)\right| \lesssim \delta^{-\frac{1}{2}} M^3(-t)^{-1}.
\end{equation}
\end{lemma}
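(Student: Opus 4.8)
The proof will mirror, at one order higher, the arguments used for \eqref{expansion of L psi_0} and \eqref{expansion of L R_i psi_0}. First I would commute $L$ through the null-frame equation \eqref{box psi_0 in null frame}. Writing $L\Lb = \Lb L - [L,\Lb]$, this yields the transport equation
\begin{equation*}
\Lb(L^2\psi_0) + \tfrac12\tr\chibt\cdot L^2\psi_0 = N',
\end{equation*}
where
\begin{equation*}
N' = -\tfrac12\big(L\tr\chibt\big)\,L\psi_0 \;-\; [L,\Lb](L\psi_0) \;-\; L\Big(-\mu\laplacianslash\psi_0 + \tfrac12\tr\widetilde{\chi}\cdot\Lb\psi_0 + 2\zetab\cdot\ds\psi_0 + \mu\,\ds\log(c)\cdot\ds\psi_0\Big).
\end{equation*}
Here $[L,\Lb]$ is computed from $L = c^{-2}\mu\Lb + 2T$, so $[L,\Lb] = -\big(\Lb(c^{-2}\mu)\big)\Lb + 2[T,\Lb]$ with $[T,\Lb]=-\Lambda$ tangential to $S_{t,\ub}$ by \eqref{commutator Lb T}. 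A point worth checking is that, whenever $\Lb(L\psi_0)$ reappears inside $N'$ (through $[L,\Lb]$, or through $L\Lb\psi_0 = \Lb L\psi_0 + [L,\Lb]\psi_0$), it is to be replaced by the right-hand side of the order-one equation \eqref{box psi_0 in null frame}; once this is done, $N'$ involves only $\psi_0$, its derivatives of the orders controlled by (B.1), and first $L$-derivatives of $\mu$, $c$ and the connection coefficients — in particular it does \emph{not} involve $L^2\psi_0$, so the transport equation is effectively a linear ODE for $L^2\psi_0$ along the integral curves of $\Lb$ (the lines of constant $\ub,\theta$).

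Next I would estimate $\|N'\|_{L^\infty(\Sigma_t)}\lesssim \delta^{-\frac12}M^3(-t)^{-3}$. This uses (B.1), the bounds of Section 3.3 (notably \eqref{bound on c}, \eqref{bound on zetab and etab}, \eqref{precise bound on chibt} and the deformation-tensor estimates \eqref{L infinity etimates on the deformation tensor of T}), and the refined estimates already established in the present section: \eqref{bound on L trchibt} and \eqref{bound on d trchibt} for $L\tr\chibt$ and $\ds\tr\chibt$; \eqref{precise bound on L mu} and \eqref{precise bound on d mu} for $L\mu$ and $\ds\mu$; and \eqref{expansion of L R_i psi_0} together with $R_i\sim(-t)\ds$ to handle the angular derivatives of $L\psi_0$ hidden in $L(\mu\laplacianslash\psi_0) = (L\mu)\laplacianslash\psi_0 + \mu\,\laplacianslash(L\psi_0) + \mu\,[L,\laplacianslash]\psi_0$. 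The loss of a factor $\delta^{-1}$ relative to the source bound $\delta^{\frac12}M^3(-t)^{-3}$ in the proof of \eqref{expansion of L R_i psi_0} is due to the $T$-derivative contained in $L=c^{-2}\mu\Lb+2T$ (a $T$-derivative costs $\delta^{-1}$), applied here to an order-one quantity.

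Finally I would renormalize and integrate. Put $f(t) := (\ub-t)\,L^2\psi_0(t,\ub,\theta)$. Since $\Lb(\ub-t)=-1$ and, by \eqref{precise bound on chibt}, $\tfrac12\tr\chibt = -(\ub-t)^{-1} + O\big(\delta M^2(-t)^{-3}\big)$, the transport equation becomes
\begin{equation*}
\Lb f = (\ub-t)\,N' + O\big(\delta M^2(-t)^{-3}\big)\,f .
\end{equation*}
As $|(\ub-t)N'|\lesssim \delta^{-\frac12}M^3(-t)^{-2}$ is integrable in $t$ and the Gronwall factor $\exp\!\big(\int_{-r_0}^{t}O(\delta M^2(-\tau)^{-3})\,d\tau\big)$ is bounded, integrating from $-r_0$ to $t$ gives $|f(t)-f(-r_0)|\lesssim \delta^{-\frac12}M^3(-t)^{-1}$. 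Since $|\ub|\le\delta$, replacing $f(t)$ by $|t|L^2\psi_0(t,\ub,\theta)$ and $f(-r_0)=(\ub+r_0)L^2\psi_0(-r_0,\ub,\theta)$ by $r_0 L^2\psi_0(-r_0,\ub,\theta)$ introduces only errors of size $O(\delta)\|L^2\psi_0\|_{L^\infty}$, which are absorbed, and \eqref{expansion of L L psi_0} follows.

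I expect the main obstacle to be the estimate of $N'$: after expanding $L$ of the bracket and the commutators $[L,\Lb]$ and $[L,\laplacianslash]$, one must verify that every resulting term is genuinely bounded by $\delta^{-\frac12}M^3(-t)^{-3}$ — in particular the pieces containing $L\mu$, $L\tr\widetilde{\chi}$ and $\laplacianslash(L\psi_0)$, for which the crude (B.1) bounds are not by themselves sharp and one must invoke \eqref{precise bound on L mu}, \eqref{bound on L trchibt} and \eqref{expansion of L R_i psi_0} — and that the $\delta$-powers combine to exactly $\delta^{-1/2}$ and no worse.
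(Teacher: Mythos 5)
Your proposal follows essentially the same route as the paper: commute $L$ with \eqref{box psi_0 in null frame}, expand the source (including the $[L,\Lb]$ contribution via $L=c^{-2}\mu\Lb+2T$), bound every term by $\delta^{-\frac12}M^3(-t)^{-3}$ using \eqref{bound on L trchibt}, \eqref{precise bound on L mu}, the $L\zetab$ bound, \eqref{expansion of L R_i psi_0} and (B.1), and then integrate the renormalized transport equation along $\Lb$ exactly as in the proof of \eqref{expansion of L psi_0}. The argument is correct, and your extra remarks (eliminating reappearances of $\Lb L\psi_0$ via the order-one equation, the $(\ub-t)$ versus $|t|$ bookkeeping) are just details the paper leaves implicit.
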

\begin{proof}
We commute $L$ with \eqref{box psi_0 in null frame} and we obtain the following transport equation for $L^2 \psi_0$:
\begin{equation}\label{box L psi_0 in null frame}
\begin{split}
 \Lb(L^2\psi_0) + \frac{1}{2}\tr\chibt\cdot L^2\psi_0  &=-\frac{1}{2}L\big(\tr\chibt\big) \cdot L\psi_0 + [\Lb,L]L\psi_0 \\
&+ L\Big(\mu\laplacianslash \psi_0 -\frac{1}{2}\tr\widetilde{\chi}\cdot \Lb\psi_0 - 2 \zetab \cdot \slashed{d}\psi_0 -\mu \slashed{d}\log(c)\cdot \slashed{d}\psi_0 \Big).
\end{split}
\end{equation}
The righthand side of the above equation can be expanded as
\begin{equation*}
\begin{split}
&L\tr\chibt L\psi_0 + \Lb(c^{-2}\mu)\Lb L\psi_0 + (\etab+\zetab)\slashed{d}L\psi_0 +\Big( L\mu \laplacianslash \psi_0 +\mu L \laplacianslash \psi_0 +L\tr\chibt \Lb \psi_0 + \tr\chibt L\Lb \psi_0 \\
& + L\zetab \slashed{d}\psi_0 + \zetab L\slashed{d}\psi_0 + L\mu \cdot \slashed{d}\log(c)\cdot \slashed{d}\psi_0 + \mu L\big(\slashed{d}\log(c)\big)\cdot \slashed{d}\psi_0 + \mu \slashed{d}\log(c)\cdot L\slashed{d}\psi_0\Big).
\end{split}
\end{equation*}
Since the exact numeric constants and signs for the coefficients are irrelevant for estimates, we replace all of them by $1$ in the above expressions.

Since $\zetab_A = -c^{-1}\mu X_A(c)$, by applying $L$ and using \eqref{precise bound on L mu}, we obtain $|L\zetab| \lesssim M^2$. Therefore, according \eqref{bound on L trchibt}, (B.1) and the estimates derived previously in this section, we can bound all the terms on the right hand side and we obtain
\begin{equation*}
|\Lb(L^2\psi_0) + \frac{1}{2}\tr\chibt\cdot L^2\psi_0| \lesssim \delta^{-\frac{1}{2}}M^3(-t)^{-3}.
\end{equation*}
We then integrate from $-r_0$ to $t$ to obtain \eqref{expansion of L L psi_0}.
\end{proof}
Following the same procedure, we have:
\begin{lemma}
For sufficiently small $\delta$, we have
\begin{equation}\label{bound on L L trchibt}
 \|L^2 \tr_{\widetilde{g}}\widetilde{\chib} \| \lesssim  M^2\delta^{-1}(-t)^{-1},
\end{equation}
\begin{equation}\label{expansion of L L L psi_0}
||t| L^3 \psi_0(t,\ub, \theta)-r_0 L^3 \psi_0(-r_0,\ub, \theta)| \lesssim \delta^{-\frac{3}{2}} M^3(-t)^{-1}.
\end{equation}
\end{lemma}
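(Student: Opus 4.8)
The plan is to run, one order higher, exactly the two arguments already carried out in this section for \eqref{bound on L trchibt} and for \eqref{expansion of L L psi_0}. The two estimates must be proved in this order --- first \eqref{bound on L L trchibt}, then \eqref{expansion of L L L psi_0} --- because the transport equation governing $L^3\psi_0$ contains a term in which $L^2\tr\chibt$ is paired against $L\psi_0$, so the bound on $L^2\tr\chibt$ is needed as an input.

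To prove \eqref{bound on L L trchibt} I would commute $L$ once more with the transport equation for $L\chib'_{AB}$ displayed just above \eqref{expansion for L chib} (equivalently, commute $L^2$ into \eqref{transport equation for chib prime}). This yields a transport equation for $L^2\chib'_{AB}$ whose right-hand side consists, schematically, of $e\,L^2\chib'_{AB}$, products of $L^{\le 2}\chib'$ with $\chib'$, the commutator terms $[\Lb,L]L\chib'_{AB}$ and $(L[\Lb,L])\chib'_{AB}$, terms carrying one or two $L$-derivatives of $\mu$, $\zetab$, $\etab$ and $c^{-2}\mu$, and the curvature term $-L^2\alphab'_{AB}$; here $[\Lb,L]$ and $L[\Lb,L]$ are rewritten through $\Lb(c^{-2}\mu)$, $\zetab$, $\etab$ and their $L$-derivatives using the null-frame connection formulas. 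Every term is bounded by $\delta^{-1}M^2(-t)^{-3}$: the factors $L\mu$, $L^2\mu$ and the $L$-derivatives of the connection coefficients are controlled by \eqref{precise bound on L mu}, \eqref{precise bound on d mu} and their consequences, while the regularity-heaviest term $L^2\alphab'_{AB}$ is handled by expressing $\alphab'$ through $\rho=\psi_0^2$, commuting the $L$'s past $\slashed{\nabla}^2$ and $\laplacianslash$ at the cost of lower-order curvature and Christoffel contributions, and invoking \eqref{expansion of L L psi_0} together with its $R_i$- and $R_iR_j$-commuted analogues (obtained as in the Remark following the first lemma of this section). A Gronwall estimate along the integral curves of $\Lb$, started from the bound on $\Sigma_{-r_0}$ coming from the data as in Section \ref{section short pulse data}, then gives $\|L^2\chib'_{AB}\|_{L^\infty}\lesssim\delta^{-1}M^2(-t)^{-3}$; taking the trace with respect to $\widetilde{\slashed g}$ and adding the explicitly computable $L^2$ of the Minkowskian piece $-\slashed g_{AB}/(\ub-t)$ yields \eqref{bound on L L trchibt}.

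To prove \eqref{expansion of L L L psi_0} I would commute $L$ into \eqref{box L psi_0 in null frame}, obtaining $\Lb(L^3\psi_0)+\tfrac12\tr\chibt\cdot L^3\psi_0=N'$, where $N'$ collects $-\tfrac12 L(\tr\chibt)L^2\psi_0-\tfrac12 L^2(\tr\chibt)L\psi_0$, the commutator terms $[\Lb,L]L^2\psi_0$ and $(L[\Lb,L])L\psi_0$, and $L^2$ of the elliptic and lower-order block $\mu\laplacianslash\psi_0-\tfrac12\tr\widetilde{\chi}\,\Lb\psi_0-2\zetab\cdot\ds\psi_0-\mu\,\ds\log c\cdot\ds\psi_0$. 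The genuinely new ingredients compared with \eqref{expansion of L L psi_0} are $L^2\tr\chibt$, supplied by \eqref{bound on L L trchibt}; the angular-differentiated second $L$-derivatives $R_i L^2\psi_0$ and $R_iR_j L^2\psi_0$, supplied by the $R_i$- and $R_iR_j$-commuted versions of \eqref{expansion of L L psi_0}; and a bound for $L^2\zetab$, obtained from $\zetab_A=-c^{-1}\mu X_A(c)$ together with \eqref{precise bound on L mu}. One checks $\|N'\|_{L^\infty}\lesssim\delta^{-3/2}M^3(-t)^{-3}$; then, using $\tfrac12\tr\chibt=-(\ub-t)^{-1}+O(\delta M^2(-t)^{-3})$ (from \eqref{bound on c} and \eqref{precise bound on chibt}) and absorbing the resulting $O(\delta M^2(-t)^{-2})\,L^3\psi_0$ error by (B.1), the equation reduces to $\big|\Lb\big((\ub-t)L^3\psi_0\big)\big|\lesssim\delta^{-3/2}M^3(-t)^{-2}$. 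Integrating from $-r_0$ to $t$ and using $|\ub|\le\delta$ gives \eqref{expansion of L L L psi_0}.

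The only real obstacle is the weight bookkeeping. Each extra factor of $L=c^{-2}\mu\Lb+2T$ costs $\delta^{-1}$, because $T$-differentiation of variation-type quantities costs $\delta^{-1}$, and one must verify that the commutators $[\Lb,L]$, $[L,\laplacianslash]$, $[L,\slashed{\nabla}^2]$ and the $L$-derivatives of the various deformation tensors contribute nothing worse --- which is precisely why the refined estimates \eqref{precise bound on L mu}, \eqref{precise bound on d mu} and \eqref{bound on L L trchibt} have to be inserted at exactly the right places, and why the $\delta$-loss is only one power of $\delta^{-1}$ per additional $L$. By contrast, there is here no loss of derivatives and no $\mu^{-1}$-type singularity to fight: $\psi_0$ and its $L$-derivatives are governed directly by $\Box_{\widetilde g}\psi_0=0$, and $\chib'$ by its own ($\mu$-regular) transport equation, so these low-order estimates are completely decoupled from the top-order energy hierarchy treated in Section 7.
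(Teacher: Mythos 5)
Your proposal is correct and is essentially the paper's own (omitted as ``routine'') argument: the paper proves this lemma by repeating, one order higher, exactly the procedure of the two preceding lemmas — commuting one more $L$ into the transport equation for $\chib'$ to get \eqref{bound on L L trchibt} via Gronwall along $\Lb$, and then commuting $L$ into \eqref{box L psi_0 in null frame}, using \eqref{bound on L L trchibt}, \eqref{precise bound on L mu} and (B.1) to bound the source, and integrating $(\ub-t)L^3\psi_0$ along $\Lb$ to get \eqref{expansion of L L L psi_0}, with each additional $L$ costing one power of $\delta^{-1}$ through its $T$-component.
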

We omit the proof since it is routine. Similarly, we commute $L$ twice with $\Lb \mu= m+ e\mu$, we can use \eqref{bound on L L trchibt} and \eqref{expansion of L L L psi_0} to obtain
\begin{lemma}
There exists $\varepsilon = \varepsilon(M)$ so that for all $\delta \leq \varepsilon$, we have
\begin{equation}\label{precise bound on L L mu}
\|L^2\mu\|_{L^\infty(\Sigma_t)} \lesssim \left(\delta^{-2} + \delta^{-1}M^2\right)\left(-t\right)^{-1},
\end{equation}
\begin{equation}\label{precise bound on T T mu}
\|T^2\mu\|_{L^\infty(\Sigma_t)} \lesssim \left(\delta^{-2} + \delta^{-1}M^2\right)\left(-t\right)^{-1}.
\end{equation}
\end{lemma}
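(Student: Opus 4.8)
The plan is to imitate, one order higher, the argument already used for \eqref{precise bound on L mu}, and then to reduce $T^{2}\mu$ to $L^{2}\mu$ via the identity $2T = L - c^{-2}\mu\Lb$ (Lemma~\ref{lemma mu kappa} and \eqref{outgoing null}). First I would commute $L$ twice with the transport equation $\Lb\mu = m + e\mu$ from \eqref{Structure Equation Lb mu}. Using $[\Lb,L] = \Lb(c^{-2}\mu)\Lb - 2(\zetab^{A}+\etab^{A})X_{A}$ (read off from the null-frame formulas for $\nabla_{\Lb}L$ and $\nabla_{L}\Lb$) to move the commutators through, this produces a transport equation of the schematic shape
\begin{equation*}
\Lb(L^{2}\mu) = L^{2}m + e\,L^{2}\mu + (Le)\,L\mu + (L^{2}e)\,\mu + \mathcal{R},
\end{equation*}
where $\mathcal{R}$ collects the commutator contributions, i.e.\ terms built from $\Lb(c^{-2}\mu)$, $\zetab$, $\etab$ and at most second $L$-derivatives of $\mu$ (of better $t$-weight), together with their $X$-derivatives. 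By (B.1), \eqref{bound on c}, \eqref{bound on e}, \eqref{bound on mu}, \eqref{precise bound on d mu}, \eqref{precise bound on L mu}, \eqref{bound on zetab and etab} and the deformation-tensor bounds \eqref{estimates on the deformation tensor of Rotational R_i in g metric}, one checks $\|\,(Le)L\mu + (L^{2}e)\mu + \mathcal{R}\,\|_{L^{\infty}(\Sigma_{t})}\lesssim (\delta^{-2}+\delta^{-1}M^{2})(-t)^{-3}$, and the term $e\,L^{2}\mu$ is harmless since $\|e\|_{L^{\infty}(\Sigma_{t})}\lesssim \delta M^{2}(-t)^{-2}$ is integrable in $t$, so it is absorbed by Gronwall. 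The only genuinely non-routine input is $L^{2}m$: since $m = -\tfrac12\tfrac{d(c^{2})}{d\rho}\,T\rho$ with $\rho = \psi_{0}^{2}$, writing $T = \tfrac12(L - c^{-2}\mu\Lb)$ expands $L^{2}m$ into a finite sum of products of derivatives of $\psi_{0}$ whose top-order factors are $L^{2}\psi_{0}$ and $L^{3}\psi_{0}$ (the latter appearing through $L^{2}T\psi_{0}$); the worst term is $\psi_{0}\cdot L^{2}T\psi_{0}$, and it is precisely this that forces the weight $\delta^{-2}$.

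Next I would pass to the weighted difference exactly as in Lemma~\ref{lemma on the expansion of Lb mu}. One has
\begin{equation*}
t^{2}\Lb(L^{2}\mu)(t,\ub,\theta) - r_{0}^{2}\Lb(L^{2}\mu)(-r_{0},\ub,\theta) = \big(t^{2}L^{2}m(t,\ub,\theta) - r_{0}^{2}L^{2}m(-r_{0},\ub,\theta)\big) + O\!\big((\delta^{-2}+\delta^{-1}M^{2})(-t)^{-1}\big),
\end{equation*}
and the $m$-difference telescopes into $t$-weighted differences of $L^{\le 2}\psi_{0}$ and $L^{\le 3}\psi_{0}$, each of which is $O((-t)^{-1})$ by \eqref{expansion of L psi_0}, \eqref{expansion of L L psi_0} and \eqref{expansion of L L L psi_0}. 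This gives the analogue of \eqref{expansion of Lb L mu},
\begin{equation*}
\big|t^{2}\Lb(L^{2}\mu)(t,\ub,\theta) - r_{0}^{2}\Lb(L^{2}\mu)(-r_{0},\ub,\theta)\big| \lesssim (\delta^{-2}+\delta^{-1}M^{2})(-t)^{-1}.
\end{equation*}
Dividing by $\tau^{2}$ and integrating from $-r_{0}$ to $t$ yields
\begin{equation*}
L^{2}\mu(t,\ub,\theta) - L^{2}\mu(-r_{0},\ub,\theta) = -\Big(\tfrac1t + \tfrac1{r_{0}}\Big) r_{0}^{2}\Lb(L^{2}\mu)(-r_{0},\ub,\theta) + O\!\big((\delta^{-2}+\delta^{-1}M^{2})\,t^{-2}\big).
\end{equation*}
Since $|t^{-1}+r_{0}^{-1}|\le |t|^{-1}$, and since on $\Sigma_{-r_{0}}$ one has $\mu = c$ with $L = c^{-1}\partial_{t}+\partial_{r}$, the explicit form of the short pulse data gives $|L^{2}\mu(-r_{0},\cdot)|\lesssim \delta^{-1}r_{0}^{-2}$ and $|r_{0}^{2}\Lb(L^{2}\mu)(-r_{0},\cdot)|\lesssim \delta^{-2}$ (the latter because $\Lb L^{2}c$ at $t=-r_{0}$ is a third-order object in $\rho=(\partial_{t}\phi)^{2}$, whose worst term $\partial_{r}\partial_{t}\phi\cdot\partial_{r}^{2}\partial_{t}\phi$ scales like $\delta^{-2}r_{0}^{-2}$). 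Taking $\delta$ small enough, depending on $M$, so that the $\delta^{-1}M^{4}$-type errors are dominated, this proves \eqref{precise bound on L L mu}.

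Finally, for \eqref{precise bound on T T mu} I would substitute $T = \tfrac12(L - c^{-2}\mu\Lb)$ twice: $T^{2}\mu = \tfrac14 L^{2}\mu$ plus terms each carrying at least one $\Lb$ on $\mu$, namely combinations of $\Lb L\mu$, $L\Lb\mu$, $\Lb^{2}\mu$, $L\mu$, $\Lb\mu$ with coefficients built from $c^{-2}\mu$ and its $L,\Lb$-derivatives. The first term is controlled by \eqref{precise bound on L L mu}; for the rest one uses $\Lb L\mu = L\Lb\mu + [\Lb,L]\mu$ and $L\Lb\mu = Lm + (Le)\mu + e L\mu$, $\Lb^{2}\mu = \Lb m + (\Lb e)\mu + e\Lb\mu$, which express all of them in already-estimated quantities via (B.1), \eqref{bound on c}, \eqref{bound on e}, \eqref{bound on mu}, \eqref{precise bound on d mu}, \eqref{precise bound on L mu}, \eqref{bound on T mu and slashed d mu} and \eqref{bound on zetab and etab}. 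All these terms decay at least like $(-t)^{-2}$ with only $\delta^{-1}$-type weights, hence are dominated by $(\delta^{-2}+\delta^{-1}M^{2})(-t)^{-1}$, and \eqref{precise bound on T T mu} follows.

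The main obstacle is the $\delta$-power bookkeeping inside $L^{2}m$: tracking that each $L$ applied to $\psi_{0}$ effectively costs a factor $\delta^{-1}$ at top order (through $L^{2}\psi_{0}$, $L^{3}\psi_{0}$), so that the product $\psi_{0}\cdot L^{2}T\psi_{0}$ lands exactly at $\delta^{-2}(-t)^{-2}$, together with verifying that the commutator remainder $\mathcal{R}$ never does worse. Everything else is a verbatim repetition of the scheme already carried out for $\Lb\mu$, $L\mu$ and $\ds\mu$, which is why the proof is merely indicated as routine.
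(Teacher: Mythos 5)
Your proposal is correct and follows essentially the same route as the paper: the paper's (very terse) proof likewise commutes $L$ twice with $\Lb\mu = m + e\mu$, invokes the expansions \eqref{expansion of L L psi_0}, \eqref{expansion of L L L psi_0} (together with the $\tr\chibt$ bounds behind them), and integrates along $\Lb$ exactly as for \eqref{precise bound on L mu}, with the $T^{2}\mu$ bound then obtained from $2T = L - c^{-2}\mu\Lb$ and the already-estimated mixed derivatives. Your $\delta$-bookkeeping for the worst term $\psi_{0}\cdot L^{2}T\psi_{0}$ and for the initial-data quantities matches the stated $(\delta^{-2}+\delta^{-1}M^{2})(-t)^{-1}$ bound, so no gap.
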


We turn to the improved estimate for $\mu^{-1}T\mu$. 
\begin{proposition}\label{Proposition C2}
Let $s$ be such that $t<s<t^{*}$. For $p=(t,\ub, \theta) \in W_\delta$, let $\big(\mu^{-1}T \mu\big)_+$ be the nonnegative part of $\mu^{-1}T \mu$. For sufficiently small $\delta $ and for all $p \in W_{shock}$, we have
\begin{equation}\label{C2}
\big(\mu^{-1}T \mu\big)_+ (t,\ub,\theta) \lesssim \frac{1}{|t-s|^{\frac{1}{2}}}\delta^{-1}(-t)^{1/2}.
\end{equation}
\begin{proof}
As in \cite{M-Y}, by a maximum principle argument, we have:
\begin{equation}\label{eq 1}
\|\mu^{-1}(T\mu)_+ \|_{L^\infty([0,\delta])}\leq \sqrt{\frac{\|T^2\mu\|_{L^\infty([0,\delta])}}{\displaystyle \inf_{\ub \in[0,\delta]}\mu(\ub)}}.
\end{equation}
From the previous lemma, we have:
\begin{align}\label{eq 2}
\|T^2\mu\|_{L^\infty([-\delta,\delta])} \lesssim \frac{1}{|t|}\delta^{-2}.
\end{align}
For $\inf \mu$, we assume $(t,\ub,\theta)\in W_{shock}$. According to \eqref{C3}, the condition $(t,\ub,\theta)\in W_{shock}$ implies $|t|^{2}(\Lb\mu)(t,\ub,\theta)\leq -\frac{1}{4}$. Therefore 
we have
\begin{align*}
\mu(t,\ub, \theta) &= \mu (s, \ub,\theta)- \int_{t}^{s}\Lb \mu(\tau,\ub, \theta)\geq - \int_{t}^{s}\Lb \mu(\tau,\ub, \theta)\\
\geq &\int_{t}^{s}\frac{1}{4\tau^{2}}d\tau\geq \frac{s-t}{4t^{2}}.
\end{align*}
Together with \eqref{eq 1} and \eqref{eq 2}, this completes the proof.
\end{proof}
\end{proposition}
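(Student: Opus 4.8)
The plan is to bound $\mu^{-1}(T\mu)_+$ at the point $p$ by controlling separately the second transversal derivative $T^2\mu$ (already estimated in the previous lemma) and the size of $\mu$ itself, the latter being kept away from zero precisely because $p$ lies in the shock region, where Proposition \ref{Proposition C3} forces $\Lb\mu$ to be strictly negative.

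\emph{Step 1: a one–dimensional maximum principle.} Fixing $t$ and $\theta$ and viewing $\mu$ as a nonnegative $C^2$ function of $\ub\in[0,\delta]$ along the integral curves of $T$, I would invoke the elementary Glaeser / Landau--Kolmogorov type inequality (applied in exactly this way in \cite{M-Y}, following \cite{Ch1}): the positive part of the first derivative of a nonnegative function is controlled by the square root of the product of the supremum of its second derivative and its infimum. This yields
\[
\big\|\mu^{-1}(T\mu)_+\big\|_{L^\infty([0,\delta])}\le\sqrt{\frac{\|T^2\mu\|_{L^\infty([0,\delta])}}{\displaystyle\inf_{\ub\in[0,\delta]}\mu}},
\]
all norms being taken over the $\ub$–segment through $p$. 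For the numerator I would simply quote \eqref{precise bound on T T mu}, which for $\delta$ small gives $\|T^2\mu\|_{L^\infty}\lesssim\delta^{-2}(-t)^{-1}$.

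\emph{Step 2: lower bound on $\mu$ from the trapping property.} Since $p\in W_{shock}$ we have $\mu(t,\ub,\theta)<\tfrac1{10}$, so Proposition \ref{Proposition C3} applies and gives $\Lb\mu(t,\ub,\theta)\le-\tfrac14|t|^{-2}$. Because $\Lb\mu<0$ wherever $\mu<\tfrac1{10}$, the inequality $\mu<\tfrac1{10}$ — and hence also the pointwise estimate of Proposition \ref{Proposition C3} — propagates forward to every $\tau\in[t,s]$ along the same integral curve of $\Lb$. Integrating $\Lb\mu$ from $t$ to $s$ and discarding the nonnegative term $\mu(s,\ub,\theta)$,
\[
\mu(t,\ub,\theta)=\mu(s,\ub,\theta)-\int_t^s\Lb\mu(\tau,\ub,\theta)\,d\tau\ge\int_t^s\frac{d\tau}{4\tau^2}\ge\frac{s-t}{4t^2},
\]
using $|\tau|\le|t|$ on $[t,s]\subset(-\infty,-1]$ in the last inequality. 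The identical argument applies at every $\ub'\in[0,\delta]$ with $(t,\ub',\theta)\in W_{shock}$, while at the remaining $\ub'$ one trivially has $\mu\ge\tfrac1{10}$; since $p\in W_{shock}$ itself forces $\tfrac{s-t}{4t^2}<\mu(t,\ub,\theta)<\tfrac1{10}$, we conclude $\inf_{\ub\in[0,\delta]}\mu\gtrsim|t-s|\,|t|^{-2}$.

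\emph{Step 3: conclusion.} Feeding the bounds of Steps 1 and 2 into the inequality of Step 1,
\[
\big(\mu^{-1}T\mu\big)_+(t,\ub,\theta)\lesssim\sqrt{\frac{\delta^{-2}(-t)^{-1}}{|t-s|\,(-t)^{-2}}}=\frac{\delta^{-1}(-t)^{1/2}}{|t-s|^{1/2}},
\]
which is \eqref{C2}. The only delicate point is the one–dimensional maximum principle of Step 1: one must check that the Taylor-remainder argument does not run off the end of the finite interval $[0,\delta]$, which is where the smallness of $\delta$ — together with the a priori control on $\mu$ and $T\mu$ from (B.1) and the estimates of this section — is used. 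Everything else is a direct assembly of estimates already established here and in the preceding section, so I anticipate no genuine obstacle beyond this bookkeeping.
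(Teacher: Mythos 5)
Your proposal is correct and follows essentially the same route as the paper: the same maximum-principle inequality bounding $\mu^{-1}(T\mu)_+$ by $\sqrt{\|T^2\mu\|_{L^\infty}/\inf\mu}$, the same appeal to the $T^2\mu$ estimate of the previous lemma, and the same lower bound $\mu(t,\ub,\theta)\geq \frac{s-t}{4t^{2}}$ obtained by integrating $\Lb\mu\leq -\frac{1}{4}|\tau|^{-2}$ from Proposition \ref{Proposition C3} over $[t,s]$. Your added remarks on propagating the shock condition along the integral curve and on handling the infimum at non-shock values of $\ub$ are just slightly more explicit bookkeeping of what the paper leaves implicit.
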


\section{Energy estimates for linear equation}
In this section we establish the energy estimates for the following inhomogeneous equation
\begin{align}\label{model linear wave equation} 
\Box_{\widetilde{g}}\psi=\rho
\end{align}
Since $\widetilde{g}$ depends on $\phi$, we will need to handle the error terms contributed by the deformation tensors of the multiplier vectorfields. In addition, since we need to deal with the time decay, we have to modify the multipliers $K_{0}$ and $K_{1}$ in \cite{M-Y}, which will be specified later in this section. 
\subsection{Energy and Flux}
As usual, we introduce the energy-momentum tensor, which is the same with respect to $g$ and $\gt$:
\begin{align}\label{energymomentum tensor}
T_{\mu\nu}(\psi):=\partial_{\mu}\psi\partial_{\nu}\psi-\frac{1}{2}g_{\mu\nu}g^{\alpha\beta}\partial_{\alpha}\psi\partial_{\beta}\psi,
\end{align}
and we have the decomposition for $T_{\mu\nu}$ with respect to the null frame $(L,\Lb, X_{1}, X_{2})$:
\begin{equation}\label{energy momentum tensor in null frame}
 \begin{split}
  T_{LL} &= (L\psi)^2,\ T_{\Lb\Lb} = (\Lb\psi)^2, \ T_{\Lb L} = \mu |\slashed{d}\psi|^2, \ T_{LA} = L\psi \cdot X_A(\psi),\\
T_{\Lb A} &= \Lb \psi \cdot X_A(\psi),\ T_{AB} = X_A(\psi)X_B(\psi)-\frac{1}{2}\slashed{g}_{AB}(-\mu^{-1}L\psi \Lb \psi + |\slashed{d} \psi|^2).
 \end{split}
\end{equation}  
We use two \emph{multiplier vectorfields} $K_0 = L+\Lb$ and $K_1 = \left(\dfrac{2(-t)}{\widetilde{\tr}\chib}\right)\Lb$. Following a similar argument as in \cite{M-Y}, the associated energy and flux for $K_{0}$ are given by:
\begin{align}\label{energy flux K0}
E^0(t,\ub)=\int_{\Sigma_{t}^{\ub}}\frac{1}{2c}\left((L\psi)^2 +c^{-2}\mu(\Lb\psi)+ (\mu+c^{-2}\mu^2) |\slashed{d} \psi|^2\right),\ \  F^0(t,\ub)=\int_{\Cb_{\ub}{}^{t}}\frac{1}{c}\left((\Lb\psi)^{2}+\mu |\slashed{d} \psi|^2\right),
\end{align}
which satisfy
\begin{align*}
E^0(t,\ub)\sim {E}(\psi)(t,\ub),\ \  F^0(t,\ub)\sim {F}(\psi)(t,\ub)
\end{align*}
The energy estimates will be based on the following identity:
\begin{align}\label{energy identity E0}
E^{0}(t,\ub)-E^{0}(-r_0,\ub)+F^{0}(t,\ub) &= \int_{W_t^{\ub}} c^{-2} \widetilde{Q_0}
\end{align}
where
\begin{align*}
\widetilde{Q}_0 := -\rho\cdot K_0 \psi-\frac{1}{2}\widetilde{T}^{\mu\nu}\widetilde{\pi}_{0,\mu\nu},
\end{align*}
with $\widetilde{\pi}_{0,\mu\nu}=(\mathcal{L}_{K_{0}}\gt)_{\mu\nu}$ being the deformation tensor of $K_{0}$. In the above identity, the spacetime integral is defined as follows:
\begin{align}\label{spacetime integral definition}
\int_{W_t^{\ub}}f  = \int_{-r_0}^{t} \int_{0}^{\ub} \left(\int_{S_{\tau,\ub'}} \mu \cdot f(\tau,\ub',\theta)\dmug\right) d\ub' d\tau.
\end{align}

The discussion for $K_{1}$ is much more complicated. First, a standard calculation as in \cite{M-Y} implies the following energy flux associated to $K_{1}$:
\begin{align}\label{energy flux K1}
E^{1}(t,\ub)&=\int_{\Sigma_{t}^{\ub}}\frac{1}{2c}\left(\frac{2(-t)}{\widetilde{\tr\chib}}\left(c^{-2}\mu(\Lb\psi)^{2}+\mu|\ds\psi|^{2}\right)+t\psi\left(c^{-2}\mu(\Lb\psi)+L\psi\right)
+2c^{-2}\mu\psi^{2}\right)\\\notag
F^{1}(t,\ub)&=\int_{\Cb^{t}_{\ub}}\frac{1}{c}\left(\frac{2(-t')}{\widetilde{\tr\chib}}(\Lb\psi)^{2}+t\psi(\Lb\psi)+\frac{1}{2}\psi^{2}\right)
\end{align}
Again, the energy estimates will be based on the following identity:
\begin{align}\label{energy identity E1}
E^{1}(t,\ub)-E^{1}(-r_0,\ub)+F^{1}(t,\ub) &= \int_{W_t^{\ub}} c^{-2} \widetilde{Q_1}
\end{align}
where
\begin{align*}
\widetilde{Q}_1 := -\rho\cdot \left(K_1 \psi-t\psi\right)-\frac{1}{2}\widetilde{T}^{\mu\nu}\widetilde{\pi}_{1,\mu\nu}+t\gt^{\mu\nu}\partial_{\mu}\psi\partial_{\nu}\psi-\frac{1}{2}\psi^{2}\Box_{\gt}(t),
\end{align*}
which can be written as
\begin{align*}
\widetilde{Q}_1 := -\rho\cdot \left(K_1 \psi-t\psi\right)-\frac{1}{2}\widetilde{T}^{\mu\nu}\widetilde{\pi}^{\prime}_{1,\mu\nu}-\frac{1}{2}\psi^{2}\Box_{\gt}(t),
\end{align*}
with $\widetilde{\pi}_{1,\mu\nu}=(\mathcal{L}_{K_{1}}\gt)_{\mu\nu}$ being the deformation tensor of $K_{1}$ and $\widetilde{\pi}^{\prime}_{1,\mu\nu}=\widetilde{\pi}_{1,\mu\nu}+2t\gt_{\mu\nu}$. However, unlike the case of $K_{0}$, it is not straightforward to show that $E^{1}(t,\ub),F^{1}(t,\ub)$ are equivalent to $\Eb(\psi)(t,\ub),\Fb(\psi)(t,\ub)$. So instead of $E^{1}(t,\ub),F^{1}(t,\ub)$, we work with 
\begin{align}\label{modified E1 F1}
E^{\prime1}(t,\ub)=\int_{\Sigma_{t}^{\ub}}\frac{1}{2c}\frac{4(-t)}{\widetilde{\tr\chib}}\left(c^{-2}\mu\left(\Lb\psi+\frac{1}{2}\widetilde{\tr\chib}\psi\right)^{2}+\mu|\ds\psi|^{2}\right),\quad
F^{\prime1}(t,\ub)=\int_{C^{t}_{\ub}}\frac{1}{c}\frac{4(-t')}{\widetilde{\tr}\chib}\left(\Lb\psi+\frac{1}{2}\widetilde{\tr\chib}\psi\right)^{2}
\end{align}
Now it is straightforward to show
\begin{align*}
E^{\prime1}(t,\ub)\sim\Eb(\psi)(t,\ub),\quad F^{\prime1}(t,\ub)\sim\Fb(\psi)(t,\ub)
\end{align*}
So we need to establish an identity for $E^{\prime1}(t,\ub)$ and $F^{\prime1}(t,\ub)$ similar to \eqref{energy identity E1}. A direct calculation implies
\begin{align}\label{difference E1prime E1 preliminary}
E^{1}(t,\ub)-E^{\prime1}(t,\ub)=\int_{\Sigma_{t}^{\ub}}\frac{1}{2c}\left(2(-t)\psi(T\psi)+\left(c^{-2}\mu+\frac{1}{2}c^{-2}\mu t\widetilde{\tr\chib}\right)\psi^{2}\right)
\end{align}
By using the relations
\begin{align*}
\slashed{\mathcal{L}}_{T}\dmug=c^{-1}\mu\tr\theta\dmug,\quad \tr\chib=-c\tr\theta,\quad \tr\chi=c^{-1}\mu\tr\theta,
\end{align*}
the difference can be rewritten as:
\begin{align}\label{difference E1prime E1 final}
E^{1}(t,\ub)-E^{\prime1}(t,\ub)=\int_{S_{t,\ub}}\frac{1}{2c}(-t)\psi^{2}-I
\end{align}
where
\begin{align}\label{I}
I=\int_{\Sigma_{t}^{\ub}}\frac{1}{2c}\left(\frac{1}{2}\widetilde{\tr\chi}(-t)-c^{-2}\mu\right)\psi^{2}
\end{align}
We also compute the difference between $F^{1}(t,\ub)$ and $F^{\prime 1}(t,\ub)$:
\begin{align}\label{difference F1prime F1 preliminary}
F^{1}(t,\ub)-F^{\prime 1}(t,\ub)=-\int_{\Cb^{t}_{\ub}}\frac{1}{2c}\left(\Lb\left((-t)\psi^{2}\right)+\widetilde{\tr\chib}(-t)\psi^{2}\right)
\end{align}
Using the identity:
\begin{align*}
\slashed{\mathcal{L}}_{\Lb}\dmug=\tr\chib\dmug
\end{align*}
This difference is rewritten as:
\begin{align}\label{difference F1prime F1 final}
F^{1}(t,\ub)-F^{\prime 1}(t,\ub)=-\int_{S_{t,\ub}}\frac{1}{2c}(-t)\psi^{2}+\int_{S_{-r_{0},\ub}}\frac{1}{2c}r_{0}\psi^{2}
\end{align}
Substituting \eqref{difference E1prime E1 final} and \eqref{difference F1prime F1 final} into \eqref{energy identity E1}, we have:
\begin{align}\label{energy identity E1 modified}
&E^{\prime1}(t,\ub)+F^{\prime 1}(t,\ub)+\int_{\Sigma_{t}^{\ub}}\frac{1}{2c}\left(\frac{1}{2}\widetilde{\tr\chi}(-t)-c^{-2}\mu\right)\psi^{2}\\\notag=&E^{\prime1}(-r_{0},\ub)+\int_{\Sigma_{-r_{0}}^{\ub}}\frac{1}{2c}\left(\frac{1}{2}\widetilde{\tr\chi}(-t)-c^{-2}\mu\right)\psi^{2}+ \int_{W_t^{\ub}} c^{-2} \widetilde{Q_1}
\end{align}
To estimate the term involving $\psi^{2}$, one needs the following lemma:
\begin{lemma}
For a $\psi$ which vanishes on $\Cb_{0}$, we have
\begin{equation}\label{Calculus Inequality}
\begin{split}
\int_{S_{t,\ub}} \psi^2 &\lesssim \delta \int_{\Sigma_{t}^{\ub}} (L\psi)^2 +\mu(\Lb \psi)^2\lesssim\delta E(\psi)(t,\ub), \ \ \int_{\Sigma_{t}^{\ub}} \psi^2 \lesssim\delta^{2}E(\psi)(t,\ub) 
\end{split}
\end{equation}
\end{lemma}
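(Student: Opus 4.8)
The plan is to use the hypothesis that $\psi$ vanishes on $\Cb_{0}=\{\ub=0\}$ and to integrate in the transversal direction $T$. First I would observe that, since $Tt=0$ and $T\ub=1$, the integral curves of $T$ stay in a single slice $\Sigma_{t}$, foliate $\Sigma_{t}^{\ub}$, increase $\ub$ at unit speed, and hence connect every point of $S_{t,\ub}$ to a point of $S_{t,0}\subset\Cb_{0}$. Along such an integral curve $\gamma$ one has $\tfrac{d}{ds}\psi(\gamma(s))=(T\psi)(\gamma(s))$ with $\psi(\gamma(0))=0$, so $\psi(t,\ub,\theta)=\int_{0}^{\ub}(T\psi)(\gamma(s))\,ds$, and by Cauchy--Schwarz $\psi(t,\ub,\theta)^{2}\le \ub\int_{0}^{\ub}(T\psi)^{2}(\gamma(s))\,ds\le \delta\int_{0}^{\ub}(T\psi)^{2}(\gamma(s))\,ds$.

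Next I would integrate this pointwise inequality over $S_{t,\ub}$ with respect to $\dmug$. Parametrizing $S_{t,\ub}$ by the time-$\ub$ flow of $T$ from $S_{t,0}$ and using that the area elements of the leaves $S_{t,\ub'}$, $\ub'\in[0,\ub]$, are all comparable to that of $S_{t,0}$ (this is precisely the volume comparison established in the course of proving the Sobolev inequalities \eqref{Sobolev on S_t ub}, where it was shown that $|U|\sim|U_{\ub'}|$ uniformly), a Fubini argument turns $\int_{S_{t,\ub}}\psi^{2}\,\dmug$ into a quantity $\lesssim\delta\int_{\Sigma_{t}^{\ub}}(T\psi)^{2}$. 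I would then convert the $T$-derivative into null derivatives: from $L=c^{-2}\mu\Lb+2T$ (see \eqref{outgoing null}) we have $2T\psi=L\psi-c^{-2}\mu\,\Lb\psi$, so, using $\tfrac12\le c\le2$ from \eqref{bound on c} and the uniform bound $\mu\le C_{0}$ from Proposition \ref{proposition on expansion for mu}, $(T\psi)^{2}\lesssim (L\psi)^{2}+\mu^{2}(\Lb\psi)^{2}\lesssim (L\psi)^{2}+\mu(\Lb\psi)^{2}$. Hence $\int_{S_{t,\ub}}\psi^{2}\lesssim\delta\int_{\Sigma_{t}^{\ub}}\big((L\psi)^{2}+\mu(\Lb\psi)^{2}\big)\le\delta\, E(\psi)(t,\ub)$, which is the first assertion.

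For the spacetime estimate I would integrate the slice estimate in $\ub'$. Since $\Sigma_{t}^{\ub'}\subseteq\Sigma_{t}^{\ub}$ and the energy density is nonnegative, $E(\psi)(t,\ub')\le E(\psi)(t,\ub)$, so $\int_{\Sigma_{t}^{\ub}}\psi^{2}=\int_{0}^{\ub}\big(\int_{S_{t,\ub'}}\psi^{2}\,\dmug\big)\,d\ub'\lesssim\int_{0}^{\ub}\delta\, E(\psi)(t,\ub')\,d\ub'\le\delta^{2}E(\psi)(t,\ub)$, using $\ub\le\delta$. The only point requiring care is the change of variables in the second paragraph, namely keeping track of the Jacobian of the $T$-flow between the leaves $S_{t,\ub'}$; but this is exactly the area comparison already carried out for the Sobolev estimates, so no new estimate is needed, and the remaining steps are elementary.
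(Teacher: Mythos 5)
Your proposal is correct, and it is essentially the standard argument (the paper states this lemma without proof, deferring to \cite{M-Y}, where the same transversal-integration idea is used). The key points are all in place: $T\ub=1$ and $Tt=0$ so the $T$-flow connects $S_{t,\ub}$ to $S_{t,0}\subset\Cb_{0}$ within $\Sigma_t$, the fundamental theorem of calculus plus Cauchy--Schwarz gives the factor $\ub\le\delta$, the identity $2T\psi=L\psi-c^{-2}\mu\,\Lb\psi$ together with $c\sim 1$ and $\mu\le C_0$ gives $(T\psi)^2\lesssim (L\psi)^2+\mu(\Lb\psi)^2$, and the second inequality follows by integrating in $\ub'$ and using the monotonicity of $E(\psi)(t,\cdot)$. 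The only technical point, the Jacobian of the $T$-flow between leaves, is indeed controlled exactly as in the isoperimetric-constant argument (via $\slashed{\mathcal{L}}_T\gslash_{AB}=-2c^{-2}\mu\chib_{AB}$ and the bounds on $\mu$, $\chib$); an equivalent and slightly tidier packaging, used in \cite{M-Y}, is to differentiate $\int_{S_{t,\ub'}}\psi^2\,\dmug$ in $\ub'$, estimate the resulting $2\psi T\psi$ and area-element terms, and close with Gronwall, which avoids an explicit change of variables but is the same estimate in substance.
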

The goal of this section is to bound $E(t,\ub), F(t,\ub)$ and $\Eb(t,\ub)$ and $\Fb(t,\ub)$ in terms of their corresponding initial data and $\rho$. 
\begin{proposition}\label{prop: scattering}
The limits $\lim_{r_{0}\rightarrow\infty}E^{0}(-r_{0},\ub)$ and $\lim_{r_{0}\rightarrow\infty}E^{\prime 1}(-r_{0},\ub)$ both exist and we have

\begin{align}\label{scattering estimates}
\lim_{r_{0}\rightarrow\infty}E^{0}(-r_{0},\ub)\lesssim 1,\quad \lim_{r_{0}\rightarrow\infty}E^{\prime 1}(-r_{0},\ub)\lesssim \delta^{2}.
\end{align}
\end{proposition}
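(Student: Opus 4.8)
The plan is to evaluate these initial energies on the \emph{fixed} domain obtained from the annulus $\Sigma_{-r_0}^{\ub}$ by the rescaling $s=(r-r_0)/\delta$, to show that the resulting integrands (with the area element folded in) converge uniformly as $r_0\to\infty$, and then to conclude the existence of the limit by a trivial instance of dominated convergence on a compact set; the size bounds $\lesssim 1$ and $\lesssim\delta^2$ are then inherited by passing to the limit in the uniform-in-$r_0$ estimates that already follow from Section~\ref{section short pulse data}.

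Concretely, on $\Sigma_{-r_0}$ I would introduce $s=(r-r_0)/\delta\in[0,1]$, in which variable the short pulse profiles are genuinely $r_0$-independent, $\phi(-r_0,\cdot)=\tfrac{\delta^{3/2}}{r_0}\phi_0(s,\theta)$ and $\psi_0(-r_0,\cdot)=\tfrac{\delta^{1/2}}{r_0}\phi_1(s,\theta)$, and every Cartesian or angular derivative of the data is likewise an explicit monomial in $\delta$ and $r_0^{-1}$ times a fixed function of $(s,\theta)$. Since $S_{-r_0,\ub'}$ is the Euclidean round sphere of radius $r_0+\ub'$, one has $\dmug=(r_0+\delta s)^2\,d\sigma$ with $d\sigma$ the area element of the unit $\mathbb{S}^2$, whence
\begin{equation*}
\int_{\Sigma_{-r_0}^{\ub}} f \;=\; \delta\int_{0}^{\ub/\delta}\!\!\int_{\mathbb{S}^2} f(r_0+\delta s,\theta)\,(r_0+\delta s)^2\,d\sigma\,ds .
\end{equation*}
On $\Sigma_{-r_0}$ the commutation vectorfields are $T=\partial_r$, $R_i=\Omega_i$ and $Q=-r_0\Lb$ with $\Lb=\partial_t-c\partial_r$, and every occurrence of $\partial_t^2\phi$ can be removed using the main equation, $\partial_t^2\phi=c^2\Delta\phi$. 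An induction on the order of the variation then shows that for any $\psi$ of the form $Z^{\alpha+1}(A\phi)$ the quantities $L\psi$, $\Lb\psi$, $\ds\psi$ and $\psi$, each multiplied by the power of $r_0$ dictated by \eqref{initial energy estimates for commutations on psi}, equal an explicit function of $(s,\theta)$ depending only on $\delta$ and the seed data, plus an error that is $O(r_0^{-1})$ relative to the leading term, uniformly on $[0,1]\times\mathbb{S}^2$; likewise $c$, $\mu$ and $\widetilde{\tr}\chib$ converge uniformly there to their Minkowskian values.

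Substituting these expansions into the densities of \eqref{energy flux K0} and \eqref{modified E1 F1}, and using $(r_0+\delta s)^2/r_0^2\to1$ uniformly, the entire integrand $f(r_0+\delta s,\theta)(r_0+\delta s)^2$ converges uniformly on the fixed compact set $[0,\ub/\delta]\times\mathbb{S}^2$ to an explicit profile; hence $\lim_{r_0\to\infty}E^0(-r_0,\ub)$ and $\lim_{r_0\to\infty}E^{\prime 1}(-r_0,\ub)$ exist. For the magnitude of the limits it suffices to pass to the limit in the bounds $E^0(-r_0,\ub)\lesssim1$ and $E^{\prime 1}(-r_0,\ub)\lesssim\delta^2$, which are immediate from \eqref{initial energy estimates for commutations on psi} together with the equivalences $E^0\sim E(\psi)$ and $E^{\prime 1}\sim\Eb(\psi)$; alternatively one can bound the limit integrand directly, noting that in the limit $c\equiv1$, so that the $\Lb\psi$ and $\ds\psi$ contributions to $E^0$ are negligible (being $O(r_0^{-2})$) and only the $(L\psi)^2$ term, of size $O(1)$, survives, while in $E^{\prime 1}$ the weight $(-t)^2=r_0^2$ multiplies terms of size $O(\delta^2 r_0^{-4})$.

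The one genuinely technical point is the assertion of \emph{uniform} convergence of the normalized integrand for the high-order commutators $Z^{\alpha+1}$: one must verify that every error term — from $r\neq r_0$ over the annulus, from $c\neq1$ and $\mu\neq1$, and from iterating the main equation to replace time derivatives by spatial ones — is $O(r_0^{-1})$ relative to its main term rather than merely bounded. This is a quantitative sharpening of the estimates of Section~\ref{section short pulse data}, obtained by the same induction on the order of the variation, and introduces no new idea.
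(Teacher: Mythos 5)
Your proposal is correct and follows essentially the same route as the paper's proof: rescale to $s=(r-r_0)/\delta$, exploit the $r_0$-independent short-pulse profiles together with $(r_0+\delta s)^2/r_0^2\to 1$ to obtain convergence of the rescaled integrands, and read off the sizes $\lesssim 1$ and $\lesssim\delta^2$ from the initial-data estimates. The only point the paper makes explicit that you leave implicit is that the convergence and smallness of the $\Lb\psi$ contribution (hence of $E^{\prime 1}$) rest on the constraint construction of Lemma \ref{lemma constraint} applied to $\partial_t\phi$, giving the profile $\Lb\psi=\frac{\delta^{1/2}}{r_0^{2}}\phi_3\big(\frac{r-r_0}{\delta},\theta\big)$; your ``induction with $O(r_0^{-1})$ relative errors'' is precisely this step, since the leading $\delta^{-1/2}r_0^{-1}$ terms cancel only because of that constraint.
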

\begin{proof}
For the proof we choose $\ub=\delta$ and the proof for any $\ub\in[0,\delta]$ is similar. In view of the definitions of $E^{0}(-r_{0},\ub)$ and $E^{\prime1}(-r_{0},\ub)$ and the fact that

\begin{align*}
\mu|_{t=-r_{0}}=c,\quad \widetilde{\textrm{tr}\chib}|_{t=-r_{0}}=2\Lb\left(\frac{1}{c}\right)-\frac{2}{r_{0}},
\end{align*}
it suffices to prove that the limits $\lim_{r_{0}\rightarrow\infty}\|T\psi\|_{L^{2}(\Sigma_{-r_{0}}^{\delta})}$, $\lim_{r_{0}\rightarrow\infty}\|r_{0}\Lb\psi\|_{L^{2}(\Sigma_{-r_{0}}^{\delta})}$, $\lim_{r_{0}\rightarrow\infty}\|r_{0}\ds\psi\|_{L^{2}(\Sigma_{-r_{0}}^{\delta})}$, $\lim_{r_{0}\rightarrow\infty}\|r_{0}\psi\|_{L^{2}(\Sigma_{-r_{0}}^{\delta})}$ exist and satisfy

\begin{align*}
\lim_{r_{0}\rightarrow\infty}\|T\psi\|_{L^{2}(\Sigma_{-r_{0}}^{\delta})}\lesssim 1,\quad \lim_{r_{0}\rightarrow\infty}\|r_{0}\Lb\psi\|_{L^{2}(\Sigma_{-r_{0}}^{\delta})},\quad \lim_{r_{0}\rightarrow\infty}\|r_{0}\ds\psi\|_{L^{2}(\Sigma_{-r_{0}}^{\delta})},\quad \lim_{r_{0}\rightarrow\infty}\|r_{0}\psi\|_{L^{2}(\Sigma_{-r_{0}}^{\delta})} \lesssim \delta.
\end{align*}
Here we only give a detailed proof for $\lim_{r_{0}\rightarrow\infty}\|T\psi\|_{L^{2}(\Sigma_{-r_{0}}^{\delta})}$ and for $\psi=\partial_{t}\phi$. The proof for $\psi=\partial_{i}\phi$ and for $\Lb, \ds$ derivatives are similar. According to the initial data constructed in Lemma \ref{lemma constraint}, we have

\begin{align*}
T\psi(-r_{0},\theta)=\frac{\delta^{-1/2}}{r_{0}}(\partial_{s}\phi_{1})\left(\frac{r-r_{0}}{\delta},\theta\right)
\end{align*}
where $\partial_{s}$ is derivative of $\phi_{1}(s,\theta)$ with respect to its first argument. A direct computation shows

\begin{align*}
\lim_{r_{0}\rightarrow\infty}\|T\psi\|^{2}_{L^{2}(\Sigma_{-r_{0}}^{\delta})}=&\lim_{r_{0}\rightarrow\infty}\int_{r_{0}}^{r_{0}+\delta}\int_{\mathbb{S}^{2}}\frac{\delta^{-1}r^{2}}{r^{2}_{0}}(\partial_{s}\phi_{1})^{2}\left(\frac{r-r_{0}}{\delta},\theta\right)d\mu_{\mathbb{S}^{2}}dr\\
=&\lim_{r_{0}\rightarrow\infty}\int_{0}^{1}\int_{\mathbb{S}^{2}}\frac{(r_{0}+\delta s)^{2}}{r^{2}_{0}}(\partial_{s}\phi_{1})^{2}(s,\theta)d\mu_{\mathbb{S}^{2}}ds\\=&\int_{0}^{1}\int_{\mathbb{S}^{2}}(\partial_{s}\phi_{1})^{2}(s,\theta)d\mu_{\mathbb{S}^{2}}ds.
\end{align*}
Applying the argument in the proof of Lemma \ref{lemma constraint} to $\partial_{t}\phi$ instead of $\phi$, one can see that $\Lb\psi$ can also be written as

\begin{align*}
(\Lb\psi)(-r_{0},\theta)=\frac{\delta^{1/2}}{r_{0}^{2}}\phi_{3}\left(\frac{r-r_{0}}{\delta},\theta\right) 
\end{align*}
for some smooth function $\phi_{3}(s,\theta)$ which vanishes for $s\leq 0$. Therefore the above argument applies to $\Lb\psi, \ds\psi$ and $\psi$.
\end{proof}

\begin{remark}\label{remark: scattering}
From the above proof, one can see that Proposition \ref{prop: scattering} is also valid if the commutators $R_{i}, T, t\Lb$ are applied on $\psi:=\partial_{t}\phi,\partial_{i}\phi$. Therefore the higher order initial energies are also finite at $t=-\infty$, and our energy estimates independent of $r_{0}$ implies the existence of semi-global-in-time solutions which lead to shock formation.
\end{remark}
\subsection{Error terms}
Now we study the error terms $\widetilde{Q}_{0}$ and $\widetilde{Q}_{1}$. The deformation tensor $\widetilde{\pi}_{0,\mu\nu}$ is given by:
\begin{align*}
 \widetilde{\pi}_{0,LL}&=\frac{4}{c}\mu\underline{L}(c^{-2}\mu),\quad \widetilde{\pi}_{0,\underline{L}\underline{L}}=0 \\\notag
\widetilde{\pi}_{0,L\underline{L}}&=-\frac{2}{c}\mu\left(\mu^{-1}(K_{0}\mu)-(K_{0}\log c)+2\underline{L}(c^{-2}\mu)\right)\\\notag
\widetilde{\pi}_{0,LA}&=\frac{2}{c}\left((\zetab_{A}+\etab_{A})
-\mu\slashed{\nabla}_{A}(c^{-2}\mu)\right)\\\notag
\widetilde{\pi}_{0,\underline{L}A}&=-\frac{2}{c}(\zetab_{A}+\etab_{A})\\\notag
\hat{\widetilde{\slashed{\pi}}}_{0,AB}&
=\frac{2}{c}(\hat{\underline{\chi}}_{AB}+\hat{\chi}_{AB})\\\notag
\textrm{tr}\tilde{\slashed{\pi}}_{0}&=2\left(\widetilde{\tr\chi}+\widetilde{\tr\chib}\right)
\end{align*}
The modified deformation tensor $\widetilde{\pi}^{\prime}_{1,\mu\nu}$ is given by:
\begin{align*}
\widetilde{\pi}^{\prime}_{1,LL}&=\frac{4}{c}\mu\left(\left(\frac{2(-t)}{\widetilde{\tr\chib}}\right)\underline{L}(c^{-2}\mu)-L\left(\frac{2(-t)}{\widetilde{\tr\chib}}\right)\right),\quad \widetilde{\pi}^{\prime}_{1,\underline{L}\underline{L}}=0\\\notag
\widetilde{\pi}^{\prime}_{1,L\underline{L}}&=-\frac{2}{c}\mu\left(\mu^{-1}K_{1}\mu+\underline{L}\left(\frac{2(-t)}{\widetilde{\tr\chib}}\right)+2t-K_{1}\log c\right)\\\notag
\widetilde{\pi}^{\prime}_{1,LA}&=\frac{2}{c}\left(\frac{2(-t)}{\widetilde{\tr\chib}}\right)(\zetab_{A}+\etab_{A})-\mu\slashed{\nabla}_{A}\left(\frac{2(-t)}{\widetilde{\tr\chib}}\right),\quad
\widetilde{\pi}^{\prime}_{1,\underline{L}A}=0\\\notag
\widehat{\tilde{\slashed{\pi}}}^{\prime}_{1,AB}&=\frac{2}{c}\left(\frac{2(-t)}{\widetilde{\tr\chib}}\right)\widehat{\underline{\chi}}_{AB},\quad
\textrm{tr}\widetilde{\slashed{\pi}}^{\prime}_{1}=0
\end{align*}

\begin{remark}\label{remark: K1}
As we stated in the introduction, the choice of $K_{1}=\left(\dfrac{2(-t)}{\widetilde{\textrm{tr}\chib}}\right)\Lb$ is such that it behaves like $u^{2}\Lb$ (up to a multiplication by a constant) when $|t|$ is large. On the other hand, the specific choice of the coefficient $\left(\dfrac{2(-t)}{\widetilde{\textrm{tr}\chib}}\right)$ is to guarantee that $\textrm{tr}\widetilde{\slashed{\pi}}^{\prime}_{1}$ vanishes, which would cause a divergence in time if it is non-zero.
\end{remark}
To calculate $\widetilde{Q}_{0}$ and $\widetilde{Q}_{1}$, we need to raise the indices for $T_{\mu\nu}$:
\begin{align}\label{energy momentum tensor raised}
\begin{split}
  T^{LL} &= \frac{(\Lb\psi)^2}{4\mu^2},\ T^{\Lb\Lb} = \frac{(L\psi)^2}{4\mu^2}, \ T^{\Lb L} = \frac{(\slashed{d}\psi)^2}{4\mu}, \ T^{LA} =-\frac{ \Lb\psi X_A(\psi)}{2\mu},\\
T^{\Lb A} &= -\frac{L \psi X_A(\psi)}{2\mu},\ T^{AB} = \slashed{g}^{AC}\slashed{g}^{BD}X_C(\psi)X_D(\psi)-\frac{1}{2}\slashed{g}^{AB}(-\frac{L\psi \Lb \psi}{\mu} + |\slashed{d} \psi|^2).
 \end{split}
\end{align}
Now we can compute the integrands $\widetilde{Q_0}$ and $\widetilde{Q_1}$ explicitly. For $\widetilde{Q_0}$, we have
\begin{equation}\label{Q_0}
\begin{split}
 {c}^{-2} \widetilde{Q}_0 &= -{c}^{-2}\rho\cdot K_0 \psi-\frac{1}{2}{T}^{\mu\nu}\widetilde{\pi}_{0,\mu\nu}= \widetilde{Q}_{0,0} + \widetilde{Q}_{0,1}+\widetilde{Q}_{0,2}+\widetilde{Q}_{0,3}+ \widetilde{Q}_{0,4}+\widetilde{Q}_{0,5}\\
&=-{c}^{-2}\rho \cdot K_0\psi  -T^{L\Lb}\widetilde{\pi}_{0,L\Lb}-T^{LA}\widetilde{\pi}_{0,LA}-T^{\Lb A}\widetilde{\pi}_{0,\Lb A}-\frac{1}{2}T^{AB}\widetilde{\pi}_{0,AB}+\frac{1}{2}T^{LL}\widetilde{\pi}_{0,LL}.
\end{split}
\end{equation}
The $\widetilde{Q}_{0,i}$s are given by
\begin{align}\label{Q_0,i}
\begin{split}
  \widetilde{Q}_{0,1}& =   \frac{1}{2c}\Big(\mu^{-1}K_{0}\mu+K_{0}\log(c^{-1}) + \Lb(c^{-2}\mu)\Big)|\slashed{d}\psi|^2, \\
\widetilde{Q}_{0,2}&=-{c}^{-1}\left(X_{A}(c^{-2}\mu)-\mu^{-1}(\zetab_{A}+\etab_{A})\right) \Lb\psi \cdot X_A(\psi),\\ \widetilde{Q}_{0,3}&=-c^{-1} \mu^{-1}\left(\zetab_A+\etab_A\right)L\psi X_A(\psi),\\
\widetilde{Q}_{0,4}&=-\frac{1}{2}\left(\widehat{\chit}_{AB}X^A \psi X^B \psi +\frac{c}{2} \mu^{-1}\left(\tr_{\widetilde{g}}\chit+\tr_{\widetilde{g}}\widetilde{\chib}\right) {L\psi \cdot \Lb \psi}\right),\\
\widetilde{Q}_{0,5}&=-\frac{1}{2}\frac{1}{c\mu}\Lb(c^{-2}\mu)\left(\Lb\psi\right)^{2}.
\end{split}
\end{align}
For $\widetilde{Q}_{1}$ we have:
\begin{equation}\label{Q_1}
\begin{split}
  {c}^{-2} \widetilde{Q}_1 &= -{c}^{-2}\rho\cdot \left(K_1 \psi-t\psi\right)-\frac{1}{2}{T}^{\mu\nu}\widetilde{\pi}^{\prime}_{1,\mu\nu}= \widetilde{Q}_{1,0} + \widetilde{Q}_{1,1}+\widetilde{Q}_{1,2}+\widetilde{Q}_{1,3}+ \widetilde{Q}_{1,4}+\widetilde{Q}_{1,5}\\
&=-{c}^{-2}\rho \cdot\left( K_1 \psi-t\psi\right)-\frac{1}{2}T^{LL}\widetilde{\pi}^{\prime}_{1,LL}-T^{L\Lb}\widetilde{\pi}^{\prime}_{1,L\Lb} -T^{LA}\widetilde{\pi}^{\prime}_{1,LA} -\frac{1}{2}T^{AB}\widetilde{\pi}^{\prime}_{1,AB}-\frac{1}{2}\psi^{2}\Box_{\gt}(t).
\end{split}
\end{equation}
The $\widetilde{Q}_{1,i}$ are given by
\begin{align}\label{Q_1,i}
\begin{split}
\widetilde{Q}_{1,1}&=-\frac{1}{2c\mu}\left(\left(\frac{2(-t)}{\widetilde{\tr\chib}}\right)\underline{L}(c^{-2}\mu)-L\left(\frac{2(-t)}{\widetilde{\tr\chib}}\right)\right)\left(\Lb\psi\right)^{2},\\
\widetilde{Q}_{1,2}&=\frac{1}{2c}\left(\mu^{-1}K_{1}\mu+\underline{L}\left(\frac{2(-t)}{\widetilde{\tr\chib}}\right)+2t-K_{1}\log c\right)|\ds\psi|^{2},\\
\widetilde{Q}_{1,3}&=2\mu^{-1}\left(\frac{2}{c}\left(\frac{2(-t)}{\widetilde{\tr\chib}}\right)(\zetab_{A}+\etab_{A})-\mu\slashed{\nabla}_{A}\left(\frac{2(-t)}{\widetilde{\tr\chib}}\right)\right)(\Lb\psi)(X_{A}\psi),\\
\widetilde{Q}_{1,4}&=-\frac{1}{c}\left(\frac{2(-t)}{\widetilde{\tr\chib}}\right)\widehat{\underline{\chi}}_{AB}(X^{A}\psi)(X^{B}\psi)
\end{split}
\end{align}

The rest of this section is devoted to the estimates for $\int_{W^{t}_{\ub}}\widetilde{Q}_{0}$ and $\int_{W^{t}_{\ub}}\widetilde{Q}_{1}$. We will make use of \eqref{spacetime integral definition}.
\subsection{Estimates on $\widetilde{Q}_{1,2}$}
We separate the estimates on $\widetilde{Q}_{1,2}$ from others, which are of lower order. We analyze the contribution of each term in the parenthesis in the expression of $\widetilde{Q}_{1,2}$. By the definition of $K_{1}$ and \eqref{box psi_0 in null frame} \eqref{expansion of L psi_0} \eqref{expansion of psi_0}, as well as the fact $\frac{(-t)}{\tr\chib}\sim t^{2}$ we have
\begin{align*}
\|K_{1}\log c\|_{L^{\infty}}\lesssim\delta
\end{align*}
So the contribution of this term to the spacetime integral is bounded by:
\begin{align}\label{Q_1,2 1}
\delta\int_{-r_{0}}^{t}(-t')^{-2}\Eb(t',\ub)dt'
\end{align}
We move to the contribution of the term $\Lb\left(\dfrac{2(-t)}{\widetilde{\tr\chib}}\right)+2t$, a direct calculation implies
\begin{align*}
\Lb\left(\dfrac{2(-t)}{\widetilde{\tr\chib}}\right)=-\frac{2}{\widetilde{\tr\chib}}-\frac{2(-t)}{(\widetilde{\tr\chib})^{2}}\Lb(\widetilde{\tr\chib})
\end{align*}
In view of the expression of $\Eb(t,\ub)$, the term on the right hand side of above equation behaving like $(-t)$ gives us the borderline contribution. However, we will show these actually cancel. 
In view of the propagation equation \eqref{Structure Equation Lb chibAB nonsingular}, the second term on the right hand side of above equation can be written as:
\begin{align*}
\frac{2t}{(\widetilde{\tr\chib})^{2}}\left(e\tr\chib-\frac{1}{2}(\tr\chib)^{2}-|\widehat{\chib}|^{2}
-\tr\alpha'+O\left(\delta M^{2}(-t)^{-3}\right)\right)
\end{align*}
The borderline term is $(-t)\dfrac{(\tr\chib)^{2}}{(\widetilde{\tr\chib})^{2}}$, which can be rewritten as 
\begin{align*}
(-t)\frac{(\widetilde{\tr\chib})^{2}+O(\delta M^{2})(-t)^{-4}}{(\widetilde{\tr\chib})^{2}},
\end{align*}
so the borderline term is $(-t)$. On the other hand, in view of \eqref{precise bound on chibt}, the borderline term in $-\frac{2}{\widetilde{\tr\chib}}$ is also $(-t)$. Therefore there is no borderline term in $\Lb\left(\dfrac{2(-t)}{\widetilde{\tr\chib}}\right)+2t$. And the contribution of this term is bounded as
\begin{align}\label{Q_1,2 2}
\delta M^{2}\int_{-r_{0}}^{t}(-t')^{-2}\Eb(t',\ub)dt'.
\end{align}
Finally the contribution of the term $\mu^{-1}K_{1}\mu$ is:
In the non-shock region we have, using the fact $\mu\gtrsim\dfrac{1}{10}$ and $|\Lb\mu|\lesssim(-t)^{-2}$,
\begin{align}\label{Q_1,2 3}
\int_{W^{t}_{\ub}\bigcap W_{rare}}\frac{1}{2c}\left(\frac{2(-t)}{\widetilde{\tr\chib}}\right)\mu^{-1}\Lb\mu\cdot |\ds\psi|^{2}\lesssim\int_{-r_{0}}^{t}(-t')^{-2}\Eb(t',\ub)dt'.
\end{align}
In the shock region, by Proposition \ref{Proposition C3}, the spacetime integral
\begin{align}\label{Q_1,2 4}
\int_{W^{t}_{\ub}\bigcap W_{shock}}\frac{1}{2c}\left(\frac{2(-t)}{\widetilde{\tr\chib}}\right)\mu^{-1}\Lb\mu\cdot|\ds\psi|^{2}:=-K(t,\ub)
\end{align}
is negative. Combining \eqref{Q_1,2 1}-\eqref{Q_1,2 4}, the spacetime integral involving $\widetilde{Q}_{1,2}$ is bounded by:
\begin{align}\label{Q_1,2 final}
\int_{-r_{0}}^{t}(-t')^{-2}\Eb(t',\ub)dt'-K(t,\ub).
\end{align}
\subsection{Estimates on $\widetilde{Q}_{0,1}$}
Another term which also needs to be treated separately is $\widetilde{Q}_{0,1}$. Again, we will estimate the contribution of each term in parenthesis. In view of \eqref{expansion of L psi_0}, \eqref{expansion of psi_0}, \eqref{expansion of Lb mu} and \eqref{box psi_0 in null frame}, we have:
\begin{align*}
\|K_{0}\log c\|_{L^{\infty}(\Sigma_{t}^{\ub})}\lesssim (-t)^{-2},\quad \|\Lb(c^{-2}\mu)\|_{L^{\infty}(\Sigma_{t}^{\ub})}\lesssim (-t)^{-2}.
\end{align*}
Therefore the corresponding contributions are bounded by:
\begin{align}\label{Q_0,1 1}
\int_{-r_{0}}^{t}(-t')^{-2}\Eb(t',\ub)dt'
\end{align}
There are two different terms in $\mu^{-1}K_{0}\mu$, $\mu^{-1}\Lb\mu$ and $\mu^{-1}L\mu$. We split the contribution from $\mu^{-1}\Lb\mu$ as:
\begin{align*}
\int_{W^{t}_{\ub}\bigcap W_{shock}}\frac{1}{2c}\mu^{-1}\Lb\mu|\ds\psi|^{2}+\int_{W^{t}_{\ub}\bigcap W_{rare}}\frac{1}{2c}\mu^{-1}\Lb\mu|\ds\psi|^{2}.
\end{align*}
The integral in the non-shock region is bounded through \eqref{expansion of Lb mu} by:
\begin{align}\label{Q_0,1 2}
\int_{-r_{0}}^{t}(-t')^{-2}\Eb(t',\ub)dt'
\end{align}
While by Proposition \ref{Proposition C3}, the integrand in the shock region is actually negative, so it does not contribute. For the contribution from $\mu^{-1}L\mu$, we only need to consider its positive part, namely, $\mu^{-1}\left(L\mu\right)_{+}$, which is bounded by $\mu^{-1}\left(T\mu\right)_{+}+c^{-2}\left(\Lb\mu\right)_{+}$. The contribution from the second term is bounded similarly as \eqref{Q_0,1 2} through \eqref{expansion of Lb mu}. In view of Proposition \ref{Proposition C2} the contribution of $\mu^{-1}\left(T\mu\right)_{+}$ is bounded as:
\begin{align}\label{Q_0,1 3}
\int_{-r_{0}}^{t}\|\mu^{-1}\left(T\mu\right)_{+}\|_{L^{\infty}(\Sigma_{t}^{\ub})}(-t')^{-2}\Eb(t',\ub)dt'\lesssim\delta^{-1}\int_{-r_{0}}^{t}\frac{1}{\left|t'-s\right|^{1/2}}(-t')^{-3/2}\Eb(t',\ub)dt'.
\end{align}

\subsection{Estimates for $\widetilde{Q}_{1,1}$}
In order to estimate the contribution of $\widetilde{Q}_{1,1}$ to the spacetime error integral, we use the trace of the structure equation \eqref{Structure Equation T chib} and \eqref{Structure Equation Lb chibAB nonsingular}:

\begin{align}\label{Structure Equation trace}
\begin{split}
T(\tr\chib)+\frac{1}{2}
c^{-1}\mu(\theta^{B}_{C}\chib_{B}^{C}+\theta^{A}_{C}\chib_{A}^{C})
&=\slashed{\text{div}}\etab+\mu^{-1}\slashed{g}^{AB}
\zetab_{A}\etab_{B}-c^{-1}\Lb(c^{-1}\mu)\tr\chib,\\
\Lb(\tr\chib)+|\chib|^{2}_{\slashed{g}}&=e\tr\chib-\tr\alpha'.
\end{split}
\end{align}
In view of the formula for $\Lb\mu$ and the pointwise estimate for $\widetilde{\tr}\chib$, the first term in the parenthesis of the first formula in \eqref{Q_1,i} is bounded by an absolute constant. For the second term, recall that $L=c^{-2}\mu\Lb+2T$. Therefore

\begin{align}\label{Q_1,1 detail}
\begin{split}
L\left(\frac{2(-t)}{\widetilde{\tr}\chib}\right)&=c^{-2}\mu\Lb\left(\frac{2(-t)}{\widetilde{\tr}\chib}\right)+2T\left(\frac{2(-t)}{\widetilde{\tr}\chib}\right)\\
&=-\frac{2c^{-2}\mu}{\widetilde{\tr}\chib}-\frac{2(-t)c^{-2}\mu\Lb(\widetilde{\tr}\chib)}{(\widetilde{\tr}\chib)^{2}}-\frac{4(-t)T(\widetilde{\tr}\chib)}{(\widetilde{\tr}\chib)^{2}}\\
&=:I+II+III.
\end{split}
\end{align}
First, the contribution of $I$ to $\widetilde{Q}_{1,1}$ is negative, so we ignore it. For $II$ and $III$, their contributions from the right hand side of the two equations in \eqref{Structure Equation trace} are bounded by an absolute constant. We focus on the contributions from the second term in each of \eqref{Structure Equation trace}. Since the difference between $\widetilde{\tr}\chib$ and $\tr\chib$ is lower order, we work the original metric $\slashed{g}$. In $II$ this contribution is 

\begin{align}\label{Q_1,1 principal I}
\frac{2(-t)c^{-2}\mu|\chib|^{2}_{\slashed{g}}}{(\widetilde{\tr}\chib)^{2}}.
\end{align}
In view of the fact $\chib=-c\theta$, this contribution from $III$ is 

\begin{align}\label{Q_1,1 principal II}
-\frac{4(-t)c^{-2}\mu|\chib|^{2}_{\slashed{g}}}{(\widetilde{\tr}\chib)^{2}}.
\end{align}
\eqref{Q_1,1 principal I} and \eqref{Q_1,1 principal II} together give a negative contribution. Therefore this contribution is ignored.
Therefore the spacetime error integral contributed by $Q_{1,1}$ is bounded by (up to a constant)

\begin{align}\label{Q_1,1 temp 3}
\begin{split}
&\int_{-r_{0}}^{t}\int_{0}^{\ub}\int_{S_{t',\ub'}}\left(\Lb\psi+\frac{1}{2}\widetilde{\tr\chib}\psi\right)^{2}\dmug d\ub'dt'+\int_{-r_{0}}^{t}\int_{0}^{\ub}\int_{S_{t',\ub'}}\left(\frac{1}{2}\widetilde{\tr\chib}\psi\right)^{2}\dmug d\ub'dt'.
\end{split}
\end{align}
The first term above is bounded by 

\begin{align}\label{Q_1, 1 1}
\int_{0}^{\ub}\int_{-r_{0}}^{t}(-t')^{2}\int_{S_{t',\ub'}}\left(\Lb\psi+\frac{1}{2}\widetilde{\tr\chib}\psi\right)^{2}\dmug d\ub'dt'\lesssim\int_{0}^{\ub}\Fb(t,\ub')d\ub'.
\end{align}
In view of \eqref{Calculus Inequality}, the second term in \eqref{Q_1,1 temp 3} is bounded by (up to a constant)

\begin{align}\label{Q_1,1 temp 4}
\begin{split}
&\int_{-r_{0}}^{t}\int_{0}^{\ub}\int_{S_{t',\ub'}}\left(\frac{1}{2}\widetilde{\tr\chib}\psi\right)^{2}\dmug d\ub'dt'\\
\lesssim&\delta^{2}\int_{-r_{0}}^{t}(-t')^{-2}E(t',\ub)dt'+\delta\int_{-r_{0}}^{t}(-t')^{-2}\int_{0}^{\ub}\int_{S_{t',\ub'}}(\Lb\psi)^{2}\dmug d\ub'dt'\\
\lesssim&\delta^{2}\int_{-r_{0}}^{t}(-t')^{-2}E(t',\ub)dt'+\delta\int_{0}^{\ub}\Fb(t,\ub')d\ub'+\delta\int_{-r_{0}}^{t}(-t')^{-2}\int_{0}^{\ub}\int_{S_{t',\ub'}}\left(\frac{1}{2}\widetilde{\tr}\chib\psi\right)^{2}\dmug d\ub'dt'.
\end{split}
\end{align}
If $\delta$ is appropriately small, we have

\begin{align}\label{Q_1, 1 2}
\begin{split}
&\int_{-r_{0}}^{t}\int_{0}^{\ub}\int_{S_{t',\ub'}}\left(\frac{1}{2}\widetilde{\tr\chib}\psi\right)^{2}\dmug d\ub'dt'\\
\lesssim&\delta^{2}\int_{-r_{0}}^{t}(-t')^{-2}E(t',\ub)dt'+\delta\int_{0}^{\ub}\Fb(t,\ub')d\ub'.
\end{split}
\end{align}
\subsection{Estimates for the other error terms}
\subsubsection{Estimates for $\widetilde{Q}_{0,2}$}
The terms in the parenthesis can be written as
\begin{align*}
\mu^{-1}\ds\mu+O((-t)^{-2})
\end{align*}
The contribution from the second term is bounded as:
\begin{align*}
\int_{-r_{0}}^{t}(-t')^{-2}\int_{0}^{\ub}\int_{S_{t',\ub'}}\mu(\Lb\psi)^{2}d\mu_{\slashed{g}}d\ub'dt'+\int_{-r_{0}}^{t}(-t')^{-2}\int_{0}^{\ub}\int_{S_{t',\ub'}}\mu|\ds\psi|^{2}d\mu_{\slashed{g}}d\ub'dt':=I+II
\end{align*}
In view of the definition of $\Eb(t,\ub)$, 
\begin{align}\label{Q_0, 2 1}
|II|\lesssim\int_{-r_{0}}^{t}(-t')^{-2}\Eb(t',\ub)dt'.
\end{align}
While $I$ is bounded as 
\begin{align}\label{Q_0, 2 2}
\begin{split}
|I|\lesssim&\int_{-r_{0}}^{t}(-t')^{-2}\int_{0}^{\ub}\int_{S_{t',\ub'}}\mu\left((\Lb\psi+\frac{1}{2}\widetilde{\tr\chib}\psi\right)^{2}d\mu_{\slashed{g}}d\ub'dt'\\
+&\int_{-r_{0}}^{t}(-t')^{-2}\int_{0}^{\ub}\int_{S_{t',\ub'}}\mu\left(\frac{1}{2}\widetilde{\tr\chib}\psi\right)^{2}d\mu_{\slashed{g}}d\ub'dt'
\\
\lesssim&\int_{-r_{0}}^{t}(-t')^{-2}\Eb(t',\ub)dt'+\int_{-r_{0}}^{t}(-t')^{-4}E(t',\ub)dt'.
\end{split}
\end{align}
where in the last step we used \eqref{Calculus Inequality}. The contribution of $\mu^{-1}\ds\mu$ is bounded as:
\begin{align*}
\int_{-r_{0}}^{t}(-t')^{-2}\int_{0}^{\ub}\int_{S_{t',\ub'}}(\Lb\psi)^{2}d\mu_{\slashed{g}}d\ub'dt'+\int_{W^{t}_{\ub}}(-t')^{-2}\mu^{-1}|\ds\psi|^{2}d\mu_{g}
\end{align*}
The first term above is bounded as:
\begin{align}\label{Q_0, 2 3}
\int_{0}^{\ub}\int_{-r_{0}}^{t}(-t')^{-2}\int_{S_{t',\ub'}}(\Lb\psi)^{2}d\mu_{\slashed{g}}dt'd\ub'\lesssim \int_{0}^{\ub}F(t,\ub')d\ub'.
\end{align}
While the second term is split as:
\begin{align*}
\int_{W^{t}_{\ub}\bigcap W_{rare}}(-t')^{-2}\mu^{-1}|\ds\psi|^{2}d\mu_{g}+\int_{W^{t}_{\ub}\bigcap W_{shock}}(-t')^{-2}\mu^{-1}|\ds\psi|^{2}d\mu_{g},
\end{align*}
The contribution in non-shock region is bounded as:
\begin{align}\label{Q_0, 2 4}
\int_{-r_{0}}^{t}(-t')^{-2}\int_{0}^{\ub}\int_{S_{t',\ub'}}\mu|\ds\psi|^{2}\dmug d\ub'dt'\lesssim \int_{-r_{0}}^{t}(-t')^{-2}\Eb(t',\ub)dt'.
\end{align}
The contribution in shock region is bounded as:
\begin{align}\label{Q_0, 2 5}
\int_{W^{t}_{\ub}\bigcap W_{shock}}\mu^{-1}(-t')^{-2}|\ds\psi|d\mu_{g}\lesssim  K(t,\ub).
\end{align}
Here we used the fact that in the shock region $\Lb\mu\lesssim -(-t)^{-2}$. This completes the estimates for $\widetilde{Q}_{0,2}$.
\subsubsection{Estimates for $\widetilde{Q}_{0,3}$}
The estimates for $\widetilde{Q}_{0,3}$ is similar to $\widetilde{Q}_{0,2}$. Its contribution is bounded by:
\begin{align}\label{Q_0, 3}
\int_{-r_{0}}^{t}(-t')^{-2}E(t',\ub)dt'+\int_{-r_{0}}^{t}(-t')^{-2}\Eb(t',\ub)dt'+ K(t,\ub).
\end{align}
\subsubsection{Estimates for $\widetilde{Q}_{0,4}$}
In view of \eqref{precise bound on chibt}, the contribution of the first term is bounded as:
\begin{align}\label{Q_0, 4 1}
\delta M^{2}\int_{-r_{0}}^{t}(-t')^{-3}\int_{0}^{\ub}\int_{S_{t',\ub'}}\mu|\ds\psi|^{2}\dmug d\ub'dt'\lesssim\delta M^{2}\int_{-r_{0}}^{t}(-t')^{-3}\Eb(t',\ub)dt'.
\end{align}
For the contribution of the second term, in view of \eqref{outgoing null}, we have $\widetilde{\chi}=c^{-2}\mu\widetilde{\chib}+2c^{-1}\mu\theta
=c^{-2}\mu\widetilde{\chib}-2c^{-2}\mu\widetilde{\chib}=-c^{-2}\mu\widetilde{\chib}$. Therefore in view of \eqref{precise bound on chibt} and \eqref{expansion of mu}, we have:
\begin{align*}
\|\widetilde{\tr}\widetilde{\chi}+\widetilde{\tr}\widetilde{\chib}\|_{L^{\infty}(\Sigma_{t}^{\ub})}=\|\widetilde{\tr}\widetilde{\chib}(1-c^{-2}\mu)\|_{L^{\infty}(\Sigma_{t}^{\ub})}\lesssim(-t)^{-2}
\end{align*}
So the contribution of the second term in $\widetilde{Q}_{0,4}$ is bounded as:
\begin{align}\label{Q_0, 4 2}
\begin{split}
&\int_{-r_{0}}^{t}(-t')^{-2}\int_{0}^{\ub}\int_{S_{t',\ub'}}(L\psi)^{2}\dmug d\ub'dt'\\
+&\int_{0}^{\ub}\int_{-r_{0}}^{t}(-t')^{-2}\int_{S_{t',\ub'}}(\Lb\psi)^{2}\dmug dt'd\ub'\\
\lesssim&\int_{-r_{0}}^{t}(-t')^{-2}E(t',\ub)dt'+\int_{0}^{\ub}F(t,\ub')d\ub'.
\end{split}
\end{align}
\subsubsection{Estimates for $\widetilde{Q}_{0,5}$}
The estimates for $\widetilde{Q}_{0,5}$ is straightforward. In view of \eqref{expansion of Lb mu}, we have 
\begin{align*}
\|\Lb(c^{-2}\mu)\|_{L^{\infty}(\Sigma_{t}^{\ub})}\lesssim(-t)^{-2}.
\end{align*}
 So this contribution is bounded by:
\begin{align}\label{Q_0, 5}
\int_{0}^{\ub}\int_{-r_{0}}^{t}(-t')^{-2}\int_{S_{t',\ub}}(\Lb\psi)^{2}\dmug dt'd\ub'\lesssim\int_{0}^{\ub}F(t,\ub')d\ub'.
\end{align}
\subsubsection{Estimates for $\widetilde{Q}_{1,3}$}
The first term in the parenthesis together with the factor $\mu^{-1}$ can be written as
\begin{align*}
\mu^{-1}\ds\mu O((-t)^{2})+O(\delta M^{2}(-t)^{-1}),
\end{align*}
The contribution of the second factor above is bounded as:
\begin{align*}
&\delta M^{2}\int_{-r_{0}}^{t}(-t')^{-1}\int_{0}^{\ub}\int_{S_{t',\ub'}}\mu|\ds\psi|^{2}\dmug d\ub'dt'\\
+&\delta M^{2}\int_{-r_{0}}^{t}(-t')^{-1}\int_{0}^{\ub}\int_{S_{t',\ub'}}\mu\left(\Lb\psi+\frac{1}{2}\widetilde{\tr\chib}\psi\right)^{2}\dmug d\ub'dt'\\
+&\delta M^{2}\int_{-r_{0}}^{t}(-t')^{-3}\int_{0}^{\ub}\int_{S_{t',\ub'}}\psi^{2}\dmug d\ub'dt'.
\end{align*}
The first two terms above can be bounded as 
\begin{align}\label{Q_1, 3 1}
\delta M^{2}\int_{-r_{0}}^{t}(-t')^{-3}\Eb(t',\ub)dt'.
\end{align}
While in view of \eqref{Calculus Inequality}, the last term is bounded as 
\begin{align}\label{Q_1, 3 2}
\delta^{3}M^{2}\int_{-r_{0}}^{t}(-t')^{-3}E(t',\ub)dt'
\end{align}
For the contribution of the factor $\mu^{-1}\ds\mu O((-t)^{2})$, we bound the term $(\Lb\psi)(X_{A}\psi)$ as:
\begin{align*}
|(\Lb\psi)(X_{A}\psi)|\leq \epsilon|\ds\psi|^{2}+C_{\epsilon}(\Lb\psi)^{2},
\end{align*}
where $\epsilon$ is a small absolute positive constant which will be determined later.
The contribution of the first term on the right hand side above is bounded as:
\begin{align}\label{Q_1, 3 3}
\begin{split}
&\epsilon\int_{W^{t}_{\ub}\bigcap W_{rare}}|\ds\psi|^{2}+\epsilon\int_{W^{t}_{\ub}\bigcap W_{shock}}\mu^{-1}|\ds\psi|^{2}\\
\leq &\epsilon\int_{-r_{0}}^{t}(-t')^{-2}\Eb(t',\ub)dt'+\epsilon K(t,\ub).
\end{split}
\end{align}
The contribution from $(\Lb\psi)^{2}$ can be bounded as:
\begin{align}\label{Q_1, 3 4}
\begin{split}
&C_{\epsilon}\int_{-r_{0}}^{t}\int_{0}^{\ub}\int_{S_{t',\ub'}}\left(\Lb\psi+\frac{1}{2}\widetilde{\tr\chib}\psi\right)^{2}\dmug d\ub'dt'+\int_{-r_{0}}^{t}(-t')^{-2}\int_{0}^{\ub}\int_{S_{t',\ub'}}\psi^{2}\dmug d\ub'dt'\\
\leq &C_{\epsilon}\int_{-r_{0}}^{t}(-t')^{-2}\Eb(t',\ub)dt'+C_{\epsilon}\delta^{2}\int_{-r_{0}}^{t}(-t')^{-2}E(t',\ub)dt'.
\end{split}
\end{align}
In view of \eqref{bound on d trchibt}, the second term in the parenthesis of $\widetilde{Q}_{1,3}$ together with the factor $\mu^{-1}$ is bounded by $(-t)^{-1}$. Therefore the contribution of this term is bounded, in the same process as we derive \eqref{Q_1, 3 4}, by:
\begin{align}\label{Q_1, 3 5}
\int_{-r_{0}}^{t}(-t')^{-3}\Eb(t',\ub)dt'+\delta^{2}\int_{-r_{0}}^{t}(-t')^{-3}E(t',\ub)dt'.
\end{align}
\subsubsection{Estimates for $\widetilde{Q}_{1,4}$}
In view of \eqref{precise bound on chibt}, this term is bounded by:
\begin{align}\label{Q_1, 4}
\delta M^{2}\int_{-r_{0}}^{t}(-t')^{-3}\Eb(t',\ub)dt'.
\end{align}
\subsubsection{Estimates for $\widetilde{Q}_{1,5}$}
Finally we consider the contribution from $\widetilde{Q}_{1,5}$. Writing the operator $\Box_{\gt}$ in the null frame (see \eqref{box psi_0 in null frame}), we have:
\begin{align*}
\Box_{\gt}(t)=-\frac{1}{2}\tr\widetilde{\chib}Lt-\frac{1}{2}\tr\widetilde{\chi}\Lb(t)=-\frac{1}{2}\tr\widetilde{\chib}\left(c^{-2}\mu\right)-\frac{1}{2}\tr\widetilde{\chi}=-\left(c^{-2}\mu-c^{-1}\right)\tr\chib
\end{align*} 
In view of \eqref{expansion of mu}, 
\begin{align*}
\left|\left(c^{-2}\mu-c^{-1}\right)\tr\chib\right|\lesssim(-t)^{-2}
\end{align*}
Therefore the contribution of $\widetilde{Q}_{1,5}$ is bounded by:
\begin{align}\label{Q_1, 5}
\delta^{2}\int_{-r_{0}}^{t}(-t')^{-2}E(t',\ub)dt'.
\end{align}
\subsection{Conclusion}
In view of \eqref{energy identity E0} and the estimates for spacetime integral of $\widetilde{Q}_{0}$ \eqref{Q_0,1 1}-\eqref{Q_0,1 3}, \eqref{Q_0, 2 1}-\eqref{Q_0, 2 5}, \eqref{Q_0, 3}, \eqref{Q_0, 4 1}-\eqref{Q_0, 4 2}, \eqref{Q_0, 5}, we have:
\begin{align}\label{K0 linear preliminary}
\begin{split}
E(t,\ub)+F(t,\ub)\leq &E(-r_{0},\ub)+C\int_{0}^{\ub}F(t,\ub')d\ub'+C\int_{-r_{0}}^{t}(-t')^{-2}\Eb(t',\ub)dt'+\int_{W^{t}_{\ub}}\left|\rho\cdot K_{0}\psi\right|\\+&CK(t,\ub)
+C\int_{-r_{0}}^{t}(-t')^{-2}E(t',\ub)dt'+C\delta^{-1}\int_{-r_{0}}^{t}\frac{1}{|t'-s|^{1/2}}(-t')^{-3/2}\Eb(t',\ub)dt'.
\end{split}
\end{align} 
Here $s\in[-2,-1]$ and $-r_{0}\leq t'\leq t<s$.
The integral in the last term on the right hand side above can be bounded as follows:

\begin{align}\label{Tmu integral}
\begin{split}
&\int_{-r_{0}}^{t}\frac{(-t')^{-3/2}}{|t'-s|^{1/2}}\Eb(t',\ub)dt'=\int_{-r_{0}}^{2s}\frac{(-t')^{-3/2}}{|t'-s|^{1/2}}\Eb(t',\ub)dt'+\int_{2s}^{t}\frac{(-t')^{-3/2}}{|t'-s|^{1/2}}\Eb(t',\ub)dt'\\
\lesssim&\int_{-r_{0}}^{t}(-t')^{-3/2}\Eb(t',\ub)dt'+\int_{2s}^{t}\frac{1}{|t'-s|^{1/2}}\Eb(t',\ub)dt'.
\end{split}
\end{align}
Using Gronwall we have

\begin{align}\label{K0 linear temp 1}
\begin{split}
E(t,\ub)+F(t,\ub)\leq &E(-r_{0},\ub)+C\delta^{-1}\int_{-r_{0}}^{t}(-t')^{-3/2}\Eb(t',\ub)dt'+\left|\int_{W^{t}_{\ub}}\rho\cdot K_{0}\psi\right|\\+&CK(t,\ub)
+C\delta^{-1}\int_{2s}^{t}\frac{1}{|t'-s|^{1/2}}\Eb(t',\ub)dt'.
\end{split}
\end{align}
On the other hand, in view of \eqref{energy identity E1 modified} and the estimates for spacetime integral of $\widetilde{Q}_{1}$ \eqref{Q_1,2 final},\eqref{Q_1, 1 1}-\eqref{Q_1, 1 2}, \eqref{Q_1, 3 1}-\eqref{Q_1, 3 5}, \eqref{Q_1, 4}, \eqref{Q_1, 5} as well as \eqref{Calculus Inequality}, we have:
\begin{align}\label{K1 linear preliminary}
\begin{split}
\Eb(t,\ub)+\Fb(t,\ub)+K(t,\ub)\leq &\Eb(-r_{0},\ub)+\left|\int_{W^{t}_{\ub}}\rho\cdot(K_{1}\psi-t\psi)\right|+C\int_{0}^{\ub}\Fb(t,\ub')d\ub'\\+&\epsilon  K(t,\ub)
+C\int_{-r_{0}}^{t}(-t')^{-3/2}\Eb(t',\ub)dt'+C\delta^{2}\int_{-r_{0}}^{t}(-t')^{-2}E(t',\ub)dt'.
\end{split}
\end{align}
Using Gronwall we obtain

\begin{align}\label{K1 linear temp 1}
\Eb(t,\ub)+\Fb(t,\ub)+K(t,\ub)\leq &\Eb(-r_{0},\ub)+\left|\int_{W^{t}_{\ub}}\rho\cdot(K_{1}\psi-t\psi)\right|+C\delta^{2}\int_{-r_{0}}^{t}(-t')^{-2}E(t',\ub)dt'.
\end{align}
Substituting \eqref{K1 linear temp 1} into \eqref{K0 linear temp 1} we obtain

\begin{align}\label{K0 linear temp 2'}
\begin{split}
E(t,\ub)+F(t,\ub)\lesssim& E(-r_{0},\ub)+\delta^{-1}\Eb(-r_{0},\ub)+\delta\int_{-r_{0}}^{t}(-t')^{-2}E(t',\ub)dt'\\
&+\sup_{t'\in[-r_{0},t]}\left|\int_{W^{t}_{\ub}}\rho\cdot(K_{1}\psi-t\psi)\right|+\left|\int_{W^{t}_{\ub}}\rho\cdot K_{0}\psi\right|,
\end{split}
\end{align}
which implies

\begin{align}\label{K0 linear temp 2}
\begin{split}
E(t,\ub)+F(t,\ub)\lesssim& E(-r_{0},\ub)+\delta^{-1}\Eb(-r_{0},\ub)\\
&+\sup_{t'\in[-r_{0},t]}\left|\int_{W^{t'}_{\ub}}\rho\cdot(K_{1}\psi-t\psi)\right|+\left|\int_{W^{t}_{\ub}}\rho\cdot K_{0}\psi\right|.
\end{split}
\end{align}
Substituting \eqref{K0 linear temp 2} in \eqref{K1 linear temp 1} we obtain

\begin{align}\label{K1 linear temp 2}
\begin{split}
\Eb(t,\ub)+\Fb(t,\ub)+K(t,\ub)\lesssim &\Eb(-r_{0},\ub)+\delta^{2}E(-r_{0},\ub)\\
&+\sup_{t'\in[-r_{0},t]}\left|\int_{W^{t'}_{\ub}}\rho\cdot(K_{1}\psi-t\psi)\right|+\delta^{2}\sup_{t'\in[-r_{0},t]}\left|\int_{W^{t'}_{\ub}}\rho\cdot K_{0}\psi\right|.
\end{split}
\end{align}

\begin{remark}\label{multiplier interaction}
Here we remark that the spacetime integral involving $K_{1}$ in \eqref{K0 linear temp 2} and the spacetime integral involving $K_{0}$ in \eqref{K1 linear temp 2} are lower order compared to the other spacetime integral in the corresponding inequality. More specifically, in \eqref{K0 linear temp 2}, if we disregard the behavior with respect to $t$, $K_{1}\psi-t\psi\sim \delta^{1/2}, K_{0}\psi\sim \delta^{-1/2}$, so the term involving $K_{0}$ is lower order. Similar argument applies to \eqref{K1 linear temp 2} in which the factor $\delta^{2}$ makes the term involving $K_{0}$ lower order. 
\end{remark}
\section{Estimates for rectangular coordinates as functions of optical coordinates, Estimates for non-top order terms}
Our energy estimates for $(\star)$ is with respect to the optical coordinates $(t,\ub,\theta)$. In order to go back to the rectangular coordinates $(t,x^{i})$, one needs to investigate the relation between the rectangular coordinates $(t,x^{i})$ and the optical coordinates $(t,\ub,\theta)$. In what follows, we will consider $x^{i}$s as functions of $(t,\ub,\theta)$ and estimate their derivatives with respect to optical coordinates. In the meantime, we also estimate the quantities $y^{k}, z^{k}, \lambda_{i}$ and their derivatives in optical coordinates. As a by product, we will also obtain estimates for the lower order objects, i.e., with order $< \Ntop+1$.

Given a vectorfield $V$, we define the null components of its deformation tensor as
\begin{equation}\label{definition for Zb and pislash}
{}^{(V)}\Zb_A = {}^{(V)}\pi (\Lb, X_A), \ \ {}^{(V)}Z_A = {}^{(V)}\pi (L, X_A), \ \  {}^{(V)}\slashed{\pi}_{AB} =  {}^{(V)}\pi (X_A, X_B).
\end{equation}
The projection of Lie derivative $\mathcal{L}_{V}$ to $S_{t,\ub}$ is denoted as $\slashedL_{V}$.
The shorthand notation $\slashedL_{Z_i}^{\, \alpha}$ to denote $\slashedL_{Z_{i_1}} \slashedL_{Z_{i_2}}\cdots \slashedL_{Z_{i_k}} $
for a multi-index $\alpha =(i_1,\cdots,i_k)$. We will show that,
for all $|\alpha| \leq \Ninfty$, we have $(-t)\slashedL_{Z_i}^{\, \alpha} \chib', \slashedL_{Z_i}^{\, \alpha}  {}^{(Z_j)}\Zb, \slashedL_{Z_i}^{\, \alpha}  {}^{(Z_j)}\slashed{\pi}, (-t)^{-1}\slashedL_{Z_i}^{\, \alpha}  {}^{(Q)}\left(\slashed{\pi}+4\right) \in \O^{|\alpha|+1}_{2-2l,2}$,
where $l$ is the number of $T$'s in $Z_i$'s, $|\alpha|\geq 1$ and $Z_{j}\slashed{=}T$. If $Z_{j}=T$, then we have $(-t)\slashedL_{Z_i}^{\, \alpha}  {}^{(T)}\Zb, \slashedL_{Z_i}^{\, \alpha}  {}^{(T)}\slashed{\pi} \in \O^{|\alpha|+1}_{-2l,1}$. Similarly, we will derive $L^2$-estimates on objects of order $\leq \Nmu$. The $L^2$ estimates depend on the $L^\infty$ estimates up to order $\Ninfty+2$.
In the course of the proof, it will be clear why $\Ninfty$ is chosen to be approximately $\frac{1}{2} N_{top}$.
\subsection{$L^\infty$ estimates}
We assume (B.1): for all $\ |\alpha| \leq N_{\infty}$, $Z_i^{\alpha+2} \psi \in \Psi^{|\alpha|+2}_{1-2l,1}$. 
\begin{proposition}\label{L infity estimates on lot}
For sufficiently small $\delta$, for all $|\alpha| \leq \Ninfty$ and $t\in [-r_0, s^*]$, we can bound $\big\{(-t)\slashedL_{Z_i}^{\, \alpha} \chib'$, $\slashedL_{Z_i}^{\, \alpha}  {}^{(Z_j)}\Zb$, $\slashedL_{Z_i}^{\, \alpha}  {}^{(Z_j)}\slashed{\pi}, (-t)^{-1}\slashedL_{Z_i}^{\, \alpha}  {}^{(Q)}\left(\slashed{\pi}+4\right), Z_i^{\,\alpha+1} y^j, (-t)^{-1}Z_i^{\,\alpha+1} \lambda_j\big\} \subset \O_{2-2l,2}^{|\alpha|+1}$ and\\ $\big\{
(-t)^{-2}Z_i^{\,\alpha+2} x^j,  (-t)^{-1}Z_i^{\,\alpha+1} \widehat{T}^j, (-t)^{-2}\slashedL_{Z_i}^{\, \alpha+1} Z_j, \slashedL_{Z_i}^{\, \alpha}  {}^{(T)}\Zb, \slashedL_{Z_i}^{\, \alpha}  {}^{(T)}\slashed{\pi} \big\} \subset \O_{-2l,1}^{|\alpha|+1}$ in terms of $Z_{i}^{\alpha+1}\psi$ which belongs to $\Psi^{|\alpha|+2}_{1-2l,1}$. Here $l$ is the number of $T$'s in $Z_i$'s.
\end{proposition}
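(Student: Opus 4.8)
The plan is to prove the proposition by induction on $|\alpha|$, run simultaneously with the secondary grading by the number $l$ of $T$'s and the number (at most two) of $Q$'s among the $Z_i$'s. The base case $|\alpha|=0$ is exactly the collection of pointwise bounds already obtained in Section~3: \eqref{precise bound on chib'} for $\chib'$, \eqref{estimates on y^i}--\eqref{estimates on z^i} for $y^j$ and $z^j$, \eqref{estimates on lambda_i} for $\lambda_j$, \eqref{estimates on r precise} and \eqref{estimates on T hat i and Lb i} for $x^j$ and $\widehat{T}^j$, and the deformation tensor estimates \eqref{L infinity etimates on the deformation tensor of T}, \eqref{Linfty of deformation tensor of Q}, \eqref{estimates on the deformation tensor of Rotational R_i in g metric} for $T$, $Q$ and $R_i$; the remaining task is to propagate this hierarchy to higher order.

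The mechanism will be uniform across the list. Every quantity satisfies a transport equation along $\Lb$, which in optical coordinates is $\partial/\partial t$ by \eqref{Lb op}: $\chib'$ obeys \eqref{transport equation for chib prime}; $x^j$ obeys $\Lb x^j=-c\,\widehat{T}^j$; $\widehat{T}^j$ obeys $\Lb\widehat{T}^j=X^A(c)\,X_A{}^j$ (as in the derivation of \eqref{estimates on Lb lambda_i}); $\lambda_j$ obeys $\Lb\lambda_j=\sum_k(\Omega_j)^k X^A(c)\,X_A{}^k$; $y^j$ and $z^j$ are algebraic in $c$, $\widehat{T}^j$ and $\lambda_j$; and the null components of ${}^{(T)}\pi$, ${}^{(Q)}\pi$, ${}^{(R_i)}\pi$ are, by \eqref{deformation tensor of T}, \eqref{deformation tensor of Q} and \eqref{deformation tensor of Rotational R_i in T,Lb,X_A}, algebraic in $\mu$, $c$, $\zetab$, $\etab$, $\widehat{\chib}$, $\lambda_i$, $y^k$, $z^k$. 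I would commute $Z_i^{\alpha}$ through the relevant transport equation, use the commutation formulas for $[\Lb,Z_i]$ (tangential to $S_{t,\ub}$ and given by ${}^{(Z_i)}\Zb$ when $Z_i\in\{T,R_i\}$, equal to $\Lb$ when $Z_i=Q$) and their iterates, and expand the commutators to get for $\slashedL_{Z_i}^{\,\alpha}(\,\cdot\,)$ a transport equation $\Lb(\,\cdot\,)=\text{source}$, where the source is built from (i) commutator terms pairing $\slashedL_{Z_i}^{\le|\alpha|-1}{}^{(Z_j)}\Zb$ with $\slashedL_{Z_i}^{\le|\alpha|}$ of the quantity, controlled by the induction hypothesis; (ii) genuinely lower-order products, controlled by induction together with the $\O$ and $\Psi$ calculus; and (iii) a top-order term $Z_i^{\alpha+1}\psi\in\Psi^{|\alpha|+2}_{1-2l,1}$ supplied by (B.1). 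Integrating from $t=-r_0$ along the integral curves of $\Lb$ (lines of constant $\ub$ and $\theta$), with data read off from Section~3 (where $T=\partial_r$, $\lambda_i=0$, $y^i=0$, $\mu=c$ on $\Sigma_{-r_0}$), and applying Gronwall would close the induction.

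The decisive issue, and the origin of the $r_0$-independence, is the bookkeeping of $(-t)$-weights. Each quantity is assigned its target class $\O^{|\alpha|+1}_{2-2l,2}$ or $\O^{|\alpha|+1}_{-2l,1}$ precisely so that, after factoring out the corresponding power of $(-t)$ from the transport equation, the source decays in $t'$ strictly faster than $(-t')^{-1}$---typically like $(-t')^{-2}$ or $\delta(-t')^{-2}$---and hence $\int_{-r_0}^{t}(\cdots)\,dt'$ is bounded uniformly on $[-r_0,s^*]$ by a constant independent of $r_0$. This is the same device already used to prove \eqref{precise bound on chib'}, where $(t-\ub)^2|\chib'|$ was shown to satisfy an integrable Gronwall inequality; it has to be checked termwise at every order, the factor $\delta$ being what absorbs the large constant $M$ coming from (B.1). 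Each occurrence of $Q=t\Lb$ costs one power of $(-t)$, which is exactly what the $(-t)^{-1}$ prefactor in the class assigned to $(-t)^{-1}\slashedL_{Z_i}^{\,\alpha}{}^{(Q)}(\slashed{\pi}+4)$, together with the improved bound \eqref{Improved estimate for trQ}, is designed to absorb; since $Q$ appears at most twice, this does not accumulate.

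The hard part will be organizational rather than computational. The commutators $[\Lb,Z_i]$ in the transport equations involve the very deformation tensors being estimated, so at each fixed order one must first close the estimates for $\slashedL_{Z_i}^{\,\alpha}{}^{(Z_j)}\pi$ and for $\slashedL_{Z_i}^{\,\alpha}\chib'$ (on which the former depend through $\widehat{\chib}$) before feeding them back as commutator coefficients---a secondary induction within each order, ordered by the position of the last commutation vectorfield. I would also lean on the equivalences $|R_i\xi|\sim(-t)|\ds\xi|$ and $|\slashedLRi\xi|\sim|\xi|+(-t)|\nablaslash\xi|$ from Section~3 to pass between the $\slashedL_{Z_i}^{\,\alpha}$-norms arising in the commuted equations and the $\nablaslash$-norms in which the structure equations are phrased; these require $\delta$ small in terms of $M$, a choice that can be fixed only after $\Ntop$---and hence $\Ninfty\approx\tfrac{1}{2}\Ntop$---is determined. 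Finally, the restriction $|\alpha|\le\Ninfty$ is what ensures that the $L^\infty$ Sobolev inequality \eqref{Sobolev on S_t ub}, which costs two $R_i$-derivatives, is never applied above the available level of regularity.
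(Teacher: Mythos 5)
Your overall skeleton --- induction on $|\alpha|$ with the Section~3 bounds as base case, commuting $Z_i^{\alpha}$ through the transport equation \eqref{transport equation for chib prime} for $\chib'$, absorbing the commutators $[\slashedL_{\Lb},\slashedL_{Z_i}^{\alpha}]$ via the induction hypothesis on ${}^{(Z_j)}\Zb$, invoking (B.1) for the top variation term, and closing by Gronwall with careful $(-t)$-weights --- is essentially the paper's treatment of $\slashedL_{R_i}^{\alpha}\chib'$ (its Step~2), and your remarks on the inner ordering of the induction and on the reduction of the $T$-derivative cases are consistent with how the paper proceeds (it reduces $l\geq 1$ to the $\mu$-estimates through \eqref{Structure Equation T chib}).

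However, there is a genuine gap in how you propose to reach the \emph{small} quantities, above all $Z_i^{\,\alpha+1}y^j$ (and through it $z^j$, $\lambda_j$ at higher order, and the good components of ${}^{(R_i)}\pi$ such as ${}^{(R_i)}\pi_{\Lb A}$, which need $z$ and $\chib'$ small). You declare $y^j,z^j$ ``algebraic in $c$, $\widehat{T}^j$ and $\lambda_j$'' and propose to propagate $x^j$ and $\widehat{T}^j$ by transport along $\Lb$. But the proposition asserts $Z_i^{\,\alpha+1}y^j\in\O^{|\alpha|+1}_{2-2l,2}$, i.e.\ a bound of size $\delta^{1-l}(-t)^{-2}$, whereas the classes claimed (and obtainable) for $Z_i^{\,\alpha+1}\widehat{T}^j$ and $Z_i^{\,\alpha+2}x^j$ only give $O(\delta^{-l})$ with no gain in $\delta$ or in decay; the triangle inequality applied to $y^j=\widehat{T}^j-x^j/(\ub-t)$ therefore cannot produce the stated class, and the whole point is a cancellation between the two terms. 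The paper's Step~3 supplies exactly this missing mechanism: the tangential identity $X_A(y^j)=\bigl(\theta_A{}^B-\tfrac{\delta_A^B}{\ub-t}\bigr)X_B{}^j$, whose coefficient is small because it is controlled by $\chib'$ and $c-1$, so that $R_i^{\,\alpha+1}y^j$ is expressed through $\slashedL_{R_i}^{\beta}\chib'$, $\slashed{d}R_i^{\beta}x^j$ and $\slashedL_{R_i}^{\beta}R_k$ already controlled at that stage (an equivalent fix would be a transport equation for $y^j$ itself, whose source is $O(\delta(-t)^{-3})$, but some such structural input is indispensable). Relatedly, your transport route for $x^j$ via $\Lb x^j=-c\widehat{T}^j$ costs one more derivative of $\widehat{T}^j$ (hence of $y^j$) than the induction provides; the paper sidesteps this by the purely algebraic identity $\Omega_i^{\alpha+1}x^j-R_i^{\,\alpha+1}x^j=R_i^{\alpha}\bigl(\lambda_i\widehat{T}^j\bigr)$, exploiting that $\Omega$-derivatives of $x^j$ are again coordinates. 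Without these two ingredients the claimed $\delta$-hierarchy for $y^j$, $z^j$, $\lambda_j$ and the ${}^{(R_i)}\pi$ components is not reached, and the commutator terms in your commuted $\chib'$ equation would then carry the wrong $\delta$-weights, so the induction as written does not close.
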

\begin{proof}
We do induction on the order. When $|\alpha|=0$, the estimates are treated in Section 3. Here we only treat the estimates when $l=0$, when $l\geq1$, we can use the structure equation \eqref{Structure Equation T chib} to reduce the problem to the estimates for $\mu$, which is treated in Proposition \ref{prop mu L infty estimates}. Given $|\alpha| \leq \Ninfty$, 
we assume that estimates hold for terms of order $\leq |\alpha|$. In particular, 
we have $\slashedL_{R_i}^{\, \beta-1} \chib', R_i^{\,\beta} y^j \in \O^{|\beta|+1}_{2,2}$ for all $|\beta| \leq |\alpha|$. We prove the proposition for $|\alpha|+1$.

\underline{Step 1 \ Bounds $R_i^{\,\alpha+1} x^j$}. \ \  Let $\delta_{\alpha+1,i}^j = \Omega_i^{\alpha+1} x^j - R_i^{\,\alpha+1} x^j$ where $\Omega_i$'s are the standard rotational vectorfields on Euclidean space. It is obvious that $\Omega_i^\alpha x^j$ is equal to some $x^i$, therefore, bounded by $r$. Since $R_i = \Omega_i -\lambda_i \widehat{T}^j \partial_j$, $R_{i}x^{j}\in\O^{0}_{0,-1}$ and by ignoring all the numerical constants, we have
\begin{equation*}
 \delta_{\alpha+1,i}^j = R_{i}^{\alpha}\left(\lambda_i \widehat{T}^j\right)= R_i^{\alpha} \left(\lambda_i \left(\frac{x^j}{\ub-t}+y^j\right)\right).
\end{equation*}
Here the index $i$ is not a single index. It means we apply a string of different $R_{i}$s. This notation applies in the following when a string of $R_{i}$s are considered. 

By the induction hypothesis, the right hand side above is in $\O^{|\alpha|}_{2,1}$. Therefore $R^{\alpha+1}_{i}x^{j}\in \O^{|\alpha|}_{0,-1}$.

\underline{Step 2 \ Bounds on $\slashedL_{R_i}^{\, \alpha} \chib'$}. \ \ We commute $\slashedL_{R_i}^{\, \alpha}$ with \eqref{transport equation for chib prime} to derive
\begin{equation}\label{b2}
\begin{split}
\slashedL_{\Lb} \slashedL_{R_i}^{\, \alpha}  \chib' &= \left[ \slashedL_{\Lb}, \slashedL_{R_i}^{\, \alpha} \right]\chib' + e \cdot \slashedL_{R_i}^{\, \alpha} \chib' + \chib' \cdot  \slashedL_{R_i}^{\, \alpha} \chib' + \sum_{|\beta_1|+|\beta_2| = |\alpha| \atop |\beta_2|<|\alpha|}  {R_i}^{\beta_1} e \cdot \slashedL_{R_i}^{\beta_2}\chib' \\
&\quad +   \sum_{|\beta_1|+|\beta_2| +|\beta_3| = |\alpha| \atop |\beta_1|<|\alpha|, |\beta_3|<|\alpha|}  \slashedL_{R_i}^{\beta_1} \chib' \cdot \slashedL_{R_i}^{\beta_2}\slashed{g}^{-1} \cdot \slashedL_{R_i}^{\beta_3}\chib'+ \slashedL_{R_i}^{\, \alpha}\big(\frac{e\slashed{g}_{AB}}{t-\ub} -\alphab'_{AB}\big).
\end{split}
\end{equation}
Since $e =c^{-1}\dfrac{d c}{d\rho}\Lb\rho$, ${R_i}^{\alpha} e$ is of order $\leq |\alpha|+1$. By (B.1), we have $R_{i}^\beta e \in \Psi_{2,3}^{|\alpha|+1}$.
Similarly, by the explicit formula of $\alphab'_{AB}$, we have $\slashedL_{R_i}^\beta \alphab'_{AB} \in \Psi_{2,4}^{|\alpha|+2}$. Since ${}^{(R_i)}\pi_{AB} = 2c^{-1}\lambda_i \chib_{AB}$ and $\slashedL_{R_i}^{\beta_2}\slashed{g} = \slashedL_{R_i}^{\beta_2-1}\,{}^{(R_i)}\slashed{\pi}_{AB}$, by the estimates derived in previous sections and by the induction hypothesis, we can rewrite \eqref{b2} as
\begin{equation*}
\slashedL_{\Lb} \slashedL_{R_i}^{\, \alpha}  \chib' = \big[ \slashedL_{\Lb}, \slashedL_{R_i}^{\, \alpha} \big]\chib' +\O_{2,3}^{1}\cdot \slashedL_{R_i}^{\, \alpha} \chib'+\Psi_{2,4}^{\leq |\alpha|+2}.
\end{equation*}
The commutator can be computed as $\left[ \slashedL_{\Lb}, \slashedL_{R_i}^{\, \alpha} \right]\chib' = \sum_{|\beta_1|+|\beta_2| = |\alpha|-1} \slashedL_{R_i}^{\beta_1}\slashedL_{{}^{(R_i)}\Zb}\slashedL_{R_i}^{\beta_2}\chib'$.
Since ${}^{(R_i)}\Zb_A = -\chib'_{AB}R_i{}^B + \varepsilon_{ijk}z^{j}X_A{}^k + \lambda_i \slashed{d}_A (c)$ and $z^j=-\frac{(c-1)x^j}{\ub-t}-cy^j$, the commutator term is of type $\mathcal{O}^{1}_{2,2}\cdot\slashed{\mathcal{L}}^{\alpha-1}_{R_{i}}\chib'$. Therefore, we have
\begin{equation*}
\slashedL_{\Lb} \slashedL_{R_i}^{\, \alpha}  \chib' = \O_{2,3}^{1}\cdot \slashedL_{R_i}^{\, \alpha} \chib'+\Psi_{2,4}^{\leq|\alpha|+2}.
\end{equation*}
By integrating this equation from $-r_0$ to $t$, the Gronwall's inequality yields $\|\slashedL_{R_i}^{\, \alpha} \chib'\|_{L^\infty(\Sigma_{t}^{\ub})} \lesssim_M \delta(-t)^{-3}$.

\underline{Step 3 \ Bounds on $R_i^{\,\alpha+1} y^j$ and $R_i^{\,\alpha+1}\lambda_{j}$}.  Since $R_i y^j = (-c^{-1}\chib^{A}_{B}-\frac{\delta^{A}_{B}}{\ub-t}) R_{i}^A \slashed{d}_B x^j$, schematically we have
\begin{equation*}
 R_i^{\,\alpha+1} y^j = R_{i}^{\alpha}R_k y^j = R_i^{\alpha}\left( \left(c^{-1}\chib+\frac{\delta}{\ub-t}\right)\cdot R_k \cdot \slashed{d} x^j \right).
\end{equation*}
 We distribute $R_i^\alpha$ inside the parenthesis by Leibniz rule. (Here again, the index $i$ is not a single index, so we use index $k$ to distinct the last rotation vectorfield.) Therefore, a typical term would be either $\slashedLRi^{\beta_1}\chib\cdot \slashedLRi^{\beta_2}\slashed{g}^{-1}\cdot \slashedLRi^{\beta_3} R_k \cdot \slashed{d} R_i^{\beta_4} x^j$ or $\slashedLRi^{\beta_1}\slashed{g}\cdot \slashedLRi^{\beta_2}\slashed{g}^{-1}\cdot \slashedLRi^{\beta_3} R_k \cdot \slashed{d} R_i^{\beta_4} x^j$ with $|\beta_1|+|\beta_2|+|\beta_3|+|\beta_4| = |\alpha|$.
There are only two terms where are not included in the induction hypothesis: $\slashedLRi^{\beta_2} \slashed{g}_{AB}$ and $\slashedLRi^{\beta_3} R_j$. The first term is in fact easy to handle by induction hypothesis and estimates derived in Step 1 and Step 2, since $\slashedLRi \slashed{g}_{AB} = {}^{(R_i)}\slashed{\pi}_{AB} = 2\lambda_i c^{-1}\chib_{AB}$. For the second one, we use the following expression:
\begin{align*}
\slashedLRi R_j = -\sum_{k=1}^3 \varepsilon_{ijk} R_k + \lambda_i\big( \frac{c^{-1}-1}{\ub-t}\slashed{g}&-c^{-1}(\chib+\frac{\slashed{g}}{\ub-t})\big)R_j-\lambda_j\big( \frac{c^{-1}-1}{\ub-t}\slashed{g}-c^{-1}(\chib+\frac{\slashed{g}}{\ub-t})\big)R_i\\
 & -\lambda_i \varepsilon_{jkl}y^k\slashed{d}x^l \cdot \slashed{g}^{-1}+ \lambda_j \varepsilon_{ikl}y^k\slashed{d}x^l\cdot \slashed{g}^{-1}.
\end{align*}
Therefore, $\slashedLRi^{\beta_3} R_k =\slashedLRi^{\beta_3-1} \slashedLRi R_k = \O_{0,-1}^{|\beta_3|-1} +\O_{\geq2,\geq2}^{|\beta_3|-1} R_i^{\beta_3}x^j $. Finally, we obtain that
\begin{equation*}
 R_i^{\,\alpha+1} y^j = \O^{\leq |\alpha|+1}_{2,2} + \O^{\leq |\alpha|}_{2,\geq2} \cdot \slashed{d} R_i^{\beta_4} x^j.
\end{equation*}
Although $ \slashed{d} R_i^{\beta_4} x^j$ and $\slashedLRi^{\beta_1}\chib$ may have order $|\alpha|+1$, they have been controlled from previous steps. This gives the bounds on $R_i^{\,\alpha+1} y^j$. Then by the fact that $\lambda_{j}=\epsilon_{jkl}x^{k}y^{l}$, the estimate for $R^{\,\alpha+1}_{i}\lambda_{j}$ follows. This completes the proof of the proposition.
\end{proof}
\begin{proposition}\label{prop mu L infty estimates}
For sufficiently small $\delta$, for all $|\alpha|\leq \Ninfty, t \in [-r_{0},s^{*}]$, we can bound $Z_i^{\alpha+1} \mu \in \O_{-2l,1}^{|\alpha|+1}$ in terms of $Z^{\alpha+2}_{i}\psi\in\Psi^{|\alpha|+2}_{1-2l,1}$.
\end{proposition}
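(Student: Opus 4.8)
The plan is to induct on the order $|\alpha|$, using the transport equation $\Lb\mu=m+\mu e$ (i.e. \eqref{Structure Equation Lb mu}) as the propagation equation for $\mu$ and commuting it with strings of the commutation vectorfields $Z_i\in\{T,R_j,Q\}$. The case $|\alpha|=0$ is already contained in Section 3: the bounds \eqref{bound on mu} and \eqref{bound on T mu and slashed d mu}, combined with the equivalence $R_j\sim(-t)\ds$ (a consequence of the estimates on $\lambda_i,y^i$), give $T\mu\in\O^1_{-2,1}$ and $R_j\mu,Q\mu\in\O^1_{0,1}$. It is natural to run this induction \emph{simultaneously} with Proposition \ref{L infity estimates on lot}: the $l\geq1$ cases of that proposition are reduced to the present $\mu$-estimates through \eqref{Structure Equation T chib}, while its deformation-tensor bounds enter the commutator errors below; within a fixed order one places the $\mu$-estimate first, which is legitimate because, as noted below, those commutator errors always couple a top-order deformation tensor to a \emph{strictly lower-order} derivative of $\mu$.

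\textbf{Commutation and closing the estimate.} Commuting $Z_i^{\alpha+1}$ through \eqref{Structure Equation Lb mu} yields a transport equation
\[
\Lb\big(Z_i^{\alpha+1}\mu\big)=e\,Z_i^{\alpha+1}\mu+F_\alpha ,
\]
where $F_\alpha$ collects three groups of terms: (i) $Z_i^{\alpha+1}m$; (ii) the Leibniz remainder of $Z_i^{\alpha+1}(\mu e)$ in which at least one $Z_i$ falls on $e$; and (iii) the commutator $[\Lb,Z_i^{\alpha+1}]\mu$, which expands into a sum of products $\big(\slashedL_{Z_i}^{\beta_1}\,{}^{(Z_{i_j})}\pi\text{-component}\big)\cdot Z_i^{\beta_2}\mu$ with $|\beta_1|+|\beta_2|\leq|\alpha|$, using that $[\Lb,R_j]$ and $[\Lb,T]$ are $S_{t,\ub}$-tangential with components controlled by the bounds on $\zetab,\etab,\chib'$, and that $[\Lb,Q]=\Lb$ (so commuting a $Q$ merely reinserts $m+\mu e$). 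Each group is then estimated: $Z_i^{\alpha+1}m$ by the product and chain rules applied to $m=-\tfrac12\tfrac{d(c^2)}{d\rho}T\rho$ — a smooth function of $\psi_0$ times $\psi_0$ and $T\psi_0$ — invoking the bootstrap $Z_i^{\alpha+2}\psi\in\Psi^{|\alpha|+2}_{1-2l,1}$ from (B.1) and the Section 3 bounds on $c,\psi_0$; the $\mu e$-remainder and the commutator sum by the induction hypothesis on lower-order derivatives of $\mu$ together with Proposition \ref{L infity estimates on lot}. Tracking the number $l$ of $T$'s in the string — each $T$ contributing a factor $\delta^{-1}$, consistently with $\|Tm\|_{L^\infty}\lesssim\delta^{-1}M^2(-t)^{-2}$ — one obtains $\|F_\alpha\|_{L^\infty(\Sigma_t)}\lesssim_M\delta^{-l}(-t)^{-2}$. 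Integrating the transport equation along the integral curves of $\Lb$ (the lines of constant $(\ub,\theta)$ in optical coordinates) from $-r_0$ to $t$, and absorbing the exponential factor via $\int_{-r_0}^{t}|e(\tau,\cdot)|\,d\tau\lesssim\delta M^2(-t)^{-1}\lesssim1$, gives
\[
\|Z_i^{\alpha+1}\mu\|_{L^\infty(\Sigma_t)}\lesssim\|Z_i^{\alpha+1}\mu\|_{L^\infty(\Sigma_{-r_0})}+\int_{-r_0}^{t}\|F_\alpha(\tau,\cdot)\|_{L^\infty}\,d\tau\lesssim_M\delta^{-l}(-t)^{-1},
\]
where the data term is handled as in Section 3 and the decisive time integral $\int_{-r_0}^{t}(-\tau)^{-2}\,d\tau\leq(-t)^{-1}$ is bounded \emph{independently of} $r_0$. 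This closes the induction and yields $Z_i^{\alpha+1}\mu\in\O^{|\alpha|+1}_{-2l,1}$.

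\textbf{Main difficulty.} The crux is the bookkeeping of the commutator $[\Lb,Z_i^{\alpha+1}]\mu$ and the source $Z_i^{\alpha+1}m$: one must verify that no term loses a derivative — which is precisely what dictates the ordering of the joint induction so that any top-order deformation-tensor factor is paired only with a low-order derivative of $\mu$ (and that when a top-order $\ds\mu$ nonetheless appears, it is re-expressed through $R_j$ and absorbed by Gronwall, since its deformation coefficient has $L^\infty$ norm with bounded $\int_{-r_0}^{t}(-\tau)^{-2}\,d\tau$) — and that the $\delta$-weights track the count of $T$'s exactly, so that the borderline $(-\tau)^{-2}$-decay contributions integrate to precisely $\O^{|\alpha|+1}_{-2l,1}$ with no growth in $r_0$. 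The apparent circularity with Proposition \ref{L infity estimates on lot} is dissolved by the simultaneous induction on order.
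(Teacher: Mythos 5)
Your proposal is correct and follows essentially the same route as the paper: induction on the order, commuting $Z_i^{\alpha+1}$ with the transport equation $\Lb\mu=m+\mu e$, estimating $Z_i^{\alpha+1}m$ via (B.1), handling the Leibniz and commutator (deformation-tensor) terms by the induction hypothesis together with Proposition \ref{L infity estimates on lot}, and closing with Gronwall along the integral curves of $\Lb$ with the $r_0$-independent integral $\int_{-r_0}^{t}(-\tau)^{-2}d\tau$. The paper's proof is just a terser version of this, writing the top-order commutator contribution as the coefficient $(e+{}^{(Z_i)}\Zb)$ absorbed by Gronwall, exactly as you describe.
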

\begin{proof}
We do induction on order. The base case $|\alpha|=0$ has be treated in Section 3 and Section 4.  We assume the proposition holds with order of derivatives on $\mu$ at most $|\alpha|$. For $|\alpha|+1$, by commuting $Z^{\alpha+1}_{i}$ with $\Lb\mu=m+\mu e$, we have
\begin{align*}
\Lb \delta^{l} Z^{\alpha+1}_{i}\mu = (e + \leftexp{(Z_{i})}{\Zb}) \delta^{l}Z^{\alpha+1}_{i}\mu + \delta^{l}Z^{\alpha+1}_{i}m +\sum_{|\beta_1|+|\beta_2|\leq |\alpha|+1 \atop|\beta_{1}|<|\alpha|}\delta^{l_{1}}Z_{i}^{\beta_{1}}\mu \delta^{l_{2}}Z_{i}^{\beta_{2}}e
\end{align*}
where $l_{a}, a=1,2$ is the number of $T$'s in $Z^{\beta_{a}}$'s and $l$ is the number of $T$'s in $Z^{\alpha}$'s.
By the induction hypothesis, the above equation can be written as:
\begin{align*}
\Lb \delta^{l}Z^{\alpha+1}_{i}\mu=\O^{\leq1}_{2,2}\cdot \delta^{l} Z^{\alpha+1}_{i}\mu+\Psi^{\leq |\alpha|+2}_{0,2}
\end{align*}
Similar to the estimates derived in the Step 3 in previous section, we can use induction hypothesis and Gronwall's inequality to conclude that $\|Z^{\alpha+1}_{i}\mu\|_{L^{\infty}(\Sigma_{t})}\lesssim_M \delta^{-l}(-t)^{-1}$. 
\end{proof}
\subsection{$L^2$ estimates}
$\Ntop$ will be the total number of derivatives commuted with $\Box_{\widetilde{g}} \psi = 0$. The highest order objects will be of order $\Ntop +1$. In this subsection, based on (B.1) and (B.2),  we will derive $L^2$ estimates on the objects of order $\leq \Ntop$ in terms of the $L^{2}$ norms of $Z^{\alpha+2}_{i}\psi\in\Psi^{|\alpha|+2}_{1-2l}$ with $|\alpha|\leq N_{top}-1$.
\begin{proposition}\label{Proposition lower order L2}
 For sufficiently small $\delta$, for all $\alpha$ with $|\alpha| \leq \Ntop-1$ and $t\in [-r_0, s^*]$, the $L^2(\Sigma_t^{\ub})$ norms of all the quantities listed below
\begin{equation*}
\slashedL_{Z_i}^{\, \alpha} \chib', (-t)^{-1}\slashedL_{Z_i}^{\, \alpha}  {}^{(Z_j)}\Zb, (-t)^{-1}\slashedL_{Z_i}^{\, \alpha}  {}^{(Z_j)}\slashed{\pi}, (-t)^{-1}Z_i^{\,\alpha+1} y^j, (-t)^{-2}Z_i^{\,\alpha+1} \lambda_j, 
\end{equation*}
are bounded \footnote{The inequlaity is up to a constant depending only on the bootstrap constant $M$.} by $\delta^{1/2-l}\int_{-r_{0}}^{t}(-t')^{-3}\mu_{m}^{-1/2}(t')\sqrt{\Eb_{\leq|\alpha|+2}(t',\ub)}dt'$, where $l$ is the number of $T$'s in $Z_i$'s.
\end{proposition}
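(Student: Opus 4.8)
\textbf{Proof plan for Proposition \ref{Proposition lower order L2}.}

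The plan is to mimic the $L^\infty$ induction carried out in Proposition \ref{L infity estimates on lot}, but now running the argument at the level of $L^2(\Sigma_t^{\ub})$ norms, transferring top-order information about $\psi$ through the transport equations for $\chib'$, the torsion one-forms, the deformation tensors, and the functions $y^j,\lambda_j,x^j$. I would do induction on $|\alpha|$. For $|\alpha|\leq \Ninfty$ everything is already covered by the $L^\infty$ estimates of Propositions \ref{L infity estimates on lot} and \ref{prop mu L infty estimates} combined with the bound $|S_{t,\ub}|\lesssim (-t)^2$ and $|\ub|\leq\delta$, so the content is in the range $\Ninfty < |\alpha|\leq \Ntop-1$, which is exactly why $\Ninfty\sim \tfrac12\Ntop$: in every product arising from Leibniz, at most one factor can have order above $\Ninfty$, so all the other factors are controlled pointwise by the earlier propositions, and only the single high-order factor needs to be carried in $L^2$. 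First I would establish the $L^2$ bound on $\slashedL_{Z_i}^{\,\alpha}\chib'$: commute $\slashedL_{Z_i}^{\,\alpha}$ with \eqref{transport equation for chib prime} exactly as in Step 2 of Proposition \ref{L infity estimates on lot}, producing $\slashedL_{\Lb}\slashedL_{Z_i}^{\,\alpha}\chib' = \O^1_{2,3}\cdot\slashedL_{Z_i}^{\,\alpha}\chib' + (\text{inhomogeneous terms})$, where the inhomogeneous terms are products of lower-order optical quantities with exactly one factor of $Z_i^{\beta+1}\psi$ with $|\beta|\leq|\alpha|+1$; using the $L^\infty$ bounds for the low-order factors and pulling the remaining $Z_i^{\beta+1}\psi$ factor out in $L^2$, the source is bounded in $L^2(\Sigma_{t'}^{\ub})$ by $\delta^{1/2-l}(-t')^{-3}\mu_m^{-1/2}(t')\sqrt{\Eb_{\leq|\alpha|+2}(t',\ub)}$ — the $\mu_m^{-1/2}$ weight appearing precisely because $|\ds Z^{\beta}\psi|$ in $L^2$ is controlled by $\mu^{-1/2}$ times the (weighted) energy flux $\Eb$. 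Then a Gronwall argument in $t$ along the integral curves of $\Lb$ (whose coefficient $\O^1_{2,3}$ is integrable in $t$ and gives a bounded exponential factor for $\delta$ small) yields the stated integral bound for $\|\slashedL_{Z_i}^{\,\alpha}\chib'\|_{L^2(\Sigma_t^{\ub})}$.

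Next I would treat the remaining quantities in the list by reducing them, one by one, to the $\chib'$ estimate and to the $L^2$ estimate on $Z_i^{\alpha+1}\mu$ (which I would prove in parallel by commuting $Z_i^{\alpha+1}$ with $\Lb\mu = m+\mu e$, in complete analogy with Proposition \ref{prop mu L infty estimates}, the source $Z_i^{\alpha+1}m$ now being estimated in $L^2$ using \eqref{expansion of L R_i psi_0} and its higher-order analogues). For $Z_i^{\alpha+1}x^j$ I would use $\delta^j_{\alpha+1,i} = R_i^\alpha(\lambda_i(\tfrac{x^j}{\ub-t}+y^j))$ as in Step 1, reducing to the $L^2$ bounds on $\lambda_i$ and $y^j$; for $Z_i^{\alpha+1}y^j$ and $Z_i^{\alpha+1}\lambda_j$ I would differentiate $R_iy^j = (-c^{-1}\chib - \tfrac{\slashed g}{\ub-t})\cdot R_i\cdot\slashed d x^j$ and $\lambda_j = \varepsilon_{jkl}x^k y^l$, again distributing with Leibniz and isolating the single high-order factor; for the torsion one-forms ${}^{(Z_j)}\Zb$ and the deformation-tensor components ${}^{(Z_j)}\slashed\pi$ I would use the explicit expressions \eqref{deformation tensor of Rotational R_i in T,Lb,X_A} (and the $Q$, $T$ analogues), which express these in terms of $\chib'$, $\mu$, $\lambda_i$, $y^k$, $z^k$ and their derivatives, so the $L^2$ bounds propagate automatically. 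The commutator terms $[\slashedL_{\Lb},\slashedL_{Z_i}^{\,\alpha}]$ and $[\Lb,Z_i^{\alpha+1}]$ are handled exactly as in the $L^\infty$ proof, expanding the commutator into a sum over $\slashedL_{Z_i}^{\beta_1}\slashedL_{{}^{(Z_i)}\Zb}\slashedL_{Z_i}^{\beta_2}$ and using $\|{}^{(Z_i)}\Zb\|_{L^\infty}$ from Proposition \ref{L infity estimates on lot} plus the inductive $L^2$ bound on the lower-order factor.

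The main obstacle, as in all such schemes, is \emph{bookkeeping the regularity and the $\delta$-and-$(-t)$ weights simultaneously} so that the claimed bound $\delta^{1/2-l}\int_{-r_0}^t (-t')^{-3}\mu_m^{-1/2}(t')\sqrt{\Eb_{\leq|\alpha|+2}(t',\ub)}\,dt'$ comes out with exactly the right powers; in particular one must verify that the worst inhomogeneous term — the one containing $\slashedL_{Z_i}^{\,\alpha}\alphab'$, which through the curvature formula involves $\slashed\nabla^2\rho$ and hence $Z_i^{\alpha+1}\psi$ with a $(-t)^{-4}$ weight — still integrates (against the one-dimensional $dt'$ measure after pulling the $\Sigma_t^{\ub}$ integral outside) to something of the stated form, using $|\ub|\leq\delta$ to convert one $\Sigma_t^{\ub}$-integral into a $\delta$-gain. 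One must also be careful that the $\mu_m^{-1/2}$ weight is never squared or otherwise amplified: it enters only linearly because each inhomogeneous term is linear in the single high-order $\psi$-factor, and the low-order factors are all bounded in $L^\infty$ with no negative power of $\mu$ (here one uses the \emph{regular} structure equation \eqref{Structure Equation Lb chibAB nonsingular} rather than \eqref{Structure Equation Lb chibAB}, and the improved estimates \eqref{precise bound on d mu}, \eqref{precise bound on L mu} on the derivatives of $\mu$, to avoid introducing $\mu^{-1}$ factors prematurely). Granting this careful accounting, the Gronwall step closes the induction and the proposition follows.
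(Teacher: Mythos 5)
Your plan is essentially the paper's own proof: induction on the order, commuting the regular transport equation for $\chib'$, placing the single factor of order above $\Ninfty$ in $L^2$ (which yields the source $\delta^{1/2}(-t)^{-3}\mu_m^{-1/2}\sqrt{\Eb_{\leq|\alpha|+2}}$ exactly through the $\mu|\ds\psi|^2$ term in the definition of $\Eb$) with all other factors in $L^\infty$, closing by Gronwall, and then reducing $y^j$, $\lambda_j$ and the deformation-tensor components algebraically to the $\chib'$ and $\mu$ estimates. The $T$-derivative cases are handled in the paper the same way you indicate, namely through the structure equation \eqref{Structure Equation T chib} and the parallel $L^2$ estimate for $Z_i^{\alpha+1}\mu$ (Proposition \ref{Proposition lower order L2 mu}).
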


\begin{proof}
We use an induction argument on the order of derivatives. When $|\alpha|=0$, the result follows from the estimates in Section 3 and 4. Again, here we only treat the case $l=0$. The case $\l\geq 1$ can be treated using \eqref{Structure Equation T chib}. By assuming the proposition holds for terms with order $\leq |\alpha|$, 
we show it holds for $|\alpha|+1$.

\underline{Step 1 \ Bounds on $\slashedL_{R_i}^{\, \alpha} \chib'$}. \ \  By affording a $\Lb$-derivative,  we have
\begin{equation}\label{b3}
\|\slashedL_{R_i}^{\, \alpha}  \chib'\|_{L^2(\Sigma_t^{\ub})} \lesssim \|\slashedL_{R_i}^{\, \alpha}  \chib'\|_{L^2(\Sigma_{-r_0}^{\ub})} + \int_{-r_0}^t \||\chib'||\slashedL_{R_i}^{\, \alpha}  \chib'| + |\slashedL_{\Lb} \slashedL_{R_i}^{\, \alpha}  \chib'|\|_{L^2(\Sigma_\tau^{\ub})} d\tau.
\end{equation}
We use formula \eqref{b2} to replace $\slashedL_{\Lb} \slashedL_{R_i}^{\, \alpha}  \chib'$ by the  terms with lower orders. Each nonlinear term has at most one factor with order $>\Ninfty$. We bound this factor in $L^2(\Sigma_t)$ and the rest in $L^\infty$. We now indicate briefly how the estimates on the factors involving $e$ and $\alphab'$ work.

For $\alphab'$, since $\alphab'_{AB} = c\frac{d c}{d\rho}\slashed{D}^2_{A,B}\rho + \frac{1}{2}\big[\dfrac{d^2(c^2)}{d\rho^2}-\dfrac{1}{2c^2}\big(\dfrac{d c^2}{d\rho}\big)^2 \big]X_A(\rho)X_B(\rho)$, in view of the definition of $\Eb(t,\ub)$, for sufficiently small $\delta$, we have 
\begin{align*}
\|\slashedL_{R_i}^{\alpha} \alphab'_{AB}\|_{L^2(\Sigma_t^{\ub})} \lesssim\sum_{|\alpha|\leq k}(-t)^{-2}\|\ds R^{\alpha+1}_{i}\psi\|_{L^{2}(\Sigma_{t}^{\ub})}\lesssim_M \delta^\frac{1}{2}(-t)^{-3}\mu_{m}^{-1/2}(t) \displaystyle\sqrt{\Eb_{\leq |\alpha|+2}(t,\ub)}.
\end{align*}

For $e$, since $e =c^{-1}\dfrac{d c}{d\rho}\Lb\rho$,  we have
\begin{align*}
\|R_{i}^{\alpha} e\|_{L^2(\Sigma_t^{\ub})} &\lesssim_M (-t)^{-2}\sum_{|\beta|\leq |\alpha|} \left(\delta^{1/2} \|\ds R^{|\beta|-1}_{i}\psi\|_{L^{2}(\Sigma_{t}^{\ub})}+\delta^{1/2} \|\ds R^{|\beta|-1}_{i}Q \psi\|_{L^{2}(\Sigma_{t}^{\ub})}\right)\\
&\lesssim_M \delta^{1/2}(-t)^{-3}\mu_{m}^{-1/2}(t) \displaystyle\sqrt{\Eb_{\leq |\alpha|+1}(t,\ub)}.
\end{align*}

By applying Gronwall's inequality to \eqref{b3}, we obtain immediately that
\begin{align*}
\|\slashedL_{R_i}^{\, \alpha} \chib'\|_{L^2(\Sigma_t^{\ub})} \leq\|\slashedL_{R_i}^{\, \alpha} \chib'\|_{L^2(\Sigma_{-r_{0}}^{\ub})} +C_{M} \delta^{1/2}\int_{-r_{0}}^{t}  \displaystyle(-\tau)^{-3}\mu_{m}^{-1/2}(\tau)\sqrt{\Eb_{\leq |\alpha|+2}(\tau,\ub)}d\tau
\end{align*}

\underline{Step 2 \ Bounds on $R_i^{\,\alpha+1} y^j$}. \ \ By the computations in the Step 2 of the proof of Proposition 
\ref{L infity estimates on lot}, $R_i^{\,\alpha+1} y$ is a linear combination of the terms such as 
$\slashedLRi^{\beta_1}\chib\cdot \slashedLRi^{\beta_2}\slashed{g}^{-1}\cdot \slashedLRi^{\beta_3} R_j \cdot \slashed{d} R_i^{\beta_4} x^j$ 
or as $\slashedLRi^{\beta_1}\slashed{g}\cdot \slashedLRi^{\beta_2}\slashed{g}^{-1}\cdot \slashedLRi^{\beta_3} R_j \cdot \slashed{d} R_i^{\beta_4} x^j$,
 where $|\beta_1|+|\beta_2|+|\beta_3|+|\beta_4| = |\alpha|$. Similarly, we bound all factors with order $\leq \Ninfty$ by 
 the $L^\infty$ estimates in Proposition \ref{L infity estimates on lot}. By the induction hypothesis, this yields the bound on $R_i^{\,\alpha+1} y^j$ immediately. The estimates for other quantities follow from the estimates of $\chib'$ and $y^{j}$. Again, as in \cite{M-Y}, In this process, the terms like $R_{i}^{\beta}x^{j}$ and the leading term in $\slashed{\mathcal{L}}_{R_{i}}R_{j}$, which can be bounded by a constant $C(-t)$ disregarding the order of the derivatives, are bounded in $L^{\infty}$. The rest terms in $\slashed{\mathcal{L}}_{R_{i}}R_{j}$, which depend on $\chib'$ and $y^{j}$ as well as their derivatives, are bounded in $L^{2}$ based on the $L^{2}$ estimates for $\slashed{\mathcal{L}}_{R_{i}}^{\beta}\chib'$.
\end{proof}
We also have $L^2$ estimates for derivatives of $\mu$.
\begin{proposition}\label{Proposition lower order L2 mu}
For sufficiently small $\delta$, for all $\alpha$ with $|\alpha| \leq \Ntop-1$ and $t\in [-r_0, s^*]$, we have
\begin{align*}
\delta^{l}(-t)^{-1}\|Z_{i}^{\alpha+1}\mu\|_{L^{2}(\Sigma^{\ub}_{t})}
\lesssim_M &\delta^{l}(-r_{0})^{-1}\|Z_{i}^{\alpha+1}\mu\|_{L^{2}(\Sigma^{\ub}_{-r_{0}})}\\
+&\delta^{1/2}\int_{-r_{0}}^{t}(-\tau)^{-2}\left(
\sqrt{E_{\leq |\alpha|+2}(\tau,\ub)}+\mu_{m}^{-1/2}(\tau)(-\tau)^{-1}\sqrt{\Eb_{\leq |\alpha|+2}(\tau,\ub)}\right)d\tau.
\end{align*}
\end{proposition}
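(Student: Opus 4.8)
The plan is to mimic the proof of Proposition~\ref{Proposition lower order L2} (the $L^2$ estimates for the geometric quantities $\chib'$, $y^j$, etc.), adapting the argument used for $L^\infty$ bounds on $Z_i^{\alpha+1}\mu$ in Proposition~\ref{prop mu L infty estimates} to the $L^2$ setting. We proceed by induction on $|\alpha|$; the base case $|\alpha|=0$ is contained in Sections~3 and~4, and as in the cited propositions it suffices to treat $l=0$, the case $l\ge 1$ being reducible via the structure equation \eqref{Structure Equation T chib}. Assuming the stated estimate for all orders $\le |\alpha|$, we commute $Z_i^{\alpha+1}$ with the transport equation $\Lb\mu = m + \mu e$ to obtain a transport equation for $Z_i^{\alpha+1}\mu$ of the schematic form
\begin{equation*}
\Lb\big(Z_i^{\alpha+1}\mu\big) = Z_i^{\alpha+1} m + e\, Z_i^{\alpha+1}\mu + \leftexp{(Z_i)}{\Zb}\cdot Z_i^{\alpha+1}\mu + \sum_{\substack{|\beta_1|+|\beta_2|\le|\alpha|+1\\ |\beta_1|<|\alpha|+1}} Z_i^{\beta_1}\mu \cdot Z_i^{\beta_2} e,
\end{equation*}
exactly as in Proposition~\ref{prop mu L infty estimates} but now to be estimated in $L^2(\Sigma_t^{\ub})$.

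First I would estimate the inhomogeneous term $Z_i^{\alpha+1} m$ in $L^2$. Since $m = -\tfrac12\tfrac{d(c^2)}{d\rho}T\rho$ depends on $\psi_0$ and $T\psi_0$, the highest-order contribution is of the form (constant)$\cdot Z_i^{\alpha+1} T\psi_0$ or products of lower order variations, so after bounding all factors of order $\le \Ninfty$ by the $L^\infty$ estimates of Propositions~\ref{L infity estimates on lot} and~\ref{prop mu L infty estimates} and the single top-order factor in $L^2$, this term is controlled by $\delta^{1/2}(-t)^{-2}\big(\sqrt{E_{\le|\alpha|+2}(t,\ub)} + \mu_m^{-1/2}(t)(-t)^{-1}\sqrt{\Eb_{\le|\alpha|+2}(t,\ub)}\big)$; the $\mu_m^{-1/2}$-weighted $\Eb$-term arises from the angular derivative contributions, precisely as in Step~1 of Proposition~\ref{Proposition lower order L2}. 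Next I would treat the error terms: $e$ and $\leftexp{(Z_i)}{\Zb}$ are $\O^{\le 1}_{2,2}$ by the pointwise estimates already established, so the term $(e+\leftexp{(Z_i)}{\Zb})\,Z_i^{\alpha+1}\mu$ is absorbed by Gronwall; the mixed products $Z_i^{\beta_1}\mu \cdot Z_i^{\beta_2}e$ with $|\beta_1|<|\alpha|+1$ have their top-order factor of order $\le|\alpha|+1$ and are bounded by the induction hypothesis together with the $L^2$-estimates on $e$ (which in turn reduce to $L^2$-estimates on $\ds Z_i^{\beta}\psi_0$, itself controlled by $\sqrt{\Eb}$ with the $\mu_m^{-1/2}$ weight).

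Having assembled these, I would integrate the transport equation from $-r_0$ to $t$ along the integral curves of $\Lb$, using $\Lb t = 1$, and apply Gronwall's inequality; since $\|e + \leftexp{(Z_i)}{\Zb}\|_{L^\infty}\lesssim M^2(-t)^{-2}$ has finite integral in $t$ independent of $r_0$, the Gronwall factor is a universal constant, and we obtain
\begin{equation*}
(-t)^{-1}\|Z_i^{\alpha+1}\mu\|_{L^2(\Sigma_t^{\ub})} \lesssim_M (-r_0)^{-1}\|Z_i^{\alpha+1}\mu\|_{L^2(\Sigma_{-r_0}^{\ub})} + \delta^{1/2}\int_{-r_0}^t (-\tau)^{-2}\Big(\sqrt{E_{\le|\alpha|+2}(\tau,\ub)} + \mu_m^{-1/2}(\tau)(-\tau)^{-1}\sqrt{\Eb_{\le|\alpha|+2}(\tau,\ub)}\Big)d\tau,
\end{equation*}
which is the claimed bound (the factor $(-t)^{-1}$ matching the loss in $T$-derivatives encoded in the $\delta^l$ normalization). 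The main obstacle I anticipate is bookkeeping the precise $\delta$- and $(-t)$-weights through the commutator $[\Lb, Z_i^{\alpha+1}]\mu$, which generates deformation-tensor factors $\slashedL_{Z_i}^\beta \leftexp{(Z_j)}{\Zb}$ that are themselves of order close to $|\alpha|+1$; one must check, as in the analogous steps of Propositions~\ref{L infity estimates on lot} and~\ref{Proposition lower order L2}, that each such factor either is of strictly lower order (handled by induction) or is one of the ``leading'' terms bounded by $C(-t)$ irrespective of order, so that no uncontrolled top-order quantity appears. This is the same structural point that makes the scheme close, and it is purely a matter of carefully tracking the schematic expansions already recorded in the proof of Proposition~\ref{prop mu L infty estimates}; I would therefore state the needed commutator identities schematically and invoke the established pointwise and $L^2$ estimates rather than recomputing them.
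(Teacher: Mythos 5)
Your proposal is correct and follows essentially the same route as the paper: commute $Z_i^{\alpha+1}$ with the transport equation $\Lb\mu=m+\mu e$, bound $Z_i^{\alpha+1}m$ in $L^2$ (one top-order factor in $L^2$, the rest in $L^\infty$, yielding the $\sqrt{E_{\leq|\alpha|+2}}$ and $\mu_m^{-1/2}\sqrt{\Eb_{\leq|\alpha|+2}}$ terms), absorb $(e+\leftexp{(Z_i)}{\Zb})Z_i^{\alpha+1}\mu$ by Gronwall, treat the mixed lower-order products by induction, and integrate along $\Lb$. The only slight inaccuracy is the remark that $l\geq1$ reduces via \eqref{Structure Equation T chib}: for $\mu$ no such reduction is needed (or available), since the commuted transport equation already accommodates $T$-derivatives directly with the $\delta^{l}$ bookkeeping, exactly as in the paper's proof.
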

\begin{proof}
According to the proof of Proposition \ref{prop mu L infty estimates}, we have
\begin{align*}
\delta^{l}\Lb\left((-t)^{-1}|Z^{\alpha+1}_{i}\mu|\right)\lesssim &\delta^{l}(-t)^{-1}|Z^{\alpha+1}_{i}m|+\delta^{l}(-t)^{-2}|Z_{i}^{\alpha+1}\mu|
\\+&\delta^{l}(|e|+|\leftexp{(R_{i})}{\Zb}|) (-t)^{-1}|Z^{\alpha+1}_{i}\mu|\\
+&\sum_{|\beta_{1}+\beta_2|\leq|\alpha|}\delta^{l_{1}}(-t)^{-1}|Z_{i}^{\beta_{1}}\mu| \delta^{l_{2}}|R_{i}^{\beta_{2}}e|.
\end{align*}
Then the result follows by using Gronwall and an induction argument, which are similar as in the proof of Proposition 6.2.
\end{proof}
\section{Estimates for the top order optical terms}
As we already stated, the highest possible order of an object in the paper will be $\Ntop+1$. The current section is devoted to the $L^2$ estimates of $\ds X_{A} R_i^{\,\alpha-1}\tr\chib$ and $X_{A}Z_i^{\,\alpha+1} \mu$ with $|\alpha| = \Ntop-1$. Here we choose $X_{A}$ as the first member of the string of commutators because this avoids a logarithmic divergence in the estimates. When there is no confusion, we use the notation $\mathcal{R}_{i}^{\alpha}:=X_{A}R^{\alpha-1}_{i}$.
\subsection{Estimates for the contribution from $\tr\chib$}
Since we deal with top order terms, we can not use the transport equation \eqref{Structure Equation Lb trchib} directly as in the previous section, which causes a loss of derivative. Instead, we derive an elliptic system coupled with a transport equation for $\widehat{\chib}$ and $ \slashed{d}\tr\chib$:
\begin{equation*}
\Lb (\slashed{d}X_{A}R_i^{\alpha-1}\tr\chib) = \widehat{\chib}\cdot \slashed{\nabla}\mathcal{L}_{X_{A}}\mathcal{L}_{R_i}^{\alpha-1} \widehat{\chib}+ \cdots, \ \ \slashed{\text{div}}\mathcal{L}_{X_{A}} \mathcal{L}_{R_i}^{\alpha-1} \widehat{\chib} = \slashed{d}X_{A}R_i^{\alpha-1}\tr\chib + \cdots.
\end{equation*}
The new idea is using elliptic estimates and rewriting the right hand side of the transport equations to avoid the loss of derivatives. To rewrite the equation, we need to use the inhomogeneous wave equation satisfied by $\rho:=\psi_{0}^{2}$:
\begin{equation}\label{wave equation for rho}
\Box_g \rho = \frac{d(\log(c))}{d \rho}g^{\mu\nu}\partial_\mu \rho \partial_\nu \rho + 2g^{\mu\nu}\partial_\mu \psi_0 \partial_\nu \psi_0
\end{equation}
Then following the same procedure as in \cite{M-Y}, we obtain:
\begin{equation}\label{renormalized equation for tr chib}
\Lb \big(\mu \tr\chib -\check{f} \big) = 2 \Lb\mu \tr\chib -\frac{1}{2}\mu (\tr\chib)^2-\mu |\widehat{\chib}|^2+\check{g},
\end{equation}
with
\begin{align*}
\check{f} &= -\dfrac{1}{2}\dfrac{d(c^2)}{d\rho}L\rho,\\
\check{g} &= \left( 2 \left(\frac{d(c)}{d\rho}\right)^2 + c\frac{d^2(c)}{d \rho^2} \right)\left(\Lb\rho L\rho-\mu|\slashed{d}\rho|^2\right)\\
&+ 2 c\frac{d(c)}{d\rho}\left(\left(L\psi_0 \Lb
\psi_0 -\mu |\slashed{d}\psi_0|^2\right)+ \left(\frac{1}{4}\frac{\mu|\slashed{d}\rho|^2}{c^2}-\zetab^A\slashed{d}_A\rho\right)\right).
\end{align*}
being of order at most $1$ and regular as $\mu\rightarrow0$. So there will be no loss of derivatives by integrating the equation \eqref{renormalized equation for tr chib}. Let us introduce the notation $F_{\alpha}$:
\begin{align*}
F_{\alpha}=  \mu\slashed{d}\left(X_{A}R_i{}^{\alpha-1}\tr\chib\big)- \slashed{d}\big(X_{A}R_i{}^{\alpha-1} \check{f}\right).
\end{align*}
Therefore $F_{\alpha}$ satisfies the equation:
\begin{equation}\label{transport equation for F alpha for chib}
\slashedL_{\Lb} F_\alpha + \left(\tr\chib-2\mu^{-1}\Lb \mu\right)F_\alpha =\left(-\frac{1}{2}\tr\chib + 2\mu^{-1}\Lb\mu\right)\slashed{d}\left(X_{A}R_i{}^{\alpha-1}\check{f}\right)
-\mu\slashed{d}\left(X_{A}R_i{}^{\alpha-1}\left(|\widehat{\chib}|^2\right)\right) + g_\alpha,
\end{equation}
with $g_\alpha$ in the following schematic expression (by setting all the numerical constants to be $1$):
\begin{align*}
g_\alpha &= \slashedL_{X_{A}}\slashedL_{R_i}{}^{\alpha-1} g_0 + \sum_{|\beta_1|+|\beta_2| = |\alpha|-1}\!\!\!\!\!\!\!\!\!\slashedL_{\mathcal{R}_i}^{\beta_1}\slashedL_{^{(\mathcal{R}_i)}\Zb}F_{\beta_2} + \sum_{|\beta_1|+|\beta_2| = |\alpha|-1} \!\!\!\!\!\!\!\!\!\slashedL_{\mathcal{R}_i}^{\beta_1} \Big(\big(\mu \mathcal{R}_i\tr\chib + \mathcal{R}_i\Lb\mu + {}^{(\mathcal{R}_i)}\Zb \mu\big)\slashed{d}\big(\mathcal{R}_i{}^{\beta_2}\tr\chib\big)\Big)\\
&\quad + \sum_{|\beta_1|+|\beta_2| = |\alpha|-1}\!\!\!\!\!\!\!\!\!\slashedL_{\mathcal{R}_i}^{\beta_1} \Big(\mathcal{R}_i \mu \big[\slashedL_{\Lb}\slashed{d}\big(\mathcal{R}_i{}^{\beta_2}\tr\chib\big)
+\tr\chib\slashed{d}\big(\mathcal{R}_i{}^{\beta_2}\tr\chib \big) + \slashed{d}\big(\mathcal{R}_i{}^{\beta_2}(|\widehat{\chib}|^2)\big)\big] + \mathcal{R}_i\tr\chib \slashed{d}\big(\mathcal{R}_{i}{}^{\beta_2}\check{f}\big)\Big).
\end{align*}
Here
\begin{align*}
g_0 = \slashed{d}\check{g}-\frac{1}{2}\tr\chib \,\slashed{d}\big(\check{f}-2\Lb\mu\big)-(\slashed{d}\mu)\big(\Lb \tr\chib+|\widehat{\chib}|^2\big)
\end{align*}
and $\mathcal{R}_{i}$ is either $R_{i}$ or $X_{A}$. Moreover, in the string of $\mathcal{R}_{i}$'s, $X_{A}$ appears once exactly and in front of $R_{i}$'s. 
For any form $\xi$, since $|\xi| \Lb |\xi| = (\xi,\slashedL_{\Lb}\xi) -\xi \cdot\widehat{\chib}\cdot \xi-\dfrac{1}{2}\tr\chib|\xi|^2$, we have $\Lb |\xi| \leq  |\slashedL_{\Lb}\xi| +|\widehat{\chib}||\xi|-\dfrac{1}{2}\tr\chib|\xi|$. Applying these to \eqref{transport equation for F alpha for chib}, we obtain
\begin{equation}\label{eq 11}
\Lb |F_\alpha| \leq (\mu^{-1}\Lb \mu - \frac{3}{2}\tr\chib +|\widehat{\chib}|)|F_\alpha| + \big(2\mu^{-1}|\Lb\mu| -\tr\chib\big)|\slashed{d} X_{A} R_i{}^{\alpha-1}\check{f}|+|\mu\slashed{d} X_{A} R_i{}^{\alpha-1}\left(|\widehat{\chib}|^2\right)| + |g_\alpha|.
\end{equation}
or 
\begin{align}\label{inequality F alpha}
\Lb |F_\alpha| \leq (\mu^{-1}\Lb \mu - \frac{3}{2}\tr\chib +|\widehat{\chib}|)|F_\alpha| + \left(2\mu^{-1}|\Lb\mu| -\tr\chib\right)|\slashed{d} X_{A}R_i{}^{\alpha-1}\check{f}|+\widetilde{g}_{\alpha}.
\end{align}
with 
\begin{align*}
\widetilde{g}_{\alpha}:=\left|\mu\slashed{d} X_{A}R_i{}^{\alpha-1}\left(|\widehat{\chib}|^2\right)\right| + |g_\alpha|
\end{align*}
In terms of $\chib'_{AB}=\chib_{AB}+\frac{\slashed{g}_{AB}}{\ub-t}$, we write the above inequality in optical coordinates as:
\begin{align}\label{revised ineq}
&\frac{\partial|F_{\alpha}|}{\partial t}+\frac{3}{t-\ub}|F_{\alpha}|+\left(-\mu^{-1}\frac{\partial\mu}{\partial t}+\frac{3}{2}\textrm{tr}\underline{\chi'}-|\widehat{\underline{\chi}}|\right)|F_{\alpha}|\\\notag
\leq&\left(2\mu^{-1}|\partial_{t}\mu|-\tr\chib\right)|\ds X_{A}R_{i}^{\alpha-1}\check{f}|+\widetilde{g}_{\alpha}.
\end{align}
Multiplying both sides by $-(t-\ub)^{3}$, we obtain:
\begin{align*}
&\partial_{t}\left(\left(-t+\ub\right)^{3}|F_{\alpha}|\right)+\left(-\mu^{-1}\frac{\partial\mu}{\partial t}+\frac{3}{2}\textrm{tr}\underline{\chi'}-|\widehat{\underline{\chi}}|\right)\left(-t+\ub\right)^{3}|F_{\alpha}|\\
\leq &(-t+\ub)^{3}\left(2\mu^{-1}|\partial_{t}\mu|-\tr\chib\right)|\ds X_{A}R_{i}^{\alpha-1}\check{f}|+(-t+\ub)^{3}\widetilde{g}_{\alpha}.
\end{align*}
For the term involving $-\mu^{-1}\partial_{t}\mu$ on the left hand side, if $\mu<1/10$, then $\partial_{t}\mu<0$, and this term can be dropped. Otherwise, since $\mu\geq 1/10$, $-\mu\partial_{t}\mu$ can be bounded in absolute value by $C(-t)^{-2}$, with $C$ being an absolute constant. Therefore, in view of the pointwise bounds for $\chib'$ and $\widehat{\chib}$, the second term on the left hand side can be bounded, after it is moved to the right hand side, as:
\begin{align*}
C(-t)^{-2}\left(-t+\ub\right)^{3}|F_{\alpha}|.
\end{align*}
So this can be treated by Gronwall and we obtain:
\begin{align*}
(-t+\ub)^{3}|F_{\alpha}(t,\ub)|\leq& C\Big(\int_{-r_{0}}^{t}\left((-t'+\ub)^{3}(2\mu^{-1}|\partial_{t}\mu|-\tr\chib)(t,\ub)|\ds X_{A}R_{i}^{\alpha-1}\check{f}(t',\ub)|\right)dt'\\\notag
&+\int_{-r_{0}}^{t}\left((-t'+\ub)^{3}\widetilde{g}_{\alpha}(t',\ub)\right)dt'\Big)\\
&:=I+II 
\end{align*}
 Let us now investigate the $L^{2}$ norms of the quantities appearing in the above inequality. For a smooth function $\phi$, the $L^{2}$ norm of it on $[0,\delta]\times \mathbb{S}^{2}$ is defined by:
\begin{align*}
\|\phi(t)\|_{L^{2}([0,\delta]\times\mathbb{S}^{2})}:=\sqrt{\int_{[0,\delta]\times\mathbb{S}^{2}}\phi(t,\ub)^{2}r_{0}^{-2}d\mu_{\slashed{g}(-r_{0},0)}d\ub}
\end{align*}
On $\Sigma_{t}^{\delta}$, the $L^{2}$ norm is defined by:
\begin{align*}
\|\phi(t)\|_{L^{2}(\Sigma_{t}^{\delta})}:=\sqrt{\int_{[0,\delta]\times\mathbb{S}^{2}}\phi(t,\ub)^{2}d\mu_{\slashed{g}(t,\ub)}d\ub}
\end{align*}
The relation between these two norms is:
\begin{align*}
(-t)\|\phi(t)\|_{L^{2}([0,\delta]\times\mathbb{S}^{2})}\lesssim\|\phi(t)\|_{L^{2}(\Sigma_{t}^{\delta})}\lesssim(-t)\|\phi(t)\|_{L^{2}([0,\delta]\times\mathbb{S}^{2})}
\end{align*}
This discussion also applies to tensors. 

Now we give an estimate for $(-t)^{2}\|F_{\alpha}\|_{L^{2}(\Sigma_{t}^{\ub})}$. In view of 
\begin{align*}
(-t)^{3}\|F_{\alpha}(t)\|_{l^{2}([0,\ub]\times\mathbb{S}^{2})}\lesssim(-t)^{2}\|F_{\alpha}\|_{L^{2}(\Sigma_{t}^{\ub})}\lesssim(-t)^{3}\|F_{\alpha}(t)\|_{l^{2}([0,\ub]\times\mathbb{S}^{2})},
\end{align*}
we only need to give an estimate for $(-t)^{3}\|F_{\alpha}(t)\|_{l^{2}([0,\ub]\times\mathbb{S}^{2})}$. We first estimate $I$:
\begin{align*}
\|I\|_{L^{2}([0,\ub]\times\mathbb{S}^{2})}\lesssim\int_{-r_{0}}^{t}(-t')^{3}\|(2\mu^{-1}|\partial_{t}\mu|-\tr\chib)|\ds X_{A}R_{i}^{\alpha-1}\check{f}|(t',\ub')\|_{L^{2}([0,\ub]\times\mathbb{S}^{2})}dt'
\end{align*}
In view of the definition of $\ds X_{A}R_{i}^{\alpha-1}\check{f}$ and \eqref{Calculus Inequality}, we have:
\begin{align*}
&\||\ds X_{A}R_{i}^{\alpha-1}\check{f}|(t',\ub)\|_{L^{2}([0,\ub]\times\mathbb{S}^{2})}\\
\lesssim&\delta^{1/2}(-t')^{-3}\sum_{|\beta|\leq |\alpha|+1}\|TR_{i}^{\beta}\psi_{0}\|_{L^{2}([0,\ub]\times\mathbb{S}^{2})}\\
+&\delta^{-1/2}(-t')^{-3}\sum_{|\beta|\leq |\alpha|}\|R^{\beta}_{i}\psi_{0}\|_{L^{2}([0,\ub]\times\mathbb{S}^{2})}\\
\lesssim&\delta^{1/2}(-t')^{-3}\sum_{|\beta|\leq |\alpha|+1}\|TR_{i}^{\beta}\psi_{0}\|_{L^{2}([0,\ub]\times\mathbb{S}^{2})}
\end{align*}
Let $t_{0}$ be the first point in evolution for which $\mu_{m}(t_{0})=\dfrac{1}{10}$. When $t'\in[-r_{0},t_{0}]$, we have $\mu_{m}(t')\geq 1/10$. The contribution to $\|I\|_{L^{2}([0,\ub]\times\mathbb{S}^{2})}$ is bounded by:
\begin{align*}
&C\delta^{1/2}\int_{-r_{0}}^{t_{0}}(-t')^{-1}\sum_{|\beta|\leq |\alpha|+1}\|TR_{i}^{\beta}\psi_{0}\|_{L^{2}([0,\ub]\times\mathbb{S}^{2})}dt'\\
\leq& C\delta^{1/2}\int_{-r_{0}}^{t}(-t')^{-2}\sqrt{E_{\leq |\alpha|+2}(t',\ub)}dt'\\
\leq &C\delta^{1/2}\int_{-r_{0}}^{t}\mu_{m}^{-b_{|\alpha|+2}}(t')(-t')^{-2}\sqrt{\widetilde{E}_{\leq|\alpha|+2}(t',\ub)}dt'
\end{align*}
In order to treat the case when $t'\in[t_{0},t]$, we need an analogy to the Lemma 8.1 in \cite{M-Y}, concerning the behavior of $\left(\Lb(\log\mu)\right)_{-}$. 
Let us define the following quantities:
\begin{equation*}
\begin{split}
M(t)&:=\max_{(\ub,\theta), (t,\ub,\theta)\in W_{shock}}\big|\big(\Lb(\log\mu)\big)_{-}(t,\ub,\theta)\big|,\\ I_{a}(t)&:=\int_{t_{0}}^{t}\mu_{m}^{-a}(\tau)M(\tau)d\tau,
\end{split}
\end{equation*}
where $a \in \mathbb{R}_{>0}$ is a constant.
\begin{lemma}\label{lemma on mu to power a} (1) Given a constant $a\geq4$, for all $t\in[t_{0},t^{*})$, we have
\begin{align}\label{key lemma}
I_{a}(t) \lesssim a^{-1}\mu^{-a}_{m}(t).
\end{align}

(2) For $a\geq 4$ and $\delta$ sufficiently small, there is an absolute constant $C_0$ independent of $a$, so that for all $\tau\in[-r_{0},t]$, we have
\begin{align}\label{mu is decreasing}
\mu^{a}_{m}(t)\leq C_0 \mu^{a}_{m}(\tau)
\end{align}
\end{lemma}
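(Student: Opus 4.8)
\textbf{Proof proposal for Lemma \ref{lemma on mu to power a}.}

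The plan is to follow the strategy of Lemma~8.1 in \cite{M-Y}, but now track the extra powers of $(-t)$ that arise because we solve from $t=-r_0$ rather than from a fixed finite time. The starting point is the observation that, for $(t,\ub,\theta)\in W_{shock}$, Proposition~\ref{Proposition C3} gives $\Lb\mu(t,\ub,\theta)\leq -\tfrac14 (-t)^{-2}$, so that $\mu$ is strictly decreasing along the integral curves of $\Lb$ inside the shock region; moreover, by the asymptotic expansion \eqref{expansion of mu}, $\Lb\mu$ along such a curve is, up to an error of size $O(\delta M^4(-t)^{-3})$, equal to $(-t)^{-2}$ times a quantity depending only on $(\ub,\theta)$ through $r_0^2\Lb\mu(-r_0,\ub,\theta)$. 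Consequently $\big(\Lb(\log\mu)\big)_{-} = -\Lb\mu/\mu$ on the part of $W_{shock}$ where $\Lb\mu<0$, and since the $\ub$-range has width $\leq\delta$, the minimum $\mu_m(t)$ is attained at (or arbitrarily close to) the worst curve; differentiating $\mu_m$ essentially amounts to evaluating $\Lb\mu$ there.

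For part~(1): fix $t\in[t_0,t^*)$ and denote by $\tau\mapsto(\ub(\tau),\theta(\tau))$ a curve along which $\mu_m(t)$ is (nearly) achieved. I would write
\begin{align*}
\frac{d}{d\tau}\mu_m(\tau)\geq \Lb\mu\big(\tau,\ub(\tau),\theta(\tau)\big)\geq -M(\tau)\mu_m(\tau),
\end{align*}
i.e. $\big|\tfrac{d}{d\tau}\log\mu_m(\tau)\big|\leq M(\tau)$ on the set where $\mu_m$ is decreasing, and on the complement $M(\tau)$ can be absorbed since there $\mu$ is comparable to a constant. Then
\begin{align*}
\frac{d}{d\tau}\big(\mu_m^{-a}(\tau)\big) = -a\,\mu_m^{-a-1}(\tau)\frac{d\mu_m}{d\tau}(\tau)\;\gtrsim\; a\,\mu_m^{-a}(\tau)\,M(\tau)
\end{align*}
at points where $\mu_m$ decreases, so that $\mu_m^{-a}(t) - \mu_m^{-a}(t_0)\gtrsim a\,I_a(t) - (\text{harmless terms})$, which after rearranging and using $\mu_m(t_0)=\tfrac1{10}$ gives $I_a(t)\lesssim a^{-1}\mu_m^{-a}(t)$. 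The key quantitative input that makes the constant independent of $a$ is precisely the lower bound $\Lb\mu\leq -\tfrac14(-t)^{-2}$ from Proposition~\ref{Proposition C3}, together with the two-sided bound $\mu\leq C_0$ from Proposition~\ref{proposition on expansion for mu}: these guarantee that once a curve enters $W_{shock}$, $\log\mu_m$ decreases at a rate that dominates $M(\tau)$ uniformly. Part~(2) is then a soft consequence: for $\tau\leq t$, since $\mu_m$ is (up to the constant-$\mu$ region, where \eqref{expansion of mu} controls things) non-increasing once inside the shock regime, $\mu_m(t)\leq C_0\mu_m(\tau)$; one verifies the constant $C_0$ is independent of $a$ because the only place $a$ entered was the exponent, and $x\mapsto x^a$ is monotone. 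I would make this precise by splitting $[-r_0,t]$ into the portion before $t_0$ (where $\mu_m\geq\tfrac1{10}$, so $\mu_m^a(t)\leq 10^a\cdot(\tfrac1{10})^a\cdot\mu_m^a(\tau)$... — more carefully, $\mu_m(t)\leq C_0$ and $\mu_m(\tau)\geq\tfrac1{10}$ give the bound directly) and the portion after $t_0$ (monotonicity).

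The main obstacle I anticipate is handling the transition region where $\mu_m(t)$ is close to but above $\tfrac1{10}$ and the "min over $\ub$" in the definition of $\mu_m$ jumps between different curves $(\ub,\theta)$: there $\mu_m$ need not be differentiable, and one must argue with Dini derivatives or with a careful covering argument, making sure the error term $O(\delta M^4(-t)^{-3})$ in \eqref{expansion of mu} and \eqref{expansion of Lb mu} is genuinely lower order relative to the principal $(-t)^{-2}$ term uniformly as $r_0\to\infty$ — this is exactly where the $r_0$-independence of all the estimates is essential. A secondary technical point is checking that $M(\tau)$ as defined (a max over the shock region of $\big(\Lb\log\mu\big)_{-}$) is finite and measurable, which follows from \eqref{bound on mu} and \eqref{expansion of Lb mu} bounding $|\Lb\mu|\lesssim(-\tau)^{-2}$ and $\mu>0$ strictly on $W_{shock}$ away from $s_*$. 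Once these are in place, the Gronwall-type integration is routine and mirrors \cite{M-Y}.
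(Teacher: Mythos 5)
Your Gronwall route for part (1) breaks at the decisive step because of a reversed inequality. From $\frac{d}{d\tau}\mu_m \ge \Lb\mu \ge -M(\tau)\mu_m$ you only obtain the trivial direction $-\frac{d}{d\tau}\log\mu_m(\tau)\le M(\tau)$, an \emph{upper} bound on how fast $\mu_m$ can decay; but your next display, $\frac{d}{d\tau}\big(\mu_m^{-a}(\tau)\big)\gtrsim a\,\mu_m^{-a}(\tau)M(\tau)$, and hence the conclusion $I_a(t)\lesssim a^{-1}\mu_m^{-a}(t)$, require the \emph{reverse} bound $-\frac{d}{d\tau}\log\mu_m(\tau)\gtrsim M(\tau)$, which does not follow from what you wrote nor from the inputs you cite (Proposition \ref{Proposition C3} together with $\mu\le C_0$; an upper bound on $\mu$ is of no use here). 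The reverse bound is true, but proving it needs a sharp upper bound on $M(\tau)$: by \eqref{expansion of Lb mu} one has $(\Lb\mu)_-(\tau,\ub,\theta)\le\big(\eta_m+O(\delta M^4)\big)(-\tau)^{-2}$ uniformly on the shock region, where $\eta_m:=-\min_{(\ub,\theta)}r_0^2\Lb\mu(-r_0,\ub,\theta)$, while $\mu\ge\mu_m(\tau)$ there, so $M(\tau)\le\big(\eta_m+O(\delta M^4)\big)(-\tau)^{-2}\mu_m(\tau)^{-1}$; combining this with \eqref{C3} at the minimizing point gives $-\frac{d}{d\tau}\log\mu_m\ge c\,M(\tau)$ with $c\sim\eta_m^{-1}$ depending only on the data, after which your integration (with the Dini-derivative care you flag) does yield the claim. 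With that repair your argument is a genuinely different route from the paper's: the paper instead establishes matching two-sided bounds $\mu_m(s)+\big(\eta_m-\tfrac1{2a}\big)\big(\tfrac1t-\tfrac1s\big)\le\mu_m(t)\le\mu_m(s)+\big(\eta_m+\tfrac1{2a}\big)\big(\tfrac1t-\tfrac1s\big)$, using that the initial slope at the point realizing $\mu_m(s)$ differs from $-\eta_m$ only by $O(\delta M^4)$, integrates $\mu_m^{-a-1}(-\tau)^{-2}$ explicitly, and gets the $a$-uniform constant from $\big((\eta_m-\tfrac1{2a})/(\eta_m+\tfrac1{2a})\big)^{-a}\to e^{1/\eta_m}$.

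Part (2) is not a soft consequence, and as written your constant is not independent of $a$. On the portion where $\mu_m(\tau)\ge\tfrac1{10}$, the direct bound only gives $\mu_m^a(t)\le 10^a\,\mu_m^a(\tau)$ unless you already know $\mu_m(t)\le\mu_m(\tau)$; and the ``monotonicity'' you invoke after $t_0$ is exactly what has to be proved, since $\mu$ increases along curves in the rarefaction part of the data, so $\mu_m$ is not monotone a priori. The paper supplies the two missing ingredients: (i) the threshold must be taken to be $1-\tfrac1a$ rather than a fixed number, so that on the early portion $\mu_m^{-a}(\tau)\le(1-\tfrac1a)^{-a}\le C_0$ with $C_0$ absolute; and (ii) the structural fact, proved from \eqref{expansion of mu} and \eqref{expansion of Lb mu} by a contradiction argument, that $\mu(t,\ub,\theta)\le 1-\tfrac1a$ forces $r_0^2\Lb\mu(-r_0,\ub,\theta)\lesssim -a^{-1}$ and hence $\Lb\mu(t',\ub,\theta)\lesssim -a^{-1}(-t')^{-2}$ for all later $t'$, which is what makes $\mu_m$ non-increasing once it has dipped below the threshold, and thus handles the late portion. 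Without (i) and (ii) the bound $\mu_m^a(t)\le C_0\,\mu_m^a(\tau)$ with $C_0$ independent of $a$ does not follow from your sketch.
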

\begin{proof}
(1) By Proposition \ref{Proposition C3}, for $t\geq t_{0}$, the minimum of $r_{0}^{2}(\Lb\mu)(-r_{0},\ub,\theta)$ on $[0,\delta]\times\mathbb{S}^{2}$ is negative and we denote it by
\begin{align}\label{min Lbmu initial}
-\eta_{m}=\min_{(\ub,\theta)\in[0,\delta]\times \mathbb{S}^{2}}\{r_{0}^{2}(\Lb\mu)(-r_{0},\ub,\theta)\}.
\end{align}
We notice that $1\leq \eta_{m}\leq C_{m}$ where $C_{m}$ is a constant depending on the initial data. In view of the asymptotic expansion for $(\Lb\mu)(t,\ub,\theta)$ in Lemma \ref{lemma on the expansion of Lb mu}, we have
\begin{align}\label{mu expansion in use}
\begin{split}
\mu(t,\ub,\theta)=&1-\left(\frac{1}{t}+\frac{1}{r_{0}}\right)r_{0}^{2}(\Lb\mu)(-r_{0},\ub,\theta)+O\left(\delta M^{4}\right)\left(\frac{1}{t^{2}}-\frac{1}{r_{0}^{2}}\right).
\end{split}
\end{align}
We fix an $s\in (t_{0}, t^{*})$ in such a way that $t_{0}\leq t<s<t^{*}$. There exists $(\ub_{s},\theta_{s})\in[0,\delta]\times \mathbb{S}^{2}$ and $(\ub_{m},\theta_{m})\in[0,\delta]\times\mathbb{S}^{2}$ so that 
\begin{align}\label{achieve min}
\mu(s,\ub_{s},\theta_{s})=\mu_{m}(s),\quad r_{0}^{2}(\Lb\mu)(-r_{0},\ub_{m},\theta_{m})=-\eta_{m}.
\end{align}
We claim that
\begin{align}\label{mins are close}
\left|\eta_{m}+r_{0}^{2}(\Lb\mu)(-r_{0},\ub_{s},\theta_{s})\right|\leq O(\delta M^{4}).
\end{align}
Indeed, one can apply \eqref{mu expansion in use} to $\mu(s,\ub_{m},\theta_{m})$ and $\mu(s,\ub_{s},\theta_{s})$ to derive
\begin{align}
\begin{split}
\mu(s,\ub_{s},\theta_{s})=&1-\left(\frac{1}{s}+\frac{1}{r_{0}}\right)(-\eta_{m}+d_{ms})+O(\delta M^{4})\left(\frac{1}{t^{2}}-\frac{1}{r_{0}^{2}}\right)\\
\mu(s,\ub_{m},\theta_{m})=&1-\left(\frac{1}{s}+\frac{1}{r_{0}}\right)(-\eta_{m})+O(\delta M^{4})\left(\frac{1}{t^{2}}-\frac{1}{r_{0}^{2}}\right),
\end{split}
\end{align}
where the quantity $d_{ms}>0$ is defined as 
\begin{align}\label{dms}
d_{ms}:=\eta_{m}+r_{0}^{2}(\Lb\mu)(-r_{0},\ub_{s},\theta_{s}).
\end{align}
Since $\mu(s,\ub_{s},\theta_{s})\leq \mu(s,\ub_{m},\theta_{m})$, we have
\begin{align}\label{dms esti pre}
0<-\left(\frac{1}{s}+\frac{1}{r_{0}}\right)d_{ms}\leq O(\delta M^{4})\left(\frac{1}{t^{2}}-\frac{1}{r_{0}^{2}}\right).
\end{align}
Hence,
\begin{align*}
-\left(\frac{1}{t}+\frac{1}{r_{0}}\right)d_{ms}\leq -\left(\frac{1}{s}+\frac{1}{r_{0}}\right)d_{ms}\leq O(\delta M^{4})\left(\frac{1}{t^{2}}-\frac{1}{r_{0}^{2}}\right),
\end{align*}
which implies

\begin{align}\label{dms esti}
d_{ms}\leq O(\delta M^{4})\left(\frac{1}{r_{0}}-\frac{1}{t}\right)\leq O(\delta M^{4}).
\end{align}
With this preparation, one can derive precise upper and lower bounds for $\mu_{m}(t)$. 

We pick up a $(\ub'_{m},\theta'_{m})\in[0,\delta]\times\mathbb{S}^{2}$ in such a way that $\mu(t,\ub'_{m},\theta'_{m})=\mu_{m}(t)$.  For the lower bound, by virtue of Lemma \ref{lemma on the expansion of Lb mu}, we have
\begin{equation}\label{mu m lower bound temp 1}
\begin{split}
\mu_{m}(t)=&\mu(t,\ub'_{m},\theta'_{m})=\mu(s,\ub'_{m},\theta'_{m})+\int_{s}^{t}(\Lb\mu)(t',\ub'_{m},\theta'_{m})dt'\\
\geq &\mu_{m}(s)+\int_{s}^{t}\frac{\eta_{m}}{-t^{\prime2}}+\frac{O(\delta M^{4})}{(-t')^{3}}dt'\\
\geq &\mu_{m}(s)+\left(\eta_{m}-\frac{1}{2a}\right)\left(\frac{1}{t}-\frac{1}{s}\right).
\end{split}
\end{equation}
In the last step, we take sufficiently small $\delta$ so that $O(\delta M^{4})\leq \frac{1}{2a}$. 

For the upper bound, in view of Lemma \ref{lemma on the expansion of Lb mu} and \eqref{dms esti}, we have
\begin{align}\label{mu m upper bound temp 1}
\begin{split}
\mu_{m}(t)\leq& \mu(t,\ub_{s},\theta_{s})=\mu_{m}(s)+\int_{s}^{t}(\Lb\mu)(t',\ub_{s},\theta_{s})dt'\\
=& \mu_{m}(s)+\int_{s}^{t}\frac{\eta_{m}-d_{ms}}{-t^{\prime2}}+\frac{O(\delta M^{4})}{(-t')^{3}}dt'\\
\leq &\mu_{m}(s)+\int_{s}^{t}\frac{\eta_{m}}{-t^{\prime2}}+\frac{O(\delta M^{4})}{t^{\prime2}}dt'\\
\leq &\mu_{m}(s)+\left(\eta_{m}+\frac{1}{2a}\right)\left(\frac{1}{t}-\frac{1}{s}\right)
\end{split}.
\end{align}
In the last step, we also take sufficiently small $\delta$ so that $O(\delta M^{4})\leq \frac{1}{2a}$. 

For $I_{a}(t)$, first of all, we have
\begin{align*}
I_{a}(t)\lesssim& \int_{t_{0}}^{t}\left(\mu_{m}(s)+\left(\eta_{m}-\frac{1}{2a}\right)\left(\frac{1}{t}-\frac{1}{s}\right)\right)^{-a-1}t^{\prime-2}dt'\\
=&\int_{\tau}^{\tau_{0}}\left(\mu_{m}(s)+\left(\eta_{m}-\frac{1}{2a}\right)\left(\tau-\tau_{s}\right)\right)^{-a-1}d\tau'\\
\leq &\frac{1}{\eta_{m}-\frac{1}{2a}}\frac{1}{a}\left(\mu_{m}(s)+\left(\eta_{m}-\frac{1}{2a}\right)\left(\tau-\tau_{s}\right)\right)^{-a}.
\end{align*}
Hence,
\begin{align}\label{la temp 1}
\begin{split}
I_{a}(t) \lesssim &\frac{1}{a}\left(\mu_{m}(s)+\left(\eta_{m}-\frac{1}{2a}\right)\left(\frac{1}{t}-\frac{1}{s}\right)\right)^{-a}\\
\leq&\frac{1}{a}\frac{\left(\mu_{m}(s)+\left(\eta_{m}-\frac{1}{2a}\right)\left(\frac{1}{t}-\frac{1}{s}\right)\right)^{-a}}{\left(\mu_{m}(s)+\left(\eta_{m}+\frac{1}{2a}\right)\left(\frac{1}{t}-\frac{1}{s}\right)\right)^{-a}}\mu^{-a}_{m}(t)\\
\leq &\frac{1}{a}\frac{\left(\left(\eta_{m}-\frac{1}{2a}\right)\left(\frac{1}{t}-\frac{1}{s}\right)\right)^{-a}}{\left(\left(\eta_{m}+\frac{1}{2a}\right)\left(\frac{1}{t}-\frac{1}{s}\right)\right)^{-a}}\mu^{-a}_{m}(t).
\end{split}
\end{align}
Since as $a\rightarrow\infty$, one has
\begin{align*}
\frac{\left(\eta_{m}-\frac{1}{2a}\right)^{-a}}{\left(\eta_{m}+\frac{1}{2a}\right)^{-a}}\rightarrow e^{\frac{1}{\eta_{m}}}.
\end{align*}
The limit is an absolute constant. Therefore, \eqref{la temp 1} yields the proof for part (1) of the lemma. The proof for part (1') is exactly the same.

(2) We start with an easy observation: if $\mu(t,\ub,\theta)\leq 1-\dfrac{1}{a}$, then $\Lb \mu(t,\ub,\theta)\lesssim -a^{-1}$. In fact, we claim that $(\frac{1}{t}+\frac{1}{r_{0}})r_{0}^{2}\Lb\mu(-r_{0},\ub,\theta)\geq \frac{1}{2}a^{-1}$.
Otherwise, for sufficiently small $\delta$ (say $\delta^{1/4}\leq a^{-1}$), according to the expansion for $\mu(t,\ub,\theta)$, i.e. $\mu(t,\ub,\theta)=\mu(-r_{0},\ub,\theta)-(\frac{1}{t}+\frac{1}{r_{0}})r^{2}_{0}\Lb\mu(-r_{0},\ub,\theta)+\frac{O(\delta)}{t}$, we have
\begin{align*}
\mu(t,\ub,\theta)> 1-\frac{1}{2a}-C\frac{\delta}{t}\geq 1-\frac{1}{a}.
\end{align*}
which is a contradiction. So in view of the fact that $\dfrac{tr_{0}}{t+r_{0}}$ is bounded above by a negative absolute constant  when $t\in(-r_{0},-1]$, we have $r_{0}^{2}\Lb\mu(-r_{0},\ub,\theta)\lesssim-a^{-1}$. Therefore the expansion of $\Lb\mu$ implies $\Lb \mu(t,\ub,\theta)\lesssim -a^{-1}t^{-2}$. In particular, this observation implies that, if there is a $t' \in[-r_{0},s^{*}_{m}]$, so that $\mu_{m}(t')\leq 1-a^{-1}$, then for all $t\geq t'$, we have $\mu_{m}(t)\leq 1-a^{-1}$. This allows us to define a time $t_1$, such that it is the minimum of all such $t'$ with $\mu_{m}(t')\leq 1-a^{-1}$.

We now prove the lemma. If $\tau \leq t_{1}$, since $\mu_m(t) \leq 1$, we have
\begin{align*}
\mu_{m}^{-a}(\tau)\leq (1-\frac{1}{a})^{-a}\leq C_0\leq C_0\mu^{-a}_{m}(t).
\end{align*}
If $\tau \geq t_{1}$, then $\mu_{m}(\tau)\leq 1-\dfrac{1}{a}$. let $\mu_{m}(\tau)=\mu(\tau,\ub_{\tau},\theta_{\tau})$. We know that $\mu(t,\ub_\tau,\theta_\tau)$ is decreasing in $t$ for $t\geq \tau$. Therefore, we have
\begin{align*}
\mu_{m}(t)\leq \mu(t,\ub_{\tau},\theta_{\tau})\leq \mu(\tau,\ub_{\tau},\theta_{\tau})=\mu_{m}(\tau).
\end{align*}
The proof now is complete.
\end{proof}

Let us continue to estimate $\|I\|_{L^{2}([0,\ub]\times\mathbb{S}^{2})}$. When $t'\in[t_{0},t]$, the contribution of the term $\mu^{-1}\partial_{t}\mu$ is bounded through Lemma \ref{lemma on mu to power a} by:
\begin{align*}
C\delta^{1/2}\int_{t_{0}}^{t}\mu_{m}^{-b_{|\alpha|+2}}(t')(-t')^{-1}M(t')\sqrt{\widetilde{E}_{\leq|\alpha|+2}(t',\ub)}dt'\leq \frac{C\delta^{1/2}(-t)^{-1}}{b_{|\alpha|+2}}\mu^{-b_{|\alpha|+2}}_{m}(t)\sqrt{\widetilde{E}_{\leq|\alpha|+2}(t,\ub)}
\end{align*}
Therefore we have the following estimates for $I$:
\begin{align}\label{L2 chib I}
\begin{split}
\|I\|_{|L^{2}([0,\ub]\times\mathbb{S}^{2})}\leq &\frac{C\delta^{1/2}(-t)^{-1}}{b_{|\alpha|+2}}\mu^{-b_{|\alpha|+2}}_{m}(t)\sqrt{\widetilde{E}_{\leq|\alpha|+2}(t,\ub)}\\
&+C\delta^{1/2}\int_{-r_{0}}^{t}(-t')^{-2}\mu_{m}^{-b_{|\alpha|+2}}(t')\sqrt{\widetilde{E}_{\leq|\alpha|+2}(t'\ub)}dt'.
\end{split}
\end{align}

Next we move to the estimate for $\|II\|_{l^{2}([0,\ub]\times\mathbb{S}^{2})}$. We first investigate the structure of $g_{\alpha}$, which is the sum of four terms. In order to estimate the first term, $\slashed{\mathcal{L}}_{R_{i}}^{\alpha}g_{0}$, we rewrite $g_{0}$ in another way. Taking the trace of the equation \eqref{Structure Equation Lb chibAB nonsingular}, we can rewrite the last term in $g_{0}$ as
\begin{align*}
-(\ds\mu)\left(\Lb\tr\chib+|\chib|^{2}\right)=-(\ds\mu)\left(e\tr\chib-\tr\underline{\alpha'}\right).
\end{align*}
Therefore the contribution of last term in $g_{0}$ to $\|\slashed{\mathcal{L}}^{\alpha}_{R_{i}}g_{0}\|_{L^{2}([0,\delta]\times\mathbb{S}^{2})}$ is $\|\slashed{\mathcal{L}}^{\alpha}_{R_{i}}(\ds\mu)\left(e\tr\chib-\tr\underline{\alpha'}\right)\|_{L^{2}([0,\delta]\times\mathbb{S}^{2})}$. Since the top order is $|\alpha|+2$, there is no top order optical terms in this contribution. In regard to the $L^{2}$ norms of lower order optical terms, we introduce the following notations:
\begin{align}\label{lower order optical norms}
\begin{split}
\mathcal{A}^{\ub}_{l}(t):&=\max_{|\alpha|=l}\|\slashed{\mathcal{L}}_{R_{i}}^{\alpha}\chib\|_{L^{2}(\Sigma_{t}^{\ub})},\\
\mathcal{A}^{\prime\ub}_{l}(t):&=\max_{|\alpha|=l}\|\slashed{\mathcal{L}}_{R_{i}}^{\alpha}\chib'\|_{L^{2}(\Sigma_{t}^{\ub})},\\
\mathcal{B}^{\ub}_{m,l+1}(t):&=\max_{|\beta|+|\gamma|= l}\|R_{i}^{\beta+1}T^{\gamma}\mu\|_{L^{2}(\Sigma_{t}^{\ub})},\\
\mathcal{A}^{\ub}_{\leq l}(t):&=\sum_{k=1}^{l}\mathcal{A}_{k}^{\ub}(t),\\
\mathcal{A}^{\prime\ub}_{\leq l}(t):&=\sum_{k=1}^{l}\mathcal{A}_{k}^{\prime\ub}(t),\\
\mathcal{B}^{\ub}_{\leq m, \leq l+1}(t):&=\sum_{m'\leq m, l'\leq l}\mathcal{B}^{\ub}_{m', l'+1}(t).
\end{split}
\end{align}
It's obvious that 
\begin{align*}
\|\mathcal{R}_{i}^{\alpha}\tr\chib\|_{L^{2}([0,\delta]\times\mathbb{S}^{2})}=\|\mathcal{R}_{i}^{\alpha}\tr\chib'\|_{L^{2}([0,\delta]\times\mathbb{S}^{2})}\quad\text{for}\quad |\alpha|\geq1.
\end{align*}
To estimate  the factor $\|\slashed{\mathcal{L}}_{\mathcal{R}_{i}}^{\alpha}\left((\ds\mu)(e\tr\chib)\right)\|_{L^{2}(\Sigma_{t}^{\ub})}$, we need to use the Leibniz rule. If more than half of the derivatives hit $e$, then the corresponding contribution is bounded by:
\begin{align*}
&C\delta^{1/2}(-t)^{-5}\sum_{|\beta|\leq |\alpha|}\left(\|R^{\beta}_{i}Q\psi_{0}\|_{L^{2}(\Sigma_{t}^{\ub})}+\|R^{\beta}_{i}\psi_{0}\|_{L^{2}(\Sigma_{t}^{\ub})}\right)\\
\leq & C\delta^{3/2}(-t)^{-5}\sqrt{E_{\leq |\alpha|+2}(t,\ub)}\\
\leq & C\delta^{3/2}(-t)^{-5}\mu_{m}^{-b_{|\alpha|+2}}(t)\sqrt{\widetilde{E}_{\leq|\alpha|+2}(t,\ub)}
\end{align*}
in view of the pointwise estimates of $\ds R^{\beta}_{i}\mu$, $R^{\beta}_{i}\tr\chib$ and $R^{\beta}_{i}\psi$, $R_{i}^{\beta}Q\psi$ when $|\beta|\leq N_{\infty}$. Similarly, if more than half of the derivatives hit $\tr\chib$ and $\ds\mu$, their corresponding contribution are bounded by:
\begin{align*}
\delta(-t)^{-5}\mathcal{A}'_{\leq |\alpha|}(t)\quad\text{and}\quad \delta(-t)^{-5}\mathcal{B}_{0,\leq|\alpha|+1}(t)
\end{align*}
respectively. 

On the other hand, in view of Proposition \ref{Proposition lower order L2} and \ref{Proposition lower order L2 mu}, we have the following estimates for $\mathcal{A}'_{\leq|\alpha|}(t)$ and $\mathcal{B}_{0,\leq|\alpha|+1}(t)$:
\begin{align*}
\mathcal{A}'_{\leq|\alpha|}(t)\lesssim&\delta^{1/2}\int_{-r_{0}}^{t}(-t')^{-3}\mu_{m}^{-1/2}(t')\sqrt{\Eb_{\leq|\alpha|+2}(t',\ub)}dt',\\
\lesssim &\delta^{1/2}(-t)^{-2}\mu_{m}^{-b_{|\alpha|+2}-1/2}(t)\sqrt{\widetilde{\Eb}_{\leq|\alpha|+2}(t,\ub)};\\
(-t)^{-1}\mathcal{B}_{0,\leq|\alpha|+1}(t)\lesssim & (-r_{0})^{-1}\mathcal{B}_{0,\leq|\alpha|+1}(-r_{0})\\
+&\delta^{1/2}\int_{-r_{0}}^{t}(-\tau)^{-2}\left(\sqrt{E_{\leq|\alpha|+2}(\tau,\ub)}+\mu_{m}^{-1/2}(\tau)(-\tau)^{-1}\sqrt{\Eb_{\leq|\alpha|+2}(\tau,\ub)}\right)d\tau,\\
\lesssim & (-r_{0})^{-1}\mathcal{B}_{0,\leq|\alpha|+1}(-r_{0})\\
+&(-t)^{-1}\left(\mu_{m}^{-b_{|\alpha|+2}}(t)\sqrt{\widetilde{E}_{\leq|\alpha|+2}(t,\ub)}+\mu_{m}^{-b_{|\alpha|+2}-1/2}(t)(-t)^{-1}\sqrt{\widetilde{\Eb}_{\leq|\alpha|+2}(t,\ub)}\right).
\end{align*}
Combining the above estimates, we have:
\begin{align}\label{3rd term in g0 1}
\begin{split}
\|\slashed{\mathcal{L}}_{\mathcal{R}_{i}}^{\alpha}\left((\ds\mu)(e\tr\chib)\right)\|_{L^{2}(\Sigma_{t}^{\ub})}\lesssim &\delta(-t)^{-5}\mathcal{B}_{0,\leq|\alpha|+1}(-r_{0})\\
+&C\delta^{3/2}(-t)^{-5}\mu_{m}^{-b_{|\alpha|+2}}(t)\sqrt{\widetilde{E}_{\leq|\alpha|+2}(t,\ub)}\\
+&C\delta^{3/2}(-t)^{-6}\mu_{m}^{-b_{|\alpha|+2}+1/2}(t)\sqrt{\widetilde{\Eb}_{\leq|\alpha|+2}(t,\ub)}.
\end{split}
\end{align}
For the last term in $\slashed{\mathcal{L}}_{\mathcal{R}_{i}}^{\alpha}g_{0}$, we are still left with the term $\|\slashed{\mathcal{L}}_{\mathcal{R}_{i}}^{\alpha}\left(\left(\ds\mu\right)\tr\alpha'\right)\|_{L^{2}(\Sigma_{t}^{\ub})}$. If more than half of the derivatives hit $\ds\mu$, the estimates are exactly the same as the previous case. If more than half of the derivatives hit $\tr\alpha'$, then the contribution is bounded by:
\begin{align}\label{3rd term in g0 2}
\delta^{1/2}(-t)^{-4}\|\ds R^{\alpha+1}_{i}\psi_{0}\|_{L^{2}(\Sigma_{t}^{\ub})}\lesssim\delta^{1/2}(-t)^{-5}\mu^{-1/2}_{m}(t)\sqrt{\Eb_{\leq|\alpha|+2}(t,\ub)}
\end{align}
Combining \eqref{3rd term in g0 1} and \eqref{3rd term in g0 2}, we have
\begin{align}\label{3rd term in g0}
\begin{split}
\|\slashed{\mathcal{L}}^{\alpha}_{\mathcal{R}_{i}}\left((\ds\mu)(e\tr\chib-\tr\alpha')\right)\|_{L^{2}(\Sigma_{t}^{\ub})}\lesssim&\delta(-t)^{-4}\mathcal{B}_{0,\leq|\alpha|+1}(-r_{0})\\
+&C\delta^{3/2}(-t)^{-4}\mu_{m}^{-b_{|\alpha|+2}}(t)\sqrt{\widetilde{E}_{\leq|\alpha|+2}(t,\ub)}\\
+&C\delta^{1/2}(-t)^{-5}\mu_{m}^{-b_{|\alpha|+2}-1/2}(t)\sqrt{\widetilde{\Eb}_{\leq|\alpha|+2}(t,\ub)}.
\end{split}
\end{align}
Next we consider the contribution from the second term in the expression of $g_{0}$, whose $L^{2}$ norm is given by $\|\slashed{\mathcal{L}}_{\mathcal{R}_{i}}^{\alpha}
\left(\tr\chib\cdot\ds\left(\check{f}-2\Lb\mu\right)\right)\|_{L^{2}(\Sigma_{t}^{\ub})}$. For the first term $\|\slashed{\mathcal{L}}_{\mathcal{R}_{i}}^{\alpha}
\left(\tr\chib\cdot\ds\check{f}\right)\|_{L^{2}(\Sigma_{t}^{\ub})}$, if more than half of the derivatives hit $\tr\chib$, then it is bounded by
\begin{align}\label{2nd term in g0 1}
(-t)^{-3}\mathcal{A}'_{\leq|\alpha|}(t)\lesssim\delta^{1/2}(-t)^{-5}\mu_{m}^{-b_{|\alpha|+2}-1/2}(t)\sqrt{\widetilde{\Eb}_{\leq|\alpha|+2}(t,\ub)}.
\end{align}
If more than half of the derivatives hit $\ds\check{f}$, then this term is bounded by:
\begin{align}\label{2nd term in g0 2}
\delta^{1/2}(-t)^{-3}\sqrt{\widetilde{E}_{\leq|\alpha|+2}(t,\ub)}\lesssim\delta^{1/2}(-t)^{-3}\mu_{m}^{-b_{|\alpha|+2}}(t)\sqrt{\widetilde{E}_{\leq|\alpha|+2}(t,\ub)}.
\end{align}
Here we also used \eqref{Calculus Inequality} if most of the derivatives hit $\psi_{0}$ instead of $L\psi_{0}$. The treatment for the second term $\|\slashed{\mathcal{L}}_{\mathcal{R}_{i}}^{\alpha}\left(\tr\chib
\cdot\ds\Lb\mu\right)\|_{L^{2}(\Sigma_{t}^{\ub})}$ is similar. If more than half of the derivatives hit $\tr\chib$, the estimate is exactly the same as \eqref{2nd term in g0 1}. If more than half of the derivatives hit $\ds\Lb\mu$, then this term is bounded by
\begin{align}\label{2nd term in g0 3}
\delta^{1/2}(-t)^{-3}\mu_{m}^{-b_{|\alpha|+2}}(t)\sqrt{\widetilde{E}_{\leq|\alpha|+2}(t,\ub)}+\delta^{1/2}(-t)^{-4}\mu_{m}^{-b_{|\alpha|+2}-1/2}(t)\sqrt{\Eb_{\leq|\alpha|+2}(t,\ub)}.
\end{align}
Combining \eqref{2nd term in g0 1}, \eqref{2nd term in g0 2} and \eqref{2nd term in g0 3}, we have:
\begin{align}\label{2nd term in g0}
\begin{split}
&\|\slashed{\mathcal{L}}^{\alpha}_{\mathcal{R}_{i}}\left(\tr\chib
\cdot\ds\left(\check{f}-2\Lb\mu\right)\right)\|_{L^{2}(\Sigma_{t}^{\ub})}\\\lesssim
&\delta^{1/2}(-t)^{-3}\mu_{m}^{-b_{|\alpha|+2}}(t)\sqrt{\widetilde{E}_{\leq|\alpha|+2}(t,\ub)}+\delta^{1/2}(-t)^{-4}\mu_{m}^{-b_{|\alpha|+2}-1/2}(t)\sqrt{\Eb_{\leq|\alpha|+2}(t,\ub)}.
\end{split}
\end{align}

To complete the estimates for the contribution from $\slashed{\mathcal{L}}_{R_{i}}^{\alpha}g_{0}$, we finally estimate the contribution from $\check{g}$: $\|\slashed{\mathcal{L}}_{R_{i}}^{\alpha}\left(\ds\check{g}\right)\|_{L^{2}(\Sigma_{t}^{\ub})}$. Recall the expression of $\check{g}$:
\begin{align*}
\check{g}=&\frac{1}{2}\frac{d^{2}(c^{2})}{d\rho^{2}}\left[\underline{L}\rho L\rho-\mu|\slashed{d}\rho|^{2}\right]-\frac{1}{2}\frac{d(c^{2})}{d\rho}\Omega^{-1}
\frac{d\Omega}{d\rho}\left[\underline{L}\rho L\rho-\mu|\slashed{d}\rho|^{2}\right]\\
+&\frac{d(c^{2})}{d\rho}\left[L\psi_{0}\underline{L}\psi_{0}-\mu|\slashed{d}\psi_{0}|^{2}\right]
+\frac{d(c^{2})}{d\rho}\left[\frac{1}{4}c^{-2}\mu|\slashed{d}\rho|^{2}-\zetab^{A}\cdot\slashed{d}_{A}\rho\right].
\end{align*}
Here we will only treat the top order terms, namely, when all the derivatives hit one factor. The estimates for other lower order terms follow in the same manner.
We first consider the contribution from the term $\Lb\rho L\rho$. When all of $\slashed{\mathcal{L}}_{\mathcal{R}_{i}}^{\alpha}$ hit $L\rho$, the contribution is bounded by:
\begin{align}\label{1st term in g0 1}
\delta^{3/2}(-t)^{-4}\sqrt{E_{\leq|\alpha|+2}(t,\ub)}\leq \delta^{3/2}(-t)^{-4}\mu_{m}^{-b_{|\alpha|+2}}(t)\sqrt{\widetilde{E}_{\leq|\alpha|+2}(t,\ub)}.
\end{align}
Here we also used \eqref{Calculus Inequality} when $\slashed{\mathcal{L}}_{\mathcal{R}_{i}}^{\alpha}$ hit $\psi_{0}$.

When all of $\slashed{\mathcal{L}}_{R_{i}}^{\alpha}$ hit $\Lb\rho$,  the contribution is bounded by:
\begin{align}\label{1st term in g0 2}
\begin{split}
\delta^{1/2}(-t)^{-3}\|\Lb \mathcal{R}^{\alpha+1}_{i}\psi_{0}\|_{L^{2}(\Sigma_{t}^{\ub})}\lesssim &\delta^{1/2}(-t)^{-4}\|\Lb R^{\alpha+1}_{i}\psi_{0}+\underline{\nu}R^{\alpha+1}_{i}\psi_{0}\|_{L^{2}(\Sigma_{t}^{\ub})}\\
+ &\delta^{3/2}(-t)^{-5}\sqrt{E_{\leq|\alpha|+2}(t,\ub)}\\
\lesssim & \delta^{1/2}(-t)^{-5}\mu_{m}^{-1/2}(t)\sqrt{\Eb_{\leq|\alpha|+2}(t,\ub)}\\
+&\delta^{3/2}(-t)^{-5}\sqrt{E_{\leq|\alpha|+2}(t,\ub)}\\
\lesssim & \delta^{1/2}(-t)^{-5}\mu_{m}^{-1/2-b_{|\alpha|+2}}(t)\sqrt{\widetilde{\Eb}_{\leq|\alpha|+2}(t,\ub)}\\
+&\delta^{3/2}(-t)^{-5}\mu_{m}^{-b_{|\alpha|+2}}(t)\sqrt{\widetilde{E}_{\leq|\alpha|+2}(t,\ub)}.
\end{split}
\end{align}
 Here \eqref{Calculus Inequality} is used when $\slashed{\mathcal{L}}_{\mathcal{R}_{i}}^{\alpha}$ hit $\psi_{0}$.

Next we consider the contribution of $\mu|\ds\rho|^{2}$. This is bounded by
\begin{align}\label{1st term in g0 3}
\delta^{3/2}(-t)^{-4}\sqrt{\Eb_{\leq|\alpha|+2}(t,\ub)}\lesssim\delta^{3/2}(-t)^{-4}\mu_{m}^{-b_{|\alpha|+2}}(t)\sqrt{\widetilde{\Eb}_{\leq|\alpha|+2}(t,\ub)}.
\end{align}
By the definition of $\zetab$, the contribution from $\zetab^{A}\cdot\slashed{d}_{A}\rho$ enjoys the same bound. 

Now we move to the contribution from $\mu|\ds\psi_{0}|^{2}$. This is bounded by:
\begin{align}\label{1st term in g0 4}
\delta^{1/2}(-t)^{-4}\sqrt{\Eb_{\leq|\alpha|+2}(t,\ub)}\lesssim\delta^{1/2}(-t)^{-4}\mu_{m}^{-b_{|\alpha|+2}}(t)\sqrt{\widetilde{\Eb}_{\leq|\alpha|+2}(t,\ub)}.
\end{align}
Finally we estimate the most difficult term: $\ds\left(\Lb\psi_{0} L\psi_{0}\right)$. When all of $\slashed{\mathcal{L}}_{R_{i}}^{\alpha}$ and $\ds$ hit $L\psi_{0}$, this contribution is bounded by:
\begin{align}\label{1st term in g0 5}
\delta^{1/2}(-t)^{-3}\sqrt{E_{\leq|\alpha|+2}(t,\ub)}\lesssim\delta^{1/2}(-t)^{-3}\mu_{m}^{-b_{|\alpha|+2}}(t)\sqrt{\widetilde{E}_{\leq|\alpha|+2}(t,\ub)}.
\end{align}
When $\slashed{\mathcal{L}}_{\mathcal{R}_{i}}^{\alpha}$ and $\ds$ hit $\Lb\psi_{0}$, the treatment is different. The spacetime integral we want to estimate is (Remember $R_{i}\sim(-t)\ds, R_{i}\sim(-t)X_{A}$.):
\begin{align*}
\int_{-r_{0}}^{t}(-t')\|L\psi_{0}\Lb \mathcal{R}^{\alpha+1}_{i}\psi_{0}\|_{L^{2}(\Sigma_{t'}^{\ub})}dt'\leq C\delta^{-1/2}\int_{-r_{0}}^{t}(-t')^{-1}\|\Lb R^{\alpha+1}_{i}\psi_{0}\|_{L^{2}(\Sigma_{t'}^{\ub})}dt'.
\end{align*}
In order to use the flux $\Fb(t,\ub)$, the right hand side of the above is bounded through \eqref{Calculus Inequality} by:
\begin{align*}
\delta^{-1/2}\int_{-r_{0}}^{t}(-t')^{-1}\|\Lb R^{\alpha+1}_{i}\psi_{0}+\underline{\nu}R^{\alpha+1}_{i}\psi_{0}\|_{L^{2}(\Sigma_{t'}^{\ub})}dt'+\delta^{1/2}\int_{-r_{0}}^{t}(-t')^{-2}\sqrt{E_{\leq|\alpha|+2}(t',\ub)}dt'.
\end{align*}
The second term above is bounded in the same way as the contribution of \eqref{1st term in g0 5} to the spacetime integral. We focus on the first term above. By using H\"older inequality, this term is bounded by:
\begin{align}\label{1st term in g0 6}
\begin{split}
&\delta^{-1/2}\left(\int_{-r_{0}}^{t}(-t')^{-3}dt'\right)^{1/2}\left(\int_{-r_{0}}^{t}(-t')^{2}\int_{0}^{\ub}\int_{\mathbb{S}^{2}}\left(\Lb R^{\alpha+1}_{i}\psi_{0}+\underline{\nu}R^{\alpha+1}_{i}\psi_{0}\right)^{2}d\mu_{\slashed{g}}d\ub'dt'\right)^{1/2}\\
\lesssim & \delta^{-1/2}(-t)^{-1}\left(\int_{0}^{\ub}\Fb_{\leq|\alpha|+2}(t,\ub')d\ub'\right)^{1/2}.
\end{split}
\end{align}
Now we summarize our estimates for the contribution of the first term in $\slashed{\mathcal{L}}_{\mathcal{R}_{i}}^{\alpha}g_{0}$. The $L^{2}$ norm of $II$ on $\Sigma_{t}^{\ub}$ is bounded as:
\begin{align}\label{L2 chib II preliminary}
\|II\|_{L^{2}([0,\ub]\times\mathbb{S}^{2})}\leq\int_{-r_{0}}^{t}(-t')^{3}\|\widetilde{g}_{\alpha}(t',\ub)\|_{L^{2}([0,\ub]\times\mathbb{S}^{2})}dt'\lesssim\int_{-r_{0}}^{t}(-t')^{2}\|\widetilde{g}_{\alpha}\|_{L^{2}(\Sigma_{t'}^{\ub})}dt'.
\end{align}
Combining \eqref{1st term in g0 1}-\eqref{1st term in g0 6} and \eqref{2nd term in g0}, \eqref{3rd term in g0}, the contribution of $\slashed{\mathcal{L}}_{\mathcal{R}_{i}}^{\alpha}g_{0}$ to right hand side of above is bounded by:
\begin{align}\label{1st term in galpha}
\begin{split}
&\delta\int_{-r_{0}}^{t}(-t')^{-2}\mathcal{B}_{0,\leq|\alpha|+1}(-r_{0})dt'+
 \delta^{-1/2}(-t)^{-1}\left(\int_{0}^{\ub}\Fb_{\leq|\alpha|+2}(t,\ub')d\ub'\right)^{1/2}+\\
&\delta^{1/2}\int_{-r_{0}}^{t}(-t')^{-2}\left(\mu^{-b_{|\alpha|+2}}_{m}(t')\sqrt{\widetilde{E}_{\leq|\alpha|+2}(t',\ub)}+\mu_{m}^{-b_{|\alpha|+2}-1/2}(t')\sqrt{\widetilde{\Eb}_{\leq|\alpha|+2}(t',\ub)}\right)dt'.
\end{split}
\end{align}
Next we consider the second term in $g_{\alpha}$, which is a sum of the terms as follows:
\begin{align*}
\slashed{\mathcal{L}}^{\beta}_{\mathcal{R}_{i}}
\slashed{\mathcal{L}}_{\leftexp{(\mathcal{R}_{i})}{\Zb}}
F_{\alpha-\beta-1},
\end{align*} 
which can be systematically rewritten as:
\begin{align*}
\slashed{\mathcal{L}}_{\leftexp{(\mathcal{R}_{i})}{\Zb}}F_{\alpha-1}+\sum_{\beta}\slashed{\mathcal{L}}
_{[\slashed{\mathcal{L}}_{\mathcal{R}_{i}}^{\beta},\leftexp{(\mathcal{R}_{i})}{\Zb}]} F_{\alpha-\beta-1}.
\end{align*}
Here the terms in the sum on the right hand side of the above, which are of lower order, come from the commutator $[\slashed{\mathcal{L}}_{\mathcal{R}_{j}},\slashed{\mathcal{L}}_{\leftexp{(\mathcal{R}_{i})}{\Zb}}]$. The first term, which is of principal order, can be bounded as follows (see \cite{Ch1} and \cite{Ch-M})
\begin{align*}
|\slashed{\mathcal{L}}_{\leftexp{(R_{i})}{\Zb}}F_{\alpha-1}|\leq &C(-t)^{-1}\Big(|\leftexp{(R_{i})}{\Zb}||F_{\alpha}|+|\ds X_{A}R^{\alpha-1}_{k}\check{f}|+|R_{i}\mu||\ds(X_{A}R_{k}^{\alpha-2}\tr\chib)|\\
+&|\slashed{\mathcal{L}}_{R_{j}}\leftexp{(R_{i})}{\Zb}|(|F_{\alpha-1}|+|\ds X_{A}R^{\alpha-2}_{k}\check{f}|\Big).
\end{align*}
The first term on the right hand side is in terms of the unknown $F_{\alpha}$. In view of the estimate for $\leftexp{(R_{i})}{\Zb}$:
\begin{align*}
|\leftexp{(R_{i})}{\Zb}|\lesssim\delta(-t)^{-2},
\end{align*}
the contribution of this term can be bounded by Gronwall. While the contribution of the second term is bounded by:
\begin{align}\label{2nd term in galpha 1}
\delta^{1/2}\int_{-r_{0}}^{t}(-t')^{-2}\sqrt{E_{\leq|\alpha|+2}(t',\ub)}dt'\lesssim\delta^{1/2}\int_{-r_{0}}^{t}(-t')^{-2}\mu_{m}^{-b_{|\alpha|+2}}(t')\sqrt{\widetilde{E}_{\leq|\alpha|+2}(t',\ub)}dt'.
\end{align}
For the third term of the right hand side, we employ the estimates for $\mathcal{A}'_{\leq|\alpha|}(t)$ to obtain the contribution of this term is bounded by:
\begin{align}\label{2nd term in galpha 2}
\delta^{1/2}\int_{-r_{0}}^{t}(-t')^{-3}\mu_{m}^{-b_{|\alpha|+2}-1/2}(t')\sqrt{\widetilde{\Eb}_{\leq|\alpha|+2}(t',\ub)}dt'.
\end{align}
By Proposition \ref{L infity estimates on lot}, we have
$|\slashed{\mathcal{L}}_{R_{j}}\leftexp{(R_{i})}{\Zb}|\lesssim\delta(-t)^{-2}, |\slashed{\mathcal{L}}_{X_{A}}\leftexp{(R_{i})}{\Zb}|\lesssim\delta(-t)^{-3}, |\slashed{\mathcal{L}}_{R_{j}}\leftexp{(X_{A})}{\Zb}|\lesssim\delta(-t)^{-3}$. Therefore the other two terms in the pointwise estimate for $|\slashed{\mathcal{L}}_{\leftexp{(\mathcal{R}_{i})}{\Zb}}F_{\alpha-1}|$ are of lower order. So the contribution of the second term in $g_{\alpha}$ to $\|II\|_{L^{2}([0,\ub]\times\mathbb{S}^{2}}$ is bounded by:
\begin{align}\label{2nd term in galpha}
\delta^{1/2}\int_{-r_{0}}^{t}(-t')^{-2}\mu_{m}^{-b_{|\alpha|+2}}(t')\sqrt{\widetilde{E}_{\leq|\alpha|+2}(t',\ub)}dt'+\delta^{1/2}\int_{-r_{0}}^{t}(-t')^{-3}\mu_{m}^{-b_{|\alpha|+2}-1/2}(t')\sqrt{\widetilde{\Eb}_{\leq|\alpha|+2}(t',\ub)}dt'.
\end{align} 

Now we move to the third term in $g_{\alpha}$. Again, using Leibniz Rule, the $L^{2}$-norm of this term on $\Sigma_{t}^{\ub}$ is bounded by:
\begin{align*}
(-t)^{-3}\mathcal{A}'_{\leq|\alpha|}(t)+\delta(-t)^{-5}\sqrt{E_{\leq|\alpha|+2}(t,\ub)}+\delta(-t)^{-6}\mathcal{B}_{0,\leq|\alpha|+1}(t).
\end{align*}
The first term is bounded in the same manner as \eqref{2nd term in galpha 2} and the rest terms are of lower order, in view of Proposition \ref{Proposition lower order L2} and \ref{Proposition lower order L2 mu}. The last term in $g_{\alpha}$ has a similar structure and can treated in the same way. 

Finally we need to consider the contribution from the term $\mu\ds(R_{i}^{\alpha}|\hat{\chib}|^{2})$. If not all $\slashed{\mathcal{L}}^{\alpha}_{R_{i}}$ hit one single $\hat{\chib}$, then the estimates for $\mathcal{A}'_{\leq|\alpha|}(t)$ implies the estimates of the contribution to $II$, which is similar to those of the second term in $g_{\alpha}$. If all $\slashed{\mathcal{L}}^{\alpha}_{R_{i}}$ hit one single $\widehat{\chib}$, we need to use the elliptic system for $\widehat{\chib}$:
\begin{align*}
\slashed{\textrm{div}}\widehat{\chib}=\frac{1}{2}\ds(\tr\chib)+\mu^{-1}\left(\zetab\cdot\chib-\frac{1}{2}\tr\chib \zetab\right).
\end{align*}
The elliptic estimate \eqref{elliptic estimates} implies:
\begin{align}\label{elliptic app}
\|\mu|\slashed{D}\widehat{\slashed{\mathcal{L}}}^{\alpha}_{\mathcal{R}_{i}}\hat{\chib}|\|_{L^{2}(\Sigma_{t}^{\ub})}\leq C\|\mu\ds(X_{A}R^{\alpha-1}_{i}\tr\chib)\|_{L^{2}(\Sigma_{t}^{\ub})}+\|\mu |H_{\alpha}|\|_{L^{2}(\Sigma_{t}^{\ub})}+\|\ds\mu\|_{L^{\infty}(\Sigma_{t}^{\ub})}\|\widehat{\slashed{\mathcal{L}}}^{\alpha}_{\mathcal{R}_{i}}\hat{\chib}\|_{L^{2}(\Sigma_{t}^{\ub})}.
\end{align}
 Here $\widehat{\slashed{\mathcal{L}}}_{\mathcal{R}_{i}}$ is the traceless part of $\slashed{\mathcal{L}}_{\mathcal{R}_{i}}$. $H_{\alpha}$ is given by
\begin{align*}
 H_\alpha &=\big(\slashed{\mathcal{L}}_{\mathcal{R}_i}+\frac{1}{2}\tr {}^{(\mathcal{R}_i)}\slashed{\pi}\big)^\alpha \big(\mu^{-1}\zetab\cdot \widehat{\chib}-\frac{1}{2}\mu^{-1}\zetab \, \tr\chib\big)\\ +&\!\!\!\!\!\!\!\!\! \sum_{|\beta_1|+|\beta_2| = |\alpha|-1}\!\!\!\!\!\!\!\!\!\big(\slashed{\mathcal{L}}_{\mathcal{R}_i}+\frac{1}{2}\tr {}^{(\mathcal{R}_i)}\slashed{\pi}\big)^{\beta_1}
 \Big(\tr{}^{(\mathcal{R}_i)} \slashed{\pi}\cdot \slashed{d}(\mathcal{R}_i{}^{\beta_2}\tr\chib) + \big( \divslash {}^{(\mathcal{R}_i)}\widehat{\slashed{\pi}}\big)\cdot \widehat{\slashed{\mathcal{L}}}_{\mathcal{R}_i}{}^{\beta_2} \widehat{\chib}\Big)\\
 & +\!\!\!\!\!\!\!\!\! \sum_{|\beta_1|+|\beta_2| = |\alpha|-1}\!\!\!\!\!\!\!\!\!\big(\slashed{\mathcal{L}}_{\mathcal{R}_i}+\frac{1}{2}\tr {}^{(\mathcal{R}_i)}\slashed{\pi}\big)^{\beta_1} \Big({}^{(\mathcal{R}_i)}\widehat{\slashed{\pi}}\cdot \nablaslash \widehat{\slashed{\mathcal{L}}}_{\mathcal{R}_i}{}^{\beta_2} \widehat{\chib} \Big). 
\end{align*}
Using the estimates:
\begin{align*}
|\mu^{-1}\zetab|\lesssim\delta(-t)^{-3},\quad |\tr\leftexp{(R_{i})}{\slashed{\pi}}|\lesssim \delta(-t)^{-2},\quad |\chib'|\lesssim\delta(-t)^{-3},
\end{align*}
 the right hand side of \eqref{elliptic app} is bounded by:
 \begin{align*}
\|F_{\alpha}\|_{L^{2}(\Sigma_{t}^{\ub})}+\delta^{1/2}(-t)^{-2}\mu_{m}^{-b_{|\alpha|+2}}(t)\sqrt{\widetilde{E}_{\leq|\alpha|+2}(t,\ub)}
+\delta^{3/2}(-t)^{-3}\mu_{m}^{-b_{|\alpha|+2}-1/2}(t)\sqrt{\widetilde{\Eb}_{\leq|\alpha|+2}(t,\ub)}.
 \end{align*}
Therefore we have:
\begin{align*}
\|\mu\slashed{d}(X_{A}R_{i}^{\alpha-1}|\widehat{\chib}|^{2})\|_{L^{2}(\Sigma_{t}^{\ub})}\lesssim &\delta(-t)^{-3}\|F_{\alpha}\|_{L^{2}(\Sigma_{t}^{\ub})}\\ 
+&\delta^{3/2}(-t)^{-5}\mu_{m}^{-b_{|\alpha|+2}}(t)\sqrt{\widetilde{E}_{\leq|\alpha|+2}(t,\ub)}\\
+&\delta^{3/2}(-t)^{-6}\mu_{m}^{-b_{|\alpha|+2}+1/2}(t)\sqrt{\widetilde{\Eb}_{\leq|\alpha|+2}(t,\ub)}
\end{align*}
The contribution to $II$ of the first term on the right hand side is bounded by using Gronwall. The contributions from the rest two terms are bounded as:
\begin{align}\label{extra in tilde galpha}
\begin{split}
&\delta^{3/2}\int_{-r_{0}}^{t}\mu^{-b_{|\alpha|+2}}_{m}(t')(-t')^{-3}\sqrt{\widetilde{E}_{\leq|\alpha|+2}(t',\ub)}dt'\\
+&\delta^{3/2}\int_{-r_{0}}^{t}\mu^{-b_{|\alpha|+2}-1/2}_{m}(t')(-t')^{-4}\sqrt{\widetilde{\Eb}_{\leq|\alpha|+2}(t',\ub)}dt'.
\end{split}
\end{align} 

Combining \eqref{1st term in galpha}, \eqref{2nd term in galpha} and \eqref{extra in tilde galpha}, we have:
\begin{align}\label{estimate for II}
\begin{split}
\|II\|_{L^{2}([0,\ub]\times\mathbb{S}^{2})}\leq &\int_{-r_{0}}^{t}(-t')^{3}\|\widetilde{g}_{\alpha}(t',\ub)\|_{L^{2}([0,\ub]\times \mathbb{S}^{2})}dt'\leq\int_{-r_{0}}^{t}(-t')^{2}\|\widetilde{g}_{\alpha}\|_{L^{2}(\Sigma_{t'}^{\ub})}dt'\\
\lesssim &\delta\int_{-r_{0}}^{t}(-t')^{-2}\mathcal{B}_{0,\leq|\alpha|+1}(-r_{0})dt'\\+
&\delta^{1/2}\int_{-r_{0}}^{t}(-t')^{-2}\mu_{m}^{-b_{|\alpha|+2}-1/2}(t')\sqrt{\widetilde{\Eb}_{\leq|\alpha|+2}(t',\ub)}dt'\\
+&\delta^{1/2}\int_{-r_{0}}^{t}(-t')^{-2}\mu_{m}^{-b_{|\alpha|+2}}(t')\sqrt{\widetilde{E}_{\leq|\alpha|+2}(t',\ub)}dt'\\
+&\delta^{-1/2}(-t)^{-1}\mu_{m}^{-b_{|\alpha|+2}}(t)\left(\int_{0}^{\ub}\widetilde{\Fb}_{\leq|\alpha|+2}(t,\ub')d\ub'\right)^{1/2}.
\end{split}
\end{align}
Now we are ready to give the final estimate for $(-t)^{2}\|F_{\alpha}(t,\ub)\|_{L^{2}(\Sigma_{t}^{\ub})}\sim(-t)^{3}\|F_{\alpha}(t,\ub)\|_{L^{2}([0,\ub]\times\mathbb{S}^{2})}$. Combining \eqref{L2 chib I} and \eqref{estimate for II}, we have:
\begin{align}\label{estimate for Falpha}
\begin{split}
(-t)^{3}\|F_{\alpha}(t,\ub)\|_{L^{2}([0,\ub]\times \mathbb{S}^{2})}\leq &C\delta\int_{-r_{0}}^{t}(-t')^{-2}\mathcal{B}_{0,\leq|\alpha|+1}(-r_{0})dt'\\
+&\frac{C\delta^{1/2}(-t)^{-1}}{b_{|\alpha|+2}}\mu^{-b_{|\alpha|+2}}_{m}(t)\sqrt{\widetilde{E}_{\leq|\alpha|+2}(t,\ub)}\\
+&C\delta^{1/2}\int_{-r_{0}}^{t}(-t')^{-2}\mu_{m}^{-b_{|\alpha|+2}-1/2}(t')\sqrt{\widetilde{\Eb}_{\leq|\alpha|+2}(t',\ub)}dt'\\
+&C\delta^{1/2}\int_{-r_{0}}^{t}(-t')^{-2}\mu_{m}^{-b_{|\alpha|+2}}(t')\sqrt{\widetilde{E}_{\leq|\alpha|+2}(t',\ub)}dt'\\
+&C\delta^{-1/2}\mu_{m}^{-b_{|\alpha|+2}}(t)(-t)^{-1}\left(\int_{0}^{\ub}\widetilde{\Fb}_{\leq|\alpha|+2}(t,\ub')d\ub'\right)^{1/2}.
\end{split}
\end{align}
This also implies an estimate for $\ds X_{A}R^{\alpha-1}_{i}\tr\chib$:
\begin{align}\label{estimate for top order chib}
\begin{split}
(-t)^{2}\|\ds X_{A}R^{\alpha-1}_{i}\tr\chib\|_{L^{2}(\Sigma_{t}^{\ub})}\leq &C\delta\int_{-r_{0}}^{t}(-t')^{-2}\mathcal{B}_{0,\leq|\alpha|+1}(-r_{0})dt'\\
+&C\delta^{1/2}(-t)^{-1}\mu^{-b_{|\alpha|+2}}_{m}(t)\sqrt{\widetilde{E}_{\leq|\alpha|+2}(t,\ub)}\\
+&C\delta^{1/2}\int_{-r_{0}}^{t}(-t')^{-2}\mu_{m}^{-b_{|\alpha|+2}-1/2}(t')\sqrt{\widetilde{\Eb}_{\leq|\alpha|+2}(t',\ub)}dt'\\
+&C\delta^{1/2}\int_{-r_{0}}^{t}(-t')^{-2}\mu_{m}^{-b_{|\alpha|+2}}(t')\sqrt{\widetilde{E}_{\leq|\alpha|+2}(t',\ub)}dt'\\
+&C\delta^{-1/2}\mu_{m}^{-b_{|\alpha|+2}}(t)(-t)^{-1}\left(\int_{0}^{\ub}\widetilde{\Fb}_{\leq|\alpha|+2}(t,\ub')d\ub'\right)^{1/2}.
\end{split}
\end{align}
Here we assume $b_{|\alpha|+2}\geq 4$.
\subsection{Estimates for the contribution from $\mu$}
Here as in \cite{M-Y}, to avoid the loss of derivatives when we estimate the top order spatial derivatives of $\mu$, we need to commute $\slashed{\Delta}$ with the propagation equation of $\mu$.

Thanks to the following commutation formulas,
\begin{equation}\label{commutator slashed Laplace}
 \begin{split}
[\Lb,\slashed{\Delta}]\phi+\text{tr}\chib\slashed{\Delta}\phi &=-2\widehat{\chib}\cdot\widehat{\slashed{D}}^{2}\phi
-2\slashed{\text{div}}\widehat{\chib}\cdot\slashed{d}\phi,\\
[T,\slashed{\Delta}]\phi+c^{-1}\mu\text{tr}\theta
\slashed{\Delta}\phi&=-2c^{-1}\mu\widehat{\theta}
\cdot\widehat{\slashed{D}}^{2}\phi-2\slashed{\text{div}}(c^{-1}\mu\widehat{\theta})\cdot\slashed{d}\phi,
\end{split}
\end{equation}
we have
\begin{equation}\label{equaiton for Lb Laplacian mu}
\begin{split}
\Lb \slashed{\Delta} \mu = &-\dfrac{1}{2}\dfrac{d c^2}{d\rho}\slashed{\Delta} T\rho + \mu \slashed{\Delta} e + e \slashed{\Delta} \mu\\+ &\ds\mu\cdot\ds e-\tr\chib\slashed{\Delta}\mu-2\widehat{\chib}
\cdot\widehat{\slashed{D}}^{2}\mu
-2\slashed{\text{div}}\widehat{\chib}\cdot\ds\mu.
\end{split}
\end{equation}
According to \eqref{wave equation for rho}, 
\begin{align*}
\Box_{g}\rho=\frac{d\log(c)}{d\rho}\left(\mu^{-1}\Lb\rho L\rho+\ds\rho\cdot\ds\rho\right)+2\mu^{-1}L\psi_{0}\Lb\psi_{0}
+2\ds\psi_{0}\cdot\ds\psi_{0}.
\end{align*}
 Therefore, by multiplying $\mu$, we have 
\begin{align*}
\mu \slashed{\Delta} \rho = \Lb(L \rho) + \frac{1}{2}\Lb\rho\tr\chi+\frac{1}{2}L\rho\tr\chib +\frac{d\log(c)}{d\rho}\left(\Lb\rho L\rho+\mu\ds\rho\cdot\ds\rho\right)+2L\psi_{0}\Lb\psi_{0}
+2\mu\ds\psi_{0}\cdot\ds\psi_{0}.
\end{align*} 
  We commute $T$ and obtain
  \begin{align}\label{Laplacian Trho}
  \begin{split}
\mu \slashed{\Delta} T \rho = &\Lb(T L \rho) + \frac{1}{2}\Lb\rho(T\tr\chi)+\frac{1}{2}L\rho(T\tr\chib)\\+&[T,\Lb]L\rho+\frac{1}{2}\tr\chi T\Lb\rho+\frac{1}{2}TL\rho\tr\chib\\
+&T\left(\frac{d\log(c)}{d\rho}\left(\Lb\rho L\rho+\mu\ds\rho\cdot\ds\rho\right)
+2L\psi_{0}\Lb\psi_{0}
+2\mu\ds\psi_{0}\cdot\ds\psi_{0}\right)\\
+&c^{-1}\mu\tr\theta\slashed{\Delta}\rho
+2c^{-1}\mu\widehat{\theta}\cdot\widehat{\slashed{D}}^{2}\rho
+2\slashed{\text{div}}(c^{-1}\mu\widehat{\theta})\cdot\ds\rho
-(T\mu)\slashed{\Delta}\rho.
\end{split}
\end{align}
Therefore
\begin{align}\label{Laplacian m}
\begin{split}
-\frac{1}{2}\frac{dc^{2}}{d\rho}\mu\slashed{\Delta}T\rho=&\Lb\left(-\frac{1}{2}\frac{dc^{2}}{d\rho}TL\rho\right)-\frac{1}{2}\frac{dc^{2}}{d\rho}\left(\frac{1}{2}\Lb\rho(T\tr\chi)+\frac{1}{2}L\rho(T\tr\chib)\right)\\
-&\frac{1}{2}\frac{dc^{2}}{d\rho}\left([T,\Lb]L\rho+\frac{1}{2}\tr\chi T\Lb\rho+\frac{1}{2}TL\rho\tr\chib\right)\\
-&\frac{1}{2}\frac{dc^{2}}{d\rho}T\left(\frac{d\log(c)}{d\rho}\left(\Lb\rho L\rho+\mu\ds\rho\cdot\ds\rho\right)
+2L\psi_{0}\Lb\psi_{0}
+2\mu\ds\psi_{0}\cdot\ds\psi_{0}\right)\\
-&\frac{1}{2}\frac{dc^{2}}{d\rho}\left(c^{-1}\mu\tr\theta\slashed{\Delta}\rho
+2c^{-1}\mu\widehat{\theta}\cdot\widehat{\slashed{D}}^{2}\rho
+2\slashed{\text{div}}(c^{-1}\mu\widehat{\theta})\cdot\ds\rho
-(T\mu)\slashed{\Delta}\rho\right)\\+&\Lb\left(\frac{1}{2}\frac{dc^{2}}{d\rho}\right)TL\rho.
\end{split}
\end{align}
In view of the commutator formula, we also have 
\begin{align}\label{Laplacian e}
\begin{split}
\mu^{2}\slashed{\Delta}e=&\Lb\left(\frac{\mu^{2}}{2c^{2}}\frac{dc^{2}}{d\rho}\slashed{\Delta}\rho\right)+\frac{\mu^{2}}{c^{2}}\frac{dc^{2}}{d\rho}\left(\underline{\chi}\cdot\slashed{D}^{2}\rho
+\slashed{\textrm{div}}\hat{\underline{\chi}}
\cdot\slashed{d}\rho\right)-\Lb\left(\frac{\mu^{2}}{2c^{2}}\frac{dc^{2}}{d\rho}\right)\slashed{\Delta}\rho\\
+&\mu^{2}\frac{d}{d\rho}\left(\frac{1}{c^{2}}\frac{dc^{2}}{d\rho}\right)\slashed{d}\rho\cdot\slashed{d}\underline{L}\rho+
\mu^{2}\left(\frac{d}{d\rho}\left(\frac{1}{2c^{2}}\frac{dc^{2}}{d\rho}\right)\slashed{\Delta}\rho+\frac{d^{2}}{d\rho^{2}}\left(\frac{1}{2c^{2}}\frac{dc^{2}}{d\rho}\right)|\slashed{d}\rho|^{2}\right)\underline{L}\rho.
\end{split}
\end{align} 
With the same notation as in \cite{M-Y}, we define:
\begin{align}\label{modified top mu}
\check{f}':=-\frac{1}{2}\frac{dc^{2}}{d\rho}TL\rho
+\frac{\mu^{2}}{2c^{2}}\frac{dc^{2}}{d\rho}\slashed{\Delta}\rho,\quad F':=\mu\slashed{\Delta}\mu-\check{f}'.
\end{align}
and we have the following propagation equation for $F'$:
\begin{align}\label{transport equation for Lb laplaican mu}
\underline{L}F^{\prime}
+\left(\textrm{tr}\underline{\chi}
-2\mu^{-1}\underline{L}\mu\right)F^{\prime}=-\left(\frac{1}{2}\textrm{tr}\underline{\chi}
-2\mu^{-1}\underline{L}\mu\right)\check{f}^{\prime}
-2\mu\hat{\underline{\chi}}\cdot\hat{\slashed{D}}^{2}\mu
+\check{g}^{\prime}.
\end{align}
where
\begin{align}\label{g prime check}
\check{g}^{\prime}=\left(-\ds\mu+\frac{\mu}{c^{2}}\frac{dc^{2}}{d\rho}\ds\rho\right)\cdot\left(\mu\ds\tr\chib\right)
+\Psi^{\leq2}_{\geq-2,3}+\O^{\leq1}_{0,1}\Psi^{\leq2}_{\geq-2,2}
+\Psi^{\leq2}_{\geq0,4}. 
\end{align}
Similarly as in \cite{M-Y}, we have used the structure equation \eqref{Structure Equation T chib} to cancel the contribution from the term $\dfrac{1}{2}\Lb\rho(T\tr\chi)+\dfrac{1}{2}L\rho(T\tr\chib)$ in \eqref{Laplacian Trho} and the term $(\Lb\mu)\slashed{\Delta}\mu$ when we write $\Lb(\mu\slashed{\Delta}\mu)
=\mu\Lb(\slashed{\Delta}\mu)+\Lb(\mu)\slashed{\Delta}\mu$. We also remark that the $L^2$ norm of all derivatives on $\slashed{\textrm{div}}\chibh$ has been estimated from previous subsection. In such a sense, it can also be considered as a $\Psi_{2,4}^{\leq 2}$ term and we use \eqref{Structure Equation div chib} to replace $\divslash \chib$ by $\slashed{d}\tr\chib + \cdots$. The term $\Psi_{\geq-2,3}^{\leq2}$ comes from the contribution of $L\psi_{0}\Lb\psi_{0}$ and $\O^{\leq1}_{0,1}\Psi^{\leq2}_{\geq-2,2}$ comes from $-\dfrac{1}{4}\dfrac{dc^{2}}{d\rho}TL\rho\tr\chib$  in \eqref{Laplacian m}. Since we already applied $T$ to $L\psi_{0}\Lb\psi_{0}$ once in \eqref{Laplacian m}, instead of using flux $\Fb(t,\ub)$ as we did in the last subsection, we only need to use the energy $E(t,\ub)$ to control the contribution of this term. For the higher order derivatives $F'_{\alpha,l} = \mu X_{A}R_i{}^{\alpha'-1} T^{l}\slashed{\Delta} \mu - X_{A}R_i{}^{\alpha'-1} T^{l}\check{f'}$ with $|\alpha'|+l=|\alpha|$, we have:
\begin{equation}\label{transport equation for F alpha}
\Lb F'_{\alpha,l} + \left(\tr \chib - 2\mu^{-1}\Lb \mu\right)F'_{\alpha,l} = -\left(\frac{1}{2}\tr\chib - 2\mu^{-1}\Lb\mu\right)X_{A}R_i{}^{\alpha'-1}T^{l} \check{f}'-2\mu \widehat{\chib}\cdot \slashed{\mathcal{L}}_{X_{A}}\slashed{\mathcal{L}}_{R_{i}}^{\alpha'-1}
\slashed{\mathcal{L}}_{T}^{l}\widehat{\slashed{D}^2} \mu + \check{g}'_{\alpha',l}.
\end{equation}
where $\check{g}'_{\alpha',l}$ is given by
\begin{equation*}
\begin{split}
\check{g}'_{\alpha',l} = &\left(-\slashed{d}\mu + \frac{\mu}{c^2}\frac{d c^2}{d\rho} \slashed{d}\rho\right)\cdot\mu \slashed{d}\left(X_{A}R_i{}^{\alpha'-1}T^{l}\tr\chib\right) + l\cdot\Lambda F'_{\alpha,l-1} +\,^{(R_i)} \Zb F'_{\alpha-1,l}\\+&\O^{\leq|\beta|+1}_{-k,\geq1}\O^{\leq|\alpha|-|\beta|+1}_{\geq2-2l+k,\geq4}+\Psi_{\geq -2l-2,4}^{\leq |\alpha|+2}+\O^{\leq|\beta|+1}_{-k,1}\Psi^{\leq|\alpha|+2-|\beta|}_{\geq-2l-2+k,3}+\Psi_{\geq-2l,4}^{\leq|\alpha|+2}.
\end{split}
\end{equation*}
Note that $F'_{\alpha,l}$ is a scalar function instead of a $1$-form, so the inequality for $|F'_{\alpha,l}(t,\ub)|$ is slightly different from that of $|F_{\alpha}|$ as in the last subsection. We have:
\begin{align}\label{inequality for absolute value F'alpha}
\Lb|F'_{\alpha,l}|+\left(\tr \chib - 2\mu^{-1}\Lb \mu\right)|F'_{\alpha,l}|\leq\left(-\frac{1}{2}\tr\chib + 2\mu^{-1}|\Lb\mu|\right)\left|X_{A}R_i{}^{\alpha'-1}T^{l} \check{f}'\right|+\widetilde{\check{g}'}_{\alpha',l},
\end{align}
with
\begin{align*}
\widetilde{\check{g}'}_{\alpha',l}:=2\left|\mu\widehat{\chib}
\cdot\slashed{\mathcal{L}}_{X_{A}}\slashed{\mathcal{L}}^{\alpha'-1}_{R_{i}}
\slashed{\mathcal{L}}^{l}_{T}\widehat{\slashed{D}}^{2}\mu\right|+|\check{g}'_{\alpha,l}|
\end{align*}
\begin{align}\label{modified inequality for higher order F'}
\begin{split}
&\partial_{t}\left(\left(-t+\ub\right)^{2}|F'_{\alpha,l}|\right)+\left(\tr \chib - 2\mu^{-1}\Lb \mu\right)\left(-t+\ub\right)^{2}|F'_{\alpha,l}|\\
\leq &(-t+\ub)^{2}\left(2\mu^{-1}|\partial_{t}\mu|-\tr\chib\right)| X_{A}R_{i}^{\alpha'-1}T^{l}\check{f}'|+(-t+\ub)^{2}\widetilde{\check{g}'}_{\alpha',l},
\end{split}
\end{align}

Integrating \eqref{modified inequality for higher order F'} and taking the $L^{2}$ norm on $[0,\ub]\times\mathbb{S}^{2}$, we obtain:
\begin{align}\label{L2 F'alpha preliminary}
\begin{split}
(-t)^{2}\delta^{l+1}\|F'_{\alpha,l}(t)\|_{L^{2}([0,\ub]\times\mathbb{S}^{2})}\lesssim &r_{0}^{2}\delta^{l+1}\|F'_{\alpha,l}(-r_{0})\|_{L^{2}([0,\ub]\times\mathbb{S}^{2})}\\
+&\delta^{l+1}\int_{-r_{0}}^{t}(-t'+\ub)^{2}\left(2\mu^{-1}|\partial_{t}\mu|-\tr\chib\right)\| X_{A}R_{i}^{\alpha'-1}T^{l}\check{f}'(t')\|_{L^{2}([0,\ub]\times\mathbb{S}^{2})}dt'\\
+&\delta^{l+1}\int_{-r_{0}}^{t}(-t'+\ub)^{2}\|\widetilde{\check{g}'}_{\alpha',l}(t')\|_{L^{2}([0,\ub]\times\mathbb{S}^{2})}dt':=I'+II'+III'.
\end{split}
\end{align}
Since $I'$ depends only on initial data, here we will focus on $II'$ and $III'$. In view of the definition of $\check{f}'$ \eqref{modified top mu}, 
\begin{align*}
\delta^{l+1}\| X_{A}R_{i}^{\alpha'-1}T^{l}\check{f}'\|_{L^{2}(\Sigma_{t}^{\ub})}\lesssim\delta^{1/2}(-t)^{-2}\sqrt{E_{\leq|\alpha|+2}(t,\ub)}+\delta^{3/2}(-t)^{-3}\sqrt{\Eb_{\leq|\alpha|+2}(t,\ub)}.
\end{align*}
This together with Lemma \ref{lemma on mu to power a} and the pointwise estimates for $\tr\chib$ as well as $\Lb\mu$, we have:
\begin{align}\label{II'}
\begin{split}
II'
\lesssim&\delta^{1/2}\mu_{m}^{-b_{|\alpha|+2}}(t)(-t)^{-1}\sqrt{\widetilde{E}_{\leq|\alpha|+2}(t,\ub)}+\delta^{1/2}\int_{-r_{0}}^{t}(-t')^{-2}\mu_{m}^{-b_{|\alpha|+2}}(t')\sqrt{\widetilde{E}_{\leq|\alpha|+2}(t',\ub)}dt'\\
+&\delta^{3/2}\int_{-r_{0}}^{t}(-t')^{-3}\sqrt{\Eb_{\leq|\alpha|+2}(t',\ub)}dt'.
\end{split}
\end{align}
Next we move to the $L^{2}$ norm of $\check{g}'_{\alpha',l}$. First, in view of the pointwise estimates $|\Lambda|\lesssim(-t)^{-2}$, $|\leftexp{(R_{i})}{\Zb}|\lesssim\delta(-t)^{-2}$, the two terms $l\cdot\Lambda F'_{\alpha,l-1}$, $\leftexp{(R_{i})}{\Zb}F'_{\alpha-1,l}$ are controlled by Gronwall. If $l=0$, in view of \eqref{estimate for top order chib} and the pointwise estimates for $\ds\mu$ and $\ds\rho$, the contribution of the first term in the expression of $\check{g}'_{\alpha',l}$ to $III'$ can be bounded as
\begin{align}\label{III' 1}
\begin{split}
&\delta^{3/2}\mu^{-b_{|\alpha|+2}}_{m}(t)(-t)^{-1}\sqrt{\widetilde{E}_{\leq|\alpha|+2}(t,\ub)}\\
+&\delta^{3/2}\int_{-r_{0}}^{t}(-t')^{-2}\mu_{m}^{-b_{|\alpha|+2}-1/2}(t')\sqrt{\widetilde{\Eb}_{\leq|\alpha|+2}(t',\ub)}dt'\\
+&\delta^{3/2}\int_{-r_{0}}^{t}(-t')^{-2}\mu_{m}^{-b_{|\alpha|+2}}(t')\sqrt{\widetilde{E}_{\leq|\alpha|+2}(t',\ub)}dt'\\
+&\delta^{1/2}\mu_{m}^{-b_{|\alpha|+2}}(t)(-t)^{-1}\left(\int_{0}^{\ub}\widetilde{\Fb}_{\leq|\alpha|+2}(t,\ub')d\ub'\right)^{1/2}.
\end{split}
\end{align}
If $l\geq1$, then in view of Proposition \ref{Proposition lower order L2 mu}, the pointwise estimates for $\ds\mu$, $\ds\rho$ and the structure equation \eqref{Structure Equation T chib}, the contribution of this term is bounded as:
\begin{align}\label{III' 2}
\begin{split}
&\delta^{2}\int_{-r_{0}}^{t}(-t')^{-3}\left(\mathcal{B}_{l-1,\leq|\alpha|+2}(t')+\sqrt{E_{\leq|\alpha|+2}(t',\ub)}+\mu_{m}^{-1/2}(t')\sqrt{\widetilde{\Eb}_{\leq|\alpha|+2}(t',\ub)}\right)dt'\\
\lesssim &\delta^{2}\int_{-r_{0}}^{t}(-t')^{-3}\left(\mathcal{B}_{l-1,\leq|\alpha|+2}(-r_{0})+\sqrt{E_{\leq|\alpha|+2}(t',\ub)}+\mu_{m}^{-1/2}(t')\sqrt{\widetilde{\Eb}_{\leq|\alpha|+2}(t',\ub)}\right)dt'.
\end{split}
\end{align}
Now we move to the second line of the expression of $\check{g}'_{\alpha',l}$. Again, the term $\O^{\leq|\beta|+1}_{-k,\geq1}\O^{\leq|\alpha|-|\beta|+1}_{\geq2-2l+k,\geq4}$ can be absorbed by $\O^{\leq|\beta|+1}_{-k,1}\Psi^{\leq|\alpha|+2-|\beta|}_{\geq-2l-2+k,3}$. So we only need to consider the last three terms in this expression. In view of Proposition \ref{Proposition lower order L2} and Proposition \ref{Proposition lower order L2 mu}, the contributions of these terms to $III'$ are bounded as follows:
\begin{align}\label{III' 3}
\begin{split}
\delta^{l+1}\int_{-r_{0}}^{t}(-t')\|\Psi^{\leq|\alpha|+2}_{\geq-2l-2,4}\|_{L^{2}(\Sigma_{t'}^{\ub})}dt'
\lesssim\delta^{1/2}\int_{-r_{0}}^{t}(-t')^{-2}\sqrt{E_{\leq|\alpha|+2}(t',\ub)}dt'
\end{split}
\end{align}
\begin{align}\label{III' 4}
\begin{split}
&\delta^{l+1}\int_{-r_{0}}^{t}(-t')\|\O^{\leq|\beta|+1}_{-k,1}\Psi^{\leq|\alpha|+2-|\beta|}_{\geq-2l-2+k,3}\|_{L^{2}(\Sigma_{t'}^{\ub})}dt'\\
\lesssim&\delta^{1/2}\int_{-r_{0}}^{t}(-t')^{-2}\sqrt{E_{\leq|\alpha|+2}(t',\ub)}dt'+\delta^{1/2}\int_{-r_{0}}^{t}(-t')^{-2}\mu_{m}^{-1/2}(t')\sqrt{\Eb_{\leq|\alpha|+2}(t',\ub)}dt'.
\end{split}
\end{align}
and
\begin{align}\label{III' 5}
\begin{split}
\delta^{l+1}\int_{-r_{0}}^{t}(-t')\|\Psi^{\leq|\alpha|+2}_{\geq-2l,4}\|_{L^{2}(\Sigma_{t'}^{\ub})}dt'\lesssim&\delta^{1/2}\int_{-r_{0}}^{t}(-t')^{-2}\mu_{m}^{-1/2}(t')\sqrt{\Eb_{\leq|\alpha|+2}(t',\ub)}dt'\\
+&\delta^{3/2}\int_{-r_{0}}^{t}(-t')^{-2}\sqrt{E_{\leq|\alpha|+2}(t',\ub)}dt'.
\end{split}
\end{align}
Now combining \eqref{L2 F'alpha preliminary} and \eqref{II'}-\eqref{III' 5}, we obtain the following estimates:
\begin{align}\label{estimates for F'alpha final}
\begin{split}
\delta^{l+1}(-t)\|F'_{\alpha,l}(t)\|_{L^{2}(\Sigma_{t}^{\ub})}\lesssim&\delta^{l+1}r_{0}\|F'_{\alpha,l}(-r_{0})\|_{L^{2}(\Sigma_{-r_{0}}^{\ub})}\\
+&\delta^{1/2}\mu^{-b_{|\alpha|+2}}_{m}(t)(-t)^{-1}\sqrt{\Et_{\leq|\alpha|+2}(t,\ub)}\\
+&\delta^{1/2}\int_{-r_{0}}^{t}(-t')^{-2}\mu_{m}^{-b_{|\alpha|+2}}(t')\sqrt{\Et_{\leq|\alpha|+2}(t',\ub)}dt'\\
+&\delta^{1/2}\int_{-r_{0}}^{t}(-t')^{-2}\mu^{-b_{|\alpha|+2}-1/2}_{m}(t')\sqrt{\Ebt_{\leq|\alpha|+2}(t',\ub)}dt'\\
+&\delta^{1/2}\mu_{m}^{-b_{|\alpha|+2}}(t)(-t)^{-1}\left(\int_{0}^{\ub}\widetilde{\Fb}_{\leq|\alpha|+2}(t,\ub')d\ub'\right)^{1/2}.
\end{split}
\end{align}
These in terms imply the estimates for $\delta^{l+1}\|\mu R^{\alpha'}T^{l}\slashed{\Delta}\mu\|_{L^{2}(\Sigma_{t}^{\ub})}$:
\begin{align}\label{estimates for top order mu}
\begin{split}
\delta^{l+1}\|\mu R^{\alpha'}T^{l}\slashed{\Delta}\mu\|_{L^{2}(\Sigma_{t}^{\ub})}\lesssim&\delta^{l+1}r_{0}\|F'_{\alpha,l}(-r_{0})\|_{L^{2}(\Sigma_{-r_{0}}^{\ub})}\\
+&\delta^{1/2}\mu^{-b_{|\alpha|+2}}_{m}(t)(-t)^{-1}\sqrt{\Et_{\leq|\alpha|+2}(t,\ub)}\\
+&\delta^{3/2}(-t)^{-2}\mu^{-b_{|\alpha|+2}}_{m}(t)\sqrt{\Ebt_{\leq|\alpha|+2}(t,\ub)}\\
+&\delta^{1/2}\int_{-r_{0}}^{t}(-t')^{-2}\mu_{m}^{-b_{|\alpha|+2}}(t')\sqrt{\Et_{\leq|\alpha|+2}(t',\ub)}dt'\\
+&\delta^{1/2}\int_{-r_{0}}^{t}(-t')^{-2}\mu^{-b_{|\alpha|+2}-1/2}_{m}(t')\sqrt{\Ebt_{\leq|\alpha|+2}(t',\ub)}dt'\\
+&\delta^{1/2}\mu_{m}^{-b_{|\alpha|+2}}(t)(-t)^{-1}\left(\int_{0}^{\ub}\widetilde{\Fb}_{\leq|\alpha|+2}(t,\ub')d\ub'\right)^{1/2}.
\end{split}
\end{align}
\section{Commutator estimates}
In this section, we shall estimate the error spacetime integrals contributed by commutators.

Let $\psi$ be a solution of the inhomogeneous wave equation $\Box_{\gt} \psi = \rho$ and $Z$ be a vector field, one can commute $Z$ with the equation to derive
\begin{equation}\label{commute an arbitary vector field with inhomogeneous wave equation}
\Box_{\gt} \left( Z \psi \right) = Z \rho + \frac{1}{2}\tr_{\gt}{}^{(Z)}\widetilde{\pi} \cdot \rho + c^2 \text{div}_{g} \, {}^{(Z)} J
\end{equation}
where the vector field ${}^{(Z)}J$ is defined by
\begin{equation*}
{}^{(Z)}J^{\mu} = \left( {}^{(Z)}\widetilde{\pi}^{\mu\nu} - \frac{1}{2}g^{\mu\nu} \tr_{g}{}^{(Z)}\widetilde{\pi} \right)\partial_\nu \psi.
\end{equation*}
We remark that the raising indices for ${}^{(Z)}\widetilde{\pi}^{\mu\nu}$ are with respect to the optic metric $g$.

In applications, we use the above formulas for homogeneous wave equations $\Box_{\gt} \psi =0$ and commute some commutation vector fields $Z_i$'s several times. Therefore, we need the following recursion formulas:
\begin{equation}\label{commute vector fields with equations}
\begin{split}
\Box_{\gt} \psi_n &= \rho_n. \ \ \psi_n = Z \psi_{n-1}, \ \ \rho_1 = 0,\\
\rho_n &= Z \rho_{n-1} + \frac{1}{2}\tr_{\gt}{}^{(Z)}\widetilde{\pi} \cdot \rho_{n-1} + c^2 \text{div}_{g} \, {}^{(Z)} J_{n-1},\\
{}^{(Z)}J_{n-1}^{\mu}&=\left( {}^{(Z)}\widetilde{\pi}^{\mu\nu} - \frac{1}{2}g^{\mu\nu} \tr_{g}{}^{(Z)}\widetilde{\pi} \right)\partial_\nu \psi_{n-1}.
\end{split}
\end{equation}

\begin{remark}\label{modified error conformal}
When we derive energy estimates for $\Box_{\gt} \psi_n = \rho_n$, due to the volume form of the conformal optic metric $\gt$, the integrands $\widetilde{\rho}_n$ appearing in the error terms is slightly different from $\rho_n$. The rescaled source terms $\widetilde{\rho}_n$ are defined as follows:
\begin{equation}\label{rescaled source terms}
\begin{split}
\widetilde{\rho}_n &= \frac{1}{c^2}\mu \rho_n = Z \widetilde{\rho}_{n-1} +{}^{(Z)} \delta \cdot \widetilde{\rho}_{n-1} + {}^{(Z)} \sigma_{n-1}, \\
 \widetilde{\rho}_1 &=0, \ \ {}^{(Z)} \sigma_{n-1} = \mu\cdot \text{div}_{g} \, {}^{(Z)} J_{n-1}, \ \  {}^{(Z)} \delta =\frac{1}{2}\tr_{\gt}{}^{(Z)}\widetilde{\pi}-\mu^{-1}Z\mu +2Z\big(\log(c)\big).
\end{split}
\end{equation}
\end{remark}
Then the error spacetime integrals corresponding to $K_{0}=L+\Lb$ and $K_{1}=\dfrac{2(-t)}{\widetilde{\tr}\chib}\Lb$ containing $\rho_{n}$ are as follows:
\begin{align*}
-\int_{W^{t}_{\ub}}\frac{1}{c^{2}}\rho_{n}K_{0}\psi_{n}d\mu_{g}
=&-\int_{W^{t}_{\ub}}\widetilde{\rho}_{n}\left(L\psi_{n}
+\Lb\psi_{n}\right)dtd\ub d\mu_{\slashed{g}}\\
-\int_{W^{t}_{\ub}}\frac{1}{c^{2}}\rho_{n}\left(K_{1}\psi_{n}-t\psi_{n}\right)d\mu_{g}
=&-\int_{W^{t}_{\ub}}\widetilde{\rho}_{n}\left(K_{1}\psi_{n}-t\psi_{n}\right)dtd\ub d\mu_{\slashed{g}}
\end{align*}
We first consider the contribution of $\leftexp{(Z)}{\sigma}_{n-1}$ in $\widetilde{\rho}_{n}$. We write $\leftexp{(Z)}{\sigma}_{n-1}$ in null frame $(\Lb,L,\dfrac{\partial}{\partial\theta^{A}})$:
\begin{align*}
\leftexp{(Z)}{\sigma}_{n-1}=&-\frac{1}{2}L(\leftexp{(Z)}{J}_{n-1,\underline{L}})-\frac{1}{2}\underline{L}
(\leftexp{(Z)}{J}_{n-1,L})+\slashed{\textrm{div}}(\mu\leftexp{(Z)}{\slashed{J}}_{n-1})\\\notag
&-\frac{1}{2}\underline{L}(c^{-2}\mu)\leftexp{(Z)}{J}_{n-1,\underline{L}}-\frac{1}{2}\textrm{tr}\chi
\leftexp{(Z)}{J}_{n-1,\underline{L}}-\frac{1}{2}\textrm{tr}\underline{\chi}\leftexp{(Z)}{J}_{n-1,L},
\end{align*}
Then with the following expressions for  the components of $\leftexp{(Z)}{J}_{n-1}$ in the null frame:
\begin{align*}
\leftexp{(Z)}{J}_{n-1,\underline{L}}=&-\frac{1}{2}\textrm{tr}\leftexp{(Z)}{\tilde{\slashed{\pi}}}
(\underline{L}\psi_{n-1})+\leftexp{(Z)}{\tilde{\underline{Z}}}\cdot\slashed{d}\psi_{n-1}\\
\leftexp{(Z)}{J}_{n-1,L}=&-\frac{1}{2}\textrm{tr}\leftexp{(Z)}{\tilde{\slashed{\pi}}}(L\psi_{n-1})
+\leftexp{(Z)}{\tilde{Z}}\cdot\slashed{d}\psi_{n-1}-\frac{1}{2\mu}\leftexp{(Z)}{\tilde{\pi}}_{LL}
(\underline{L}\psi_{n-1})\\
\mu\leftexp{(Z)}{\slashed{J}}^{A}_{n-1}=&-\frac{1}{2}\leftexp{(Z)}{\tilde{Z}}^{A}(\underline{L}\psi_{n-1})
-\frac{1}{2}\leftexp{(Z)}{\tilde{\underline{Z}}}^{A}(L\psi_{n-1})\\\notag
&+\frac{1}{2}(\leftexp{(Z)}{\tilde{\pi}}_{L\underline{L}}-\mu\textrm{tr}\leftexp{(Z)}{\tilde{\slashed{\pi}}})
\slashed{d}^{A}\psi_{n-1}+\mu\leftexp{(Y)}{\tilde{\slashed{\pi}}}^{A}_{B}\slashed{d}^{B}\psi_{n-1}
\end{align*}
Based on the above expressions, we decompose:
\begin{align*}
\leftexp{(Z)}{\sigma}_{n-1}=\leftexp{(Z)}{\sigma}_{1,n-1}+\leftexp{(Z)}{\sigma}_{2,n-1}+\leftexp{(Z)}{\sigma}_{3,n-1}
\end{align*}
where $\leftexp{(Z)}{\sigma}_{1,n-1}$ contains the products of components of $\leftexp{(Z)}{\widetilde{\pi}}$ with the 2nd derivatives of $\psi_{n-1}$, $\leftexp{(Z)}{\sigma}_{2,n-1}$ contains the products of the 1st derivatives of $\leftexp{(Z)}{\widetilde{\pi}}$ with the 1st derivatives of
$\psi_{n-1}$, and $\leftexp{(Z)}{\sigma}_{3,n-1}$ contains the other lower order terms. More specifically, we have:
\begin{align}\label{sigma1}
\begin{split}
\leftexp{(Z)}{\sigma}_{1,n-1}=&\frac{1}{2}\text{tr}\leftexp{(Z)}{\tilde{\slashed{\pi}}}(\Lb L\psi_{n-1}+\frac{1}{2}\text{tr}_{\tilde{\slashed{g}}}\widetilde{\chib}L\psi_{n-1})\\
&+\frac{1}{4}(\mu^{-1}\leftexp{(Z)}{\tilde{\pi}}_{LL})\Lb^{2}\psi_{n-1}\\
&-\leftexp{(Z)}{\tilde{Z}}\cdot\ds\Lb\psi_{n-1}-\leftexp{(Z)}{\tilde{\Zb}}\cdot\ds L\psi_{n-1}\\
&+\frac{1}{2}\leftexp{(Z)}{\tilde{\pi}}_{L\Lb}\slashed{\Delta}\psi_{n-1}+\mu\leftexp{(Z)}{\widehat{\tilde{\pi}}}\cdot\slashed{D}^{2}\psi_{n-1}
\end{split}
\end{align}
\begin{align}\label{sigma2}
\begin{split}
\leftexp{(Z)}{\sigma}_{2,n-1}=&\frac{1}{4}\Lb(\text{tr}\leftexp{(Z)}{\tilde{\slashed{\pi}}})L\psi_{n-1}+\frac{1}{4}L(\text{tr}\leftexp{(Z)}{\tilde{\slashed{\pi}}})\Lb\psi_{n-1}\\
&+\frac{1}{4}\Lb(\mu^{-1}\leftexp{(Z)}{\tilde{\pi}}_{LL})\Lb\psi_{n-1}\\
&-\frac{1}{2}\slashed{\mathcal{L}}_{\Lb}\leftexp{(Z)}{\tilde{Z}}\cdot\ds\psi_{n-1}-\frac{1}{2}\slashed{\mathcal{L}}_{L}\leftexp{(Z)}{\tilde{\Zb}}\cdot\ds\psi_{n-1}\\
&-\frac{1}{2}\slashed{\text{div}}\leftexp{(Z)}{\tilde{Z}}\Lb\psi_{n-1}-\frac{1}{2}\slashed{\text{div}}\leftexp{(Z)}{\tilde{\Zb}}L\psi_{n-1}\\
&+\frac{1}{2}\ds\leftexp{(Z)}{\tilde{\pi}}_{L\Lb}\ds\psi_{n-1}+\slashed{\text{div}}(\mu\leftexp{(Z)}{\widehat{\tilde{\slashed{\pi}}}})\cdot\ds\psi_{n-1}
\end{split}
\end{align}
and
\begin{align}\label{sigma3}
\begin{split}
\leftexp{(Z)}{\sigma}_{3,n-1}=&\big(\frac{1}{4}\text{tr}\chi\text{tr}\leftexp{(Z)}{\tilde{\slashed{\pi}}}+\frac{1}{4}\text{tr}\chib(\mu^{-1}\leftexp{(Z)}{\tilde{\pi}}_{LL})\\&+\frac{1}{2}\leftexp{(Z)}{\tilde{\Zb}}\cdot\ds(c^{-2}\mu)\big)\Lb\psi_{n-1}-\frac{1}{4}(\Lb\log(c^{-1}))\text{tr}\leftexp{(Z)}{\tilde{\slashed{\pi}}}L\psi_{n-1}\\
&-\big((\frac{1}{2}\text{tr}\chi+\Lb(c^{-2}\mu))\leftexp{(Z)}{\tilde{\Zb}}+\frac{1}{2}\text{tr}\chib\leftexp{(Z)}{\tilde{Z}}\big)\ds\psi_{n-1}
\end{split}
\end{align}
With these expressions for $\leftexp{(Z)}{\sigma}_{n-1}$, we are able to investigate the structure of $\widetilde{\rho}_{n}$. Basically, we want to use the recursion formulas in \eqref{rescaled source terms} to obtain a relatively explicit expression for $\widetilde{\rho}_{n}$.

On the other hand, for the energy estimates, we consider the following possible $\psi_{n}$:
\begin{align*}
\psi_{n}=R_{i}^{\alpha+1}\psi, \quad \psi_{n}=R^{\alpha'}_{i}T^{l+1}\psi,\quad \psi_{n}=QR^{\alpha'}_{i}T^{l}\psi
\end{align*}
Here $\psi_{n}$ is the $n$th order variation and $n=|\alpha|+1=|\alpha'|+l+1$ and $\psi$ is any first order variation. The reason that we can always first apply $T$, then $R_{i}$, and finally a possible $Q$ is that the commutators $[R_{i},T]$, $[R_{i},Q]$, and $[T,Q]$ are one order lower than $R_{i}T, TR_{i}$; $Q R_{i}, R_{i}Q$; and $QT,TQ$ respectively. Moreover, all these commutators are tangent to $S_{t,\ub}$. Since we let $Q$ be the last possible commutator, there will be no $Q$'s in $\psi_{n-1}$ in the second term on the right hand side of \eqref{sigma1}. Therefore we only need to commute $Q$ once.

Now suppose that we consider the variations of order $n=|\alpha|+2$ in the following form:
\begin{align*}
\psi_{|\alpha|+2}:=Z_{|\alpha|+1}...Z_{1}\psi.
\end{align*}
We have the inhomogeneous wave equation:
\begin{align*}
\Box_{\tilde{g}}\psi_{|\alpha|+2}=\rho_{|\alpha|+2}
\end{align*}
As we pointed out in Remark 8.1, we define:
\begin{align*}
\widetilde{\rho}_{|\alpha|+2}=\frac{\mu}{c^{2}}\rho_{|\alpha|+2}
\end{align*}
Then by a induction argument, the corresponding inhomogeneous term $\widetilde{\rho}_{|\alpha|+2}$ is given by:
\begin{align}\label{rho tilde n}
\widetilde{\rho}_{|\alpha|+2}=\sum_{k=0}^{|\alpha|}\big(Z_{|\alpha|+1}+\leftexp{(Z_{|\alpha|+1})}{\delta}\big)...\big(Z_{|\alpha|-k+2}+\leftexp{(Z_{|\alpha|-k+2})}{\delta}\big)\leftexp{(Z_{|\alpha|-k+1})}{\sigma}_{|\alpha|-1+k}
\end{align}
\subsection{Error Estimates for the lower order terms}
Consider an arbitrary term in the sum \eqref{rho tilde n}. There is a total of $k$ derivatives with respect to the commutators acting on $\leftexp{(Z)}{\sigma}_{|\alpha|-1+k}$. In view of the fact that $\leftexp{(Z)}{\sigma}_{|\alpha|-1+k}$ has the structure described in \eqref{sigma1}, \eqref{sigma2} and \eqref{sigma3}, in considering the partial contribution of each term in $\leftexp{(Z)}{\sigma}_{1,|\alpha|-1+k}$, if the factor which is a component of $\leftexp{(Z)}{\tilde{\pi}}$ receives more than $\big[\dfrac{|\alpha|+1}{2}\big]$ derivatives with respect to the commutators, then the factor which is a $2$nd order derivative of $\psi_{|\alpha|+1-k}$ receives at most $k-\big[\dfrac{|\alpha|+1}{2}\big]-1$ order derivatives of commutators, thus corresponds to a derivative of the $\psi$ of order at most: $k-\big[\dfrac{|\alpha|+1}{2}\big]+1+|\alpha|-k=\big[\dfrac{|\alpha|}{2}\big]+1$, therefore this factor is bounded in $L^{\infty}(\Sigma_{t}^{\ub})$ by the bootstrap assumption. Also, in considering the partial contribution of each term in $\leftexp{(Z)}{\sigma}_{2,|\alpha|+1-k}$, if the factor which is a 1st derivative of $\leftexp{(Z)}{\tilde{\pi}}$ receives more than $\big[\dfrac{|\alpha|+1}{2}\big]-1$ derivatives with respect to the commutators, then the factor which is a 1st derivative of $\psi_{|\alpha|+1-k}$ receives at most $k-\big[\dfrac{|\alpha|+1}{2}\big]$ derivatives with respect to the commutators, thus corresponds to a derivative of the $\psi_{\alpha}$ of order at most $ k-\big[\dfrac{|\alpha|+1}{2}\big]+1+|\alpha|-k=\big[\dfrac{|\alpha|}{2}\big]+1$, therefore this factor is again bounded in $L^{\infty}(\Sigma_{t}^{\ub})$ by the bootstrap assumption. Similar considerations apply to $\leftexp{(Z)}{\sigma}_{3,|\alpha|+1-k}$. We conclude that for all the terms in the sum in \eqref{rho tilde n} of which one factor is a derivative of the $\leftexp{(Z)}{\tilde{\pi}}$ of order more than $\big[\dfrac{|\alpha|+1}{2}\big]$, the other factor is then a derivative of the $\psi_{\alpha}$ of order at most $\big[\dfrac{|\alpha|}{2}\big]+1$ and is thus bounded in $L^{\infty}(\Sigma_{t}^{\ub})$ by the bootstrap assumption. Of these terms we shall estimate the contribution of those containing the top order spatial derivatives of the optical entities in the next subsection. Before we give the estimates for the contribution of the lower order optical terms to the spacetime integrals:

\begin{align}\label{spacetime error lower order}
-\delta^{2k}\int_{W^{t}_{\ub}}\widetilde{\rho}_{\leq|\alpha|+2}\left(L\psi_{\leq|\alpha|+2}+\Lb\psi_{\leq|\alpha|+2}\right)dtd\ub d\mu_{\slashed{g}},\quad -\delta^{2k}\int_{W^{t}_{\ub}}\widetilde{\rho}_{\leq|\alpha|+2}\left(\frac{2(-t')}{\widetilde{\tr}\chib}\Lb\psi_{\leq|\alpha|+2}-t'\psi_{\leq|\alpha|+2}\right)dtd\ub d\mu_{\slashed{g}},
\end{align}
we investigate the behavior of these integrals with respect to $\delta$. Here $k$ is the number of $T$s in string of commutators.
For the multiplier $K_{1}=\dfrac{2(-t)}{\widetilde{\tr}\chib}\Lb$, the associated energy inequality is 
\begin{align}\label{energy inequality delta behavior K1}
\Eb_{\leq|\alpha|+2}(t,\ub)+\Fb_{\leq|\alpha|+2}(t,\ub)+K_{\leq|\alpha|+2}(t,\ub)\lesssim\Eb_{\leq|\alpha|+2}(-r_{0},\ub)+\int_{W^{t}_{\ub}}\widetilde{Q}_{1,\leq|\alpha|+2}.
\end{align}
The quantities $K_{\leq|\alpha|+2}(t,\ub)$ are defined similar as $K(t,\ub)$:
\begin{align*}
K_{\leq|\alpha|+2}(t,\ub):=\sum_{|\alpha'|\leq |\alpha|+1}\delta^{l'}K(t,\ub)[Z^{\alpha'}\psi],
\end{align*}
Again, $l'$ is the number of $T$'s in $Z^{\alpha'}$.

In $\widetilde{Q}_{1,\leq|\alpha|+2}$, there are contributions from the deformation tensors of two multipliers, which has been treated in Section 5. There are also contributions from the deformation tensors of commutators, which are given by \eqref{rho tilde n}. Now we investigate the terms which are not top order optical terms, namely, the terms containing $\chib$ and $\mu$ of order less than $|\alpha|+2$. In view of the discussion in Section 5, the left hand side of \eqref{energy inequality delta behavior K1} is of order $\delta$, so we expect these lower order terms in the second integral of \eqref{spacetime error lower order} is at least of order $\delta$. In fact the integration on $W^{t}_{\ub}$ gives us a $\delta$ and the contribution from the variation $\delta^{k}\left(\dfrac{2(-t')}{\widetilde{\tr}\chib}\Lb\psi_{\leq|\alpha|+2}-t\psi_{\leq|\alpha|+2}\right)$ is of order $\delta^{1/2}$. For the behavior of $\delta^{k}\sigma$, we take a look at $\sigma_{1}$ as an example. Let $k'$ be the number of $T$s applied to $\left(\Lb L\psi_{l}+\frac{1}{2}\widetilde{\tr}\chib L\psi_{l}\right)$. \eqref{box psi_0 in null frame} implies 
\begin{align*}
\delta^{k'}\left(\Lb L\psi_{l}+\frac{1}{2}\widetilde{\tr}\widetilde{\chib}L\psi_{l}\right)\sim\delta^{1/2}
+\delta^{k'}\rho_{l}.
\end{align*}
Since $\rho_{1}=0$, an induction argument implies that 
 \begin{align*}
 \delta^{k'}\left(\Lb L\psi_{l}+\frac{1}{2}\widetilde{\tr}\widetilde{\chib}L\psi_{l}\right)\sim\delta^{1/2}.
 \end{align*}
 Then in view of \eqref{deformation tensor of T}, \eqref{deformation tensor of Q} and \eqref{deformation tensor of Rotational R_i in T,Lb,X_A}, the first term in $\sigma_{1}$ behaves like $\delta^{1/2}$. Following the same procedure, one sees straightforwardly that all the other terms in $\sigma_{1}, \sigma_{2}$ and $\sigma_{3}$ behave like $\delta^{1/2}$ (one keeps in mind that if $Z=T$, then we multiplier a $\delta$ with the corresponding deformation tensor.) except
 the term $\Lb\left(\tr{}^{(Z)}\widetilde{\slashed{\pi}}\right)L\psi_{l}$. For this term we use the argument deriving \eqref{Improved estimate for trQ} and Proposition \ref{L infity estimates on lot} to see actually we have:
\begin{align*}
\|\Lb\big(\text{tr}\leftexp{(Q)}{\tilde{\slashed{\pi}}}\big)\|_{L^{\infty}(\Sigma_{t}^{\ub})}\lesssim\delta.
\end{align*}
This completes the discussions for $\sigma$ associated to $K_{1}$.

The same argument applies to the energy inequality associated to $K_{0}$: 
\begin{align}\label{energy inequality delta behavior K0}
E_{\leq|\alpha|+2}(t,\ub)+F_{\leq|\alpha|+2}(t,\ub)\lesssim E_{\leq|\alpha|+2}(-r_{0},\ub)+\int_{W^{t}_{\ub}}\widetilde{Q}_{0,\leq|\alpha|+2}.
\end{align}
and we conclude that the lower order optical terms in the error spacetime integrals have one more power in $\delta$ than the energies on the left hand side. 

Now we summarize the spacetime error estimates for the terms which come from the $L^{2}$ norms of the lower order optical quantities. In view of the proof for Proposition \ref{Proposition lower order L2} and \ref{Proposition lower order L2 mu} and the bootstrap assumption on $L^{\infty}$ norms of variations, the contribution to the spacetime error integral from the $L^{2}$ norms of the lower order optical terms is bounded as:

\begin{align}\label{L2 lower optical K1}
\begin{split}
&\int_{-r_{0}}^{t}(-t')^{-2}\left(\sum_{|\alpha'|\leq |\alpha|+1}\delta^{1/2+l}\|\ds Z_{i}^{\alpha'}\psi\|_{L^{2}(\Sigma_{t'}^{\ub})}+\delta^{1/2}\sqrt{E_{\leq|\alpha|+2}(t',\ub)}\right)dt'\\
&\cdot\delta^{1/2}\int_{-r_{0}}^{t}(-t')^{-1}\left\|\left(\Lb\psi_{|\alpha|+2}+\frac{1}{2}\widetilde{\tr}\chib\psi_{|\alpha|+2}\right)\right\|_{L^{2}(\Sigma_{t'}^{\ub})}dt'\\
\lesssim&\int_{-r_{0}}^{t}(-t')^{-2}\left(\sum_{|\alpha'|\leq|\alpha|+1}\delta^{2+2l}\|\ds Z^{\alpha'}_{i}\psi\|^{2}_{L^{2}(\Sigma_{t'}^{\ub})}+\delta^{2} E_{\leq|\alpha|+2}(t',\ub)\right)dt'\\
&+\int_{0}^{\ub}\Fb_{\leq|\alpha|+2}(t,\ub')d\ub'\\
\lesssim&\delta^{2}\int_{-r_{0}}^{t}(-t')^{-2}E_{\leq|\alpha|+2}(t',\ub)dt'+\int_{0}^{\ub}\Fb_{\leq|\alpha|+2}(t,\ub')d\ub'+\delta^{2}K_{\leq|\alpha|+2}(t,\ub)\quad \textrm{for}\quad K_{1}.
\end{split}
\end{align}
and

\begin{align}\label{L2 lower optical K2}
\begin{split}
&\int_{-r_{0}}^{t}(-t')^{-2}\left(\sum_{|\alpha'|\leq |\alpha|+1}\delta^{1/2+l}\|\ds Z_{i}^{\alpha'}\psi\|_{L^{2}(\Sigma_{t'}^{\ub})}+\delta^{1/2}\sqrt{E_{\leq|\alpha|+2}(t',\ub)}\right)dt'\\
&\cdot\delta^{1/2}\int_{-r_{0}}^{t}(-t')^{-3/2}\left\|\Lb\psi_{|\alpha|+2}+L\psi_{|\alpha|+2}\right\|_{L^{2}(\Sigma_{t'}^{\ub}}dt'\\
\lesssim&\int_{-r_{0}}^{t}(-t')^{-2}\left(\sum_{|\alpha'|\leq|\alpha|+1}\delta^{1+2l}\|\ds Z^{\alpha'}_{i}\psi\|^{2}_{L^{2}(\Sigma_{t'}^{\ub})}+\delta E_{\leq|\alpha|+2}(t',\ub)\right)dt'\\
&+\delta\int_{-r_{0}}^{t}(-t')^{-3/2}E_{\leq|\alpha|+2}(t',\ub)dt'\\
\lesssim&\delta\int_{-r_{0}}^{t}(-t')^{-3/2}E_{\leq|\alpha|+2}(t',\ub)dt'+\delta K_{\leq|\alpha|+2}(t,\ub),\quad \textrm{for}\quad K_{0}.
\end{split}
\end{align}

Next we consider the case in which the deformation tensors receive less derivatives such that they can be bounded in $L^{\infty}$.  More specifically, we consider the terms in the sum \eqref{rho tilde n} in which there are at most $\big[\dfrac{|\alpha|+1}{2}\big]$ derivatives hitting the deformation tensor $\leftexp{(Z)}{\tilde{\slashed{\pi}}}$, thus the spatial derivatives on $\chib$ is at most $\big[\dfrac{|\alpha|+1}{2}\big]$ and the spatial derivatives on $\mu$ is at most $\big[\dfrac{|\alpha|+1}{2}\big]+1$, which are bounded in $L^{\infty}(\Sigma_{t}^{\ub})$. 

Let us start with the contribution associated to $K_{1}$. In view of \eqref{L infinity etimates on the deformation tensor of T}, \eqref{Improved estimate for trQ} and \eqref{estimates on the deformation tensor of Rotational R_i in g tilde}, the contribution from the first line in \eqref{sigma1} is bounded by 

\begin{align}\label{sigma1 1 K1}
\int_{0}^{\ub}\Fb_{\leq|\alpha|+2}(t,\ub')d\ub'.
\end{align}
In view of \eqref{L infinity etimates on the deformation tensor of T}, \eqref{Linfty of deformation tensor of Q} and \eqref{estimates on the deformation tensor of Rotational R_i in g tilde}, the contribution from the second line in \eqref{sigma1} is bounded by (up to a constant)

\begin{align}\label{sigma1 2 K1}
\int_{0}^{\ub}\Fb_{\leq|\alpha|+2}(t,\ub')d\ub'+\delta\int_{-r_{0}}^{t}(-t')^{-2}E_{\leq|\alpha|+2}(t',\ub)dt'.  
\end{align}
The contribution from the first term in the third line of \eqref{sigma1} enjoys the same estimate as \eqref{sigma1 2 K1}, while the contribution of the second term is bounded by (up to a constant)

\begin{align}\label{sigma1 3 K1}
\delta\int_{-r_{0}}^{t}(-t')^{-2}E_{\leq|\alpha|+2}(t',\ub)dt'+\int_{0}^{\ub}\Fb_{\leq|\alpha|+2}(t,\ub')d\ub'
\end{align}
in view of \eqref{L infinity etimates on the deformation tensor of T} and \eqref{estimates on the deformation tensor of Rotational R_i in g tilde}. The contribution from the last line in \eqref{sigma1} is bounded by (up to a constant)

\begin{align}\label{sigma1 4 K1}
\delta^{1/2}K_{\leq|\alpha|+2}(t,\ub)+\delta^{-1/2}\int_{0}^{\ub}\Fb_{\leq|\alpha|+2}(t,\ub')d\ub'+\int_{-r_{0}}^{t}(-t')^{-2}\Eb_{\leq|\alpha|+2}(t',\ub)dt'.
\end{align}
Here besides \eqref{L infinity etimates on the deformation tensor of T}, \eqref{Linfty of deformation tensor of Q} and \eqref{estimates on the deformation tensor of Rotational R_i in g tilde}, we also used the inequality $ab\leq \frac{1}{2}\left(\delta^{1/2}a^{2}+\delta^{-1/2}b^{2}\right)$. 

The contribution of the third, fourth and fifth lines in \eqref{sigma2} is bounded in the similar way as the third and fourth lines in \eqref{sigma1}. The second term in the first line of \eqref{sigma2} can be bounded in the similar way as the first line of \eqref{sigma1}. The contributions from the first term in the first line and the second line of \eqref{sigma2} are bounded by (up to a constant)

\begin{align}\label{sigma2 1 K1}
\int_{0}^{\ub}\Fb_{\leq|\alpha|+2}(t,\ub')d\ub'+\delta\int_{-r_{0}}^{t}(-t')^{-2}E_{\leq|\alpha|+2}(t',\ub)dt'.
\end{align}

Since the contributions from \eqref{sigma3} are of lower order compared to the contributions of \eqref{sigma1} and \eqref{sigma2}, we omit the details. Therefore the contributions associated to $K_{1}$ are bounded by (up to a constant)

\begin{align}\label{sigma K1}
\begin{split}
&\delta^{1/2}K_{\leq|\alpha|+2}(t,\ub)+\delta^{-1/2}\int_{0}^{\ub}\Fb_{\leq|\alpha|+2}(t,\ub')d\ub'\\
+&\int_{-r_{0}}^{t}(-t')^{-2}\Eb_{\leq|\alpha|+2}(t',\ub)dt'+\delta\int_{-r_{0}}^{t}(-t')^{-2}E_{\leq|\alpha|+2}(t',\ub)dt'.
\end{split}
\end{align}

Similarly to the contributions associated to $K_{1}$, these contributions are bounded by (up to a constant)

\begin{align}\label{sigma K0}
\delta^{-1/2}\int_{0}^{\ub}\Fb_{\leq|\alpha|+2}(t,\ub')d\ub'+\delta^{1/2}\int_{-r_{0}}^{t}(-t')^{-2}E_{\leq|\alpha|+2}(t',\ub)dt'+\delta^{1/2}K_{\leq|\alpha|+2}(t,\ub).
\end{align}

\subsection{Top Order Optical Estimates}
Now we estimate the contributions from the top order optical terms to the error spacetime integrals. In estimating the top order optical terms, we need to choose the power of $\mu_{m}(t)$ large enough. Therefore from this subsection on, we will use $C$ to denote an absolute positive constant so that one can see the largeness of the power of $\mu_{m}(t)$ more clearly.

The top order optical terms come from the term in which all the commutators hit the deformation tensors in the expression of $\leftexp{(Z_{1})}{\sigma}_{1}$, namely, the term:
\begin{align*}
(Z_{|\alpha|+1}+\leftexp{(Z_{|\alpha|+1})}{\delta})...(Z_{2}+\leftexp{(Z_{2})}{\delta})\leftexp{(Z_{1})}{\sigma}_{1}
\end{align*}
more precisely, in:
\begin{align*}
Z_{|\alpha|+1}...Z_{2}\big(-\frac{1}{2}L(\leftexp{(Z_{1})}{J}_{1,\Lb})-\frac{1}{2}\Lb(\leftexp{(Z_{1})}{J}_{1,L})+\slashed{\text{div}}(\mu\leftexp{(Z_{1})}{\slashed{J}})\big)
\end{align*}
when the operators $L, \Lb, \slashed{\text{div}}$ hit the deformation tensors in the expression of $\leftexp{(Z_{1})}{J}$.

Now we consider the top order variations:
\begin{align*}
R_{i}^{\alpha+1}\psi,\quad R_{i}^{\alpha'}T^{l+1}\psi,\quad QR^{\alpha'}_{i}T^{l}\psi
\end{align*}
where $|\alpha|=\Ntop-1$ and $|\alpha'|+l+1=|\alpha|+1$. 
Then the corresponding principal optical terms are:
\begin{align*}
\widetilde{\rho}_{|\alpha|+2}(R_{i}^{\alpha+1}\psi):=\frac{1}{c}(R_{i}^{\alpha+1}\text{tr}\chib' )\cdot T\psi,\quad \widetilde{\rho}_{|\alpha|+2}(R_{i}^{\alpha'}T^{l+1}\psi):=\frac{1}{c}(R_{i}^{\alpha'}T^{l}\slashed{\Delta}\mu)\cdot T\psi\\
\widetilde{\rho}_{|\alpha|+2}(QR^{\alpha'}_{i}T^{l}\psi):=\frac{t\mu}{c}\big(\slashed{d}R^{\alpha'}_{i}\text{tr}
\chib'\big)\cdot\ds\psi+\frac{t\mu}{c}(\ds R^{\alpha'}_{i}\slashed{\Delta}\mu)\cdot\Lb\psi,\quad\text{if}\quad l=0\\
\widetilde{\rho}_{|\alpha|+2}(QR^{\alpha'}_{i}T^{l}\psi):=\frac{t\mu}{c}\big(\slashed{d}R^{\alpha'}_{i}T^{l-1}
\slashed{\Delta}\mu\big)\cdot\ds\psi+\frac{t\mu}{c}(\ds R^{\alpha'}_{i}T^{l}\slashed{\Delta}\mu)\cdot\Lb\psi,\quad\text{if}\quad l\geq1
\end{align*}
Here we used the structure equation \eqref{Structure Equation T chib}
\begin{align*}
\slashed{\mathcal{L}}_{T}\text{tr}\chib
=\slashed{\Delta}\mu+\mathcal{O}^{\leq1}_{\geq0, 2}
\end{align*}
Now we briefly investigate the behavior of the above terms with respect to $\delta$ and $t$. Note that $R^{\alpha}_{i}Q\psi$ has the same behavior as $R^{\alpha+1}_{i}\psi$ with respect to $\delta$ and $t$, while for their corresponding top order optical terms $\dfrac{t\mu}{c}\big(\ds R^{\alpha}_{i}\text{tr}\chib'\big)
\cdot\ds\psi$ and $\dfrac{1}{c}\big(R^{\alpha+1}_{i}\text{tr}\chib'
\big)\cdot T\psi$, the former behaves better than the latter with respect to $\delta$ and $t$:
\begin{align*}
|\dfrac{t\mu}{c}\big(\ds R^{\alpha}_{i}\text{tr}\chib'\big)
\cdot\ds\psi|\sim |\mu \big(R^{\alpha+1}_{i}\text{tr}\chib'\big)|\delta^{1/2}(-t)^{-1},\quad |\frac{1}{c}\big(R^{\alpha+1}_{i}\text{tr}
\chib'\big)\cdot T\psi|\sim|\big(R^{\alpha+1}_{i}\text{tr}\chib'\big)|\delta^{-1/2}
\end{align*}
 We see that not only the former behaves better with respect to $\delta$, but also has an extra $\mu$, which makes the behavior even better when $\mu$ is small. This means that we only need to estimate the contribution of $\dfrac{1}{c}\big(R^{\alpha+1}_{i}\text{tr}\chib'
\big) \cdot T\psi$. The same analysis applies to the terms involving $\Lb\psi$ as well as the comparison between $R^{\alpha'}_{i}T\psi$ and $QR^{\alpha'}_{i}T\psi$, which correspond to $\dfrac{1}{c}\big(R^{\alpha}_{i}\slashed{\Delta}
\mu\big)\cdot T\psi$ and $\dfrac{t\mu}{c}\big(R^{\alpha'+1}_{i}
\slashed{\Delta}\mu\big)\cdot\ds\psi$. So in the following, we do not need to estimate the contributions corresponding to the variations containing a $Q$.

\subsubsection{Contribution of $K_{0}$}
In this subsection we first estimate the spacetime integral:
\begin{align}\label{spacetime K_0 chib}
\begin{split}
&\int_{W^{t}_{\ub}}\frac{1}{c}|R_{i}^{\alpha+1}\text{tr}\chib'||T\psi||LR^{\alpha+1}_{i}\psi|dt'd\ub' d\mu_{\slashed{g}}\lesssim\\ &\int_{-r_{0}}^{t}\sup_{\Sigma_{t'}^{\ub}}(\mu^{-1}|T\psi|)(-t')\|\mu\ds R_{i}^{\alpha}\text{tr}\chib'\|_{L^{2}(\Sigma_{t'}^{\ub})}\|LR^{\alpha+1}_{i}\psi\|_{L^{2}(\Sigma_{t'}^{\ub})}dt'.
\end{split}
\end{align}
Here we do not include the contribution from $\Lb R^{\alpha+1}_{i}\psi$ in the definition of $K_{0}$, because compared to the spacetime error integral associated to $K_{1}$, this contribution is lower order with respect to the behavior in $t$. We will see how this contribution is bounded when we estimate the spacetime integral for $K_{1}$.

By \eqref{estimate for top order chib}, we have, in view of the monotonicity of $\widetilde{E}_{\leq|\alpha|+2}(t,\ub)$ in $t$:

\begin{align}\label{final top chib}
\begin{split}
(-t)\|\ds R^{\alpha}_{i}\tr\chib\|_{L^{2}(\Sigma_{t}^{\ub})}\leq &C\delta\int_{-r_{0}}^{t}(-t')^{-2}\mathcal{B}_{0,\leq|\alpha|+1}(-r_{0})dt'\\
+&C\delta^{1/2}\mu^{-b_{|\alpha|+2}}_{m}(t)(-t)^{-1}\sqrt{\widetilde{E}_{\leq|\alpha|+2}(t,\ub)}\\
+&C\delta^{1/2}\sqrt{\widetilde{\Eb}_{\leq|\alpha|+2}(t,\ub)}\int_{-r_{0}}^{t}(-t')^{-2}\mu_{m}^{-b_{|\alpha|+2}-1/2}(t')dt'\\
+&C\delta^{1/2}\int_{-r_{0}}^{t}(-t')^{-2}\mu_{m}^{-b_{|\alpha|+2}}(t')\sqrt{\widetilde{E}_{\leq|\alpha|+2}(t',\ub)}dt'\\
+&C\delta^{-1/2}\mu_{m}^{-b_{|\alpha|+2}}(t)(-t)^{-1}\left(\int_{0}^{\ub}\widetilde{\Fb}_{\leq|\alpha|+2}(t,\ub')d\ub'\right)^{1/2}.
\end{split}
\end{align}
Without loss of generality, here we assume that there is a $t_{0}\in[-r_{0},t^{*})$ such that $\mu_{m}(t_{0})=\frac{1}{10}$ and $\mu_{m}(t')\geq \frac{1}{10}$ for $t\leq t_{0}$. If there is no such $t_{0}$ in $[-r_{0},t^{*})$, then $\mu_{m}(t)$ has an absolute positive lower bound for all $[-r_{0},t^{*})$ and it is clear to see that the following argument simplifies and also works in this case. In view of part (1') and part (2) in Lemma \ref{lemma on mu to power a} and the fact $\mu_{m}(t')\geq \frac{1}{10}$ for $t'\in[-r_{0},t_{0}]$, we have

\begin{align*}
\int_{-r_{0}}^{t_{0}}\mu_{m}^{-b_{|\alpha|+2}-1/2}(t')dt'\lesssim&\mu_{m}^{-b_{|\alpha|+2}+1/2}(t_{0})\leq \mu_{m}^{-b_{|\alpha|+2}+1/2}(t),\\
\int_{t_{0}}^{t}\mu_{m}^{-b_{|\alpha|+2}-1/2}(t')dt'\lesssim&\frac{1}{\left(b_{|\alpha|+2}-1/2\right)}\mu_{m}^{-b_{|\alpha|+2}+1/2}(t).
\end{align*}

Therefore the third term in \eqref{final top chib} are bounded by:
\begin{align*}
\delta^{1/2}\mu_{m}^{-b_{|\alpha|+2}+1/2}(t)\sqrt{\widetilde{\Eb}_{\leq |\alpha|+2}(t,\ub)}.
\end{align*}
Substituting this in \eqref{spacetime K_0 chib}, and using the fact that $|T\psi|\lesssim\delta^{-1/2}(-t)^{-1}$, we see that the spacetime integral \eqref{spacetime K_0 chib} is bounded by (up to a constant):

\begin{align}\label{top K0 chib temp 1}
\begin{split}
&\int_{-r_{0}}^{t}(-t')^{-2}\mu^{-b_{|\alpha|+2}-1}_{m}(t')\sqrt{\widetilde{E}_{\leq|\alpha|+2}(t',\ub)}\|LR^{\alpha+1}_{i}\psi\|_{L^{2}(\Sigma_{t'}^{\ub})}dt'\\&+\int_{-r_{0}}^{t}(-t')^{-2}\mu^{-b_{|\alpha|+2}-1/2}_{m}(t')\sqrt{\widetilde{\Eb}_{\leq|\alpha|+2}(t',\ub)}\|LR^{\alpha+1}_{i}\psi\|_{L^{2}(\Sigma_{t'}^{\ub})}dt'\\
&+\int_{-r_{0}}^{t}\delta^{-1}(-t')^{-2}\mu^{-b_{|\alpha|+2}-1}_{m}(t')\sqrt{\int_{0}^{\ub}\widetilde{\Fb}_{\leq|\alpha|+2}(t',\ub')d\ub'}\|LR^{\alpha+1}_{i}\psi\|_{L^{2}(\Sigma_{t'}^{\ub})}dt'\\
&+\int_{-r_{0}}^{t}(-t')^{-1}\mu_{m}^{-b_{|\alpha|+2}-1}(t')\left(\int_{-r_{0}}^{t'}(-t'')^{-2}\sqrt{\widetilde{E}_{\leq|\alpha|+2}(t'',\ub)}dt''\right)\|LR^{\alpha+1}_{i}\psi\|_{L^{2}(\Sigma_{t'}^{\ub})}dt'.
\end{split}
\end{align}
For the factor $\|LR^{\alpha+1}_{i}\psi\|_{L^{2}(\Sigma_{t}^{\ub})}$, we bound it as:
\begin{align*}
\|LR^{\alpha+1}_{i}\psi\|_{L^{2}(\Sigma_{t}^{\ub})}\leq \sqrt{E_{|\alpha|+2}(t,\ub)}\leq \mu_{m}^{-b_{|\alpha|+2}}(t)\sqrt{\widetilde{E}_{\leq|\alpha|+2}(t,\ub)},
\end{align*}
To estimate \eqref{top K0 chib temp 1} we split the integral as $\int_{-r_{0}}^{t_{0}}+\int_{t_{0}}^{t}$, which we call the ``non-shock" and ``shock" parts respectively. In view of the second part of Lemma \ref{lemma on mu to power a}, the ``non-shock" part of \eqref{top K0 chib temp 1} is bounded by (up to a constant)

\begin{align}\label{top K0 chib temp 2 non shock}
\begin{split}
&
\mu^{-2b_{|\alpha|+2}+1/2}_{m}(t)\int_{-r_{0}}^{t_{0}}(-t')^{-2}\widetilde{\Eb}_{\leq|\alpha|+2}(t',\ub)dt'\\
&+
\delta^{-2}\mu_{m}^{-2b_{|\alpha|+2}}(t)\int_{0}^{\ub}\widetilde{\Fb}_{\leq|\alpha|+2}(t,\ub')d\ub'\\
&+\mu_{m}^{-2b_{|\alpha|+2}}(t)\int_{-r_{0}}^{t_{0}}(-t')^{-2}\widetilde{E}_{\leq|\alpha|+2}(t',\ub)dt'
\end{split}
\end{align}
Following the proof of Lemma \ref{lemma on mu to power a}, we have, for any $a>0$

\begin{align}\label{integral mu power}
\int_{t_{0}}^{t}\mu^{-a-1}_{m}(t')(-t')^{-2}dt'\leq \frac{C}{a}\mu_{m}^{-a}(t).
\end{align}
Therefore the ``shock part" of \eqref{top K0 chib temp 1} is bounded by (up to a constant)
\begin{align}\label{top K0 chib temp 2 shock}
\begin{split}
\boxed{\frac{\mu_{m}^{-2b_{|\alpha|+2}}(t)}{2b_{|\alpha|+2}}\widetilde{E}_{\leq|\alpha|+2}(t,\ub)}+\frac{\mu_{m}^{-2b_{|\alpha|+2}+1/2}(t)}{2b_{|\alpha|+2}-1/2}\widetilde{\Eb}_{\leq|\alpha|+2}(t,\ub)+\frac{\delta^{-2}\mu_{m}^{-2b_{|\alpha|+2}}(t)}{2b_{|\alpha|+2}}\int_{0}^{\ub}\widetilde{\Fb}_{\leq|\alpha|+2}(t,\ub')d\ub'.
\end{split}
\end{align}

\begin{remark}\label{a dependence}
The boxed term is from the estimates for the top order term $F_{\alpha}$. In view of \eqref{transport equation for F alpha for chib}, the number of top order terms contributed by the variations is independent of $\delta$ and $|\alpha|$, so is the constant $C$ in the boxed term. Later on in the top order energy estimates we will choose $b_{|\alpha|+2}$ in such a way that $\frac{C}{b_{top}}\leq \frac{1}{10}$. (The purpose of doing this is to make sure that this term can be absorbed by the left hand side the energy inequality.)  Therefore we can choose $b_{top}=\{10, \frac{C}{20}\}$. In particular $b_{top}$ is independent of $\delta$. 
\end{remark}

Next we consider the spacetime integral:
\begin{align}\label{spacetime K0 mu}
&\delta^{2l+2}\int_{W^{t}_{\ub}}\frac{1}{c}|R^{\alpha'}_{i}T^{l}\slashed{\Delta}\mu||T\psi||LR^{\alpha'}T^{l+1}\psi|dt'd\ub'd\mu_{\slashed{g}}\lesssim\\\notag
&\delta^{2l+2}\int_{-r_{0}}^{t}\sup_{\Sigma_{t'}^{\ub}}\big(\mu^{-1}|T\psi|\big)\|\mu R^{\alpha'}_{i}T^{l}\slashed{\Delta}\mu\|_{L^{2}(\Sigma_{t'}^{\ub})}\|L R^{\alpha'}T^{l+1}\psi\|_{L^{2}(\Sigma_{t'}^{\ub})}dt'.
\end{align}
In view of \eqref{estimates for top order mu} and Lemma \ref{lemma on mu to power a} we have 

\begin{align}\label{final top mu}
\begin{split}
\delta^{l+1}\|\mu R^{\alpha'}T^{l}\slashed{\Delta}\mu\|_{L^{2}(\Sigma_{t}^{\ub})}\lesssim&\delta^{l+1}r_{0}(-t)^{-1}\|F'_{\alpha,l}(-r_{0})\|_{L^{2}(\Sigma_{-r_{0}}^{\ub})}\\
+&\delta^{1/2}\mu^{-b_{|\alpha|+2}}_{m}(t)(-t)^{-1}\sqrt{\Et_{\leq|\alpha|+2}(t,\ub)}\\
+&\delta^{3/2}(-t)^{-2}\mu^{-b_{|\alpha|+2}}_{m}(t)\sqrt{\Ebt_{\leq|\alpha|+2}(t,\ub)}\\
+&\delta^{1/2}\mu_{m}^{-b_{|\alpha|+2}}(t)\int_{-r_{0}}^{t}(-t')^{-2}\sqrt{\Et_{\leq|\alpha|+2}(t',\ub)}dt'\\
+&\delta^{1/2}\mu^{-b_{|\alpha|+2}+1/2}_{m}(t)\int_{-r_{0}}^{t}(-t')^{-2}\sqrt{\Ebt_{\leq|\alpha|+2}(t',\ub)}dt'\\
+&\delta^{1/2}\mu_{m}^{-b_{|\alpha|+2}}(t)(-t)^{-1}\left(\int_{0}^{\ub}\widetilde{\Fb}_{\leq|\alpha|+2}(t,\ub')d\ub'\right)^{1/2}.
\end{split}
\end{align}
Substituting this to \eqref{spacetime K0 mu} and using the fact that $|T\psi|\lesssim\delta^{-1/2}(-t)^{-1}$, we see that \eqref{spacetime K0 mu} is bounded by (up to a constant)

\begin{align}\label{top K0 mu temp 1}
\begin{split}
&\int_{-r_{0}}^{t}\mu^{-2b_{|\alpha|+2}-1}_{m}(t')(-t')^{-2}\widetilde{E}_{\leq|\alpha|+2}(t',\ub)dt'\\
&+\int_{-r_{0}}^{t}\mu^{-2b_{|\alpha|+2}-1}_{m}(t')(-t')^{-3}\sqrt{\widetilde{\Eb}_{\leq|\alpha|+2}(t',\ub)}\sqrt{\widetilde{E}_{\leq|\alpha|+2}(t',\ub)}dt'\\
&+\int_{-r_{0}}^{t}\mu_{m}^{-2b_{|\alpha|+2}-1}(t')(-t')^{-1}\int_{-r_{0}}^{t'}(-t'')^{-2}\sqrt{\widetilde{E}_{\leq|\alpha|+2}(t',\ub)}dt''\sqrt{\widetilde{E}_{\leq|\alpha|+2}(t',\ub)}dt'\\
&+\int_{-r_{0}}^{t}\mu_{m}^{-2b_{|\alpha|+2}-1}(t')(-t')^{-2}\left(\int_{0}^{\ub}\widetilde{\Fb}_{\leq|\alpha|+2}(t',\ub')d\ub'\right)^{1/2}\sqrt{\widetilde{E}_{\leq|\alpha|+2}(t',\ub)}dt'.
\end{split}
\end{align}
As before we split the spacetime integral \eqref{top K0 mu temp 1} as the ``non-shock" and ``shock" parts. In view of Lemma \ref{lemma on mu to power a}, the ``non-shock" part is bounded by (up to a constant)

\begin{align}\label{top K0 mu temp 2 nonshock}
\begin{split}
&\mu_{m}^{-2b_{|\alpha|+2}}(t)\int_{-r_{0}}^{t_{0}}(-t')^{-2}\widetilde{E}_{\leq|\alpha|+2}(t',\ub)dt'\\
&+\mu_{m}^{-2b_{|\alpha|+2}}(t)\int_{-r_{0}}^{t_{0}}(-t')^{-3}\widetilde{\Eb}_{\leq|\alpha|+2}(t',\ub)dt'\\
&+\mu_{m}^{-2b_{|\alpha|+2}}(t)\int_{0}^{\ub}\widetilde{\Fb}_{\leq|\alpha|+2}(t',\ub')d\ub'.
\end{split}
\end{align}
The ``shock" part is bounded by (up to a constant)

\begin{align}\label{top K0 mu temp 2 shock}
\frac{\mu_{m}^{-2b_{|\alpha|+2}}(t)}{2b_{|\alpha|+2}}\widetilde{E}_{\leq|\alpha|+2}(t,\ub)+\frac{\mu_{m}^{-2b_{|\alpha|+2}}(t)}{2b_{|\alpha|+2}}\widetilde{\Eb}_{\leq|\alpha|+2}(t,\ub)+\frac{\mu_{m}^{-2b_{|\alpha|+2}}(t)}{2b_{|\alpha|+2}}\int_{0}^{\ub}\widetilde{\Fb}_{\leq|\alpha|+2}(t,\ub')d\ub'.
\end{align}

\subsubsection{Contribution of $K_{1}$}
We now turn to estimate the contributions of top order optical terms associated to $K_{1}$. Let us start with the following absolute value of a spacetime integral:

\begin{align}\label{spacetime K1 chib}
\left|\int_{W^{t}_{\ub}}\frac{1}{c}\frac{2(-t)}{\widetilde{\tr\chib}}(R^{\alpha+1}_{i}\tr\chib')\cdot (T\psi)\cdot(\Lb R^{\alpha+1}_{i}\psi+\frac{1}{2}\widetilde{\tr\chib}R^{\alpha+1}_{i}\psi)dt'd\ub'
d\mu_{\slashed{g}}\right|.
\end{align}

In view of the relation between $\slashed{g}$ and $\widetilde{\slashed{g}}$, the above integral can be written as

\begin{align}\label{spacetime K1 chib temp 1}
\int_{W^{t}_{\ub}}\frac{2(-t)}{\widetilde{\tr\chib}}(R^{\alpha+1}_{i}\tr\chib')\cdot (T\psi)\cdot(\Lb R^{\alpha+1}_{i}\psi+\frac{1}{2}\widetilde{\tr\chib}R^{\alpha+1}_{i}\psi)dt'd\ub'
d\mu_{\widetilde{\slashed{g}}}.
\end{align}
For a smooth function $f$ we have

\begin{align*}
\frac{\partial}{\partial t}\left(\int_{S_{t,\ub}}fd\mu_{\widetilde{\slashed{g}}}\right)=\int_{S_{t,\ub}}\left(\Lb f+\widetilde{\tr\chib}f\right)d\mu_{\widetilde{\slashed{g}}},
\end{align*}
which implies

\begin{align*}
\int_{W^{t}_{\ub}}\big(\Lb f+\widetilde{\text{tr}}\chib f\big)d\mu_{\tilde{\slashed{g}}}d\ub'dt'
=\int_{\Sigma_{t}^{\ub}}f
d\mu_{\tilde{\slashed{g}}}d\ub'
-\int_{\Sigma_{-r_{0}}^{\ub}}f
d\mu_{\tilde{\slashed{g}}}d\ub'.
\end{align*}
This inspires us to write the spacetime integral \eqref{spacetime K1 chib temp 1} as

\begin{align*}
&\int_{W^{t}_{\ub}}(R^{\alpha+1}_{i}\text{tr}\chib')\cdot
\left(\frac{2(-t')}{\widetilde{\tr\chib}}T\psi\right)\cdot(\Lb R^{\alpha+1}_{i}\psi
+\widetilde{\text{tr}}\chib R^{\alpha+1}_{i}\psi)dt'd\ub' d\mu_{\tilde{\slashed{g}}}\\
&-\frac{1}{2}\int_{W^{t}_{\ub}}\widetilde{\text{tr}}
\chib(R^{\alpha+1}_{i}\text{tr}\chib')
\cdot\left(\frac{2(-t')}{\widetilde{\tr\chib}}T\psi\right)\cdot (R^{\alpha+1}_{i}\psi)dt'd\ub' d\mu_{\tilde{\slashed{g}}}\\
&=\int_{W^{t}_{\ub}}\big(\Lb
+\widetilde{\text{tr}}\chib\big)\Big[
(R^{\alpha+1}_{i}\text{tr}\chib')
\cdot\left(\frac{2(-t')}{\widetilde{\tr\chib}}T\psi\right)\cdot(R^{\alpha+1}_{i}\psi)\Big]
dt'd\ub'd\mu_{\tilde{\slashed{g}}}\\&-
\int_{W^{t}_{\ub}}(\Lb+\frac{1}{2}\widetilde{\text{tr}}\chib)
\Big[(R^{\alpha+1}_{i}\text{tr}\chib')
\cdot\left(\frac{2(-t')}{\widetilde{\tr\chib}}T\psi\right)\Big]\cdot (R^{\alpha+1}_{i}\psi)dt'd\ub' d\mu_{\tilde{\slashed{g}}}.
\end{align*}

We conclude that \eqref{spacetime K1 chib temp 1} equals:
\begin{align*}
&\int_{\Sigma_{t}^{\ub}}(R^{\alpha+1}_{i}\text{tr}\chib')
\cdot\left(\frac{2(-t)}{\widetilde{\tr\chib}}T\psi\right)\cdot(R^{\alpha+1}_{i}\psi)d\ub'
d\mu_{\tilde{\slashed{g}}}\\
&-\int_{\Sigma_{-r_{0}}^{\ub}}(R^{\alpha+1}_{i}\text{tr}\chib')
\cdot\left(\frac{2r_{0}}{\widetilde{\tr\chib}}T\psi\right)\cdot(R^{\alpha+1}_{i}\psi)d\ub'
d\mu_{\tilde{\slashed{g}}}\\
&-\int_{W^{t}_{\ub}}(\Lb+\frac{1}{2}\widetilde{\text{tr}}\chib)
\Big[(R^{\alpha+1}_{i}\text{tr}\chib')
\cdot\left(\frac{2(-t')}{\widetilde{\tr\chib}}T\psi\right)\Big]\cdot (R^{\alpha+1}_{i}\psi)dt'd\ub' d\mu_{\tilde{\slashed{g}}}.
\end{align*}
We will be using the following lemma for integration by parts:

\begin{lemma}\label{lemma integration by parts}
Let $f,g$ be arbitrary smooth functions defined on $S_{t,\ub}$ and $X$ be a vectorfield tangent to $S_{t,\ub}$. We have:

\begin{align*}
\int_{S_{t,\ub}}f(Xg)d\mu_{\widetilde{\slashed{g}}}=-\int_{S_{t,\ub}}g(Xf)d\mu_{\widetilde{\slashed{g}}}-\frac{1}{2}\int_{S_{t,\ub}}\tr\leftexp{(X)}{\widetilde{\slashed{\pi}}}d\mu_{\widetilde{\slashed{g}}}.
\end{align*}
\end{lemma}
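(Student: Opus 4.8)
\textbf{Proof proposal for Lemma \ref{lemma integration by parts}.}

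The plan is to reduce the identity to the standard divergence theorem on the closed surface $S_{t,\ub}$ equipped with the conformal metric $\widetilde{\slashed{g}}$, applied to the vectorfield $fgX$. First I would recall that for any vectorfield $Y$ tangent to a closed Riemannian surface $(S,\widetilde{\slashed{g}})$ one has $\int_{S}\widetilde{\divslash}\,Y\,d\mu_{\widetilde{\slashed{g}}}=0$, where $\widetilde{\divslash}$ is the divergence with respect to $\widetilde{\slashed{g}}$. Applying this to $Y=fgX$ and using the Leibniz rule for the divergence of a function times a vectorfield, $\widetilde{\divslash}(fgX)=(Xf)g+f(Xg)+fg\,\widetilde{\divslash}X$, gives immediately
\begin{align*}
\int_{S_{t,\ub}}f(Xg)\,d\mu_{\widetilde{\slashed{g}}}=-\int_{S_{t,\ub}}g(Xf)\,d\mu_{\widetilde{\slashed{g}}}-\int_{S_{t,\ub}}fg\,\widetilde{\divslash}X\,d\mu_{\widetilde{\slashed{g}}}.
\end{align*}
So the entire content of the lemma is the algebraic identification $\widetilde{\divslash}X=\tfrac12\tr{}^{(X)}\widetilde{\slashed{\pi}}$ for a vectorfield $X$ tangent to $S_{t,\ub}$.

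The second step is to verify this trace identity. Here I would use that the deformation tensor ${}^{(X)}\widetilde{\pi}_{\mu\nu}=(\mathcal{L}_X\widetilde{g})_{\mu\nu}=\widetilde{\nabla}_\mu X_\nu+\widetilde{\nabla}_\nu X_\mu$, and that $\tr{}^{(X)}\widetilde{\slashed{\pi}}$ as defined in the paper means $\widetilde{\slashed{g}}^{AB}\,{}^{(X)}\widetilde{\pi}(X_A,X_B)$, the trace over the $S_{t,\ub}$-tangential part. Since $X$ is tangent to $S_{t,\ub}$, the projected Lie derivative of the induced metric satisfies $(\slashedL_X\widetilde{\slashed{g}})_{AB}={}^{(X)}\widetilde{\slashed{\pi}}_{AB}$, and tracing with $\widetilde{\slashed{g}}^{AB}$ yields $\widetilde{\slashed{g}}^{AB}(\slashedL_X\widetilde{\slashed{g}})_{AB}=2\,\widetilde{\divslash}X$ by the usual formula $\mathcal{L}_X(d\mu_{\widetilde{\slashed{g}}})=(\widetilde{\divslash}X)\,d\mu_{\widetilde{\slashed{g}}}$ together with $\mathcal{L}_X d\mu_{\widetilde{\slashed{g}}}=\tfrac12\widetilde{\slashed{g}}^{AB}(\slashedL_X\widetilde{\slashed{g}})_{AB}\,d\mu_{\widetilde{\slashed{g}}}$. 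Combining gives $\tr{}^{(X)}\widetilde{\slashed{\pi}}=2\,\widetilde{\divslash}X$, which is exactly the needed identity and produces the factor $-\tfrac12$ in the statement.

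There is essentially no serious obstacle here; the only point requiring a little care is the bookkeeping of \emph{which} trace and \emph{which} Lie derivative appear—the paper uses $\slashedL$ for the $S_{t,\ub}$-projected Lie derivative and $\tr{}^{(X)}\widetilde{\slashed{\pi}}$ for the trace of the purely tangential block, so one must be sure that for an $S_{t,\ub}$-tangent $X$ these projections do not discard anything that contributes to the divergence on $S_{t,\ub}$. That is guaranteed precisely because $X$ is tangential, so its flow preserves $S_{t,\ub}$ (for fixed $t,\ub$) and the intrinsic divergence on $S_{t,\ub}$ agrees with the projected quantity. I would then simply remark that the same computation is valid for $\widetilde{\slashed{g}}$ replaced by $\slashed{g}$, but state it for $\widetilde{\slashed{g}}$ since that is the measure used in the error integrals of the previous subsection.
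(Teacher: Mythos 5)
Your argument is correct, and it is the standard one: the paper states this lemma without giving a proof, treating it as the routine consequence of the divergence theorem on the closed surface $S_{t,\ub}$ together with the identity $\widetilde{\divslash}X=\tfrac12\tr\leftexp{(X)}{\widetilde{\slashed{\pi}}}$ for an $S_{t,\ub}$-tangent $X$ (which holds because, for tangential $X$, the tangential block of $\mathcal{L}_X\widetilde{g}$ coincides with the intrinsic Lie derivative $\slashedL_X\widetilde{\slashed{g}}$, exactly as you argue). One small point worth recording: your computation yields the last term as $-\tfrac12\int_{S_{t,\ub}}fg\,\tr\leftexp{(X)}{\widetilde{\slashed{\pi}}}\,d\mu_{\widetilde{\slashed{g}}}$, i.e.\ with the factor $fg$ present; the factor is evidently missing in the lemma as printed (a typo), and it is the version with $fg$ that is used in the subsequent error-integral estimates, so your statement is the correct one.
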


We first consider the hypersurface integral which, integrating by parts using the above lemma, equals:
\begin{align*}
-H_{0}-H_{1}-H_{2}
\end{align*}
where:

\begin{align*}
H_{0}&=\int_{\Sigma_{t}^{\ub}}(R^{\alpha}_{i}\text{tr}\chib)\cdot\left(\frac{2(-t')}{\widetilde{\tr\chib}}T\psi\right)
\cdot(R^{\alpha+2}_{i}\psi)
d\ub'd\mu_{\tilde{\slashed{g}}}\\
H_{1}&=\int_{\Sigma_{t}^{\ub}}\frac{2(-t')}{\widetilde{\tr\chib}}(R^{\alpha}_{i}\text{tr}\chib')
\cdot(R_{i}T\psi)
\cdot(R^{\alpha+1}_{i}\psi)
d\ub'd\mu_{\tilde{\slashed{g}}}\\
H_{2}&=\int_{\Sigma_{t}^{\ub}}(T\psi)\cdot(R^{\alpha+1}_{i}\psi)
\cdot(R^{\alpha}\text{tr}\chib')\left(2R_{i}\left(\frac{(-t')}{\widetilde{\tr\chib}}\right)+\frac{1}{2}\text{tr}\leftexp{(R_{i})}{\tilde{\slashed{\pi}}}\right)
d\ub'd\mu_{\tilde{\slashed{g}}}.
\end{align*}
Since we bound both $T\psi$ and $R_{i}T\psi$ in $L^{\infty}$ norm, compared to $H_{0}$, $H_{1}$ is a lower order term with respect to the order of derivatives. While for $H_{2}$, we use the estimate:

\begin{align*}
2\left|R_{i}\left(\frac{(-t')}{\widetilde{\tr\chib}}\right)\right|+|\text{tr}\leftexp{(R_{i})}{\tilde{\slashed{\pi}}}|\lesssim\delta
\end{align*}
to see that it is a lower order term with respect to both the behavior of $\delta, (-t')$ and the order of derivatives compared to $H_{0}$. This analysis tells us that we only need to estimate $H_{0}$.

\begin{align}\label{H0 pre}
\begin{split}
|H_{0}|\leq &\int_{\Sigma_{t}^{\ub}}(-t)^{3}|T\psi||R^{\alpha}_{i}\text{tr}\chib'||\ds R^{\alpha+1}_{i}\psi|d\ub'd\mu_{\tilde{\slashed{g}}}
\leq \delta^{-1/2}(-t)^{2}\|R^{\alpha}_{i}\tr\chib'\|_{L^{2}(\Sigma_{t'}^{\ub})}\|\ds R^{\alpha+1}_{i}\psi\|_{L^{2}(\Sigma_{t'}^{\ub})}\\
\lesssim&\delta^{-1/2}(-t)\|R^{\alpha}_{i}\tr\chib'\|_{L^{2}(\Sigma_{t'}^{\ub})}\mu_{m}^{-1/2}(t)\sqrt{\Eb_{\leq|\alpha|+2}(t,\ub)}\\
\lesssim&\delta^{-1/2}(-t)\|R^{\alpha}_{i}\tr\chib'\|_{L^{2}(\Sigma_{t'}^{\ub})}\mu_{m}^{-b_{|\alpha|+2}-1/2}(t)\sqrt{\widetilde{\Eb}_{\leq|\alpha|+2}(t,\ub)}.
\end{split}
\end{align}
Even though Proposition 6.3 gives an $L^{2}$-estimate for $R^{\alpha}_{i}\tr\chib'$, here we give an alternative proof which will be used later. In view of \eqref{Structure Equation Lb trchib} and the relation \eqref{definition for chib prime} we have

\begin{align}\label{Lb trchibprime precise}
\Lb\tr\chib'-\frac{2}{\ub-t}\tr\chib'=e\tr\chib-|\chib'|^{2}_{\slashed{g}}-\tr\alpha':=\rho_{0}.
\end{align}
Applying $R^{\alpha}_{i}$ to this equation we have

\begin{align}\label{diff Lb trchibprime}
\Lb R^{\alpha}_{i}\tr\chib'-\frac{2}{\ub-t}R_{i}^{\alpha}\tr\chib'=R^{\alpha}_{i}\rho_{0}+\sum_{|\beta|\leq|\alpha|}R_{i}^{\alpha-\beta}\leftexp{(R_{i})}{\Zb}R^{\beta-1}_{i}\tr\chib',
\end{align}
which can be rewritten as

\begin{align}\label{diff Lb trchibprime modified}
\Lb\left((t-\ub)^{2}R_{i}^{\alpha}\tr\chib'\right)=(t-\ub)^{2}\left(R^{\alpha}_{i}\rho_{0}+\sum_{|\beta|\leq|\alpha|}R_{i}^{\alpha-\beta}\leftexp{(R_{i})}{\Zb}R^{\beta-1}_{i}\tr\chib'\right):=\rho_{\alpha}.
\end{align}
In view of 

\begin{align*}
\|\leftexp{(R_{i})}{\Zb}\|_{L^{\infty}(\Sigma_{t}^{\ub})}\lesssim\delta(-t)^{-2}
\end{align*}
and Proposition 6.3 we have

\begin{align}\label{rho alpha 4}
\begin{split}
\|(t-\ub)^{2}R_{i}^{\alpha-\beta}\leftexp{(R_{i})}{\Zb}R^{\beta-1}_{i}\tr\chib'\|_{L^{2}(\Sigma_{t}^{\ub})}\lesssim&\delta^{3/2}\int_{-r_{0}}^{t}(-t')^{-3}\mu_{m}^{-1/2}(t')\sqrt{\Eb_{\leq|\alpha|+2}(t',\ub)}dt'\\
\lesssim&\delta^{3/2}\int_{-r_{0}}^{t}(-t')^{-3}\mu_{m}^{-b_{|\alpha|+2}-1/2}(t')\sqrt{\widetilde{\Eb}_{\leq|\alpha|+2}(t',\ub)}dt'\\
\lesssim&\delta^{3/2}(-t)^{-2}\mu_{m}^{-b_{|\alpha|+2}-1/2}(t)\sqrt{\widetilde{\Eb}_{\leq|\alpha|+2}(t,\ub)}.
\end{split}
\end{align}
Also by Proposition 6.1, 6.3, the contribution of the second term in $\rho_{0}$ to the $L^{2}(\Sigma_{t}^{\ub})$-norm of $\rho_{\alpha}$ is also bounded by \eqref{rho alpha 4}. In view of the definition of $e$ the contribution of the first term in $\rho_{0}$ to the $L^{2}(\Sigma_{t}^{\ub})$-norm of $\rho_{\alpha}$ is bounded by

\begin{align}\label{rho alpha 1}
\delta^{1/2}(-t)^{-2}\mu_{m}^{-1/2}(t)\sqrt{\Eb_{\leq|\alpha|+2}(t,\ub)}\lesssim\delta^{1/2}(-t)^{-2}\mu_{m}^{-b_{|\alpha|+2}-1/2}(t)\sqrt{\widetilde{\Eb}_{\leq|\alpha|+2}(t,\ub)}.
\end{align}
  In view of the definition of $\alpha'$, the $L^{2}(\Sigma_{t}^{\ub})$-norm of other contributions of $\rho_{0}$ to $\rho_{\alpha}$ is bounded by

\begin{align}\label{rho alpha 3}
\delta^{1/2}(-t)^{-1}\mu_{m}^{-1/2}(t)\sqrt{\Eb_{\leq|\alpha|+2}(t,\ub)}\lesssim\delta^{1/2}(-t)^{-1}\mu_{m}^{-b_{|\alpha|+2}-1/2}(t)\sqrt{\widetilde{\Eb}_{\leq|\alpha|+2}(t,\ub)}.
\end{align}
Integrating the propagation equation \eqref{diff Lb trchibprime modified} we have

\begin{align}\label{L2 chib precise}
\begin{split}
&(-t)\|R^{\alpha}_{i}\tr\chib'\|_{L^{2}(\Sigma_{t}^{\ub})}\lesssim (-t)^{2}\|R^{\alpha}_{i}\tr\chib'(t)\|_{L^{2}([0,\ub]\times\mathbb{S}^{2})}\\
\lesssim&\int_{-r_{0}}^{t}\|\rho_{\alpha}(t')\|_{L^{2}([0,\ub]\times\mathbb{S}^{2})}dt'\lesssim\int_{-r_{0}}^{t}(-t')^{-1}\|\rho_{\alpha}\|_{L^{2}(\Sigma_{t'}^{\ub})}dt'\\
\lesssim&\delta^{1/2}\int_{-r_{0}}^{t}(-t')^{-2}\mu_{m}^{-b_{|\alpha|+2}-1/2}(t')\sqrt{\widetilde{\Eb}_{\leq|\alpha|+2}(t',\ub)}dt'.
\end{split}
\end{align}
Substituting this in \eqref{H0 pre} $|H_{0}|$ is bounded by

\begin{align}\label{H0 temp 1}
\mu_{m}^{-2b_{|\alpha|+2}-1/2}(t)\sqrt{\widetilde{\Eb}_{\leq|\alpha|+2}(t,\ub)}\int_{-r_{0}}^{t}(-t')^{-2}\mu_{m}^{-b_{|\alpha|+2}-1/2}(t')\sqrt{\widetilde{\Eb}_{\leq|\alpha|+2}(t',\ub)}dt'.
\end{align}
As before, we consider the ``shock part $\int_{-r_{0}}^{t_{0}}$" and ``non-shock part $\int_{t_{0}}^{t}$", which are denoted by $H^{S}_{0}$ and $H^{N}_{0}$, separately. 

For $t'\in[-r_{0},t_{0}]$ we have $\mu^{-1}_{m}(t')\leq 10$. Therefore the time integral in the ``non-shock" part is bounded by

\begin{align*}
&\int_{-r_{0}}^{t_{0}}\mu_{m}^{-b_{|\alpha|+2}-1/2}(t')(-t')^{-2}\sqrt{\widetilde{\Eb}_{\leq|\alpha|+2}(t',\ub)}dt'\\
=&\int_{-r_{0}}^{t_{0}}\mu_{m}^{-1}(t')\mu_{m}^{-b_{|\alpha|+2}+1/2}(t')(-t')^{-2}\sqrt{\widetilde{\Eb}_{\leq|\alpha|+2}(t',\ub)}dt'\\
\lesssim&\int_{-r_{0}}^{t_{0}}\mu_{m}^{-b_{|\alpha|+2}+1/2}(t')(-t')^{-2}\sqrt{\widetilde{\Eb}_{\leq|\alpha|+2}(t',\ub)}dt'\\
\lesssim &\int_{-r_{0}}^{t_{0}}(-t')^{-2}\sqrt{\widetilde{\Eb}_{\leq|\alpha|+2}(t',\ub)}dt'\cdot\mu^{-b_{|\alpha|+2}+1/2}_{m}(t).
\end{align*}
Here in the last step we used Lemma \ref{lemma on mu to power a}. Therefore, using Holder's inequality we have
\begin{align*}
|H^{N}_{0}|&\leq C\int_{-r_{0}}^{t_{0}}(-t')^{-2}\sqrt{\widetilde{\Eb}_{\leq|\alpha|+2}(t',\ub)}dt'\cdot \mu^{-2b_{|\alpha|+2}}_{m}(t)\sqrt{\widetilde{\Eb}_{\leq |\alpha|+2}(t,\ub)}\\
&\leq \epsilon\mu^{-2b_{|\alpha|+2}}_{m}(t)\widetilde{\Eb}_{\leq |\alpha|+2}(t,\ub)+C_{\epsilon}\mu^{-2b_{|\alpha|+2}}_{m}(t)\int_{-r_{0}}^{t_{0}}(-t')^{-2}\widetilde{\Eb}_{\leq|\alpha|+2}(t',\ub)dt'
\end{align*}
For the ``shock part", by the monotonicity of $\widetilde{\Eb}_{\leq|\alpha|+2}(t,\ub)$ we have

\begin{align*}
|H^{S}_{0}|&\lesssim\sqrt{\widetilde{\Eb}_{\leq |\alpha|+2}(t,\ub)}\int_{t_{0}}^{t}\mu^{-b_{|\alpha|+2}-1/2}_{m}(t')(-t')^{-2}dt'\cdot\mu_{m}^{-b_{|\alpha|+2}-1/2}(t)\sqrt{\widetilde{\Eb}_{\leq |\alpha|+2}(t,\ub)}\\
&\lesssim\widetilde{\Eb}_{\leq |\alpha|+2}(t,\ub)\mu_{m}^{-b_{|\alpha|+2}-1/2}(t)\int_{t_{0}}^{t}\mu_{m}^{-b_{|\alpha|+2}-1/2}(t')(-t')^{-2}dt'\\
&\lesssim \frac{1}{\big(b_{|\alpha|+2}-1/2\big)}\mu_{m}^{-b_{|\alpha|+2}-1/2}(t)\widetilde{\Eb}_{\leq |\alpha|+2}(t,\ub)\cdot\mu_{m}^{-b_{|\alpha|+2}+1/2}(t)\\&= \frac{1}{\big(b_{|\alpha|+2}-1/2\big)}\mu_{m}^{-2b_{|\alpha|+2}}(t)\widetilde{\Eb}_{\leq |\alpha|+2}(t,\ub).
\end{align*}
We obtain the following estimate for $|H_{0}|$:
\begin{align}\label{estimates for H0}
\begin{split}
|H_{0}|&\leq  \frac{C}{\big(b_{|\alpha|+2}-1/2\big)}\mu_{m}^{-2b_{|\alpha|+2}}(t)\widetilde{\Eb}_{\leq |\alpha|+2}(t,\ub)\\&+C_{\epsilon}\mu^{-2b_{|\alpha|+2}}_{m}(t)\int_{-r_{0}}^{t_{0}}(-t')^{-2}\widetilde{\Eb}_{\leq|\alpha|+2}(t',\ub)dt'+\epsilon\mu^{-2b_{|\alpha|+2}}_{m}(t)\widetilde{\Eb}_{\leq |\alpha|+2}(t,\ub).
\end{split}
\end{align}
Next we consider the spacetime integral:

\begin{align*}
&\int_{W^{t}_{\ub}}(\Lb+\frac{1}{2}\widetilde{\text{tr}}\chib)
\Big[(R^{\alpha+1}_{i}\text{tr}\chib)\cdot
\left(\frac{2(-t')}{\widetilde{\tr}\chib}T\psi\right)\Big]\cdot(R^{\alpha+1}_{i}\psi)
dt'd\ub'd\mu_{\tilde{\slashed{g}}}\\
=&\int_{W^{t}_{\ub}}\big((\Lb
+\widetilde{\text{tr}}\chib)(R^{\alpha+1}_{i}\text{tr}\chib')\big)
\cdot\left(\frac{2(-t')}{\widetilde{\tr}\chib}T\psi\right)\cdot(R^{\alpha+1}_{i}\psi)dt'
d\ub'd\mu_{\tilde{\slashed{g}}}\\
&+\int_{W^{t}_{\ub}}(R^{\alpha+1}_{i}\text{tr}\chib')
\cdot(\Lb+\frac{1}{2}\widetilde{\text{tr}}\chib)\left(\frac{2(-t')}{\widetilde{\tr}\chib}T\psi\right)\cdot(R^{\alpha+1}_{i}\psi)
dt'd\ub'd\mu_{\tilde{\slashed{g}}}\\
&-\int_{W^{t}_{\ub}}(\widetilde{\text{tr}}\chib)(R^{\alpha+1}_{i}\text{tr}\chib')
\cdot\left(\frac{2(-t')}{\widetilde{\tr}\chib}T\psi\right)\cdot(R^{\alpha+1}_{i}\psi)dt'
d\ub'd\mu_{\tilde{\slashed{g}}}:=I+II+III.
\end{align*}
We start with $III$. Using Lemma \ref{lemma integration by parts} we rewrite $III$ as 

\begin{align*}
&\int_{W^{t}_{\ub}}(\widetilde{\text{tr}}\chib)(R^{\alpha}_{i}\text{tr}\chib')
\cdot\left(\frac{2(-t')}{\widetilde{\tr}\chib}T\psi\right)\cdot(R^{\alpha+2}_{i}\psi)dt'
d\ub'd\mu_{\tilde{\slashed{g}}}\\
+&\int_{W^{t}_{\ub}}(R_{i}\widetilde{\tr}\chib)(R_{i}^{\alpha}\tr\chib')\cdot\left(\frac{2(-t')}{\widetilde{\tr}\chib}T\psi\right)\cdot(R^{\alpha+1}_{i}\psi)dt'
d\ub'd\mu_{\tilde{\slashed{g}}}\\
+&\int_{W^{t}_{\ub}}(\widetilde{\text{tr}}\chib)(R^{\alpha}_{i}\text{tr}\chib')
\cdot R_{i}\left(\frac{2(-t')}{\widetilde{\tr}\chib}T\psi\right)\cdot(R^{\alpha+1}_{i}\psi)dt'
d\ub'd\mu_{\tilde{\slashed{g}}}\\
+&\frac{1}{2}\int_{W^{t}_{\ub}}\tr\leftexp{(R_{i})}{\widetilde{\slashed{\pi}}}(\widetilde{\text{tr}}\chib)(R^{\alpha}_{i}\text{tr}\chib')
\cdot\left(\frac{2(-t')}{\widetilde{\tr}\chib}T\psi\right)\cdot(R^{\alpha+1}_{i}\psi)dt'
d\ub'd\mu_{\tilde{\slashed{g}}}\\
=:&III_{1}+III_{2}+III_{3}+III_{4}.
\end{align*}
Compared to $III_{3}$, $III_{2}$ and $III_{4}$ are lower order due to the factor $R_{i}\widetilde{\tr}\chib+\frac{1}{2}\tr\leftexp{(R_{i})}{\widetilde{\slashed{\pi}}}$. While $III_{3}$ is lower order compared to $III_{1}$ since $\psi$ receives less derivatives. So we only need to estimate $III_{1}$. In view of \ref{L2 chib precise} $III_{1}$ can be bounded as

\begin{align}\label{III1}
\begin{split}
&\delta^{-1/2}\int_{-r_{0}}^{t}(-t')\|R^{\alpha}_{i}\tr\chib'\|_{L^{2}(\Sigma_{t'}^{\ub})}\|\ds R^{\alpha+1}_{i}\psi\|_{L^{2}(\Sigma_{t'}^{\ub})}dt'\\
\lesssim&\int_{-r_{0}}^{t}\left(\int_{-r_{0}}^{t'}(-t'')^{-2}\mu^{-b_{|\alpha|+2}-1/2}_{m}(t'')\sqrt{\widetilde{\Eb}_{\leq|\alpha|+2}(t'',\ub)}dt''\right)\cdot\\
&\mu_{m}^{-b_{|\alpha|+2}-1/2}(t')(-t')^{-1}\sqrt{\widetilde{\Eb}_{\leq|\alpha|+2}(t',\ub)}dt'\\
\lesssim&\int_{-r_{0}}^{t}(-t')^{-2}\mu_{m}^{-2b_{|\alpha|+2}-1}(t')\widetilde{\Eb}_{\leq|\alpha|+2}(t',\ub)dt'\\
\lesssim&\frac{1}{2b_{|\alpha|+2}}\mu_{m}^{-2b_{|\alpha|+2}}(t)\widetilde{\Eb}_{\leq|\alpha|+2}(t,\ub)+\mu_{m}^{-2b_{|\alpha|+2}}(t)\int_{-r_{0}}^{t_{0}}(-t')^{-2}\widetilde{\Eb}_{\leq|\alpha|+2}(t',\ub)dt'.
\end{split}
\end{align}
As before, here we split the time integral into the ``shock" and ``non-shock" parts and use Lemma \ref{lemma on mu to power a}.  

Now let us move to $II$. Note that the factor involving $T\psi$ can be rewritten as

\begin{align*}
\frac{2(-t')}{\widetilde{\tr}\chib}(\Lb+\frac{1}{2}\widetilde{\tr}\chib)(T\psi)-\frac{2(-t')}{(\widetilde{\tr}\chib)^{2}}\Lb(\widetilde{\tr}\chib)T\psi:
=T_{1}+T_{2}.
\end{align*}
By the equation \eqref{Structure Equation Lb chibAB nonsingular}, the contribution of $T_{2}$ to $II$ is lower order with respect to $\delta$ compared to $III$. On the other hand, the equation \eqref{box psi_0 in null frame} implies the pointwise estimate

\begin{align}\label{Improved LbTpsi}
\|(\Lb+\frac{1}{2}\widetilde{\tr}\chib)(T\psi)\|_{L^{\infty}(\Sigma_{t}^{\ub})}\lesssim\delta^{1/2}(-t)^{-3},
\end{align}
which shows that the contribution from $T_{1}$ to $II$ is also lower order with respect to $\delta$ and $t$ compared to $III$.

For $I$, we first note that

\begin{align}\label{Rewriting I}
(\Lb+\widetilde{\text{tr}}\chib)(R^{\alpha+1}_{i}\text{tr}\chib')=R_{i}(\Lb+\widetilde{\text{tr}}\chib)
(R^{\alpha}_{i}\text{tr}\chib')
+\leftexp{(R_{i})}{\Zb}R^{\alpha}_{i}\text{tr}\chib'-R_{i}(\text{tr}\chib')R^{\alpha}_{i}\text{tr}\chib'+\text{l.o.t.}
\end{align}
The lower order term above is lower order compared to the second term above. By the pointwise estimate for $\leftexp{(R_{i})}{\Zb}$ and Proposition 6.1, the contributions of the second and the third term are lower order with respect to $\delta$ and $t$ compared to $III$.

For the contribution of the first term in \eqref{Rewriting I}, we use Lemma \ref{lemma integration by parts} to write it as

\begin{align}\label{I1}
\begin{split}
&-\int_{W^{t}_{\ub}}\big((\Lb
+\widetilde{\text{tr}}\chib)(R^{\alpha}_{i}\text{tr}\chib')\big)
\cdot R_{i}\left(\frac{2(-t')}{\widetilde{\tr}\chib}T\psi\right)\cdot(R^{\alpha+1}_{i}\psi)dt'
d\ub'd\mu_{\tilde{\slashed{g}}}\\
&-\int_{W^{t}_{\ub}}\big((\Lb
+\widetilde{\text{tr}}\chib)(R^{\alpha}_{i}\text{tr}\chib')\big)
\cdot\left(\frac{2(-t')}{\widetilde{\tr}\chib}T\psi\right)\cdot(R^{\alpha+2}_{i}\psi)dt'
d\ub'd\mu_{\tilde{\slashed{g}}}\\
&-\int_{W^{t}_{\ub}}\tr\leftexp{(R_{i})}{\widetilde{\slashed{\pi}}}\big((\Lb
+\widetilde{\text{tr}}\chib)(R^{\alpha}_{i}\text{tr}\chib')\big)
\cdot\left(\frac{2(-t')}{\widetilde{\tr}\chib}T\psi\right)\cdot(R^{\alpha+1}_{i}\psi)dt'
d\ub'd\mu_{\tilde{\slashed{g}}}\\
=&-I_{11}-I_{12}-I_{13}.
\end{split}
\end{align}
Again, due to the pointwise estimates for $R_{i}\tr\chib$ and $\tr\leftexp{(R_{i})}{\widetilde{\slashed{\pi}}}$, $I_{11}$ and $I_{13}$ are lower order with respect to $\delta$ and $t$ compared to $I_{12}$. By the equation \eqref{diff Lb trchibprime},

\begin{align*}
(\Lb
+\widetilde{\text{tr}}\chib)(R^{\alpha}_{i}\text{tr}\chib')=\left(\Lb+\frac{2}{t-\ub}\right)(R^{\alpha}_{i}\tr\chib')+\lot=\frac{\rho_{\alpha}}{(t-\ub)^{2}}+\lot.
\end{align*}
Here $\lot$ is bounded as

\begin{align}\label{I12 temp 1}
\|\lot\|_{L^{2}(\Sigma_{t'}^{\ub})}\lesssim\delta(-t')^{-3}\|R^{\alpha}_{i}\tr\chib'\|_{L^{2}(\Sigma_{t'}^{\ub})},
\end{align}
whose contribution to $I_{12}$ is lower order compared to $III_{1}$. In view of \eqref{rho alpha 4}-\eqref{rho alpha 3},

\begin{align}\label{I12 temp 2}
\left\|\frac{\rho_{\alpha}}{(t-\ub)^{2}}\right\|_{L^{2}(\Sigma_{t'}^{\ub})}\lesssim\delta^{1/2}(-t')^{-3}\mu_{m}^{-b_{|\alpha|+2}-1/2}(t')\sqrt{\widetilde{\Eb}_{\leq|\alpha|+2}(t',\ub)}.
\end{align}
Thus the principal contribution in $I_{12}$ is bounded by

\begin{align}\label{I12 principal}
\begin{split}
&\int_{-r_{0}}^{t}(-t')^{-2}\mu_{m}^{-2b_{|\alpha|+2}-1}(t')\widetilde{\Eb}_{\leq|\alpha|+2}(t',\ub)dt'\\
\lesssim&\frac{1}{2b_{|\alpha|+2}}\mu_{m}^{-2b_{|\alpha|+2}}(t)\widetilde{\Eb}_{\leq|\alpha|+2}(t,\ub)+\mu_{m}^{-2b_{|\alpha|+2}}(t)\int_{-r_{0}}^{t_{0}}(-t')^{-2}\widetilde{\Eb}_{\leq|\alpha|+2}(t',\ub)dt'.
\end{split}
\end{align}
This completes the estimate for the spacetime integral \eqref{spacetime K1 chib temp 1}. \vspace{2mm}

Next we consider the top order optical contribution of the variation $R^{\alpha'}T^{l+1}\psi$, where $|\alpha'|+l+1=|\alpha|+1$, which is the following spacetime integral:
\begin{align}\label{top optical K1 mu}
\delta^{2l+2}\Big|\int_{W^{t}_{\ub}}\frac{2(-t')}{\widetilde{\tr}\chib}(T\psi)\cdot(R^{\alpha'}_{i}T^{l}
\slashed{\Delta}\mu)\cdot((\Lb+\frac{1}{2}\widetilde{\tr}\chib) R^{\alpha'}_{i}
T^{l+1}\psi)dt'd\ub'
d\mu_{\tilde{\slashed{g}}}\Big|
\end{align}

Again, we rewrite the above spacetime integral as:

\begin{align*}
&\delta^{2l+2}\int_{W^{t}_{\ub}}\frac{2(-t')}{\widetilde{\tr}\chib}(T\psi)\cdot(R^{\alpha'}_{i}T^{l}
\slashed{\Delta}\mu)\cdot
\big((\Lb+\widetilde{\text{tr}}\chib)(R^{\alpha'}_{i}T^{l+1}\psi)\big)
dt'd\ub'd\mu_{\tilde{\slashed{g}}}
\\
&-\delta^{2l+2}\int_{W^{t}_{\ub}}\frac{(-t')}{\widetilde{\tr}\chib}(T\psi)\cdot(R^{\alpha'}_{i}T^{l}
\slashed{\Delta}\mu)\cdot
\big(\widetilde{\text{tr}}
\chib(R^{\alpha'}_{i}T^{l+1}\psi)\big)
dt'd\ub'd\mu_{\tilde{\slashed{g}}}
\end{align*}
which is:

\begin{align*}
&\delta^{2l+2}\int_{W^{t}_{\ub}}(\Lb+\widetilde{\text{tr}}\chib)
\Big(\frac{2(-t')}{\widetilde{\tr}\chib}(T\psi)(R^{\alpha'}_{i}T^{l}\slashed{\Delta}\mu)(R^{\alpha'}_{i}T^{l+1}\psi)\Big)dt' d\ub'  d\mu_{\tilde{\slashed{g}}}\\
&-\delta^{2l+2}\int_{W^{t}_{\ub}}\left((\Lb+\frac{1}{2}\widetilde{\tr}\chib)\left(\frac{2(-t')}{\widetilde{\tr}\chib} T\psi\right)\right)(R^{\alpha'}_{i}T^{l}\slashed{\Delta}\mu)(R^{\alpha'}_{i}T^{l+1}\psi)dt'd\ub'
d\mu_{\tilde{\slashed{g}}}\\
&-\delta^{2l+2}\int_{W^{t}_{\ub}}\frac{2(-t')}{\widetilde{\tr}\chib}(T\psi)\big((\Lb+
\widetilde{\text{tr}}\chib)(R^{\alpha'}_{i}T^{l}\slashed{\Delta}\mu)
\big)(R^{\alpha'}_{i}T^{l+1}\psi)dt'd\ub'
d\mu_{\tilde{\slashed{g}}}\\
&+\delta^{2l+2}\int_{W^{t}_{\ub}}\frac{2(-t')}{\widetilde{\tr}\chib}(T\psi)\big(
\widetilde{\text{tr}}\chib(R^{\alpha'}_{i}T^{l}\slashed{\Delta}\mu)
\big)(R^{\alpha'}_{i}T^{l+1}\psi)dt'd\ub'
d\mu_{\tilde{\slashed{g}}}\\
=:&H'+I'+II'+III'.
\end{align*}
As before, the spacetime integral in the first line above can be written as:

\begin{align*}
\delta^{2l+2}\int_{\Sigma_{t}^{\ub}}
\frac{2(-t)}{\widetilde{\tr}\chib}(T\psi)(R^{\alpha'}_{i}T^{l}\slashed{\Delta}\mu)(R^{\alpha'}_{i}T^{l+1}\psi)d\ub'
d\mu_{\tilde{\slashed{g}}}-
\delta^{2l+2}\int_{\Sigma_{-r_{0}}^{\ub}}\frac{2r_{0}}{\widetilde{\tr}\chib}(T\psi)(R^{\alpha'}_{i}T^{l}\slashed{\Delta}\mu)(R^{\alpha'}_{i}T^{l+1}\psi)d\ub'
d\mu_{\tilde{\slashed{g}}}
\end{align*}
The integral on $\Sigma_{t}^{\ub}$ can be written as

\begin{align*}
&-\delta^{2l+2}\int_{\Sigma_{t}^{\ub}}\frac{2(-t)}{\widetilde{\tr}\chib}(T\psi)(R^{\alpha'-1}_{i}T^{l}\slashed{\Delta}
\mu)(R^{\alpha'+1}_{i}T^{l+1}\psi)d\ub'
d\mu_{\tilde{\slashed{g}}}
\\
&-\delta^{2l+2}\int_{\Sigma_{t}^{\ub}}\left(R_{i}\left(\frac{2(-t)}{\widetilde{\tr}\chib}T\psi\right)+\frac{1}{2}\text{tr}\leftexp{(R_{i})}{\tilde{\slashed{\pi}}}\right)(R^{\alpha'-1}_{i}T^{l}\slashed{\Delta}\mu)(R^{\alpha'}_{i}T^{l+1}\psi)d\ub'
d\mu_{\tilde{\slashed{g}}}\\
&:=-H'_{0}-H'_{1}
\end{align*}
In view of the estimates

\begin{align*}
\|\tr\leftexp{(R_{i})}{\widetilde{\slashed{\pi}}}\|_{L^{\infty}(\Sigma_{t}^{\ub})}\lesssim\delta(-t)^{-2},\quad \|R_{i}\tr\chib\|_{L^{\infty}(\Sigma_{t}^{\ub})}\lesssim\delta(-t)^{-3},
\end{align*}
$H'_{1}$ is lower order with respect to $\delta$ and the order of derivatives compared to $H'_{0}$. So we only estimate $H'_{0}$. In view of Proposition 6.4, a preliminary estimate for $H'_{0}$ is given by

\begin{align}\label{H0prime pre}
\begin{split}
|H'_{0}|\lesssim&\delta^{-1/2+l+1}(-t)\|R_{i}^{\alpha'-1}T^{l}\slashed{\Delta}\mu\|_{L^{2}(\Sigma_{t}^{\ub})}\mu_{m}^{-b_{|\alpha|+2}-1/2}(t)\sqrt{\widetilde{\Eb}_{\leq|\alpha|+2}(t,\ub)}\\
\lesssim &\delta\int_{-r_{0}}^{t}(-t')^{-2}\left(\mu_{m}^{-b_{|\alpha|+2}}(t')\sqrt{\widetilde{E}_{\leq|\alpha|+2}(t',\ub)}+(-t')^{-1}\mu_{m}^{-b_{|\alpha|+2}-1/2}(t')\sqrt{\widetilde{\Eb}_{\leq|\alpha|+2}(t',\ub)}\right)dt'\cdot\\
&\mu_{m}^{-b_{|\alpha|+2}-1/2}(t)\sqrt{\widetilde{\Eb}_{\leq|\alpha|+2}(t,\ub)}.
\end{split}
\end{align}
As before we split the time integral as ``shock" and ``non-shock" parts. The contribution from the ``non-shock" part is bounded by

\begin{align}\label{H0prime non shock}
\begin{split}
&\delta\mu^{-b_{|\alpha|+2}+1/2}_{m}(t)\int_{-r_{0}}^{t_{0}}(-t')^{-2}\left(\sqrt{\widetilde{E}_{\leq|\alpha|+2}(t',\ub)}+\sqrt{\widetilde{\Eb}_{\leq|\alpha|+2}(t',\ub)}\right)dt'\cdot\mu_{m}^{-b_{|\alpha|+2}-1/2}(t)\sqrt{\widetilde{\Eb}_{\leq|\alpha|+2}(t,\ub)}\\
\lesssim &\delta\mu_{m}^{-2b_{|\alpha|+2}}(t)\widetilde{\Eb}_{\leq|\alpha|+2}(t,\ub)+\delta\mu_{m}^{-2b_{|\alpha|+2}}(t)\int_{-r_{0}}^{t_{0}}(-t')^{-2}\left(\widetilde{E}_{\leq|\alpha|+2}(t',\ub)+\widetilde{\Eb}_{\leq|\alpha|+2}(t',\ub)\right)dt'.
\end{split}
\end{align}
The contribution from the ``shock" part is bounded by

\begin{align}\label{H0prime shock}
\begin{split}
\frac{\delta\mu_{m}^{-2b_{|\alpha|+2}+1/2}(t)}{b_{|\alpha|+2}-1}\sqrt{\widetilde{E}_{\leq|\alpha|+2}(t,\ub)}\sqrt{\widetilde{\Eb}_{\leq|\alpha|+2}(t,\ub)}+\frac{\delta\mu_{m}^{-2b_{|\alpha|+2}}(t)}{b_{|\alpha|+2}-1/2}\widetilde{\Eb}_{\leq|\alpha|+2}(t,\ub).
\end{split}
\end{align}

Next let us turn to the spacetime integrals $I',II',III'$. We start with $III'$. Using Lemma \ref{lemma integration by parts} $III'$ can be written as

\begin{align}\label{IIIprime pre}
\begin{split}
&-\delta^{2l+2}\int_{W^{t}_{\ub}}\frac{2(-t')}{\widetilde{\tr}\chib}(T\psi)\big(
\widetilde{\text{tr}}\chib(R^{\alpha'-1}_{i}T^{l}\slashed{\Delta}\mu)
\big)(R^{\alpha'+1}_{i}T^{l+1}\psi)dt'd\ub'
d\mu_{\tilde{\slashed{g}}}\\
&-\delta^{2l+2}\int_{W^{t}_{\ub}}\frac{2(-t')}{\widetilde{\tr}\chib}(T\psi)\big(
(R_{i}\widetilde{\text{tr}}\chib)(R^{\alpha'-1}_{i}T^{l}\slashed{\Delta}\mu)
\big)(R^{\alpha'}_{i}T^{l+1}\psi)dt'd\ub'
d\mu_{\tilde{\slashed{g}}}\\
&-\delta^{2l+2}\int_{W^{t}_{\ub}}R_{i}\left(\frac{2(-t')}{\widetilde{\tr}\chib}(T\psi)\right)\big(
\widetilde{\text{tr}}\chib(R^{\alpha'-1}_{i}T^{l}\slashed{\Delta}\mu)
\big)(R^{\alpha'}_{i}T^{l+1}\psi)dt'd\ub'
d\mu_{\tilde{\slashed{g}}}\\
&-\delta^{2l+2}\int_{W^{t}_{\ub}}\left(\tr\leftexp{(R_{i})}{\widetilde{\slashed{\pi}}}\right)\frac{(-t')}{\widetilde{\tr}\chib}(T\psi)\big(
\widetilde{\text{tr}}\chib(R^{\alpha'-1}_{i}T^{l}\slashed{\Delta}\mu)
\big)(R^{\alpha'}_{i}T^{l+1}\psi)dt'd\ub'
d\mu_{\tilde{\slashed{g}}}\\
=:&-III'_{1}-III'_{2}-III'_{3}-III'_{4}.
\end{split}
\end{align}
In view of the pointwise estimates for $\tr\leftexp{(R_{i})}{\widetilde{\slashed{\pi}}}$ and $R_{i}\widetilde{\tr}\chib$, $III'_{2}$ and $III'_{4}$ are lower order with respect to $\delta, t$ and the order of derivative compared to $III'_{1}$. Also the pointwise estimate for $R_{i}\widetilde{\tr}\chib$ implies that $III'_{3}$ is lower order with respect to the order of derivative compared to $III'_{1}$. So here we only estimate $III'_{1}$. Using Proposition 6.4, a preliminary estimate for $III'_{1}$ is given by

\begin{align}\label{IIIprime1 pre}
\begin{split}
|III'_{1}|\lesssim&\delta^{1/2}\int_{-r_{0}}^{t}\delta^{l}\|R^{\alpha'-1}_{i}T^{l}\slashed{\Delta}\mu\|_{L^{2}(\Sigma_{t'}^{\ub})}\mu_{m}^{-b_{|\alpha|+2}-1/2}(t')\sqrt{\widetilde{\Eb}_{\leq|\alpha|+2}(t',\ub)}dt'\\
\lesssim&\delta\int_{-r_{0}}^{t}\int_{-r_{0}}^{t'}(-t'')^{-2}\mu_{m}^{-b_{|\alpha|+2}}(t'')\left(\sqrt{\widetilde{E}_{\leq|\alpha|+2}(t'',\ub)}+\mu^{-1/2}_{m}(t'')(-t'')^{-1}\sqrt{\widetilde{\Eb}_{\leq|\alpha|+2}(t'',\ub)}\right)dt''\\
&\cdot\mu^{-b_{|\alpha|+2}-1/2}_{m}(t')(-t')^{-1}\sqrt{\widetilde{\Eb}_{\leq|\alpha|+2}(t',\ub)}dt'\\
\lesssim&\delta\int_{-r_{0}}^{t}\mu_{m}^{-2b_{|\alpha|+2}-1/2}(t')(-t')^{-1}\sqrt{\widetilde{E}_{\leq|\alpha|+2}(t',\ub)}\sqrt{\widetilde{\Eb}_{\leq|\alpha|+2}(t',\ub)}\int_{-r_{0}}^{t'}(-t'')^{-2}dt''dt'\\
&+\delta\int_{-r_{0}}^{t}\mu_{m}^{-2b_{|\alpha|+2}-1}(t')(-t')^{-1}\widetilde{\Eb}_{\leq|\alpha|+2}(t',\ub)\int_{-r_{0}}^{t'}(-t'')^{-3}dt''dt'.
\end{split}
\end{align}
In the last step we used the second part of Lemma \ref{lemma on mu to power a} and the monotonicity of $\widetilde{E}_{\leq|\alpha|+2}(t,\ub)$ and $\widetilde{\Eb}_{\leq|\alpha|+2}(t,\ub)$. Splitting the time integral as ``non-shock" and ``shock" parts, $III'_{1}$ is bounded by

\begin{align}\label{IIIprime1 temp 1}
\begin{split}
&\delta\int_{-r_{0}}^{t}\mu_{m}^{-2b_{|\alpha|+2}-1/2}(t')(-t')^{-2}\sqrt{\widetilde{E}_{\leq|\alpha|+2}(t',\ub)}\sqrt{\widetilde{\Eb}_{\leq|\alpha|+2}(t',\ub)}dt'\\
&+\delta\int_{-r_{0}}^{t}\mu_{m}^{-2b_{|\alpha|+2}-1}(t')(-t')^{-2}\widetilde{\Eb}_{\leq|\alpha|+2}(t',\ub)dt'\\
\lesssim&\delta\mu_{m}^{-2b_{|\alpha|+2}}(t)\int_{-r_{0}}^{t_{0}}(-t')^{-2}\left(\widetilde{E}_{\leq|\alpha|+2}(t',\ub)+\widetilde{\Eb}_{\leq|\alpha|+2}(t',\ub)\right)dt'\\
&+\frac{\delta\mu_{m}^{-2b_{|\alpha|+2}+1/2}(t)}{2b_{|\alpha|+2}-1/2}\widetilde{E}_{\leq|\alpha|+2}(t,\ub)+\frac{\delta\mu^{-2b_{|\alpha|+2}}_{m}(t)}{2b_{|\alpha|+2}}\widetilde{\Eb}_{\leq|\alpha|+2}(t,\ub).
\end{split}
\end{align}
In view of \eqref{Improved LbTpsi} and the propagation equation for $\tr\chib$, $I'$ is lower order with respect to $t'$ compared to $III'$. Finally we consider the estimate for $II'$. Regarding to the contribution from the factor $(\Lb+\widetilde{\tr}\chib)R^{\alpha'}T^{l}\slashed{\Delta}\mu$, we rewrite systematically

\begin{align*}
R^{\alpha'}_{i}T^{l}\slashed{\Delta}\mu=
\slashed{\Delta}R^{\alpha'}_{i}T^{l}\mu+\slashed{D}^{2} R^{\alpha'-1}_{i}T^{l}\mu+\slashed{D}^{2}R^{\alpha'}_{i}T^{l-1}\mu
+\mathcal{O}^{\leq2}_{0,1}\left(\ds R^{\alpha'-1}_{i}T^{l}\mu+\ds R^{\alpha'}_{i}T^{l-1}\mu\right).
\end{align*}
in view of \eqref{commutator slashed Laplace}. Here we abuse the notation by using $\ds$ and $\slashed{D}^{2}$ to denote $\dfrac{\partial}{\partial\theta^{A}}$ and $\dfrac{\partial^{2}}{\partial\theta^{A}\partial\theta^{B}}$ respectively. Obviously the contribution of the last term above is lower order compared to the other three terms. While the contribution of the third term above is lower order with respect to $\delta$ compared to the first two terms. The contribution of the second term is

\begin{align}\label{IIprime temp 1}
-\delta^{2l+2}\int_{W^{t}_{\ub}}\frac{2(-t')}{\widetilde{\tr}\chib}(T\psi)\left((\Lb+\widetilde{\tr}\chib)(\slashed{D}^{2} R^{\alpha'-1}_{i}T^{l}\mu)\right)(R^{\alpha'}_{i}T^{l+1}\psi)dt'd\ub'd\mu_{\widetilde{\slashed{g}}}.
\end{align}
In particular the contribution from the term involving ``$\widetilde{\tr}\chib$" is lower order compared to $III'_{1}$, which has been estimated. For the contribution from the term involving ``$\Lb$", the propagation equation for $\mu$ implies

\begin{align}\label{commuted Lb mu}
\Lb(\slashed{D}^{2}R^{\alpha'-1}_{i}T^{l}\mu)
=\slashed{D}^{2}R^{\alpha'-1}_{i}T^{l}m+
\slashed{D}^{2}R^{\alpha'-1}_{i}T^{l}(\mu e)+\leftexp{(R_{i})}{\Zb}\ds R_{i}^{\alpha'-1}T^{l}\mu
+\Lambda\slashed{D}^{2}R^{\alpha'-1}T^{l-1}\mu.
\end{align}
The contributions of the last two terms are lower order compared to $III'_{1}$ and so are the terms in the second term for which $e$ and its derivatives can be bounded in $L^{\infty}$. The contributions from the first term is bounded by

\begin{align}\label{IIprime temp 2}
\begin{split}
&\left|\delta^{2l+2}\int_{W^{t}_{\ub}}\frac{2(-t')}{\widetilde{\tr}\chib}(T\psi)(\slashed{D}^{2}R^{\alpha'-1}_{i}T^{l}m)(R^{\alpha'}_{i}T^{l+1}\psi)dt'd\ub'
d\mu_{\widetilde{\slashed{g}}}\right|\\
\lesssim& \delta\int_{-r_{0}}^{t}(-t')^{-2}\mu^{-b_{|\alpha|+2}}_{m}(t')\sqrt{\widetilde{E}_{\leq|\alpha|+2}(t',\ub)}\mu_{m}^{-b_{|\alpha|+2}-1/2}(t')\sqrt{\widetilde{\Eb}_{\leq|\alpha|+1}(t',\ub)}dt',
\end{split}
\end{align}
which enjoys the same estimate as \eqref{IIIprime1 temp 1}. The contribution from the terms in which the derivatives of $e$ are bounded in $L^{2}$ is bounded by (Here we only consider the case that all the derivatives fall on $e$, which is the principal term.)

\begin{align}\label{IIprime temp 3}
\begin{split}
&\left|\delta^{2l+2}\int_{W^{t}_{\ub}}\frac{2(-t')}{\widetilde{\tr}\chib}(T\psi)(\mu\slashed{D}^{2}R^{\alpha'-1}_{i}T^{l}e)(R^{\alpha'}_{i}T^{l+1}\psi)dt'd\ub'
d\mu_{\widetilde{\slashed{g}}}\right|\\
\lesssim& \delta\int_{-r_{0}}^{t}(-t')^{-2}\mu^{-b_{|\alpha|+2}-1/2}_{m}(t')\sqrt{\widetilde{\Eb}_{\leq|\alpha|+2}(t',\ub)}\mu_{m}^{-b_{|\alpha|+2}-1/2}(t')\sqrt{\widetilde{\Eb}_{\leq|\alpha|+1}(t',\ub)}dt',
\end{split}
\end{align}
which also enjoys the same estimate as \eqref{IIIprime1 temp 1}.

Finally we consider the contribution from the factor $(\Lb+\widetilde{\tr}\chib)\left(\slashed{\Delta}
R^{\alpha'}_{i}T^{l}\mu\right)$ to $II'$. Let us denote by $\widetilde{\slashed{\text{div}}}$ the divergence operator on $S_{t,\ub}$ with respect to $\widetilde{\slashed{g}}$. A direct computation implies

\begin{align}\label{commutator div}
\begin{split}
(\Lb+\widetilde{\tr}\chib)\left(\slashed
{\Delta}_{\widetilde{\slashed{g}}}
R^{\alpha'}_{i}T^{l}\mu\right)=(\Lb+\widetilde{\tr}\chib)
\left(\widetilde{\slashed{\text{div}}}\left((\slashed{\nabla}_{\widetilde{\slashed{g}}}R^{\alpha'}_{i}T^{l}\mu)
\right)=\widetilde{\slashed{\text{div}}}(\Lb+\widetilde{\tr}\chib)
\left(\slashed{\nabla}_{\widetilde{\slashed{g}}}R^{\alpha'}_{i}T^{l}
\mu\right)\right).
\end{split}
\end{align}
On the other hand since $\slashed{\Delta}_{\widetilde{\slashed{g}}}=c\slashed{\Delta}$, we have

\begin{align}\label{Lb mu over order}
(\Lb+\widetilde{\tr}\chib)\left(\slashed{\Delta}
R^{\alpha'}_{i}T^{l}\mu\right)=\Lb\left(\frac{1}{c}\right)c\slashed{\Delta}R^{\alpha'}_{i}T^{l}\mu+\frac{1}{c}(\Lb+\widetilde{\tr}\chib)\left(\slashed
{\Delta}_{\widetilde{\slashed{g}}}R^{\alpha'}_{i}T^{l}\mu\right).
\end{align}
The contribution of the first term on the right hand side above to $II'$ is 

\begin{align}\label{IIprime temp 4}
-\delta^{2l+2}\int_{W^{t}_{\ub}}\frac{2c(-t')}{\widetilde{\tr}\chib}\Lb\left(\frac{1}{c}\right)(T\psi)(\slashed{\Delta}R^{\alpha'}_{i}T^{l}\mu)(R^{\alpha'}_{i}T^{l+1}\psi)dt'd\ub'd\mu_{\widetilde{\slashed{g}}},
\end{align}
which is lower order with respect to $\delta$ and $t$ compared to $III'$. The contribution of the second term in \eqref{Lb mu over order} is bounded by

\begin{align}\label{IIprime temp 5}
\begin{split}
&-\delta^{2l+2}\int_{W^{t}_{\ub}}\frac{2(-t')}{c\widetilde{\tr}\chib}(T\psi)\widetilde{\slashed{\text{div}}}
\left((\Lb+\widetilde{\tr}\chib)\left(
\slashed{\nabla}_{\widetilde{\slashed{g}}}
R^{\alpha'}_{i}T^{l}\mu\right)\right)(R^{\alpha'}T^{l+1}\psi)dt'd\ub'd\mu_{\widetilde{\slashed{g}}}\\
=&\delta^{2l+2}\int_{W^{t}_{\ub}}\frac{2(-t')}{c\widetilde{\tr}\chib}(T\psi)
\left((\Lb+\widetilde{\tr}\chib)\left(
\slashed{\nabla}_{\widetilde{\slashed{g}}}
R^{\alpha'}_{i}T^{l}\mu\right)\right)(\slashed{\nabla}_{\widetilde{\slashed{g}}}
R^{\alpha'}T^{l+1}\psi)dt'd\ub'd\mu_{\widetilde{\slashed{g}}}\\
&+\delta^{2l+2}\int_{W^{t}_{\ub}}\slashed{\nabla}
_{\widetilde{\slashed{g}}}\left(\frac{2(-t')}{c\widetilde{\tr}\chib}(T\psi)\right)
\left((\Lb+\widetilde{\tr}\chib)\left(
\slashed{\nabla}_{\widetilde{\slashed{g}}}
R^{\alpha'}_{i}T^{l}\mu\right)\right)(
R^{\alpha'}T^{l+1}\psi)dt'd\ub'd\mu_{\widetilde{\slashed{g}}}
\end{split}
\end{align}
In view of the fact $\slashed{\nabla}_{\widetilde{\slashed{g}}}=c\slashed{\nabla}$, the second term on the right hand side above is similar to \eqref{IIprime temp 1} and therefore is also bounded as \eqref{IIIprime1 temp 1}. The only difference between the first term on the right hand side above and \eqref{IIprime temp 1} is that the variation $\slashed{\nabla}_{\widetilde{\slashed{g}}}R^{\alpha'}_{i}T^{l+1}\psi$
is top order, while the variation in \eqref{IIprime temp 1} is one order less. Therefore similar to \eqref{IIprime temp 2} and \eqref{IIprime temp 3}, this contribution is bounded as

\begin{align}\label{IIprime temp 6}
\begin{split}
&\delta\int_{-r_{0}}^{t}(-t')^{-2}\mu^{-b_{|\alpha|+2}}_{m}(t')\sqrt{\widetilde{E}_{\leq|\alpha|+2}(t',\ub)}\mu_{m}^{-b_{|\alpha|+2}-1/2}(t')\sqrt{\widetilde{\Eb}_{\leq|\alpha|+2}(t',\ub)}dt'\\
+&\delta\int_{-r_{0}}^{t}(-t')^{-2}\mu^{-b_{|\alpha|+2}-1/2}_{m}(t')\sqrt{\widetilde{\Eb}_{\leq|\alpha|+2}(t',\ub)}\mu_{m}^{-b_{|\alpha|+2}-1/2}(t')\sqrt{\widetilde{\Eb}_{\leq|\alpha|+2}(t',\ub)}dt',
\end{split}
\end{align}
which is finally bounded as \eqref{IIIprime1 temp 1}. 

\section{Top Order Energy Estimates}
Now we are ready to complete the top order energy estimates, namely, the energy estimates for the variations of order up to $|\alpha|+2$. As we have pointed out, we allow the top order energies to blow up as shock forms. So in this section, we prove that the modified energies $\widetilde{E}_{\leq|\alpha|+2}(t,\ub)$, $\widetilde{\Eb}_{\leq |\alpha|+2}(t,\ub)$ and $\widetilde{\Fb}_{\leq|\alpha|+2}(t,\ub)$, $\widetilde{F}_{\leq |\alpha|+2}(t,\ub)$ are bounded by initial data. Therefore we obtain a rate for the possible blow up of the top order energies.

\subsection{Estimates associated to $K_{1}$}
We start with the energy inequality for $Z^{\alpha'+1}_{i}\psi$ as we obtained in Section 6. Here $Z_{i}$ is any one of $R_{i}$ $Q$ and $T$.
\begin{align*}
\sum_{|\alpha'|\leq |\alpha|}\delta^{2l'}\Big(\Eb[Z^{\alpha'+1}_{i}\psi](t,\ub)+\Fb[Z^{\alpha'+1}_{i}\psi](t,\ub)+K[Z^{\alpha'+1}_{i}\psi](t,\ub)\Big)\\\leq C\sum_{|\alpha'|\leq|\alpha|}\delta^{2l'}\Eb[Z^{\alpha'+1}_{i}\psi](-r_{0},\ub)+C\sum_{|\alpha'|\leq |\alpha|}\delta^{2l'}\int_{W^{t}_{\ub}}c^{-2}
\widetilde{Q}_{1,|\alpha'|+2}
\end{align*}
where $l'$ is the number of $T$s' appearing in the string of $Z^{\alpha'}_{i}$.
In the spacetime integral $\int_{W^{t}_{\ub}}c^{-2}\widetilde{Q}_{1,\alpha'+2}$ we have the contributions from the deformation tensor of $K_{1}$,
which have been investigated in Section 6. Actually, if we choose $N_{\text{top}}$ to be large enough, then we can bound $\|\slashed{d}Z^{\beta}_{i}\mu\|_{L^{\infty}(\Sigma_{t}^{\ub})}$ in terms of initial data by using the same argument as in Section 4.2 for $|\beta|\leq N_{\infty}+1$.

Another contribution of the spacetime integral $\int_{W^{t}_{\ub}}c^{-2}\widetilde{Q}_{1,|\alpha'|+2}$ comes from $\int_{W^{t}_{\ub}}\dfrac{1}{c}\widetilde{\rho}_{|\alpha'|+2}\cdot \Lb Z^{\alpha'+1}_{i}\psi$, namely, the deformation tensor of commutators, which has been studied intensively in the last section. Among these we first consider the lower order optical contributions, which are bounded by (See \eqref{L2 lower optical K1} and \eqref{sigma K1}):

\begin{align}\label{K1 RHS lower order}
\begin{split}
&\delta^{1/2}K_{\leq|\alpha|+2}(t,\ub)+\delta^{-1/2}\int_{0}^{\ub}\Fb_{\leq|\alpha|+2}(t,\ub')d\ub'\\
&+\int_{-r_{0}}^{t}(-t')^{-2}\Eb(t',\ub)dt'+\delta\int_{-r_{0}}^{t}(-t')^{-2}E_{\leq|\alpha|+2}(t',\ub)dt'\\
\lesssim&\mu^{-2b_{|\alpha|+2}}_{m}(t)\Big(\delta^{1/2}\widetilde{K}_{\leq|\alpha|+2}(t,\ub)+\delta^{-1/2}\int_{0}^{\ub}\widetilde{\Fb}_{\leq|\alpha|+2}(t,\ub')d\ub'\\
&+\int_{-r_{0}}^{t}\widetilde{\Eb}_{\leq|\alpha|+2}(t',\ub)dt'+\delta\int_{-r_{0}}^{t}(-t')^{-2}\widetilde{E}_{\leq|\alpha|+2}(t',\ub)dt'\Big).
\end{split}
\end{align}
Here we define the following non-decreasing quantity in $t$:

\begin{align}\label{non decreasing K}
\widetilde{K}_{\leq|\alpha|+2}(t,\ub):=\sup_{t'\in[-r_{0},t]}\{\mu_{m}^{2b_{|\alpha|+2}}(t')K_{\leq|\alpha|+2}(t',\ub)\}
\end{align}
By \eqref{estimates for H0}, \eqref{III1}, \eqref{I12 principal}, \eqref{H0prime non shock}, \eqref{H0prime shock}, \eqref{IIIprime1 temp 1}, the top order optical contributions associated to $K_{1}$ is bounded by (up to a constant)

\begin{align}\label{top optical K1}
\begin{split}
&\frac{1}{b_{|\alpha|+2}-1/2}\mu_{m}^{-2b_{|\alpha|+2}}(t)\widetilde{\Eb}_{\leq|\alpha|+2}(t,\ub)+\epsilon\mu_{m}^{-2b_{|\alpha|+2}}(t)\widetilde{\Eb}_{\leq|\alpha|+2}(t,\ub)\\
&+C_{\epsilon}\mu^{-2b_{|\alpha|+2}}_{m}(t)\int_{-r_{0}}^{t}(-t')^{-2}\widetilde{\Eb}_{\leq|\alpha|+2}(t',\ub)dt'+\delta\mu_{m}^{-2b_{|\alpha|+2}}(t)\widetilde{\Eb}_{\leq|\alpha|+2}(t,\ub)\\
&+\delta\mu^{-2b_{|\alpha|+2}}_{m}(t)\int_{-r_{0}}^{t}(-t')^{-2}\left(\widetilde{E}_{\leq|\alpha|+2}(t',\ub)+\widetilde{\Eb}_{\leq|\alpha|+2}(t',\ub)\right)dt'\\
&+\frac{\delta\mu^{-2b_{|\alpha|+2}+1/2}_{m}(t)}{b_{|\alpha|+2}-1}\sqrt{\widetilde{E}_{\leq|\alpha|+2}(t,\ub)}\sqrt{\widetilde{\Eb}_{\leq|\alpha|+2}(t,\ub)}\\
&+\frac{\delta\mu^{-2b_{|\alpha|+2}+1/2}_{m}(t)}{2b_{|\alpha|+2}-1/2}\widetilde{E}_{\leq|\alpha|+2}(t,\ub).
\end{split}
\end{align}
Substituting these contributions into the energy inequality, and use the fact that $\mu_{m}(t)\leq 1$, we obtain:

\begin{align*}
&\mu_{m}^{2b_{|\alpha|+2}}(t)\sum_{|\alpha'|\leq |\alpha|}\delta^{2l'}\Big(\Eb[Z^{\alpha'+1}_{i}\psi](t,\ub)+\Fb[Z^{\alpha'+1}_{i}\psi](t,\ub)+K[Z^{\alpha'+1}_{i}\psi](t,\ub)\Big)\\
\lesssim& \mu^{2b_{|\alpha|+2}}_{m}(t)\sum_{|\alpha'|\leq|\alpha|}\delta^{2l'}\Eb[Z^{\alpha'+1}_{i}\psi](-r_{0},\ub)+\frac{1}{b_{|\alpha|+2}-1/2}\widetilde{\Eb}_{\leq|\alpha|+2}(t,\ub)\\
&+\epsilon\widetilde{\Eb}_{\leq|\alpha|+2}(t,\ub)+C_{\epsilon}\int_{-r_{0}}^{t}(-t')^{-2}\widetilde{\Eb}_{\leq|\alpha|+2}(t',\ub)dt'+\delta\widetilde{\Eb}_{\leq|\alpha|+2}(t,\ub)\\
&+\delta\int_{-r_{0}}^{t}(-t')^{-2}\left(\widetilde{E}_{\leq|\alpha|+2}(t',\ub)+\widetilde{\Eb}_{\leq|\alpha|+2}(t',\ub)\right)dt'\\
&+\frac{\delta}{b_{|\alpha|+2}-1}\sqrt{\widetilde{E}_{\leq|\alpha|+2}(t,\ub)}\sqrt{\widetilde{\Eb}_{\leq|\alpha|+2}(t,\ub)}+\frac{\delta}{2b_{|\alpha|+2}-1/2}\widetilde{E}_{\leq|\alpha|+2}(t,\ub).
\end{align*}

Now the right hand side of the above inequality is non-decreasing in $t$, so the above inequality is also valid if we replace ``$t$" by any $t'\in[-r_{0},t]$ on the left hand side:
\begin{align*}
&\mu_{m}^{2b_{|\alpha|+2}}(t')\sum_{|\alpha'|\leq |\alpha|}\Big(\Eb[Z^{\alpha'+1}_{i}\psi](t',\ub)+\Fb[Z^{\alpha'+1}_{i}\psi](t',\ub)+K[Z^{\alpha'+1}_{i}\psi](t',\ub)\Big)\\
\lesssim& \mu^{2b_{|\alpha|+2}}_{m}(t)\sum_{|\alpha'|\leq|\alpha|}\delta^{2l'}\Eb[Z^{\alpha'+1}_{i}\psi](-r_{0},\ub)+\frac{1}{b_{|\alpha|+2}-1/2}\widetilde{\Eb}_{\leq|\alpha|+2}(t,\ub)\\
&+\epsilon\widetilde{\Eb}_{\leq|\alpha|+2}(t,\ub)+C_{\epsilon}\int_{-r_{0}}^{t}(-t')^{-2}\widetilde{\Eb}_{\leq|\alpha|+2}(t',\ub)dt'+\delta\widetilde{\Eb}_{\leq|\alpha|+2}(t,\ub)\\
&+\delta\int_{-r_{0}}^{t}(-t')^{-2}\left(\widetilde{E}_{\leq|\alpha|+2}(t',\ub)+\widetilde{\Eb}_{\leq|\alpha|+2}(t',\ub)\right)dt'\\
&+\frac{\delta}{b_{|\alpha|+2}-1}\sqrt{\widetilde{E}_{\leq|\alpha|+2}(t,\ub)}\sqrt{\widetilde{\Eb}_{\leq|\alpha|+2}(t,\ub)}+\frac{\delta}{2b_{|\alpha|+2}-1/2}\widetilde{E}_{\leq|\alpha|+2}(t,\ub).
\end{align*}
For each term in the sum on the left hand side of above inequality, we keep it on the left hand side and ignore all the other terms. Then taking supremum of the term we kept with respect to $t'\in[-r_{0},t]$. Repeat this process for all the terms on the left hand side, we finally obtain:
\begin{align*}
&\widetilde{\Eb}_{\leq |\alpha|+2}(t,\ub)+\widetilde{\Fb}_{\leq|\alpha|+2}(t,\ub)+\widetilde{K}_{\leq|\alpha|+2}(t,\ub)\\
\lesssim& \mu^{2b_{|\alpha|+2}}_{m}(t)\sum_{|\alpha'|\leq|\alpha|}\delta^{2l'}\Eb[Z^{\alpha'+1}_{i}\psi](-r_{0},\ub)+\boxed{\frac{1}{b_{|\alpha|+2}-1/2}\widetilde{\Eb}_{\leq|\alpha|+2}(t,\ub)}\\
&+\epsilon\widetilde{\Eb}_{\leq|\alpha|+2}(t,\ub)+C_{\epsilon}\int_{-r_{0}}^{t}(-t')^{-2}\widetilde{\Eb}_{\leq|\alpha|+2}(t',\ub)dt'+\delta\widetilde{\Eb}_{\leq|\alpha|+2}(t,\ub)\\
&+\delta\int_{-r_{0}}^{t}(-t')^{-2}\left(\widetilde{E}_{\leq|\alpha|+2}(t',\ub)+\widetilde{\Eb}_{\leq|\alpha|+2}(t',\ub)\right)dt'\\
&+\frac{\delta}{b_{|\alpha|+2}-1}\sqrt{\widetilde{E}_{\leq|\alpha|+2}(t,\ub)}\sqrt{\widetilde{\Eb}_{\leq|\alpha|+2}(t,\ub)}+\frac{\delta}{2b_{|\alpha|+2}-1/2}\widetilde{E}_{\leq|\alpha|+2}(t,\ub).
\end{align*}
The control on the boxed term relies on Remark \ref{a dependence}--since $\frac{C}{b_{|\alpha|+2}}$ is suitably small, the boxed term can be absorbed by the left hand side. So if $\delta$ and $\epsilon$ are appropriately small, we obtain

\begin{align}\label{Eb temp 1}
\begin{split}
&\widetilde{\Eb}_{\leq |\alpha|+2}(t,\ub)+\widetilde{\Fb}_{\leq|\alpha|+2}(t,\ub)+\widetilde{K}_{\leq|\alpha|+2}(t,\ub)\\
\lesssim&\mu^{2b_{|\alpha|+2}}_{m}(t)\sum_{|\alpha'|\leq|\alpha|}\delta^{2l'}\Eb[Z^{\alpha'+1}_{i}\psi](-r_{0},\ub)+\frac{\delta}{b_{|\alpha|+2}-1}\widetilde{E}_{\leq|\alpha|+2}(t,\ub)\\
&+\int_{-r_{0}}^{t}(-t')^{-2}\left(\delta\widetilde{E}_{\leq|\alpha|+2}(t',\ub)+\widetilde{\Eb}_{\leq|\alpha|+2}(t',\ub)\right)dt',
\end{split}
\end{align}
which implies, by Gronwall,

\begin{align}\label{Eb temp 2}
\begin{split}
&\widetilde{\Eb}_{\leq |\alpha|+2}(t,\ub)+\widetilde{\Fb}_{\leq|\alpha|+2}(t,\ub)+\widetilde{K}_{\leq|\alpha|+2}(t,\ub)\\
\lesssim&\mu^{2b_{|\alpha|+2}}_{m}(t)\sum_{|\alpha'|\leq|\alpha|}\delta^{2l'}\Eb[Z^{\alpha'+1}_{i}\psi](-r_{0},\ub)\\
&+\frac{\delta}{b_{|\alpha|+2}-1}\widetilde{E}_{\leq|\alpha|+2}(t,\ub)+\int_{-r_{0}}^{t}(-t')^{-2}\delta\widetilde{E}_{\leq|\alpha|+2}(t',\ub)dt'.
\end{split}
\end{align}

\subsection{Estimates associated to $K_{0}$}
Now we turn to the top order energy estimates for $K_{0}$. We start with the energy identity for $Z^{\alpha'+1}_{i}\psi$, where $Z_{i}$ is any one of $R_{i}$, $Q$ and $T$:
\begin{align*}
&\sum_{|\alpha'|\leq|\alpha|}\delta^{2l'}\left(E[Z^{\alpha'+1}_{i}\psi](t,\ub)+F[Z^{\alpha'+1}_{i}\psi](t,\ub)\right)\\&\lesssim \sum_{|\alpha'|\leq |\alpha|}\delta^{2l'}E[Z^{\alpha'+1}_{i}\psi](-r_{0},\ub)+\sum_{|\alpha'|\leq |\alpha|}\int_{W^{t}_{\ub}}c^{-2}
\widetilde{Q}_{0,|\alpha'|+2}
\end{align*}
Again, $l'$ is the number of $T$s' in the string of $Z^{\alpha'+1}_{i}$.

In the spacetime integral $\int_{W^{t}_{\ub}}c^{-2}\widetilde{Q}_{0,\alpha'+2}$ we have the contributions from the deformation tensor of $K_{0}$, which have been investigated in section 6 and also the contribution of the spacetime integral from $\int_{W^{t}_{\ub}}\dfrac{1}{c}\widetilde{\rho}_{|\alpha'|+2}\cdot LZ^{\alpha'+1}_{i}\psi$, namely, the deformation tensor of commutators, which has been studied intensively in the
last section. We first consider the lower order optical contributions, which are bounded by (See \eqref{L2 lower optical K2}, \eqref{sigma K0} and \eqref{Eb temp 2} and provided that $\delta$ is sufficiently small):

\begin{align}\label{K0 RHS lower order optical}
\begin{split}
&\mu_{m}^{-2b_{|\alpha|+2}}(t)\left(\int_{-r_{0}}^{t}(-t')^{-3/2}\delta^{1/2}\widetilde{E}_{\leq|\alpha|+2}(t',\ub)dt'+\delta^{1/2} \widetilde{K}_{\leq|\alpha|+2}(t,\ub)+\delta^{-1/2}\int_{0}^{\ub}\widetilde{\Fb}_{\leq|\alpha|+2}(t,\ub')d\ub'\right)\\
\lesssim&\Eb_{\leq|\alpha|+2}(-r_{0},\ub)+\mu_{m}^{-2b_{|\alpha|+2}}(t)\left(\int_{-r_{0}}^{t}(-t')^{-3/2}\delta^{1/2}\widetilde{E}_{\leq|\alpha|+2}(t',\ub)dt'+\frac{\delta^{3/2}}{b_{|\alpha|+2}-1}\widetilde{E}_{\leq|\alpha|+2}(t,\ub)\right).
\end{split}
\end{align}
Here we used the following fact: Since the right hand side of \eqref{Eb temp 2} is non-decreasing in $\ub$, $\sup_{\ub'\in[0,\ub]}\{\widetilde{\Fb}_{\leq|\alpha|+2}(t,\ub')\}$ is also bounded by the right hand side of \eqref{Eb temp 2}.

By \eqref{top K0 chib temp 2 non shock}, \eqref{top K0 chib temp 2 shock}, \eqref{top K0 mu temp 2 nonshock}, \eqref{top K0 mu temp 2 shock} and \eqref{Eb temp 2}, the top order optical contributions are bounded by (up to a constant)

\begin{align}\label{K0 RHS top optical}
\begin{split}
&\frac{\mu^{-2b_{|\alpha|+2}}_{m}(t)}{2b_{|\alpha|+2}}\widetilde{E}_{\leq|\alpha|+2}(t,\ub)+\mu_{m}^{-2b_{|\alpha|+2}}(t)\int_{-r_{0}}^{t}(-t')^{-2}\widetilde{E}_{\leq|\alpha|+2}(t',\ub)dt'.
\end{split}
\end{align}
Substituting \eqref{K0 RHS lower order optical} and \eqref{K0 RHS top optical} into the energy inequality we obtain

\begin{align}\label{E temp 1}
\begin{split}
&\mu_{m}^{2b_{|\alpha|+2}}(t)E_{\leq|\alpha|+2}(t,\ub)+\mu_{m}^{2b_{|\alpha|+2}}(t)F_{\leq|\alpha|+2}(t,\ub)\\
\lesssim&E_{\leq|\alpha|+2}(-r_{0},\ub)+\Eb_{\leq|\alpha|+2}(-r_{0},\ub)\\
&+\frac{1}{2b_{|\alpha|+2}}\widetilde{E}_{\leq|\alpha|+2}(t,\ub)+\int_{-r_{0}}^{t}(-t')^{-3/2}\widetilde{E}_{\leq|\alpha|+2}(t',\ub)dt',
\end{split}
\end{align}
which, using the fact that the right hand side above is non-decreasing in $t$, implies

\begin{align}\label{E temp 2}
\begin{split}
&\widetilde{E}_{\leq|\alpha|+2}(t,\ub)+\widetilde{F}_{\leq|\alpha|+2}(t,\ub)\\
\lesssim&E_{\leq|\alpha|+2}(-r_{0},\ub)+\Eb_{\leq|\alpha|+2}(-r_{0},\ub)\\
&+\frac{1}{2b_{|\alpha|+2}}\widetilde{E}_{\leq|\alpha|+2}(t,\ub)+\int_{-r_{0}}^{t}(-t')^{-3/2}\widetilde{E}_{\leq|\alpha|+2}(t',\ub)dt'.
\end{split}
\end{align}
Using Gronwall and taking $b_{|\alpha|+2}$ large enough implies

\begin{align}\label{E temp 3}
\widetilde{E}_{\leq|\alpha|+2}(t,\ub)+\widetilde{F}_{\leq|\alpha|+2}(t,\ub)
\lesssim E_{\leq|\alpha|+2}(-r_{0},\ub)+\Eb_{\leq|\alpha|+2}(-r_{0},\ub).
\end{align}
Substituting this into \eqref{Eb temp 2} we obtain

\begin{align}\label{Eb temp 3}
\begin{split}
&\Ebt_{\leq|\alpha|+2}(t,\ub)+\widetilde{\Fb}_{\leq|\alpha|+2}(t,\ub)+\widetilde{K}_{\leq|\alpha|+2}(t,\ub)\\
\lesssim&\Eb_{\leq|\alpha|+2}(-r_{0},\ub)+\delta E_{\leq|\alpha|+2}(-r_{0},\ub).
\end{split}
\end{align}
This completes the top order energy estimates. If we denote the initial energies by 

\begin{align}\label{initial top order energies}
\mathcal{D}^{\ub}_{|\alpha|+2}:=E_{\leq|\alpha|+2}(-r_{0},\ub)+\delta^{-1}\Eb_{\leq|\alpha|+2}(-r_{0},\ub),
\end{align}
then the top order energy estimates can be summarized as

\begin{align}\label{top order energy estimate final}
\begin{split}
\Ebt_{\leq|\alpha|+2}(t,\ub)+\widetilde{\Fb}_{\leq|\alpha|+2}(t,\ub)+\widetilde{K}_{\leq|\alpha|+2}(t,\ub)\lesssim&\delta\mathcal{D}^{\ub}_{|\alpha|+2}\\
\widetilde{E}_{\leq|\alpha|+2}(t,\ub)+\widetilde{F}_{\leq|\alpha|+2}(t,\ub)\lesssim&\mathcal{D}^{\ub}_{|\alpha|+2}.
\end{split}
\end{align}

\section{Descent Scheme}
In the previous section, we have shown that the modified energies $\widetilde{E}_{\leq|\alpha|+2}(t), \widetilde{\Eb}_{\leq |\alpha|+2}(t)$ for the top order variations are bounded by the initial energies $\mathcal{D}^{\ub}_{|\alpha|+2}$. According to the definition, the modified energies go to zero when $\mu_{m}(t)$ goes to zero. This means the energy estimates obtained in the last section are not sufficient for us to close the argument when shock forms. However, based on those estimates, we shall show in this section, that if the order of derivative decreases, the power of $\mu_{m}(t)$ needed in the definition of modified energies also decreases. The key point is that after several steps, this power could be zero and the energies which do not go to zero as shock forms can be bounded.

\subsection{Next-to-top order error estimates}
We first investigate the estimates associated to $K_{1}$. To improve the energy estimates for the next-to-the-top variations, we consider the spacetime integral (Keep in mind that the top order quantities are of order $|\alpha|+2$):
\begin{align}\label{K1 next to top spacetime}
\begin{split}
&\left|\int_{W^{t}_{\ub}}\frac{1}{c}\frac{2(-t)}{\widetilde{\tr}\chib}(T\psi)\cdot(Z^{\alpha}_{i}\text{tr}\chib')\cdot\left(\Lb Z^{\alpha}_{i}\psi+\frac{1}{2}\widetilde{\tr}\chib Z_{i}^{\alpha}\psi\right)dt'd\ub'd\mu_{\tilde{g}}\right|\\
\lesssim&\delta^{-1/2}\int_{W^{t}_{\ub}}(-t')|Z^{\alpha}_{i}\text{tr}\chib'|\left|\Lb Z^{\alpha}_{i}\psi+\frac{1}{2}\widetilde{\tr}\chib Z_{i}^{\alpha}\psi\right|dt'd\ub'd\mu_{\tilde{g}}\\
\lesssim&\delta^{-1/2}\Big(\int_{W^{t}_{\ub}}|Z^{\alpha}_{i}\text{tr}\chib'|^{2}dt'd\ub'
d\mu_{\tilde{\slashed{g}}}\Big)^{1/2}
\cdot\left(\int_{W^{t}_{\ub}}(-t')^{2}\left(\Lb Z^{\alpha}_{i}\psi+\frac{1}{2}\widetilde{\tr}\chib\psi\right)^{2}dt'd\ub'd\mu_{\tilde{\slashed{g}}}
\right)^{1/2}\\
\lesssim&\delta^{-1/2}\left(\int_{-r_{0}}^{t}\|Z^{\alpha}_{i}\text{tr}\chib'\|^{2}_{L^{2}(\Sigma_{t'}^{\ub})}dt'\right)^{1/2}\cdot
\left(\int_{0}^{\ub}\Fb[Z^{\alpha}_{i}\psi](t,\ub')d\ub'\right)^{1/2}
\end{split}
\end{align}
Throughout this subsection, $Z_{i}$ is either $R_{i}$ or $Q$.

By Proposition \ref{Proposition lower order L2},

\begin{align}\label{trchib descent}
\begin{split}
\|Z^{\alpha}_{i}\text{tr}\chib'\|_{L^{2}(\Sigma_{t}^{\ub})}&\lesssim \delta^{1/2}\int_{-r_{0}}^{t}(-t')^{-3}\mu^{-1/2}_{m}(t')\sqrt{\underline{E}_{|\alpha|+2}(t',\ub)}dt'\\
&\lesssim\delta^{1/2}\int_{-r_{0}}^{t}(-t')^{-3}\mu^{-1/2-b_{|\alpha|+2}}_{m}(t')\sqrt{\widetilde{\Eb}_{\leq|\alpha|+2}(t',\ub)}dt'\\
&\lesssim\delta^{1/2}\sqrt{\widetilde{\Eb}_{\leq |\alpha|+2}(t,\ub)}\int_{-r_{0}}^{t}\mu_{m}^{-b_{|\alpha|+2}-1/2}(t')(-t')^{-3}dt'\\
&\lesssim \delta^{1/2}(-t)^{-1}\sqrt{\widetilde{\Eb}_{\leq|\alpha|+2}(t,\ub)}\int_{-r_{0}}^{t}\mu_{m}^{-b_{|\alpha|+2}-1/2}(t')(-t')^{-2}dt'\\
&\lesssim\delta^{1/2}\mu^{-b_{|\alpha|+2}+1/2}_{m}(t)(-t)^{-1}\sqrt{\widetilde{\Eb}_{\leq |\alpha|+2}(t,\ub)}.
\end{split}
\end{align}
Then by the top order energy estimates obtained in the last section, the integral in the first factor of \eqref{K1 next to top spacetime} is bounded by (up to a constant):

\begin{align*}
&\delta\int_{-r_{0}}^{t}\mu^{-2b_{|\alpha|+2}+1}_{m}(t')(-t')^{-2}\widetilde{\Eb}_{\leq|\alpha|+2}(t',\ub)dt'\\
\lesssim &\delta\widetilde{\Eb}_{\leq |\alpha|+2}(t,\ub)\int_{-r_{0}}^{t}\mu^{-2b_{|\alpha|+2}+1}_{m}(t')(-t')^{-2}dt'\\
\lesssim&\delta^{2}\mu^{-2b_{|\alpha|+2}+2}_{m}(t)\mathcal{D}^{\ub}_{|\alpha|+2}
\end{align*}
On the other hand, the second factor in \eqref{K1 next to top spacetime} is bounded by:

\begin{align*}
&\int_{0}^{\ub}\Fb[Z^{\alpha}_{i}\psi](t,\ub')d\ub'\leq \mu^{-2b_{|\alpha|+1}}_{m}(t)\int_{0}^{\ub}\sup_{t'\in[-r_{0},t]}\{\mu_{m}^{2b_{|\alpha|+1}}(t')\Fb[Z^{\alpha}_{i}\psi](t',\ub')\}d\ub'
\end{align*}
where $b_{|\alpha|+1}=b_{|\alpha|+2}-1$. Therefore \eqref{K1 next to top spacetime} is bounded by (up to a constant):

\begin{align}\label{K1 next to top estimates}
\begin{split}
&\delta^{1/2}\mu^{-2b_{|\alpha|+1}}_{m}(t)\sqrt{\mathcal{D}^{\ub}_{|\alpha|+2}}\sqrt{\int_{0}^{\ub}
\widetilde{\Fb}_{\leq|\alpha|+1}(t,\ub')d\ub'}\\
\lesssim &\delta^{2}\mu^{-2b_{|\alpha|+1}}_{m}(t)\mathcal{D}^{\ub}_{|\alpha|+2}+C\delta^{-1}\mu_{m}^{-2b_{|\alpha|+1}}(t)\int_{0}^{\ub}\widetilde{\Fb}_{\leq|\alpha|+1}(t,\ub')d\ub'.
\end{split}
\end{align}
Next we consider the spacetime integral

\begin{align}\label{K1T next to top spacetime}
\begin{split}
&\delta^{2l'+2}\left|\int_{W^{t}_{\ub}}\frac{1}{c}\frac{2(-t')}{\widetilde{\tr}\chib}(T\psi)\cdot(Z^{\alpha'}_{i}T^{l'}\slashed{\Delta}\mu)\cdot\left(\Lb Z^{\alpha'}_{i}T^{l'+1}\psi+\frac{1}{2}\widetilde{\tr}\chib Z^{\alpha'}_{i}T^{l'+1}\psi\right)dt'd\ub'd\mu_{\tilde{\slashed{g}}}\right|\\
\lesssim &\delta^{-1/2}\left(\int_{-r_{0}}^{t}\delta^{l'+1}\|Z^{\alpha'}_{i}T^{l'}\slashed{\Delta}\mu\|^{2}_{L^{2}(\Sigma_{t'}^{\ub})}\right)^{1/2}
\left(\int_{0}^{\ub}\delta^{l'+1}
\Fb[Z^{\alpha'}_{i}T^{l'+1}\psi](t,\ub')d\ub'\right)^{1/2}
\end{split}
\end{align}
with $|\alpha'|+l'\leq |\alpha|-1$. By Proposition \ref{Proposition lower order L2 mu},

\begin{align}\label{mu descent}
\begin{split}
&\delta^{l'+1}\|Z_{i}^{\alpha'}T^{l'}\slashed{\Delta}\mu\|_{L^{2}(\Sigma_{t}^{\ub})}\\
\lesssim&\delta^{1/2}(-t)^{-1}\int_{-r_{0}}^{t}(-t')^{-2}\left(\sqrt{E_{\leq|\alpha|+2}(t',\ub)}+\mu_{m}^{-1/2}(t')(-t')^{-1}\sqrt{\underline{E}_{\leq|\alpha|+2}(t',\ub)}\right)dt'\\
\lesssim&\delta^{1/2}(-t)^{-1}\int_{-r_{0}}^{t}(-t')^{-2}\left(\mu^{-b_{|\alpha|+2}}_{m}(t')\sqrt{\widetilde{E}_{\leq|\alpha|+2}(t',\ub)}+\mu_{m}^{-b_{|\alpha|+2}-1/2}(t')\sqrt{\widetilde{\Eb}_{\leq|\alpha|+2}(t',\ub)}\right)dt'\\
\lesssim&\delta^{1/2}(-t)^{-1}\left(\mu^{-b_{|\alpha|+2}+1/2}_{m}(t)\sqrt{\widetilde{\Eb}_{\leq|\alpha|+2}(t,\ub)}+\mu_{m}^{-b_{|\alpha|+2}+1}(t)\sqrt{\widetilde{E}_{\leq|\alpha|+2}(t,\ub)}\right).
\end{split}
\end{align}
Then by the top order energy estimates obtained in the last section, the integral in the first factor of \eqref{K1T next to top spacetime} is bounded by ($\mu_{m}(t)\leq1$):

\begin{align*}
&\delta\int_{-r_{0}}^{t}\mu^{-2b_{|\alpha|+2}+1}_{m}(t')(-t')^{-2}\left(\widetilde{E}_{\leq|\alpha|+2}(t',\ub)+\widetilde{\Eb}_{\leq|\alpha|+2}(t',\ub)\right)dt'\\
\lesssim&\delta\left(\widetilde{E}_{\leq|\alpha|+2}(t)+\widetilde{\Eb}_{\leq|\alpha|+2}(t,\ub)\right)\int_{-r_{0}}^{t}\mu^{-2b_{|\alpha|+2}+1}_{m}(t')(-t')^{-2}dt'\\
\lesssim&\delta\mu^{-2b_{|\alpha+2}+2}_{m}(t)\left(\widetilde{E}_{\leq|\alpha|+2}(t,\ub)+\widetilde{\Eb}_{\leq|\alpha|+2}(t,\ub)\right)\\
\lesssim&\delta\mu^{-2b_{|\alpha+2}+2}_{m}(t)\mathcal{D}^{\ub}_{|\alpha|+2}.
\end{align*}
Again, with $b_{|\alpha|+1}=b_{|\alpha|+2}-1$, the spacetime integral \eqref{K1T next to top spacetime} is bounded by (up to a constant):
\begin{equation}\label{K1T next to top estimate}
\begin{split}
&\mu^{-2b_{|\alpha|+1}}_{m}(t)\sqrt{\mathcal{D}^{\ub}_{|\alpha|+2}}\sqrt{\int_{0}^{\ub}\widetilde{\Fb}_{\leq|\alpha|+1}(t,\ub')d\ub'}\\\notag
\lesssim&\delta\mu_{m}^{-2b_{|\alpha|+1}}(t)\mathcal{D}^{\ub}_{|\alpha|+2}+C\delta^{-1}\mu^{-2b_{|\alpha|+1}}_{m}(t)\int_{0}^{\ub}\widetilde{\Fb}_{\leq|\alpha|+1}(t,\ub')d\ub'.
\end{split}
\end{equation}

We proceed to consider the spacetime error integral associated to $K_{0}$. We first consider the spacetime integral:
\begin{align*}
&\int_{W^{t}_{\ub}}|T\psi||Z^{\alpha}_{i}\text{tr}\chib'||L Z^{\alpha}_{i}\psi|dt'd\ub'd\mu_{\tilde{\slashed{g}}}\\
&\lesssim\delta^{-1/2}\int_{-r_{0}}^{t}(-t')^{-1}\|Z^{\alpha}_{i}\text{tr}\chib'\|_{L^{2}(\Sigma_{t'}^{\ub})}\|LZ^{\alpha}_{i}\psi\|_{L^{2}(\Sigma_{t'}^{\ub})}dt'
\end{align*}
Substituting the estimates:

\begin{align*}
\|Z^{\alpha}_{i}\text{tr}\chib'\|_{L^{2}(\Sigma_{t'}^{\ub})}&\lesssim\delta^{1/2}\mu^{-b_{|\alpha|+2}+1/2}_{m}(t')(-t')^{-1}\sqrt{\widetilde{\Eb}_{\leq|\alpha|+2}(t',\ub)}\\&\lesssim\delta\mu^{-b_{|\alpha|+2}+1/2}_{m}(t)(-t')^{-1}\sqrt{\mathcal{D}^{\ub}_{|\alpha|+2}},\\
\|L Z^{\alpha}_{i}\psi\|_{L^{2}(\Sigma_{t'}^{\ub})}&\lesssim\mu^{-b_{|\alpha|+1}}_{m}(t')\sqrt{\widetilde{E}_{\leq|\alpha|+1}(t',\ub)}
\end{align*}
with $b_{|\alpha|+1}=b_{|\alpha|+2}-1$, and using the fact that $\widetilde{E}_{\leq|\alpha|+1}(t)$ are non-decreasing in $t$, we see that the spacetime integral is bounded by ($\mu_{m}(t)\leq1$):
\begin{align}
\begin{split}
&\delta^{1/2}\sqrt{\mathcal{D}^{\ub}_{|\alpha|+2}}\sqrt{\widetilde{E}_{\leq|\alpha|+1}(t,\ub)}\int_{-r_{0}}^{t}\mu^{-2b_{|\alpha|+1}-1/2}_{m}(t')(-t')^{-2}dt'\\
\lesssim&\delta^{1/2}\mu^{-2b_{|\alpha|+1}+1/2}_{m}(t)\sqrt{\mathcal{D}^{\ub}_{|\alpha|+2}}\sqrt{\widetilde{E}_{\leq|\alpha|+1}(t,\ub)}\\
\lesssim&\mu_{m}^{-2b_{|\alpha|+1}}(t)\mathcal{D}^{\ub}_{|\alpha|+2}+\delta\mu^{-2b_{|\alpha|+1}}_{m}(t)\widetilde{E}_{\leq|\alpha|+1}(t,\ub)
\end{split}
\end{align}
Finally, we consider the spacetime integral:

\begin{align}\label{K0T next to top spacetime}
\begin{split}
&\delta^{2l'+2}\int_{W^{t}_{\ub}}|Z^{\alpha'}_{i}T^{l'}\slashed{\Delta}\mu||T\psi||LZ^{\alpha'}_{i}T^{l'+1}\psi|dt'd\ub'd\mu_{\tilde{\slashed{g}}}\\
\lesssim&\delta^{2l'+2-1/2}\int_{-r_{0}}^{t}(-t')^{-1}\|Z^{\alpha'}_{i}T^{l'}\slashed{\Delta}\mu\|_{L^{2}(\Sigma_{t'}^{\ub})}\|LZ^{\alpha'}_{i}T^{l'+1}\psi\|_{L^{2}(\Sigma_{t'}^{\ub})}dt'
\end{split}
\end{align}
for $|\alpha'|+l'\leq |\alpha|-1$.
Again, substituting the estimates ($\mu_{m}(t)\leq1$):
\begin{align*}
\delta^{l'+1}\|Z^{\alpha'}_{i}T^{l'}\slashed{\Delta}\mu\|_{L^{2}(\Sigma_{t'}^{\ub})}\lesssim\delta^{1/2}\mu^{-b_{|\alpha|+2}+1/2}_{m}(t')(-t')^{-1}\left(\sqrt{\widetilde{E}_{\leq|\alpha|+2}(t',\ub)}+\sqrt{\widetilde{\Eb}_{\leq|\alpha|+2}(t',\ub)}\right)
\end{align*}
with $b_{|\alpha|+1}=b_{|\alpha|+2}-1$, the same argument implies that the spacetime integral is bounded by:
\begin{align}\label{K0T next to top estimates}
\begin{split}
&\sqrt{\mathcal{D}^{\ub}_{|\alpha|+2}}\sqrt{\widetilde{E}_{\leq|\alpha|+1}(t,\ub)}\int_{-r_{0}}^{t}\mu^{-2b_{|\alpha|+1}-1/2}_{m}(t')(-t')^{-2}dt'\\
\lesssim&\mu^{-2b_{|\alpha|+1}+1/2}_{m}(t)\sqrt{\mathcal{D}^{\ub}_{|\alpha|+2,l'}}\sqrt{\widetilde{E}_{\leq|\alpha|+1}(t,\ub)}\\
\leq&C_{\epsilon}\mu^{-2b_{|\alpha|+1}}_{m}(t)\mathcal{D}^{\ub}_{|\alpha|+2}+\epsilon\widetilde{E}_{\leq|\alpha|+1}(t,\ub).
\end{split}
\end{align}
Here $\epsilon$ is a small absolute positive constant.

\subsection{Energy estimates-next to top order}
Throughout this subsection $Z_{i}$ could be $R_{i}, Q$ and $T$. Now we consider the other contributions from the spacetime error integrals associated to $K_{1}$. For the variations $Z^{\alpha'}_{i}\psi$ where $|\alpha'|\leq |\alpha|$, the contributions similar to \eqref{sigma K1} are bounded by (up to a constant)

\begin{align}\label{sigma K1 next to top}
\begin{split}
&\delta^{1/2}K_{\leq|\alpha|+1}(t,\ub)+\delta^{-1/2}\int_{0}^{\ub}\Fb_{\leq|\alpha|+1}(t,\ub')d\ub'\\
&+\int_{-r_{0}}^{t}(-t')^{-2}\Eb_{\leq|\alpha|+1}(t',\ub)dt'+\delta\int_{-r_{0}}^{t}(-t')^{-2}E_{\leq|\alpha|+1}(t',\ub)dt'\\
\lesssim&\delta^{1/2}\mu_{m}^{-2b_{|\alpha|+1}}(t)\widetilde{K}_{\leq|\alpha|+1}(t,\ub)+\delta^{-1/2}\mu_{m}^{-2b_{|\alpha|+1}}(t)\int_{0}^{\ub}\widetilde{\Fb}_{\leq|\alpha|+1}(t,\ub')d\ub'\\
&+\mu_{m}^{-2b_{|\alpha|+1}}(t)\int_{-r_{0}}^{t}(-t')^{-2}\Ebt_{\leq|\alpha|+1}(t',\ub)dt'+\delta\mu_{m}^{-2b_{|\alpha|+1}}(t)\int_{-r_{0}}^{t}(-t')^{-2}\Et_{\leq|\alpha|+1}(t',\ub)dt'.
\end{split}
\end{align}
In view of \eqref{K1 next to top estimates}, \eqref{K1T next to top estimate}, \eqref{sigma K1 next to top} and multiplying $\mu_{m}^{2b_{|\alpha|+1}}(t)$ on both sides of the energy inequality associated to $K_{1}$ for $Z^{\alpha'}_{i}\psi$ with $|\alpha'|\leq |\alpha|$ gives us

\begin{align*}
&\sum_{|\alpha'|\leq |\alpha|}\mu^{2b_{|\alpha|+1}}_{m}(t)\delta^{2l'}\left(\Eb[Z^{\alpha'}_{i}\psi](t,\ub)+\Fb[Z^{\alpha'}_{i}\psi](t,\ub)+K[Z^{\alpha'}_{i}\psi](t,\ub)\right)\\
\lesssim&\sum_{|\alpha'|\leq |\alpha|}\delta^{2l'}\Eb[Z^{\alpha'}_{i}\psi](-r_{0},\ub)+\delta^{-1}\int_{0}^{\ub}\widetilde{\Fb}_{\leq|\alpha|+1}(t,\ub')d\ub'+\int_{-r_{0}}^{t}(-t')^{-2}\Ebt(t',\ub)dt'\\
&+\delta\int_{-r_{0}}^{t}(-t')^{-2}\widetilde{E}_{\leq|\alpha|+1}(t',\ub)dt'
+\delta^{1/2}\widetilde{K}_{\leq|\alpha|+1}(t,\ub)+\delta\mathcal{D}^{\ub}_{|\alpha|+2}
\end{align*}
Arguing as in the previous section, this inequality holds $t$ is replaced by $t'\in[-r_{0},t]$ on the left hand side. Taking the supremum  with respect to $t'\in[-r_{0},t]$ we obtain

\begin{align*}
&\widetilde{\Eb}_{\leq|\alpha|+1}(t,\ub)+\widetilde{\Fb}_{\leq|\alpha|+1}(t,\ub)+\widetilde{K}_{\leq|\alpha|+1}(t,\ub)\\
&\lesssim\delta\mathcal{D}^{\ub}_{|\alpha|+2}
+\delta^{-1}\int_{0}^{\ub}\widetilde{\Fb}_{\leq|\alpha|+1}(t,\ub')d\ub'+\int_{-r_{0}}^{t}(-t')^{-2}\Ebt_{\leq|\alpha|+2}(t',\ub)dt'\\
&+\delta\int_{-r_{0}}^{t}(-t')^{-2}\widetilde{E}_{\leq|\alpha|+1}(t',\ub)dt'+\delta^{1/2}\widetilde{K}_{\leq|\alpha|+1}(t,\ub).
\end{align*}
Choosing $\delta$ sufficiently small and using Gronwall implies 

\begin{align}\label{Next to top K1}
\widetilde{\Eb}_{\leq|\alpha|+1}(t,\ub)+\widetilde{\Fb}_{\leq|\alpha|+1}(t,\ub)+\widetilde{K}_{\leq|\alpha|+1}(t,\ub)\lesssim\delta\mathcal{D}^{\ub}_{|\alpha|+2}+\delta\int_{-r_{0}}^{t}(-t')^{-2}\widetilde{E}_{\leq|\alpha|+1}(t',\ub)dt'.
\end{align}

Next we consider the energy estimates associated to $K_{0}$. We start with the variation $Z^{\alpha'}_{i}\psi$ with $|\alpha'|\leq |\alpha|$. The contributions similar to \eqref{sigma K0} is bounded by (up to a constant)

\begin{align}\label{Next to top K0 pre}
\begin{split}
&\delta^{-1/2}\int_{0}^{\ub}\Fb_{\leq|\alpha|+1}(t,\ub')d\ub'+\delta^{1/2}\int_{-r_{0}}^{t}(-t')^{-2}E_{\leq|\alpha|+1}(t',\ub)dt'+\delta^{1/2}K_{\leq|\alpha|+1}(t,\ub)\\
\lesssim&\delta^{-1/2}\mu_{m}^{-2b_{|\alpha|+1}}(t)\int_{0}^{\ub}\widetilde{\Fb}_{\leq|\alpha|+1}(t,\ub')d\ub'+\delta^{1/2}\mu_{m}^{-2b_{|\alpha|+1}}(t)\int_{-r_{0}}^{t}(-t')^{-2}\Et_{\leq|\alpha|+1}(t',\ub)dt'\\
&+\delta^{1/2}\mu^{-2b_{|\alpha|+1}}_{m}(t)\widetilde{K}_{\leq|\alpha|+1}(t,\ub)\\
\lesssim&\delta^{3/2}\mu_{m}^{-2b_{|\alpha|+1}}(t)\mathcal{D}^{\ub}_{|\alpha|+2}+\delta^{3/2}\mu_{m}^{-2b_{|\alpha|+1}}(t)\int_{-r_{0}}^{t}(-t')^{-2}\Et_{\leq|\alpha|+1}(t',\ub)dt'.
\end{split}
\end{align}
Here in the last step we used \eqref{Next to top K1}. \eqref{Next to top K0 pre} together with \eqref{K0T next to top estimates} gives the following estimate

\begin{align*}
&\sum_{|\alpha'|\leq |\alpha|}\mu_{m}^{2b_{|\alpha|+1}}(t)\delta^{2l'}\left(E[Z^{\alpha'}_{i}\psi](t,\ub)+F[Z^{\alpha'}_{i}\psi](t,\ub)\right)\\
\lesssim&\mathcal{D}^{\ub}_{|\alpha|+2}+\epsilon\widetilde{E}_{\leq|\alpha|+1}(t,\ub)+\delta^{1/2}\int_{-r_{0}}^{t}(-t')^{-2}\widetilde{E}_{\leq|\alpha|+1}(t',\ub)dt',
\end{align*}
which implies

\begin{align*}
&\widetilde{E}_{\leq|\alpha|+1}(t,\ub)+\widetilde{F}_{\leq|\alpha|+1}(t,\ub)\\
\lesssim&\mathcal{D}^{\ub}_{|\alpha|+2}
+\epsilon\widetilde{E}_{\leq|\alpha|+1}(t,\ub)+\delta^{1/2}\int_{-r_{0}}^{t}(-t')^{-2}\widetilde{E}_{\leq|\alpha|+1}(t',\ub)dt'
\end{align*}
Choosing $\epsilon$ sufficiently small and using Gronwall, we finally have:

\begin{align}\label{Next to top K0}
\widetilde{E}_{\leq|\alpha|+1}(t,\ub)+\widetilde{F}_{\leq|\alpha|+1}(t,\ub)\lesssim\mathcal{D}^{\ub}_{|\alpha|+2}
\end{align}
Now substituting \eqref{Next to top K0} to the right hand side of \eqref{Next to top K1}, we have:

\begin{align*}
\widetilde{\Eb}_{\leq|\alpha|+1}(t,\ub)+\widetilde{\Fb}_{\leq|\alpha|+1}(t,\ub)+\widetilde{K}_{\leq|\alpha|+1}(t,\ub)\lesssim \delta\mathcal{D}^{\ub}_{|\alpha|+2}
\end{align*}
Summarizing, we have:

\begin{align}\label{energy estimates next to top}
\begin{split}
\widetilde{\Eb}_{\leq|\alpha|+1}(t,\ub)+\widetilde{\Fb}_{\leq|\alpha|+1}(t,\ub)+\widetilde{K}_{\leq|\alpha|+1}(t,\ub)\lesssim&\delta\mathcal{D}^{\ub}_{|\alpha|+2}\\
\widetilde{E}_{\leq|\alpha|+1}(t,\ub)+\widetilde{F}_{\leq|\alpha|+1}(t,\ub)\lesssim
&\mathcal{D}^{\ub}_{|\alpha|+2}.
\end{split}
\end{align}

\subsection{Descent scheme}
We proceed in this way taking at the $n$th step:
\begin{align*}
b_{|\alpha|+2-n}=b_{|\alpha|+2}-n,\quad b_{|\alpha|+1-n}=b_{|\alpha|+2}-n-1
\end{align*}
in the role of $b_{|\alpha|+2}$ and $b_{|\alpha|+1}$ respectively, the argument beginning in the paragraph containing \eqref{K1 next to top spacetime} and concluding with \eqref{energy estimates next to top} being step $0$. The $n$th step is exactly the same as the $0$th step as above, as long as $b_{|\alpha|+1-n}>0$. If we choose 

\begin{align}\label{b choice}
b_{|\alpha|+2}=[b_{|\alpha|+2}]+\frac{3}{4}
\end{align}
where $[b_{|\alpha|+2}]$ is the integer part of $b_{|\alpha|+2}$, then $b_{|\alpha|+1-n}>0$ is equivalent to $n\leq[b_{|\alpha|+2}]-1$. For each of such $n$, we need to estimate the integrals:
\begin{align*}
\int_{-r_{0}}^{t}\mu_{m}^{-b_{|\alpha|+2-n}-1/2}(t')(-t')^{-2}dt',\quad \int_{-r_{0}}^{t}\mu_{m}^{-2b_{|\alpha|+2-n}+1}(t')(-t')^{-2}dt'
\end{align*}
As in the previous sections, we split the interval $[-r_{0},t]$ into two parts: $t'\in[-r_{0},t_{0}]$ and $t'\in[t_{0},t]$ where $\mu_{m}(t_{0})=\frac{1}{10}$. If $t'\in[-r_{0},t_{0}]$, we have

\begin{align}\label{mu power descent 1}
\begin{split}
\int_{-r_{0}}^{t_{0}}\mu^{-b_{|\alpha|+2-n}-1/2}_{m}(t')(-t')^{-2}dt'&\lesssim\int_{-r_{0}}^{t_{0}}\mu^{-b_{|\alpha|+2-n}+1/2}_{m}(t')(-t')^{-2}dt'\lesssim\mu_{m}^{-b_{|\alpha|+2-n}+1/2}(t)\\
\int_{-r_{0}}^{t_{0}}\mu^{-2b_{|\alpha|+2-n}+1}_{m}(t')(-t')^{-2}dt'&\lesssim
\int_{-r_{0}}^{t_{0}}\mu_{m}^{-2b_{|\alpha|+2-n}+2}(t')(-t')^{-2}dt'\lesssim
\mu_{m}^{-2b_{|\alpha|+2-n}+2}(t).
\end{split}
\end{align}
Here we used the fact that $\mu_{m}(t')\geq\dfrac{1}{10}$ for $t'\in[-r_{0},t_{0}]$ and the second part of Lemma \ref{lemma on mu to power a}. For $t'\in[t_{0},t]$, since 

\begin{align*}
b_{|\alpha|+2-n}=[b_{|\alpha|+2-n}]+\frac{3}{4}\geq 1+\frac{3}{4}=\frac{7}{4},
\end{align*}
which implies

\begin{align*}
\eta_{m}-\frac{1}{2(b_{|\alpha|+2-n}-1/2)}\geq c_{0}>0
\end{align*}
for some absolute constant $c_{0}$. Here $\eta_{m}$ is defined by \eqref{min Lbmu initial}. Therefore the proof of Lemma \ref{lemma on mu to power a} goes through and we have

\begin{align}\label{mu power descent 2}
\begin{split}
\int_{t_{0}}^{t}\mu^{-b_{|\alpha|+2-n}-1/2}_{m}(t')(-t')^{-2}dt'&\lesssim\mu^{-b_{|\alpha|+2-n}+1/2}_{m}(t)\\
\int_{t_{0}}^{t}\mu^{-2b_{|\alpha|+2-n}+1}_{m}(t')(-t')^{-2}dt'&\lesssim\mu^{-2b_{|\alpha|+2-n}+2}_{m}(t).
\end{split}
\end{align}
So indeed, we can repeat the process of $0$th for $n=1,...,[b_{|\alpha|+2}]-1$. Therefore we have the following estimates:

\begin{align}\label{descent energy estimates}
\begin{split}
\widetilde{E}_{\leq|\alpha|+1-n}(t,\ub)+\widetilde{F}_{\leq|\alpha|+1-n}(t,\ub)&\lesssim\mathcal{D}^{\ub}_{|\alpha|+2}\\
\widetilde{\Eb}_{\leq|\alpha|+1-n}(t,\ub)+\widetilde{\Fb}_{\leq|\alpha|+1-n}(t,\ub)+\widetilde{K}_{\leq |\alpha|+1-n}(t,\ub)&\lesssim\delta\mathcal{D}^{\ub}_{|\alpha|+2}.
\end{split}
\end{align}
We now make the final step $n=[b_{|\alpha|+2-n}]$. In this case we have $b_{|\alpha|+2-n}=\frac{3}{4}$. Using the same process as in \eqref{trchib descent} and \eqref{mu descent}, the optical terms are bounded by:

\begin{align}\label{trchib mu descent last step}
\begin{split}
\|Z^{\alpha'}\text{tr}\chib'\|_{L^{2}(\Sigma_{t}^{\ub})}&\lesssim\delta\mu^{-1/4}_{m}(t)(-t)^{-1}\sqrt{\mathcal{D}^{\ub}_{|\alpha|+2}}\quad\text{with}\quad |\alpha'|+2\leq |\alpha|+1-[b_{|\alpha|+2}]\\
\|Z^{\alpha'}_{i}T^{l'}\slashed{\Delta}\mu\|_{L^{2}(\Sigma_{t}^{\ub})}&\lesssim \delta^{1/2}\mu^{-1/4}_{m}(t)(-t)^{-1}\sqrt{\mathcal{D}^{\ub}_{|\alpha|+2}}\quad\text{with}\quad |\alpha'|+l'+2\leq |\alpha|+1-[b_{|\alpha|+2}]
\end{split}
\end{align}
with $Z_{i}=R_{i}$ or $Q$.
As before, in order to bound the corresponding integrals:
\begin{align*}
\int_{-r_{0}}^{t}\|Z^{\alpha'}_{i}\text{tr}\chib'\|^{2}_{L^{2}(\Sigma_{t'}^{\ub})}dt',\quad\int_{-r_{0}}^{t}\|Z^{\alpha'}_{i}T^{l'}\slashed{\Delta}\mu\|^{2}_{L^{2}(\Sigma_{t'}^{\ub})}dt'
\end{align*}
we need to consider the integral:
\begin{align*}
\int_{-r_{0}}^{t}\mu^{-1/2}_{m}(t')(-t')^{-2}dt'\leq \int_{-r_{0}}^{t_{0}}\mu^{-1/2}_{m}(t')(-t')^{-2}dt'+\int_{t_{0}}^{t}\mu^{-1/2}_{m}(t')(-t')^{-2}dt'\quad\text{with}\quad \mu_{m}(t_{0})=\frac{1}{10}
\end{align*}
For the ``non-shock part $\int_{-r_{0}}^{t_{0}}$", since $\mu_{m}(t_{0})\geq\dfrac{1}{10}$,
\begin{align*}
\int_{-r_{0}}^{t_{0}}\mu_{m}^{-1/2}(t')(-t')^{-2}dt'\lesssim 1.
\end{align*}
Let $s$ be such that $t_{0}\leq t'\leq t<s<t^{*}$. Following the same arguments as in \eqref{mu m lower bound temp 1} and \eqref{mu m upper bound temp 1}, we have

\begin{align}\label{mu m lower bound temp 2}
\mu_{m}(t)\geq \mu_{m}(s)+\left(\eta_{m}-O(\delta M^{4})\right)\left(\frac{1}{t}-\frac{1}{s}\right),
\end{align}
and 

\begin{align}\label{mu m upper bound temp 2}
\mu_{m}(t)\leq \mu_{m}(s)+\left(\eta_{m}+O(\delta M^{4})\right)\left(\frac{1}{t}-\frac{1}{s}\right).
\end{align}
If $\delta$ is sufficiently small, a similar argument as deriving \eqref{la temp 1} implies

\begin{align}\label{mu m descent final}
\int_{t_{0}}^{t}\mu_{m}^{-1/2}(t')(-t')^{-2}dt'\lesssim\mu_{m}^{1/2}(t)\lesssim 1.
\end{align}
So we have the following bounds:
\begin{align*}
\Big(\int_{-r_{0}}^{t}\|Z^{\alpha'}\text{tr}\chib'\|^{2}_{L^{2}(\Sigma_{t'}^{\ub})}dt'\Big)^{1/2}&\lesssim\delta\sqrt{\mathcal{D}^{\ub}_{|\alpha|+2}}\quad\text{with}\quad |\alpha'|+2\leq |\alpha|+1-[b_{|\alpha|+2}],\\
\Big(\int_{-r_{0}}^{t}\|Z^{\alpha'}_{i}T^{l'}\slashed{\Delta}\mu\|_{L^{2}(\Sigma_{t'}^{\ub})}dt'\Big)^{1/2}&\lesssim\delta^{1/2}\sqrt{\mathcal{D}^{\ub}_{|\alpha|+2}}\quad\text{with}\quad |\alpha'|+l'+2\leq |\alpha|+1-[b_{|\alpha|+2}].
\end{align*}
Therefore we can set:
\begin{align*}
b_{|\alpha|+1-n}=b_{|\alpha|+1-[b_{|\alpha|+2}]}=0
\end{align*}
in this step. Then we can proceed exactly the same as in the preceding steps. We thus arrive at the estimates:
\begin{align}\label{descent estimates final}
\begin{split}
\widetilde{E}_{\leq|\alpha|+1-[b_{|\alpha|+2}]}(t,\ub)+\widetilde{F}_{\leq|\alpha|+1-[b_{|\alpha|+2}]}(t,\ub)&\lesssim\mathcal{D}^{\ub}_{|\alpha|+2}\\
\widetilde{\Eb}_{\leq|\alpha|+1-[b_{|\alpha|+2}]}(t,\ub)+\widetilde{\Fb}_{\leq|\alpha|+1-[b_{|\alpha|+2}]}(t,\ub)+\widetilde{K}_{\leq |\alpha|+1-[b_{|\alpha|+2}]}(t,\ub)&\lesssim\delta\mathcal{D}^{\ub}_{|\alpha|+2}.
\end{split}
\end{align}
These are the desired estimates, because from the definitions:
\begin{align}\label{definitions for finite energy}
\begin{split}
\widetilde{\Eb}_{\leq|\alpha|+1-[b_{|\alpha|+2}]}(t,\ub):&=\sup_{t'\in[-r_{0},t]}\{\underline{E}_{\leq|\alpha|+1-[b_{|\alpha|+2}]}(t',\ub)\}\\
\widetilde{E}_{\leq|\alpha|+1-[b_{|\alpha|+2}]}(t,\ub):&=\sup_{t'\in[-r_{0},t]}\{E_{\leq|\alpha|+1-[b_{|\alpha|+2}]}(t',\ub)\}\\
\widetilde{\Fb}_{\leq|\alpha|+1-[b_{|\alpha|+2}]}(t,\ub):&=\sup_{t'\in[-r_{0},t]}\{\Fb_{\leq|\alpha|+1-[b_{|\alpha|+2}]}(t',\ub)\}\\
\widetilde{F}_{\leq|\alpha|+1-[b_{|\alpha|+2}]}(t,\ub):&=\sup_{t'\in[-r_{0},t]}\{F_{\leq|\alpha|+1-[b_{|\alpha|+2}]}(t',\ub)\}\\
\widetilde{K}_{\leq|\alpha|+1-[b_{|\alpha|+2}]}(t,\ub):&=\sup_{t'\in[-r_{0},t]}\{K_{\leq|\alpha|+1-[b_{|\alpha|+2}]}(t',\ub)\}
\end{split}
\end{align}
the weight $\mu_{m}(t')$ has been eliminated.
\section{Completion of Proof}
\subsection{Proof of Theorem 3.1}
Let us define:
\begin{align*}
\mathcal{S}_{2}[\phi]:=\int_{S_{t,\ub}}\Big(|\phi|^{2}+|R_{i_{1}}\phi|^{2}+|R_{i_{1}}R_{i_{2}}\phi|^{2}\Big)d\mu_{\slashed{g}}
\end{align*}
And also let us denote by $\mathcal{S}_{n}(t,\ub)$ the integral on $S_{t,\ub}$ (with respect to $d\mu_{\slashed{g}}$) of the sum of the square of all the variation $\psi=\delta^{l'}Z^{\alpha'}_{i}\psi_{\gamma}$ up to order $|\alpha|+1-[b_{|\alpha|+2}]$, where $l'$ is the number of $T$'s in the string of $Z^{\alpha'}_{i}$ and $\gamma=0,1,2,3$. Then by \eqref{Calculus Inequality} we have:
\begin{align*}
\mathcal{S}_{|\alpha|-[b_{|\alpha|+2}]}(t,\ub)\lesssim \delta\left( E_{|\alpha|+1-[b_{|\alpha|+2}]}(t,\ub)+\Eb_{|\alpha|+1-[b_{|\alpha|+2}]}(t,\ub)\right) \quad\text{for all}\quad (t,\ub)\in[-r_{0},t^{*})\times [0,\delta].
\end{align*}
Hence, in view of \eqref{descent estimates final} and \eqref{definitions for finite energy},
\begin{align}\label{surface integral bounded by initial data}
\mathcal{S}_{|\alpha|-[b_{|\alpha|+2}]}(t,\ub)\lesssim\delta\mathcal{D}^{\ub}_{|\alpha|+2} \quad\text{for all}\quad (t,\ub)\in[-r_{0},t^{*})\times [0,\delta].
\end{align}
Then for any variations $\psi$ of order up to $|\alpha|-2-[b_{|\alpha|+2}]$ we have:
\begin{align}\label{transaction of surface integral}
\mathcal{S}_{2}[\psi]\leq \mathcal{S}_{|\alpha|-[b_{|\alpha|+2}]}(t,\ub)
\end{align}
Then by the isoperimetric inequality in \eqref{isoperimetric}, \eqref{surface integral bounded by initial data} and \eqref{transaction of surface integral}, we have:
\begin{align}\label{Linfty on surface}
\delta^{l'}\sup_{S_{t,\ub}}|Z^{\alpha'}_{i}\psi_{\alpha}|=\sup_{S_{t,\ub}}|\psi|\lesssim\delta^{1/2}\sqrt{\mathcal{D}^{\ub}_{|\alpha|+2}}\leq C_{0}\delta^{1/2}
\end{align}
where $C_{0}$ depends on the initial energy $\mathcal{D}^{\ub}_{|\alpha|+2}$, the constant in the isoperimetric inequality and the constant in \eqref{Calculus Inequality} as well as the constants in \eqref{descent estimates final}, which are absolute constants.
If we choose $|\alpha|$ large enough such that
\begin{align*}
\Big[\frac{|\alpha|+1}{2}\Big]+3\leq |\alpha|-2-[b_{|\alpha|+2}]
\end{align*}
then \eqref{Linfty on surface} recovers the bootstrap assumption (B.1) for $(t,\ub)\in[-r_{0},t^{*})\times[0,\delta]$. 

\bigskip

To complete the proof of Theorem 3.1, it remains to show that the smooth solution exists for $t\in[-r_{0},s^{*})$, i.e. $t^{*}=s^{*}$. More precisely, we will prove that either $\mu_{m}(t^{*})=0$ if shock forms before $t=-1$ or otherwise $t^{*}=-1$.

If $t^{*}<s^{*}$, then $\mu$ would be positive on $\Sigma_{t^{*}}^{\delta}$. In particular $\mu$ has a positive lower bound on $\Sigma_{t^{*}}^{\delta}$. Therefore by Remark \ref{geometric meaning of mu}, the Jacobian $\triangle$ of the transformation from optical coordinates to rectangular coordinates has a positive lower bound on $\Sigma_{t^{*}}^{\delta}$. This implies that the inverse transformation from rectangular coordinates to optical coordinates is regular. On the other hand, in the course of recovering bootstrap assumption we have proved that all the derivatives of the first order variations $\psi_{\alpha}$ extend smoothly in optical coordinates to $\Sigma_{t^{*}}^{\delta}$. Since the inverse transformation is regular, $\psi_{\alpha}$ also extend smoothly to $\Sigma_{t^{*}}^{\delta}$ in rectangular coordinates. Once $\psi_{\alpha}$ extend to functions of rectangular coordinates on $\Sigma_{t^{*}}^{\delta}$ belonging to some Sobolev space $H^{3}$, then the standard local existence theorem (which is stated and proved in rectangular coordinates) applies and we obtain an extension of the solution to a development containing an extension of all null hypersurface $\Cb_{\ub}$ for $\ub\in[0,\delta]$, up to a value $t_{1}$ of $t$ for some $t_{1}>t^{*}$, 
which contradicts with the definition of $t^{*}$ and therefore $t^{*}=s^{*}$. This completes the proof of Theorem 3.1. 
\subsection{Data leading to shock formation}
Finally, let us identify a class of initial data constructed in Lemma \ref{lemma constraint} which guarantee shock formation. As we have seen in Remark \ref{geometric meaning of mu}, to let shock form before $t=-1$, we need to let $\mu$ vanish before $t=-1$. By Proposition \ref{proposition on expansion for mu} we have

\begin{align}\label{expan mu temp 1}
\mu(t,\ub,\theta)=\mu(-r_{0},\ub,\theta)-\left(\frac{1}{t}+\frac{1}{r_{0}}\right)r_{0}^{2}\Lb\mu(-r_{0},\ub,\theta)+O\left(\frac{\delta}{t^{2}}\right).
\end{align}
Note that since we already recovered all the bootstrap assumptions, the large parameter $M$ in Proposition \ref{proposition on expansion for mu} goes away here. In view of the fact $\mu(-r_{0},\ub,\theta)=1+O\left(\dfrac{\delta}{r_{0}^{2}}\right)$, the propagation equation for $\mu$ as well as the pointwise estimates for $\psi$ and its derivatives, we rewrite \eqref{expan mu temp 1} as

\begin{align}\label{expan mu temp 2}
\mu(t,\ub,\theta)=1+3r_{0}^{2}\left(\frac{1}{|t|}-\frac{1}{r_{0}}\right)\left(G''(0)\phi_{1}\left(\frac{r-r_{0}}{\delta},\ub,\theta\right)\partial_{s}\phi_{1}\left(\frac{r-r_{0}}{\delta},\ub,\theta\right)\right)+O\left(\frac{\delta}{t^{2}}\right).
\end{align}
Here $\partial_{s}$ is the partial derivative with respect to the first argument of the function $\phi_{1}(s,\theta)$. Therefore if \eqref{shock condition} holds, $\mu(t,\ub,\theta)$ becomes zero before $t=-1$. This completes the proof of the main theorem of the paper.

\section*{Acknowledgment}
This work was supported by NSF grant DMS-1253149 to The University of Michigan and in its initial phase by ERC Advanced Grant 246574 ``Partial Differential Equations of Classical Physics".
\bibliographystyle{plain}
\bibliography{Shockbib}

\end{document}